\documentclass[a4paper]{amsart}

\usepackage[utf8]{inputenc} 

\usepackage{amsmath, amssymb, amsthm, mathtools, mathrsfs} 
\usepackage{bm} 
\numberwithin{equation}{section} 

\usepackage{tikz-cd} 

\usepackage[top=2.3cm, bottom=2.3cm, left=2.3cm, right=2.3cm]{geometry} 
\usepackage{setspace} 
\usepackage[shortlabels]{enumitem} 

\usepackage{xcolor} 

\usepackage{hyperref} 
\definecolor{urlcolor}{rgb}{0,0,0}
\definecolor{linkcolor}{rgb}{.7,0.10,0.2}
\definecolor{citecolor}{rgb}{.12,.54,.11}
\hypersetup{
    breaklinks=true,     
    colorlinks=true,     
    urlcolor=urlcolor,   
    linkcolor=linkcolor, 
    citecolor=citecolor  
}

\usepackage{cleveref}

\crefname{section}{section}{sections}
\Crefname{section}{Section}{Sections}

\crefname{theorem}{theorem}{theorems}
\Crefname{theorem}{Theorem}{Theorems}

\crefname{lemma}{lemma}{lemmas}
\Crefname{lemma}{Lemma}{Lemmas}

\crefname{proposition}{proposition}{propositions}
\Crefname{proposition}{Proposition}{Propositions}

\crefname{corollary}{corollary}{corollaries}
\Crefname{corollary}{Corollary}{Corollaries}

\crefname{equation}{equation}{equations}
\Crefname{equation}{Equation}{Equations}

\usepackage[
    style=alphabetic,     
    doi=false,
    isbn=false,
    url=true,
    eprint=false,
    block=space,          
    maxcitenames=100,     
    maxbibnames=100,      
    sorting=anyt          
]{biblatex}
\newtheorem{theorem}{Theorem}[section]
\newtheorem{corollary}[theorem]{Corollary}
\newtheorem{proposition}[theorem]{Proposition}
\newtheorem{proposition-definition}[theorem]{Proposition-Definition}
\newtheorem{lemma}[theorem]{Lemma}

\newtheorem{definition}[theorem]{Definition}
\theoremstyle{remark}
\newtheorem{remark}{Remark}
\newtheorem{example}{Example}

\newcommand{\abs}[1]{\lvert #1 \rvert}  

\renewcommand{\subset}{\subseteq}  
\renewcommand{\setminus}{\smallsetminus}  

\newcommand{\BD}{\mathbb{D}}

\newcommand{\BQ}{\mathbb{Q}}

\newcommand{\BoA}{\mathbf{A}}

\newcommand{\BoC}{\mathbf{C}}

\newcommand{\BoG}{\mathbf{G}}

\newcommand{\BoN}{\mathbf{N}}

\newcommand{\BoQ}{\mathbf{Q}}

\newcommand{\BoZ}{\mathbf{Z}}
\newcommand{\bd}{\mathbf{d}}

\newcommand{\CA}{\mathcal{A}}

\newcommand{\CC}{\mathcal{C}}
\newcommand{\CD}{\mathcal{D}}

\newcommand{\CF}{\mathcal{F}}
\newcommand{\CG}{\mathcal{G}}
\newcommand{\CH}{\mathcal{H}}

\newcommand{\CL}{\mathcal{L}}
\newcommand{\CM}{\mathcal{M}}
\newcommand{\CN}{\mathcal{N}}

\newcommand{\FL}{\mathfrak{L}}
\newcommand{\FM}{\mathfrak{M}}

\newcommand{\FP}{\mathfrak{P}}

\newcommand{\SA}{\mathscr{A}}

\newcommand{\SC}{\mathscr{C}}

\newcommand{\SG}{\mathscr{G}}

\newcommand{\NN}{\mathrm{N}}
\newcommand{\MO}{\mathrm{MO}}

\newcommand{\rmB}{\mathrm{B}}  
\newcommand{\rmBPS}{\mathrm{BPS}}
\newcommand{\BPS}{\mathcal{BPS}}  
\newcommand{\GL}{\mathrm{GL}}  
\newcommand{\Hom}{\mathrm{Hom}}  
\newcommand{\Spec}{\mathrm{Spec}}  
\newcommand{\Sym}{\mathrm{Sym}}  

\newcommand{\Tr}{\mathrm{Tr}} 
\newcommand{\Jac}{\mathrm{Jac}} 
\newcommand{\ad}{\mathrm{ad}}
\newcommand{\rmb}{\mathrm{b}}
\newcommand{\rmc}{\mathrm{c}}
\newcommand{\cms}{/\!\!/}
\newcommand{\coker}{\mathrm{coker}}
\newcommand{\dd}{\mathbf{d}}
\newcommand{\ff}{\mathbf{f}}
\newcommand{\ee}{\mathbf{e}}
\newcommand{\pD}{{^{\frakp}\!\mathcal{D}}}
\newcommand{\Det}{\mathrm{Det}}
\newcommand{\Ext}{\mathrm{Ext}}
\newcommand{\id}{\mathrm{id}}  
\newcommand{\rmH}{\mathrm{H}}
\newcommand{\Heis}{\mathsf{Heis}}
\newcommand{\frakg}{\mathfrak{g}}
\newcommand{\Gr}{\mathrm{Gr}}
\newcommand{\frakh}{\mathfrak{h}}
\newcommand{\ICS}{\mathcal{IC}}
\newcommand{\JH}{\mathtt{JH}}
\newcommand{\frakL}{\mathfrak{L}}
\newcommand{\frakm}{\mathfrak{m}}
\newcommand{\rmm}{\mathrm{m}}
\newcommand{\nil}{\mathrm{nil}}  
\newcommand{\Nil}{\mathrm{Nil}}
\newcommand{\frakn}{\mathfrak{n}}
\newcommand{\op}{\mathrm{op}}
\newcommand{\Perv}{\mathrm{Perv}}  

\newcommand{\frakp}{\mathfrak{p}}

\newcommand{\pH}{{^{\mathfrak{p}}\!\mathcal{H}}}
\newcommand{\pt}{\mathrm{pt}}
\newcommand{\ptau}{{^{\mathfrak{p}}\!\tau}}
\newcommand{\re}{\mathrm{re}}
\newcommand{\red}{\mathrm{red}}
\newcommand{\rep}{\mathrm{rep}}
\newcommand{\frakS}{\mathfrak{S}}
\renewcommand{\Spec}{\mathrm{Spec}}  
\newcommand{\bfU}{\mathbf{U}}
\newcommand{\vir}{\mathrm{vir}}

\title[Degenerations of CoHAs of $2$-Calabi--Yau categories]{Degenerations of CoHAs of $\mathbf{2}$-Calabi--Yau categories}
\date{\today}

\author{Lucien Hennecart}

\address{Laboratoire Ami\'enois de Math\'ematique Fondamentale et Appliqu\'ee, CNRS UMR 7352, Universit\'e de Picardie Jules Verne, 33 rue Saint Leu, 80000 Amiens, France}\email{lucien.hennecart@u-picardie.fr}

\author{Shivang Jindal}
\address{Ecole Polytechnique Fédérale de Lausanne (EPFL), CH-1015 Lausanne, Switzerland}\email{shivang.jindal@epfl.ch}

\begin{document}

\begin{abstract}
By work of Davison and Meinhardt, the cohomological Hall algebra of a symmetric quiver with potential admits a geometrically defined filtration (the perverse filtration) whose associated graded is a supercommutative algebra. In the case of the triple quiver of a quiver with the canonical cubic potential, which corresponds to the preprojective algebra of the quiver via dimensional reduction, there is an additional filtration (the less perverse filtration), which is defined more generally for cohomological Hall algebras of suitably geometric $2$-Calabi--Yau categories in work of Davison. In this paper, we show that the degenerations of the cohomological Hall algebras of preprojective algebras and more generally $2$-Calabi--Yau categories with respect to the less perverse filtration is isomorphic to the enveloping algebra of the current Lie algebra of the BPS Lie algebra. This result applies in particular to CoHAs of local systems on Riemann surfaces and Higgs bundles on smooth projective curves. We extend this description to deformations of the cohomological Hall algebra obtained via torus actions on the arrows of the quiver and deformed canonical cubic potentials via the deformed dimensional reduction of Davison--P\u adurariu. We prove all our results at the level of sheafified CoHAs, which allows us to deduce similar statements for all versions of nilpotent CoHAs. Last, we use our results to compare the less perverse filtration on CoHAs of preprojective algebras with the order filtration on the Maulik--Okounkov Yangian, via the comparison isomorphism of Botta--Davison and Schiffmann--Vasserot.
\end{abstract}

\maketitle

\setcounter{tocdepth}{1}

\tableofcontents

\section{Introduction}

\subsection{Background}

Cohomological Hall algebras (CoHAs) of quivers with potentials were introduced around 2010 by Kontsevich and Soibelman \cite{kontsevich2011cohomological}. In parallel, Schiffmann and Vasserot introduced in \cite{schiffmann2013cherednik} the cohomological Hall algebra of the preprojective algebra of the Jordan quiver (i.e. of the commuting stack of pairs of matrices). Their construction is generalized to preprojective algebras of arbitrary quivers by Yang and Zhao in \cite{yang2018cohomological}. CoHAs are associative algebra structures on the (singular, critical or compactly supported) cohomology of the stack of objects in the category of representations of the quiver. The cohomology theory one choses depends on the context: for quivers with potentials, the critical cohomology is well-suited, while for preprojective algebras, one choses the Borel--Moore homology (a.k.a. dual compactly supported cohomology). When the potential vanishes, singular and critical cohomology coincide. It happens that the cohomological Hall algebra of the preprojective algebra of a quiver is isomorphic, via dimensional reduction \cite[Appendix~A]{davison2017critical}, to the cohomological Hall algebra of the triple quiver with its canonical cubic potential. These complementary approaches make CoHAs of preprojective algebras particularly well-suited to a detailed study. Further, CoHAs of preprojective algebras have applications to a broad range of questions in representation theory and enumerative geometry, in relation to the geometry of Nakajima quiver varieties and of symplectic surfaces.

Cohomological Hall algebras of quivers with potentials are further studied in \cite{davison2020cohomological}, where Davison and Meinhardt define the \emph{BPS Lie algebra} and prove the \emph{cohomological integrality isomorphism}. The latter takes the form of a PBW isomorphism for the cohomological Hall algebra of the quiver with potential. It gives a comparison of the underlying graded vector spaces of the cohomological Hall algebra and of the symmetric algebra of the current Lie algebra of the BPS algebra. Precisely, if $\CA$ is the cohomological Hall algebra and $\frakn$ the BPS Lie algebra, then there is a linear isomorphism $\CA\cong\Sym(\frakn[u])$.

Cohomological Hall algebras now play a major role in various subjects of mathematics, including enumerative geometry, algebraic geometry, representation theory and combinatorics. This role is explained by the access to the symmetries of the moduli problems given by CoHAs. Indeed, CoHAs act on the cohomology of related moduli spaces (sheaves on surfaces, quiver varieties). For example, CoHAs admit applications to a proof of the P=W conjecture of de Cataldo--Hausel--Migliorini in \cite{hausel2022p} (proven first by purely geometric means in \cite{maulikp}), to the study of the cohomology of Nakajima quiver varieties and the proof of the positivity of cuspidal polynomials of quivers in \cite{davison2023bps} (a further refinement of Kac polynomials of quivers and the corresponding Kac positivity conjecture). Following \cite{botta2023okounkov,schiffmann2023cohomological}, CoHAs also allow a computation of the Maulik--Okounkov Lie algebra defined in \cite{maulik2019quantum} via geometric R-matrices.

The examination of the structure of CoHAs is preliminary to their use and application to concrete problems. In this paper, we study further cohomological Hall algebras of $2$-Calabi--Yau categories, including that of preprojective algebras of quivers together with their deformations. The geometric examples include the category of Gieseker semistable coherent sheaves of fixed
normalised Hilbert polynomial on a polarised K3 surface and the category of semistable Higgs sheaves of fixed slope on a smooth
projective curve.  Although their precise structure remains unclear, we study the \emph{less perverse degeneration} of these algebras. We show that the associated graded with respect to the less perverse filtration (defined in \cite{davison2020bps}) is isomorphic to the enveloping algebra of the (twisted) current Lie algebra on the BPS Lie algebra. The heart of this paper is \Cref{theorem:main_degeneration_affinized_BPS}, which describes the Lie bracket on the associated graded of the preprojective CoHA with respect to the less perverse filtration, from which all other results follow via the use of \'etale slices and application of strict monoidal functors.

\subsection{Main results}
We give a brief outline of our main results.

Let $Q=(Q_0,Q_1)$ be a quiver and $\Pi_Q$ its preprojective algebra (see \Cref{subsection:preprojective_algebra_stacks}). We let $\BPS_{\Pi_Q}$ be the BPS sheaf (defined as in \cite{davison2020bps}) and $\frakn_{\Pi_Q}^+\coloneqq \rmH^*(\BPS_{\Pi_Q})$ its derived global sections. It has the structure of a Lie algebra, obtained by dimensional reduction from the BPS Lie algebra $\BPS_{\tilde{Q},\tilde{W}}$ of the triple quiver $\tilde{Q}$ with its canonical cubic potential $\tilde{W}$ \cite{davison2020cohomological}. The preprojective sheafified cohomological Hall algebra is denoted by $\SA_{\Pi_Q}^{\psi}$. It is a constructible sheaf on $\CM_{\Pi_Q}$, the moduli space of semisimple $\Pi_Q$-representations (\Cref{subsection:preprojective_algebra_stacks}). It has a structure of an algebra object for a certain monoidal structure on $\CD_{\rmc}^+(\CM_{\Pi_Q})$, the category of locally bounded below constructible complexes on $\CM_{\Pi_Q}$. We also let $\CA_{\Pi_Q}^{\psi}\coloneqq \rmH^*(\SA_{\Pi_Q}^{\psi})$. Here, $\psi$ is a bilinear form on $\BoN^{Q_0}$ whose symmetrization is the symmetrized Euler form of $Q$ (e.g. the Euler form of $Q$ is itself a valid choice for $\psi$) and encodes a twist of the geometric product on the cohomological Hall algebra by a sign. We refer to \Cref{subsection:cohas} for the definition of the twist. We let $\frakL$ be the less perverse filtration on $\CA_{\Pi_Q}^{\psi}$ \cite{davison2020bps}, see \Cref{subsection:support_lessperverse}.

\begin{theorem}[=\Cref{theorem:main_degeneration_affinized_BPS}+\Cref{corollary:coha=envelopingalgebra}]
\label{theorem:main1-preproj}
Let $Q$ be a quiver and $\Pi_Q$ be the preprojective algebra of $Q$. We let $u$ be the first Chern class of a positive determinant line bundle over the stack $\FM_{\Pi_Q}$ of representations of $\Pi_Q$ (\Cref{subsection:determinant_line_bundle_preproj}). Then, there is an isomorphism of algebras
\[
 \bfU(\frakn_{\Pi_Q}^{+,\rmBPS}\otimes\rmH^*_{\BoC^*}(\pt))\rightarrow \Gr^{\frakL}(\CA_{\Pi_Q}^{\psi})\,,
\]
where $\bfU$ denotes the universal enveloping algebra, $\frakn_{\Pi_Q}^{+,\rmBPS}\otimes\rmH^*_{\BoC^*}(\pt)$ has the Lie bracket ${^\frakp[-,-]}$ given by the $\abs{-}$-twisted $\rmH^*_{\BoC^*}(\pt)$-linear extension of the Lie bracket on $\frakn_{\Pi_Q}^{+,\rmBPS}$. This means that for any dimension vectors $\dd,\ee\in\BoN^{Q_0}$, $x\in\frakn_{\Pi_Q,\dd}^{+,\rmBPS}$, $y\in\frakn_{\Pi_Q,\ee}^{+,\rmBPS}$ and $m,n\in\BoN$,
\[
 {^\frakp[xu^m,yu^n]}=\frac{\abs{\dd}^m\abs{\ee}^n}{\abs{\dd+\ee}^{m+n}}[x,y]u^{m+n}
\]
where $\abs{\dd}=\sum_{i\in Q_0}r_i\dd_i$ if the determinant line bundle on $\FM_{\Pi_Q}$ is determined by $\GL_{\dd}\rightarrow\BoG_{\rmm}$, $(g_i)_{i\in Q_0}\mapsto\prod_{i\in Q_0}\det(g_i)^{r_i}$ and $[-,-]$ denotes the Lie bracket on the BPS Lie algebra $\frakn_{\Pi_Q}^{+,\rmBPS}$.
\end{theorem}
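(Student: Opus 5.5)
We will prove this in four steps: build the map, compute the bracket on the associated graded, invoke the universal property of the enveloping algebra, and compare graded dimensions.

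\textbf{Step 1: construct the map on generators.} By Davison's BPS theorem for preprojective CoHAs, the BPS sheaf $\BPS_{\Pi_Q}$ sits in the lowest piece of the less perverse filtration. This gives an embedding of Lie algebras $\frakn_{\Pi_Q}^{+}\hookrightarrow \CA_{\Pi_Q}^{\psi}$, whose image lies in $\frakL_{0}$ and maps isomorphically onto its image in $\Gr^{\frakL}_0$. Multiplication by $u=c_1(\CL)$, the first Chern class of the determinant line bundle, preserves the less perverse filtration and shifts it by $2$. We then define
\[
\Phi(xu^m)\coloneqq \overline{x\cdot u^m}\in \Gr^{\frakL}(\CA_{\Pi_Q}^{\psi}),
\]
where $x\cdot u^m$ means the action of $u^m$ on the class $x$. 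The relative cohomological integrality theorem (the sheafified PBW isomorphism $\SA_{\Pi_Q}^{\psi}\cong \Sym_{\boxdot}(\BPS_{\Pi_Q}\otimes\rmH^*_{\BoC^*}(\pt))$) shows two things. First, the elements $\overline{xu^m}$ span $\Gr^{\frakL}$ as an algebra. Second, $\Gr^{\frakL}(\CA)$ and $\Sym(\frakn^+\otimes\rmH^*_{\BoC^*}(\pt))$ have the same graded dimensions.

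\textbf{Step 2: compute the bracket on the associated graded.} This is the main obstacle. We must show that in $\Gr^{\frakL}$
\[
[\overline{xu^m},\overline{yu^n}]=\frac{\abs{\dd}^m\abs{\ee}^n}{\abs{\dd+\ee}^{m+n}}\,\overline{[x,y]u^{m+n}}.
\]
The determinant line bundle on $\FM_{\Pi_Q,\dd+\ee}$ pulls back to the extension stack as the product of the pulled-back determinant line bundles on the two factors. Hence on the correspondence $u_{\dd+\ee}=u'_{\dd}+u''_{\ee}$, and the commutator $[xu^m,yu^n]$ is the CoHA commutator of $x,y$ twisted by $(u')^m(u'')^n$.

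Next, write $u'=\frac{\abs{\dd}}{\abs{\dd+\ee}}u+v$ and $u''=\frac{\abs{\ee}}{\abs{\dd+\ee}}u-v$, where $v$ is a Chern class pulled back along the $\BoG_{\rmm}$-direction transverse to the overall scalar. The key claim is that any term involving a positive power of $v$ lands in strictly lower less-perverse degree, and is therefore killed in $\Gr^{\frakL}$.

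To prove the claim we reduce via \'etale slices (neighbourhood theorem) to a neighbourhood of a semisimple point $x_1^{\oplus a}\oplus x_2^{\oplus b}$. There the local model is the preprojective algebra of an Ext-quiver, and we work on the support of BPS sheaves, which is the diagonal (so $\abs{\dd}$ scales linearly). Support considerations then show that $v$ acts by lowering perverse degree relative to $\CM$, since $v$ restricted to the small diagonal vanishes modulo $\frakL$. The $v^0$ term reproduces the ordinary bracket $[x,y]u^{m+n}$ with the coefficient stated in the theorem.

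\textbf{Step 3: extend to the enveloping algebra.} Once Step 2 holds, the universal property of $\bfU$ extends $\Phi$ to an algebra morphism
\[
\bfU(\frakn^{+,\rmBPS}_{\Pi_Q}\otimes\rmH^*_{\BoC^*}(\pt))\to\Gr^{\frakL}(\CA_{\Pi_Q}^{\psi}).
\]
For this we use that $\frakn\otimes\rmH^*_{\BoC^*}(\pt)$ with the twisted bracket is a Lie algebra. This follows by rescaling $xu^m\mapsto \abs{\dd}^{-m}xu^m$, which identifies it with the ordinary current algebra $\frakn[u]$; here $\abs{\dd}>0$ by positivity of the line bundle.

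\textbf{Step 4: show bijectivity.} The morphism is surjective by Step 1. By PBW for $\bfU$, source and target have equal graded dimensions, and these are finite in each degree, bidegree by bidegree. Surjectivity and equal finite dimensions give bijectivity. The twist $\psi$ only affects signs, which are absorbed consistently because $\psi$ symmetrizes to the Euler form; the BPS Lie algebra is defined using the same twist.
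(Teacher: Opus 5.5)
Steps 1, 3 and 4 are fine and agree with the paper's PBW comparison. Step 2, however, carries all of the content, and it is not proved; the mechanism you propose for it does not work.

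Write $\frakn=\frakn_{\Pi_Q}^{+,\rmBPS}$. Read literally, your key claim is false. Take the Jordan quiver: there $\abs{\dd}=\dd$, the BPS Lie algebra is abelian, and $\Gr^{\FL}\CA_{\Pi_Q}\cong\Sym(\frakn[u])$. With $x=e_1\in\frakn_1$ and $y=e_2\in\frakn_2$, the $v$-part of $(xu)\cdot y$ is $\frac{1}{3}\bigl(2(xu)y-x(yu)\bigr)$. Its class $\frac13(2\,e_1u\cdot e_2-e_1\cdot e_2u)$ in $\Gr^{\FL}_2$ is non-zero.

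What is really needed is that the antisymmetrized combinations $[x,y]_{v^k}\coloneqq\sum_{a}\binom{k}{a}\abs{\ee}^a(-\abs{\dd})^{k-a}\abs{\dd+\ee}^{-k}[xu^a,yu^{k-a}]$ vanish in $\Gr^{\FL}_{2k}$. For $k=1$ this reads $\abs{\ee}\,{}^{\frakp}[xu,y]=\abs{\dd}\,{}^{\frakp}[x,yu]$. Together with the derivation identity ${}^{\frakp}[xu,y]+{}^{\frakp}[x,yu]=[x,y]u$, this \emph{is} the theorem for $m+n=1$. So your change of variables restates the problem rather than solving it.

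The support argument does not fill the gap, for three reasons.
\begin{enumerate}
\item $\BPS_{\Pi_Q}$ is not supported on small diagonals in general: for $\dd\in\Sigma_{\Pi_Q}$ it contains $\ICS(\CM_{\Pi_Q,\dd})$.
\item $v$ is a class on $\FM_{\dd}\times\FM_{\ee}$, not on $\CM$. Its action on $\pH(\SA_{\dd})\boxtimes\pH(\SA_{\ee})$ is a non-zero map of perverse degree $2$, exactly like $u'$ and $u''$; for instance $x\otimes y\mapsto\abs{\dd+\ee}^{-1}(\abs{\ee}\,xu\otimes y-\abs{\dd}\,x\otimes yu)$.
\item The \'etale-slice reduction only replaces $\Pi_Q$ by the preprojective algebra of an Ext-quiver near the nilpotent point, which is a problem of the same kind. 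In the paper it is used in the other direction, to pass from preprojective algebras to general $2$-Calabi--Yau categories.
\end{enumerate}

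The missing ingredients are the two facts the paper uses.

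(a) $\frakn[u]$ is closed under the actual commutator of $\CA_{\Pi_Q}^{\psi}$. This is because it is the primitive part of the cocommutative coproduct $\Delta^{\chi}$, obtained by splitting representations of $\Pi_Q[\omega]$ according to the eigenvalues of $\omega$. Combined with the filtration bound, this gives ${}^{\frakp}[xu^m,yu^n]=zu^{m+n}$ for some $z\in\frakn$.

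(b) The lowering derivation $\partial_u$ comes from the $\rmB\BoG_{\rmm}$-action on $\FM_{\Pi_Q}$ by tensoring with line bundles. It kills BPS classes, satisfies $[\partial_u,u]=\abs{\dd}$ in degree $\dd$, and lowers $\FL$-degree by $2$. Hence $\partial_u(xu^n)=n\abs{\dd}xu^{n-1}$, and $\partial_u$ is injective on $\frakn\otimes u\BoQ[u]$. Applying $\partial_u$ to ${}^{\frakp}[xu^m,yu^n]=zu^{m+n}$ then determines $z$ by induction on $m+n$.

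These two facts would also complete your route. By (a), $[x,y]_{v^k}\in\frakn[u]$. A direct computation with $\partial_u(xu^a)=a\abs{\dd}xu^{a-1}$ gives $\partial_u[x,y]_{v^k}=0$. So $[x,y]_{v^k}\in\frakn\subset\FL^0$, and all $v$-terms die in $\Gr^{\FL}_{2(m+n)}$. But once (a) and (b) are available, the paper's direct induction is shorter.
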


There is an extension of this theorem to the more general $2$-Calabi--Yau setting. When $\CC$ is a $2$-Calabi--Yau abelian category (\S\ref{subsection:2CY_Abelian_categories}), we define the BPS sheaf $\BPS_{\CC}$ as in \cite{davison2023bps}. It has the structure of a Lie algebra object in the category of perverse sheaves on the moduli space $\CM_{\CC}$ of semisimple objects in $\CC$ (a.k.a. the good moduli space of the stack $\FM_{\CC}$ of objects in $\CC$). We let $\frakn_{\CC}^{+,\rmBPS}\coloneqq\rmH^*(\BPS_{\CC})$. It has a Lie algebra structure. It is the \emph{BPS Lie algebra} of $\CC$. When $\CC$ is the category of representations of the preprojective algebra of a quiver $Q$, it coincides with the BPS Lie algebra of the triple quiver with its canonical cubic potential $(\tilde{Q},\tilde{W})$. It is proven in \cite{davison2023bps} that the BPS Lie algebra of $\CC$ has the structure of a generalized Kac--Moody Lie algebra. We let $\SA_{\CC}\in\CD^+_{\rmc}(\CM_{\CC})$ be the sheafified cohomological Hall algebra of $\CC$ and as usual, we fix a bilinear form $\psi$ on the numerical Grothendieck group $\pi_0(\FM_{\CC})$ of $\CC$ whose symmetrization is the Euler form of $\CC$ and we let $\SA_{\CC}^{\psi}$ be the cohomological Hall algebra of $\CC$ with the sign-twisted multiplication. We denote by $\frakL$ the less perverse filtration on $\rmH^*(\SA_{\CA}^{\psi})$ defined in terms of the good moduli space morphism $\JH_{\CC}\colon\FM_{\CC}\rightarrow\CM_{\CC}$ \cite{davison2021purity}. The CoHA product on $\rmH^*(\SA_{\CC}^{\psi})$ is compatible with the less perverse filtration, so that there is an induced algebra structure on the associated graded $\Gr^{\FL}(\rmH^*(\SA_{\CC}^{\psi}))=\rmH^*(\pH(\SA_{\CC}^{\psi}))$. The following theorem gives a description of this algebra. We refer to \Cref{lemma:abs_monoid_morphism} for the definition of the monoid morphism $\abs{-}\colon\pi_0(\FM_{\CA})\rightarrow\BoN$.

\begin{theorem}[=\Cref{theorem:less_perverse_2CY}+\Cref{corollary:enveloping_affinizedBPS}]
\label{theorem:main2-preproj}
 Let $\CC$ be a suitably geometric $2$-Calabi--Yau abelian category (i.e. satisfying the assumptions given in \S\ref{subsection:2CY_Abelian_categories}). Then, the perversely degenerated Lie bracket on $\pH(\SA_{\CC}^{\psi})$ preserves the subobject $\BPS_{\CC}\otimes\rmH^*_{\BoC^*}(\pt)$ and the Lie bracket ${^\frakp[-,-]}$ on $\BPS_{\CC}\otimes\rmH^*_{\BoC^*}(\pt)$ is given by the $\abs{-}$-twisted trivial $\rmH^*_{\BoC^*}(\pt)$-linear extension of the Lie bracket on $\BPS_{\CC}$. In particular, the subspace $\frakn_{\CC}^{+,\rmBPS}\otimes\rmH^*_{\BoC^*}(\pt)\subset \rmH^*(\SA_{\CC}^{\psi})$ is stable under the perversely degenerated Lie bracket, which is given by the $\abs{-}$-twisted $\rmH^*_{\BoC^*}(\pt)$-linear extension of the Lie bracket on $\frakn_{\CC}^{+,\rmBPS}$. This means that for any $a,b\in\pi_0(\FM_{\CC})$, $x\in\frakn_{\CC,a}^{+,\rmBPS}$, $y\in\frakn_{\CC,b}^{+,\rmBPS}$, $m,n\in\BoN$, and $u$ the first Chern class of a determinant line bundle over $\FM_{\CC}$,
 \[
  {^\frakp[xu^m,yu^n]}=\frac{\abs{a}^m\abs{b}^n}{\abs{a+b}^{m+n}}[x,y]u^{m+n}
 \]
where $[-,-]$ denotes the Lie bracket on the BPS Lie algebra $\frakn_{\CC}^{+,\rmBPS}$.

 In addition, there are isomorphisms of associative algebras objects
 \[
  \bfU(\BPS_{\CC}\otimes\rmH^*_{\BoC^*}(\pt))\rightarrow \pH(\SA_{\CC}^{\psi})
 \]
 in $\CD^+_{\rmc}(\CM_{\CC})$ and of algebras
 \[
  \bfU(\frakn_{\CC}^{+,\rmBPS}\otimes\rmH^*_{\BoC^*}(\pt))\rightarrow\Gr^{\frakL}(\rmH^*(\SA_{\CC}^{\psi}))\,,
 \]
 where $\pH(\SA_{\CC}^{\psi})\coloneqq\bigoplus_{i\in\BoZ}\pH^i(\SA_{\CC}^{\psi})[-i]$ is the total perverse cohomology sheaf of $\SA_{\CC}^{\psi}$.
\end{theorem}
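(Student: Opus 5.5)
The plan is to reduce to the preprojective case, \Cref{theorem:main1-preproj}, via étale/analytic local neighbourhoods, and then assemble the PBW statement from the known sheafified cohomological integrality. The argument has four steps.

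\emph{Step 1: perverse filtration and degenerated product.} First I recall from \cite{davison2023bps} the local structure of a suitably geometric $2$-Calabi--Yau category. Near a semisimple point $x=\bigoplus_i S_i^{\oplus m_i}$ of $\CM_{\CC}$, the map $\JH_{\CC}\colon\FM_{\CC}\to\CM_{\CC}$ is analytically locally modelled on $\JH_{\Pi_{Q_x}}\colon\FM_{\Pi_{Q_x}}\to\CM_{\Pi_{Q_x}}$ near $0$. Here $Q_x$ is the Ext-quiver of the collection $(S_i)$. The model is compatible with the direct sum maps and with the CoHA correspondences, so it is an isomorphism of sheafified CoHAs $\SA_{\CC}^{\psi}\cong\SA_{\Pi_{Q_x}}^{\psi}$ over these neighbourhoods.

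The CoHA multiplication $\oplus_*(\SA_{\CC}^\psi\boxtimes\SA_{\CC}^\psi)\to\SA_{\CC}^\psi$ is a morphism in $\CD^+_{\rmc}(\CM_{\CC})$. The map $\oplus$ is finite, so $\oplus_*$ is perverse t-exact. The multiplication therefore respects perverse truncations, and applying $\pH$ gives the degenerated product on $\pH(\SA_{\CC}^{\psi})$. Taking global sections recovers $\Gr^{\frakL}$, using the purity and decomposition results of \cite{davison2021purity} to identify $\rmH^*(\pH(\SA))$ with the associated graded.

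\emph{Step 2: the bracket formula.} The claim that ${^\frakp[-,-]}$ preserves $\BPS_{\CC}\otimes\rmH^*_{\BoC^*}(\pt)$ and has the stated form is a statement about morphisms between semisimple perverse sheaves. Its source is $\oplus_*(\BPS_a\boxtimes\BPS_b)u^{m}\otimes u^n$, which is perverse up to shift. Such a morphism is determined by its restriction to the analytic neighbourhoods above. The determinant class $u$ and the function $\abs{-}$ restrict to the corresponding data for $\Pi_{Q_x}$, by \Cref{lemma:abs_monoid_morphism} and the construction of the local model. Hence locally the claim is exactly the sheafified form of \Cref{theorem:main1-preproj}, namely \Cref{theorem:main_degeneration_affinized_BPS} for $\Pi_{Q_x}$.

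One point needs checking: the Lie bracket on $\BPS_{\CC}$ restricts to that on $\BPS_{\Pi_{Q_x}}$. This should follow because $\BPS_{\CC}$ is defined in \cite{davison2023bps} by precisely this local gluing. The delicate part is that the identification of $u$ with the preprojective determinant class must hold only up to the $\abs{-}$ rescaling, and I would verify that the factor $\abs{a}^m\abs{b}^n/\abs{a+b}^{m+n}$ is intrinsic. It is intrinsic because it arises from $\oplus^*u=$ the weighted combination $\frac{\abs a}{\abs{a+b}}u_1+\dots$ after passing to perverse degree zero parts.

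\emph{Step 3: the PBW isomorphism.} The inclusion $\BPS_{\CC}\otimes\rmH^*_{\BoC^*}(\pt)\to\pH(\SA_{\CC}^\psi)$ is now a Lie algebra morphism, with the target carrying the commutator of the associative degenerated product. By the universal property it induces an algebra morphism $\bfU(\BPS_{\CC}\otimes\rmH^*_{\BoC^*}(\pt))\to\pH(\SA_{\CC}^\psi)$. Filtering the source by PBW degree, the associated graded of this map is the sheafified integrality map $\Sym_{\oplus}(\BPS_{\CC}\otimes\rmH^*_{\BoC^*}(\pt))\to\pH(\SA_{\CC}^\psi)$ of \cite{davison2023bps}, which is an isomorphism. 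The map is therefore an isomorphism. Taking $\rmH^*$ gives the statement for $\Gr^{\frakL}$, using that $\rmH^*$ is monoidal on these objects by Künneth and the finiteness of $\oplus$.

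\emph{Main obstacle.} The hardest step is Step 2. Its difficulty lies in making the local-to-global argument rigorous for the Lie bracket, not just for the underlying objects. This requires the analytic neighbourhood isomorphisms to be compatible with the CoHA multiplication including the sign twist $\psi$ and the determinant line bundle. My first approach would be to check this on Ext-quiver local models using the étale slice compatibility already established in \cite{davison2023bps} for the nondegenerated CoHA.
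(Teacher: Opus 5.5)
Your overall architecture matches the paper's: reduce pointwise to preprojective algebras via the local models, then apply the universal property together with the integrality isomorphism. Your Step~3 is essentially the paper's triangle argument for \Cref{corollary:enveloping_affinizedBPS}.

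\textbf{The gap is in Step~2.} You write that locally the claim is ``exactly the sheafified form of \Cref{theorem:main1-preproj}, namely \Cref{theorem:main_degeneration_affinized_BPS} for $\Pi_{Q_x}$''. But \Cref{theorem:main_degeneration_affinized_BPS} is not a sheafified statement. It concerns the absolute algebra $\CA_{\Pi_Q}^{\psi}=\rmH^*(\SA_{\Pi_Q}^{\psi})$ and the subspace $\frakn_{\Pi_Q}^{+,\rmBPS}[u]$, and its proof is global: Heisenberg action, injectivity of $\partial_u$, induction on $m+n$. The sheaf-level statement for $\Pi_Q$ is \Cref{corollary:affinizedBPSsheaf_preproj}, which the paper deduces \emph{from} the theorem you are proving, so invoking it is circular. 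Even for $\CC=\rep(\Pi_Q)$ an argument is needed, since the local model at a point $x\neq 0$ of $\CM_{\Pi_Q}$ is the preprojective algebra of a different quiver.

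What you are missing is the device that turns the absolute statement into vanishing of morphisms of perverse sheaves. The paper argues as follows.
\begin{enumerate}
\item Suppose the composite $E\colon(\BPS_{\CC}\otimes\rmH^*_{\BoC^*}(\pt))^{\boxdot 2}\to\coker(\imath)$ (resp.\ the difference $D_{a,b,m,n}$) is nonzero. Its image is semisimple, so pick a simple summand.
\item Choose $x$ in the support of that summand at which its stalk is nonzero.
\item Transport via \Cref{proposition:comparison_local_neighbourhood} to obtain a summand $\CH$ on the $\Pi_{Q_x}$ side with $\imath_{\Nil}^*\CH\neq 0$.
\item Everything on $\CM_{\Pi_{Q_x}}$ is equivariant for the $\BoC^*$-action scaling the arrows, which contracts $\CM_{\Pi_{Q_x},\mathbf m}$ onto $0_{\mathbf m}$. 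Hence $\imath_0^*\CH\cong\rmH^*(\CH)\neq 0$, so $\rmH^*(E_{\Pi_{Q_x}})\neq 0$, contradicting the absolute theorem.
\end{enumerate}
Relatedly, the available comparison is not an isomorphism of sheafified CoHAs over analytic neighbourhoods, since these are not closed under $\oplus$. It is an algebra isomorphism after $!$-restriction to the submonoids $\imath_{\underline{\CF}}(\BoN^r)$ and $\imath_{\Nil}(\BoN^r)$. It therefore only sees (co)stalks at $x$ and at $0$, which is why the contraction step cannot be skipped.

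\textbf{Two smaller points.}

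First, applying the universal property of $\bfU$ in Step~3 requires $\pH(\SA_{\CC}^{\psi})$ to be an associative algebra object. The assumptions of \S\ref{subsection:2CY_Abelian_categories} deliberately omit associativity of $\SA_{\CC}$. The paper proves associativity of the degenerated product separately (\Cref{proposition:associativity_perv_deg}) by the same pointwise reduction, and you need this step too.

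Second, your justification of the factor $\abs{a}^m\abs{b}^n/\abs{a+b}^{m+n}$ is wrong. Compatibility of $\CL$ with direct sums gives $\oplus^*u=u\otimes 1+1\otimes u$, with no weights. The factor reflects the normalisation $u/\abs{a}$: the restriction of $u$ to the scalar $\rmB\BoG_{\rmm}$ at a point of $\FM_{\CC,a}$ has weight $\abs{a}$ (\Cref{lemma:abs_monoid_morphism}). In the paper it comes from the Heisenberg relation with central charge $\abs{\dd}$. Since your Step~2 is a reduction, you do not need this heuristic at all.
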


The last layer of generality regarding the perversely degenerated cohomological Hall algebra of the preprojective algebra of a quiver consists in adding a torus action on the arrows of the quiver for which the preprojective relation is homogeneous. In this case, we denote by $\SA_{\Pi_Q}^{T',\psi}$ the corresponding sheafified cohomological Hall algebra. It is a complex of constructible sheaves in the $T'$-equivariant derived category $\CD_{\rmc,T'}(\CM_{\Pi_Q})$. The absolute cohomological Hall algebra $\CA_{\Pi_Q}^{T',\psi}\coloneqq\rmH^*(\SA_{\Pi_Q}^{T',\psi})$ is a $\rmH^*_{T'}(\pt)$-module.

\begin{theorem}[=\Cref{corollary:TprimeCoHA}+\Cref{corollary:Tprimeenveloping}]
Let $Q$ be a quiver and $\Pi_Q$ be the preprojective algebra of $Q$. We let $u$ be the first Chern class of a positive determinant line bundle over $\FM_{\Pi_Q}$. We let $T'$ be a torus acting on the arrows of $\overline{Q}$ such that the preprojective relation is homogeneous (see \Cref{subsection:preprojective_algebra_stacks} for the definition). Then, there is an isomorphism of algebras
\[
 \bfU(\frakn_{\Pi_Q}^{+,\rmBPS}\otimes\rmH^*_{T'\times\BoC^*}(\pt))\rightarrow \Gr^{\frakL}(\rmH^*(\SA_{\Pi_Q}^{T',\psi}))\,,
\]
where $\frakn_{\Pi_Q}^{+,\rmBPS}\otimes\rmH^*_{T'\times\BoC^*}(\pt)$ has the Lie bracket given by the $\abs{-}$-twisted $\rmH^*_{T'\times\BoC^*}(\pt)$-linear extension of the Lie bracket on $\frakn_{\Pi_Q}^{+,\rmBPS}$.
\end{theorem}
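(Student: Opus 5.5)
The plan is to deduce the $T'$-equivariant statement from the non-equivariant one, \Cref{theorem:main1-preproj}, by working at the sheafified level. The key input is that for a torus $T'$ acting on the arrows of $\overline{Q}$ with the preprojective relation homogeneous, $\SA_{\Pi_Q}^{T',\psi}$ is a $T'$-equivariant lift of $\SA_{\Pi_Q}^{\psi}$. Its underlying non-equivariant complex, obtained by forgetting the equivariance, is $\SA_{\Pi_Q}^{\psi}$, and the CoHA multiplication and the perverse truncations are compatible with this forgetful functor.

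\emph{Step 1: purity and formality.} Davison's purity theorem says $\JH_*\BD\BoQ^{\vir}_{\FM_{\Pi_Q}}$ is pure. Hence the total perverse cohomology $\pH(\SA^{T',\psi}_{\Pi_Q})$, computed in $\CD_{\rmc,T'}(\CM_{\Pi_Q})$, lifts $\pH(\SA^{\psi}_{\Pi_Q})$. Moreover, the $T'$-equivariant hypercohomology of each pure perverse summand is a free $\rmH^*_{T'}(\pt)$-module, by an equivariant formality/degeneration argument for pure complexes. This gives isomorphisms
\[
\Gr^{\frakL}(\rmH^*(\SA_{\Pi_Q}^{T',\psi}))\cong\rmH^*_{T'}(\CM_{\Pi_Q},\pH(\SA^{T',\psi}_{\Pi_Q}))\cong \rmH^*_{T'}(\pt)\otimes\Gr^{\frakL}(\CA^{\psi}_{\Pi_Q})
\]
of $\rmH^*_{T'}(\pt)$-modules. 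The $T'$-equivariant BPS sheaf also lifts $\BPS_{\Pi_Q}$, with $\rmH^*_{T'}(\BPS_{\Pi_Q})\cong\frakn^{+,\rmBPS}_{\Pi_Q}\otimes\rmH^*_{T'}(\pt)$.

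\emph{Step 2: sheaf-level algebra structure.} The sheafified isomorphism $\bfU(\BPS_{\Pi_Q}\otimes\rmH^*_{\BoC^*}(\pt))\to\pH(\SA^{\psi}_{\Pi_Q})$ comes from \Cref{theorem:main2-preproj} applied to $\Pi_Q$, or from \Cref{theorem:main_degeneration_affinized_BPS}. We redo its construction in $\CD_{\rmc,T'}(\CM_{\Pi_Q})$. The map is built from the inclusion $\BPS\otimes\rmH^*_{\BoC^*}(\pt)\to\pH(\SA)$, given by multiplication by powers of $u=c_1$ of the determinant line bundle, and from the degenerated product. All ingredients are $T'$-equivariant: the determinant bundle is pulled back from $\GL_{\dd}$ characters, and the induction diagrams are $T'$-equivariant. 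So we obtain a $T'$-equivariant morphism of algebra objects. The Lie bracket formula ${^\frakp[xu^m,yu^n]}$ holds equivariantly because it is an identity between morphisms of perverse sheaves, namely semisimple objects in a heart. Such morphisms are detected after forgetting equivariance, since $\Hom$ between equivariant perverse sheaves injects into the non-equivariant $\Hom$ for connected $T'$. This is the step I expect to need most care: one must check that the equivariant perverse truncation and the equivariant monoidal structure are compatible with forgetting, and that the $\Hom$ injectivity applies to the relevant (infinite direct sums of shifted) perverse sheaves.

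\emph{Step 3: conclusion.} The morphism of Step 2 is an isomorphism because an equivariant morphism that becomes an isomorphism after forgetting equivariance is an isomorphism. Taking $T'$-equivariant hypercohomology is monoidal on these pure semisimple objects by Step 1. It therefore turns $\bfU(\BPS\otimes\rmH^*_{\BoC^*}(\pt))$ into $\bfU(\frakn^{+,\rmBPS}_{\Pi_Q}\otimes\rmH^*_{T'\times\BoC^*}(\pt))$, and the bracket becomes the $\abs{-}$-twisted $\rmH^*_{T'\times\BoC^*}(\pt)$-linear extension. Combined with the first isomorphism of Step 1, this yields the claimed algebra isomorphism.
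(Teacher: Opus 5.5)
Your sheaf-level argument matches the paper's. That covers Step 2 and the isomorphism $\bfU(\BPS^{T'}_{\Pi_Q}\otimes\rmH^*_{\BoC^*}(\pt))\to\pH(\SA^{T',\psi}_{\Pi_Q})$ in Step 3. \Cref{theorem:Tprime_deformed_sheaf} checks stability and the bracket formula after applying the faithful forgetful functor $\Perv^{\BoZ}(\CM^{T'}_{\Pi_Q})\to\Perv^{\BoZ}(\CM_{\Pi_Q})$, and \Cref{corollary:Tprimeenveloping} gets the isomorphism by conservativity. This part is routine and is not where the difficulty lies.

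The gap is at the passage to global sections. Equivariant hypercohomology of the sheaf-level isomorphism tells you that $\Gr^{\frakL}$ is $\bfU$ of $\rmBPS^{T'}_{\Pi_Q}\otimes\rmH^*_{\BoC^*}(\pt)$. The bracket there is the $\abs{-}$-twisted extension of whatever bracket $\rmBPS^{T'}_{\Pi_Q}=\rmH^*_{T'}(\BPS^{T'}_{\Pi_Q})$ carries. To reach the stated form you need $\rmBPS^{T'}_{\Pi_Q}\cong\frakn^{+,\rmBPS}_{\Pi_Q}\otimes\rmH^*_{T'}(\pt)$ as Lie algebras, and Step 1 gives this only as $\rmH^*_{T'}(\pt)$-modules:
\begin{itemize}
\item Equivariant formality gives only a non-canonical splitting of the Leray filtration.
\item Monoidality of hypercohomology shows only that the bracket is $\rmH^*_{T'}(\pt)$-bilinear and reduces to the bracket of $\frakn^{+,\rmBPS}_{\Pi_Q}$ modulo $\rmH^{>0}_{T'}(\pt)$.
\end{itemize}
A free $\rmH^*_{T'}(\pt)$-Lie algebra with these properties can still be a nontrivial deformation $[x,y]+\sum_{\alpha}t^{\alpha}\phi_{\alpha}(x,y)$, with $\phi_\alpha(x,y)$ of lower cohomological degree in $\frakn^{+,\rmBPS}_{\Pi_Q}$. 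Weights do not rule such terms out, since both sides are pure of the same weight. Faithfulness of the forgetful functor does not help either: it controls sheaf morphisms, not the choice of splitting on global sections. Compare $\rmH^*_{\BoC^*}(\mathbb{P}^1)\cong\BoQ[t,h]/(h(h-t))$, which is free over $\BoQ[t]$ but not isomorphic to $\rmH^*(\mathbb{P}^1)\otimes\BoQ[t]$ as an algebra.

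The paper fills this with \Cref{proposition:dhm2023_deformedBPS}, imported from the literature, which states that the Lie bracket on $\rmBPS^{T'}_{\Pi_Q}$ is the trivial $\rmH^*_{T'}(\pt)$-linear extension of that on $\rmBPS_{\Pi_Q}$. \Cref{corollary:TprimeCoHA} then combines this with global sections of \Cref{theorem:Tprime_deformed_sheaf}. With that input added to Step 3, your proof goes through.
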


These theorems constitute a step towards a full understanding of the CoHA product on cohomological Hall algebras of $2$-Calabi--Yau categories. Before taking any perverse degeneration, the behaviour of the product takes, in known cases, a very interesting form involving avatars of vertex algebras. We refer the reader to \cite{davison2022affine} and \cite{jindal2024coha} for full determination of the entire cohomological Hall algebras for affine type A quivers, which involve the $W_{1+\infty}$ Lie algebra. We also refer to \cite{mellit2023coherent} for a study of the cohomological Hall algebras of finite length coherent sheaves on smooth surfaces. For \emph{nilpotent} CoHAs of affine quivers, we refer to \cite{diaconescu2025cohomological} for descriptions by generators and relations involving Yangians. Our degeneration results apply to the various versions of nilpotent CoHAs, is explained in \Cref{section:degenerations-nilpotent}.

\subsection{Ackowledgements}
The first author thanks CNRS Math\'ematiques for financial support via a PEPS JCJC 2025. We would like to thank Ben Davison for his careful reading and insightful comments on a preliminary version of this paper. The second author would also like to thank Ben Davison for various discussions about the contents of this paper.

\subsection{Conventions and Notations}
\begin{enumerate}
 \item If $\CF$ is a constructible complex on an algebraic variety $X$, we abbreviate $\rmH^*(\CF)\coloneqq\rmH^*(X,\CF)$ for the cohomology of $X$ with coefficients in $\CF$.
 \item If $\CF$ is a semisimple constructible sheaf on an algebraic variety $X$, we denote by $\FP$ the perverse filtration on $\rmH^*(\CF)$. We make an exception for the \emph{less} perverse filtration of cohomological Hall algebras of $2$-Calabi--Yau categories, which is denoted by $\FL$.
 \item If $\CF$ is a constructible sheaf on an algebraic variety $X$, we denote by $\pH(\CF)\coloneqq\bigoplus_{i\in\BoZ}\pH^i(\CF)[-i]\in\Perv^{\BoZ}(X)$ its total perverse cohomology sheaf, a $\BoZ$-graded perverse sheaf on $X$.
 \item If $X$ is an algebraic variety with an action of an algebraic group $G$, we denote by $X/G$ the quotient stack. That is, we omit the brackets to lighten the notation.
 \item If $f\colon X\rightarrow\BoA^1$ and $g\colon Y\rightarrow\BoA^1$ are two regular functions on some algebraic varieties or stacks $X,Y$, we denote by $f\boxplus g\colon X\times Y\rightarrow\BoA^1$ the function $(x,y)\mapsto f(x)+g(y)$.
 \item Let $\frakn$ be a Lie algebra graded by a monoid $M$ such that $\frakn_0=0$ and let $\abs{-}\colon M\rightarrow\BoZ$ be a monoid homomorphism such that $\abs{m}\neq 0$ for $m\neq 0$. Then, the $\abs{-}$-twisted Lie bracket on $\frakn[u]$ is defined by
 \[
  [xu^p,yu^q]=\frac{\abs{m}^p\abs{n}^q}{\abs{m+n}^{p+q}}[x,y]u^{p+q}
 \]
for $x\in\frakn_m$, $y\in\frakn_n$, $p,q\in\BoN$. The multiplication by $u$ on $\frakn[u]$ is a derivation, that is $u\cdot [x,y]=[u\cdot x,y]+[x,u\cdot y]$ for any $x,y\in\frakn[u]$. There is another derivation $\partial$ on $\frakn[u]$ defined by $\partial (xu^p)=\abs{m}xu^{p-1}$ for $x\in\frakn_m$ and $p\in\BoN$. We have $[\partial,u]=\abs{m}xu^m$.
\end{enumerate}

\section{Cohomological Donaldson--Thomas theory of quivers with potential}
We recall background material regarding cohomological Hall algebras of quivers with potentials \cite{kontsevich2011cohomological}, including the powerful relative/sheafified cohomological Hall algebras \cite{davison2020cohomological}.

\subsection{Constructible sheaves}
\label{subsection:constructible_sheaves}
The base field for all geometric objects (schemes, algebraic varieties, stacks, algebraic groups) is the field of complex numbers $\BoC$. Let $X$ be a finite type separated scheme over $\BoC$. We let $\CD_{\rmc}(X)$ be the derived category of constructible complexes of rational vector spaces over $X$. It is defined as the subcategory of $\CD(\mathrm{Sh}(X(\BoC)^{\mathrm{an}}))$, the derived category of the Abelian category of sheaves of $\BoQ$-vector spaces on the analytic space $X(\BoC)^{\mathrm{an}}$, of complexes with constructible cohomology sheaves. We let $\CD^{\rmb}_{\rmc}(X)$ be the full subcategory of bounded complexes and $\CD^+_{\rmc}(X)$ the subcategory of complexes that are bounded below. We let $\Perv(X)\subset \CD_{\rmc}(X)$ be the category of perverse sheaves. It is an Abelian category realized as the heart of the perverse t-structure $(\pD_{\rmc}^{\leq 0}(X),\pD_{\rmc}^{\geq 0}(X))$. We denote by $\ptau^{\leq i}, \ptau^{\geq i}$ the perverse truncation functors and $\pH^i=\ptau^{\leq i}\ptau^{\geq i}[i]\colon \CD_{\rmc}(X)\rightarrow\Perv(X)$ the perverse cohomology functors. If $\CF$ is a complex of constructible sheaves on $X$, we let $\pH(\CF)\coloneqq \bigoplus_{i\in\BoZ}\pH^i(\CF)[-i]$ be its total perverse cohomology. We see the total perverse cohomology $\pH$ as a functor $\pH\colon\CD_{\rmc}(X)\rightarrow\Perv^{\BoZ}(X)$ with target the abelian category of cohomologically graded perverse sheaves on $X$, with graded morphisms. In $\Perv^{\BoZ}(X)$, kernels, cokernels, etc. are computed componentwise. While not faithful, the restriction of the functor $\pH$ to the subcategory of $\CD^{+}_{\rmc}(X)$ of complexes that are bounded below is conservative, meaning that a morphism between constructible complexes is an isomorphism if and only if it is an isomorphism after applying $\pH$. This is easily seen by considering the long exact sequence of perverse cohomology sheaves, see \cite[Proposition~1.3.7]{beilinson2018faisceaux}.

If $X$ is an algebraic variety and $f\colon X\rightarrow\BoA^1$ a regular function, there is a perverse exact functor $\varphi_f\colon\CD^+_{\rmc}(X)\rightarrow\CD^+_{\rmc}(X)$, this is the \emph{vanishing cycle sheaf functor}. We refer the reader to \cite{davison2020cohomological} for the essential properties that it satisfies.

If $\CM$ is a monoid in the category of schemes with finite type separated connected components (see \Cref{example:monoid} for an example), $\oplus\colon\CM\times\CM\rightarrow\CM$ denotes the monoid structure (which we assume to be a finite morphism) and $\eta\colon \pt\rightarrow \CM$ is the unit, the formula $\CF\boxdot\CG\coloneqq\oplus_*(\CF\boxtimes\CG)$ defines a monoidal structure on $\CD_{\rmc}^+(\CM)$. The monoidal unit is given by $\eta_*\BoQ_{\pt}$. By finiteness of $\oplus$, the pushforward $\oplus_*$ is perverse exact and so preserves the subcategory $\Perv(\CM)$ of perverse sheaves. We obtain induced monoidal structures on $\Perv(\CM)$ and $\Perv^{\BoZ}(\CM)$.

Let $T'$ be a torus. A monoid in the category of schemes is \emph{$T'$-equivariant} if there is an action of $T'$ on $\CM$ such that the structure morphism $\oplus\colon\CM\times\CM\rightarrow\CM$ is $T'$-equivariant for the diagonal $T'$-action on $\CM\times\CM$. We let $\CM^{T'}\coloneqq\CM/T'$ be the quotient stack. Then, $\CM^{T'}$ is a monoid over $\rmB T'$, i.e. the structure map induces a morphism $\oplus^{T'}\colon\CM^{T'}\times_{\rmB T'}\CM^{T'}\rightarrow\CM^{T'}$. To ease the notation, we will often write $\oplus=\oplus^{T'}$. We consider the perverse t-structure on the $T'$-equivariant derived category $\CD^+_{\rmc}(\CM^{T'})$ so that a constructible sheaf $\CF$ on $\CM^{T'}$ is perverse if and only if its pullback $(\CM\rightarrow\CM^{T'})^*\CF$ to $\CM$ is perverse. In particular, if $Y\subset \CM$ is a smooth connected component, the shifted constant sheaf $\BoQ_{Y/T'}[\dim Y]$ is in $\Perv(\CM^{T'})$. Note that usually, perverse sheaves on Artin stacks (in particuler $\CM^{T'}$) are defined by descent from schemes so that if $\FM$ is a smooth stack, $\BoQ_{\FM}[\dim \FM]$ is perverse. Hence, our definition of perverse sheaves on $\CM^{T'}$ differs by a shift. The formula for $\CF\boxdot \CG$ induces monoidal structures on both $\CD^+_{\rmc}(\CM^{T'})$ and $\Perv(\CM^{T'})$.

We also consider the category $\Perv^{\BoZ}(\CM^{T'})\coloneqq\prod_{i\in\BoZ}\Perv(\CM^{T'})$ of cohomologically graded $T'$-equivariant perverse sheaves. The $i$th component of $\CF\in\Perv^{\BoZ}(\CM^{T'})$ is denoted by $\CF[-i]$. The formula for $\CF\boxdot\CG$ induces a monoidal structure on $\Perv^{\BoZ}(\CM^{T'})$.

The total perverse cohomology functor
\[
\begin{matrix}
 \pH&\colon&\CD^+_{\rmc}(\CM^{T'})&\rightarrow&\Perv^{\BoZ}(\CM^{T'})\\
 &&\CF&\mapsto&\bigoplus_{i\in\BoZ}\pH^i(\CF)[-i]
\end{matrix}
\]
is a monoidal functor since $\oplus$ is perverse exact. If $f\colon\CF\rightarrow\CG$ is a morphism between constructible sheaves in $\CD^+_{\rmc}(\CM)$, the morphism $\pH(f)\colon\pH(\CF)\rightarrow\pH(\CG)$ is called its \emph{perverse degeneration}. In this paper, we will study the perverse degenerations of Lie brackets on cohomological Hall algebras.

\begin{example}
\label{example:monoid}
A typical example of monoid $\CM$ appearing in the study of cohomological Hall algebras of quivers is $\CM=\bigoplus_{n\in\BoN}\BoC^n\cms\frakS_n$, where $\frakS_n$ acts on $\BoC^n$ by permuting the $n$ components (this is the monoid one gets for the study of the CoHA of the Jordan quiver -- the quiver with one vertex and one loop). The unit is the inclusion of the connected component $\BoC^0\cms\frakS_0=\pt\subset\CM$ and the structure morphism is induced by the $(\frakS_m\times\frakS_n)-\frakS_{m+n}$-equivariant isomorphisms $\BoC^{m}\times\BoC^n\rightarrow\BoC^{m+n}$ for any $m,n\in\BoN$, which give morphisms $\BoC^m\cms \frakS_m\times\BoC^n\cms \frakS_n\rightarrow\BoC^{m+n}\cms \frakS_{m+n}$ between the quotients.
\end{example}

If $\imath\colon(\CN,\oplus)\rightarrow(\CM,\oplus)$ is a \emph{saturated submonoid} in the sense that the diagram
\[\begin{tikzcd}
	{\CN\times\CN} & \CN \\
	{\CM\times\CM} & \CM
	\arrow["\oplus", from=1-1, to=1-2]
	\arrow["{\imath\times\imath}"', from=1-1, to=2-1]
	\arrow["\imath", from=1-2, to=2-2]
	\arrow["\oplus", from=2-1, to=2-2]
\end{tikzcd}\]
is Cartesian, then the exceptional pullback $\imath^!\colon \CD^+_{\rmc}(\CM)\rightarrow\CD^+_{\rmc}(\CN)$ is strict monoidal (by base-change). If $f\colon \CM\rightarrow\CN$ is a morphism of monoids in the category of schemes, then $f_*$ is a strict monoidal functor. In particular, if $\CN=\pt$ is the trivial monoid, the derived global section functor $f_*=\rmH^*$ is strict monoidal. If $\CM$ is a commutative monoid, i.e. $\oplus\circ\mathrm{sw}=\oplus$ where $\mathrm{sw}$ is the operator swapping the two factors of $\CM\times\CM$, then the monoidal structures $\boxdot$ on the various categories $\CD^+_{\rmc}(\CM)$, $\Perv(\CM)$, $\Perv^{\BoZ}(\CM)$ and their $T'$-equivariant versions are symmetric.

\begin{remark}
\label{remark:twist_monoidal_structure}
If $\theta\colon\pi_0(\CM)\times\pi_0(\CM)\rightarrow\BoZ$ is a bilinear form on the monoid $\pi_0(\CM)$ of connected components of $\CM$, we may twist the monoidal structure $\boxdot$ on $\CD^+_{\rmc}(\CM)$ by $\theta$ in the following way. For $\CF,\CG\in\CD^+_{\rmc}(\CM)$, we let $\CF\boxdot_{\theta}\CG\coloneqq \bigoplus_{a,b\in\pi_0(\CM)}(\oplus_{a,b})_*(\CF_a\boxtimes\CG_b)[\theta(a,b)]$. The $\theta$-twisted monoidal structure $\boxdot_{\theta}$ is considered when studying CoHAs of non-symmetric quivers, in which case $\theta$ is the antisymmetrized Euler form of $Q$ (\Cref{subsection:cohas}). For symmetric quivers, $\theta=0$ and it is not necessary to twist the monoidal structures.
\end{remark}

\subsection{Associative and Lie algebras in categories of perverse sheaves}

As in \cite{davison2020cohomological}, the cohomological Hall algebras we consider will be defined as algebra objects in monoidal categories of constructible complexes. We briefly define how these algebras are defined.

Let $\CM$ be a monoid in the category of schemes as in  \Cref{subsection:constructible_sheaves} and $\CD^+_{\rmc}(\CM)$ the category of locally bounded below (i.e. bounded below on each connected component of $\CM$) complexes of constructible sheaves with the monoidal structure $\boxdot$ defined in \Cref{subsection:constructible_sheaves} induced by the binary structural operation on $\CM$. An associative algebra object in $\CD^+_{\rmc}(\CM)$ is a complex $\CF\in\CD^+_{\rmc}(\CM)$ together with a morphism $m\colon \CF\boxdot\CF\rightarrow\CF$ called the \emph{multiplication} and a unit morphism $u\colon\eta_{*}\BoQ_{\pt}\rightarrow\CF$ where $\eta\colon\pt\rightarrow\CM$ is the unit of the monoid $\CM$. They satisfy the associativity axiom which is the commutativity of the following diagram:
\[\begin{tikzcd}
	{\CF\boxdot\CF\boxdot\CF} & {\CF\boxdot\CF} \\
	{\CF\boxdot\CF} & \CF
	\arrow["{\id_{\CF}\boxdot m}", from=1-1, to=1-2]
	\arrow["{m\boxdot \id_{\CF}}"', from=1-1, to=2-1]
	\arrow["m", from=1-2, to=2-2]
	\arrow["m"', from=2-1, to=2-2]
\end{tikzcd}\]
and the unitality axiom, given by the commutativity of the diagram
\[\begin{tikzcd}
	{\CF\cong\CF\boxdot\BoQ_{\pt}} & {\CF\boxdot\CF} \\
	& \CF
	\arrow["{\id_{\CF}\boxdot u}", from=1-1, to=1-2]
	\arrow["{\id_{\CF}}"', from=1-1, to=2-2]
	\arrow["m", from=1-2, to=2-2]
\end{tikzcd}\]
and of the similar diagram with $\CF\boxdot\BoQ_{\pt}$ replaced by $\BoQ_{\pt}\boxdot\CF$.

We now assume that $\CM$ is commutative, so that $\CD^+_{\rmc}(\CM)$ is symmetric. For $\CF,\CG\in\CD^+_{\rmc}(\CM)$, we let $T_{\CF,\CG}\colon\CF\boxdot\CG\rightarrow\CG\boxdot\CF$ be the braiding isomorphism. A Lie algebra object is an object $\CL\in\CD^+_{\rmc}(\CM)$ together with a morphism $c\colon\CL\boxdot\CL\rightarrow\CL$ (the \emph{Lie bracket}) that is antisymmetric: $c\circ T_{\CL,\CL}\cong -c$, and that satisfies the Jacobi identity. The Jacobi identity can be expressed as follows: $c\circ (c\boxdot \id_{\CL})+c\circ (c\boxdot \id_{\CL})\circ T_{1,2,3}+c\circ (c\boxdot \id_{\CL})\circ T_{1,3,2}=0$ where $T_{1,2,3}$ and $T_{2,3,1}$ are the cyclic permutation automorphisms of $\CL^{\boxdot 3}$, which may be defined in terms of the braiding operator $T_{\CL,\CL}$ as $T_{1,2,3}=(\id_{\CL}\boxdot T_{\CL,\CL})\circ(T_{\CL,\CL}\boxdot \id_{\CL})$ and $T_{1,3,2}=(T_{\CL,\CL}\boxdot\id_{\CL})\circ(\id_{\CL}\boxdot T_{\CL,\CL})$.

If we apply a monoidal functor to an associative algebra or a Lie algebra object, then we obtain a similar object in the target category. As noticed in \Cref{subsection:constructible_sheaves}, the derived global section functor
\[
 \rmH^*\colon\CD_{\rmc}^+(\CM)\rightarrow \CD^+_{\rmc}(\pt)
\]
is strict monoidal. We may identify $\CD_{\rmc}^+(\pt)$ with the category of $\BoZ$-graded vector spaces with the symmetric monoidal structure involving the Koszul sign rule. Therefore, any algebra or Lie algebra in $\CD_{\rmc}^+(\CM)$ produces a $\BoZ$-graded algebra after taking derived global sections.

\subsection{Quivers with potentials} \label{subsection:quivers-with-potentials}


Let $Q$ be a quiver with set of vertices $Q_0$ and set of arrows $Q_1$ and $\BoC[Q]$ be the path algebra of $Q$. A \emph{potential} for $Q$ is an element $W\in\BoC[Q]$ that is a linear combination of (oriented) cycles. Sometimes, in the literature, a potential is defined as an element of the quotient $\BoC[Q]/[\BoC[Q],\BoC[Q]]_{\mathrm{vec}}$ of the path algebra by the subspace linearly generated by commutators (a basis of which being given by non-based oriented cycles in $Q$). However, it is more convenient for our purposes to consider directly a lift in $\BoC[Q]$ instead of just the class in $\BoC[Q]/[\BoC[Q],\BoC[Q]]_{\mathrm{vec}}$. We recall the definition of the \emph{cyclic derivatives} $\frac{\partial}{\partial a}$ of a potential with respect to arrows $a\in Q_1$  by linearly extending the formula on a cyclic word $a_1\hdots a_r\in\BoC[Q]$ with $a_i\in Q_1$,
\[
 \frac{\partial a_1\hdots a_r}{\partial a}=\sum_{\substack{1\leq i\leq r\\a_i=a}}a_{i+1}\hdots a_ra_1\hdots a_{i-1}\,.
\]
The Jacobi algebra $\Jac(Q,W)$ of the quiver with potential $(Q,W)$ is defined as the quotient of the path algebra $\BoC[Q]$ of $Q$ by the two-sided ideal generated by all cyclic derivatives $\left\langle \frac{\partial W}{\partial a}\colon a \in Q_1\right\rangle$.

A particular case of interest is given by the tripled quiver with the canonical cubic potential, defined as follows. Let $Q$ be a quiver. Then, we define the triple quiver $\tilde{Q}=(Q_0,\tilde{Q}_1)$. It has the same set of vertices as $Q$ but $\tilde{Q}_1=Q_1\sqcup Q_1^*\sqcup Q_0$ where $Q_1^*$ is the set of opposite arrows and $Q_0$ is seen as the set of loops at vertices. For $\alpha\in Q_1$, we denote by $\alpha^*$ the opposite arrow and for $i\in Q_0$, we denote by $\omega_i$ the loop at the $i$th vertex. The canonical cubic potential is $\tilde{W}\coloneqq \left(\sum_{i\in Q_0}\omega_i\right)\left(\sum_{\alpha\in Q_1}[\alpha,\alpha^*]\right)$. For example, if $Q$ is the Jordan quiver (one vertex with a single loop), then $\tilde{Q}$ is the one-vertex three-loop quiver and, letting $\alpha,\alpha^*,\omega$ be labels for the three loops, $\tilde{W}=\omega[\alpha,\alpha^*]=\omega \alpha\alpha^*-\omega \alpha^*\alpha$.

In this paper, we will always assume that all potentials $W$ are such that the critical locus $\mathrm{crit}(\Tr(W))$ is contained in the zero locus $\Tr(W)^{-1}(0)$.

\subsection{Moduli stacks and moduli spaces}


Let $Q=(Q_0,Q_1)$ be a quiver. We denote by $s(\alpha)$ and $t(\alpha)$ the source and target of an arrow $\alpha\in Q_1$. We let $\FM_{Q}$ be the stack of representations of $Q$. It decomposes into connected components $\FM_{Q}\simeq\bigsqcup_{\dd\in\BoN^{Q_0}}\FM_{Q,\dd}$ where $\FM_{Q,\dd}$ is the stack of $\dd$-dimensional representations of $Q$. It can be presented as a quotient stack $\FM_{Q,\dd}=X_{Q,\dd}/\GL_{\dd}$ where $X_{Q,\dd}=\prod_{\alpha\in Q_1}\Hom(\BoC^{\dd_{s(\alpha)}},\BoC^{\dd_{t(\alpha)}})$ and $\GL_{\dd}=\prod_{i\in Q_0}\GL_{\dd_i}$ acts by changing bases at vertices.

The stack $\mathfrak{Exact}_Q$ of extensions of representations of $Q$ can be defined in a similar way. Namely, for $\dd,\dd'\in\BoN^{Q_0}$, we fix a direct sum decomposition $\BoC^{\dd+\dd'}=\BoC^{\dd}\oplus\BoC^{\dd'}$ and we let $X_{Q,\dd,\dd'}$ be the subspace of elements $(x_{\alpha})_{\alpha\in Q_1}\in X_{Q,\dd+\dd'}$ such that for any arrow $\alpha\in Q_1$, $x_{\alpha}(\BoC^{\dd_{s(\alpha)}})\subset\BoC^{\dd_{t(\alpha)}}$. We denote by $P_{\dd,\dd'}\subset\GL_{\dd+\dd'}$ the stabilizer of $\BoC^{\dd}\subset\BoC^{\dd+\dd'}$, a parabolic subgroup. Then, there is an equivalence $\mathfrak{Exact}_{Q}\simeq\bigsqcup_{\dd,\dd'\in\BoN^{Q_0}}X_{Q,\dd,\dd'}/P_{\dd,\dd'}$.

The inclusion $X_{Q,\dd,\dd'}\rightarrow X_{Q,\dd+\dd'}$ is $P_{\dd,\dd'}$ equivariant, where the levi subgroup $P_{\dd,\dd'}$ acts via inclusion $P_{\dd,\dd'} \subset \GL_{\dd+\dd'}$, and thus induces a proper and representable morphism of stacks $p_{\dd,\dd'}\colon\FM_{Q,\dd,\dd'}\rightarrow\FM_{Q,\dd+\dd'}$. The vector bundle $X_{Q,\dd,\dd'}\rightarrow X_{Q,\dd}\times X_{Q,\dd'}$ (projection on the diagonal blocks) is equivariant with respect to the projection on the Levi $P_{\dd,\dd'}\rightarrow\GL_{\dd}\times\GL_{\dd'}$ and therefore induces a smooth morphism $q_{\dd,\dd'}\colon\FM_{Q,\dd,\dd'}\rightarrow\FM_{Q,\dd}\times\FM_{Q,\dd'}$. This morphism is even a vector bundle stack \cite[Definition~0.1]{heinloth2003coherent}, as it can be presented as the total space of a complex of vector bundles  over $\FM_{Q,\dd}\times\FM_{Q,\dd'}$ concentrated in degrees $[-1,0]$ (the RHom-complex).

For $\dd\in\BoN^{Q_0}$, we let $\CM_{Q,\dd}\coloneqq X_{Q,\dd}\cms \GL_{\dd}=\Spec(\BoC[X_{Q,\dd}]^{\GL_{\dd}})$ be the good moduli space for $\FM_{Q,\dd}$. It is in this case an affine GIT quotient. We let $\JH_{Q,\dd}\colon \FM_{Q,\dd}\rightarrow\CM_{Q,\dd}$ be the good moduli space morphism. On $\BoC$-points, it takes a $\dd$-dimensional representation of $Q$ and sends it to its semisimplification (i.e. its associated graded with respect to a Jordan--H\"older filtration). We let $\CM_{Q}\coloneqq\bigsqcup_{\dd\in\BoN^{Q_0}}\CM_{Q,\dd}$, and $\JH_{Q}\coloneqq\bigsqcup_{\dd\in\BoN^{Q_0}}\JH_{Q,\dd}$. The direct sum morphism $\oplus\colon\FM_{Q}\times\FM_{Q}\rightarrow\FM_{Q}$ induces, by universality of the good moduli space morphism, a direct sum morphism $\oplus\colon\CM_Q\times\CM_Q\rightarrow\CM_{Q}$. It is a finite morphism \cite{meinhardt2019donaldson}.

If $T'$ is an auxiliary torus acting on the arrows of $Q$, it acts on the representation spaces $X_{Q,\dd}$ ($\dd\in\BoN^{Q_0}$), and this action commutes with the $\GL_{\dd}$-action. We let $\FM_{Q}^{T'}\coloneqq \FM_Q/T'$ and $\CM_{Q}^{T'}\coloneqq\CM_Q/T'$ be the quotient stacks. Since $\JH_Q$ is $T'$-equivariant, it induces a morphism $\JH_Q^{T'}\colon \FM_{Q}^{T'}\rightarrow\CM_{Q}^{T'}$. The stack $\FM_{Q}^{T'}$ and the moduli space $\CM_{Q}^{T'}$ have natural structure morphisms to $\rmB T'$.

We let $\langle-,-\rangle$ be the Euler form of $Q$. If $\dd,\dd'\in\BoN^{Q_0}$ are dimension vectors, $\langle\dd,\dd'\rangle=\sum_{i\in Q_0}\dd_i\dd'_i-\sum_{\alpha\in Q_1}\dd_{s(\alpha)}\dd_{t(\alpha)}$. The stack $\FM_{Q,\dd}^{T'}$ has relative dimension $-\langle\dd,\dd\rangle$ over $\rmB T'$. We let $(-,-)$ be the \emph{symmetrized Euler form}: $(\dd,\dd')=\langle\dd,\dd'\rangle+\langle\dd',\dd\rangle$.

Let $(Q,W)$ be a quiver with potential. We assume that the potential $W$ is invariant for the $T'$-action. Then, the trace of $W$ acting on representations of $Q$ gives a regular function $\Tr(W)\colon\FM_{Q}^{T'}\rightarrow\BoA^1$.

\subsection{Determinant line bundles}
\label{subsection:determinant_line_bundle_Q}
A determinant line bundle on $\FM_Q$ is a line bundle $\CL$ such that for any $\BoC$-point $x\in\FM_Q$, the pullback $\mathrm{act}_x^*\colon\rmH^*(\FM_Q)\rightarrow\rmH^*(\mathrm{B}\BoG_{\mathrm{m}})$ in cohomology of the action map $\mathrm{act}_x\colon\mathrm{B}\BoG_{\mathrm{m}}\rightarrow\FM_Q$, induced by the inclusion of scalar automorphisms of $x$, is such that $\mathrm{act}_x^*(c_1(\CL))\neq 0$. We assume in addition that $\CL$ is compatible with short exact sequences. Namely, in the correspondence
\[
 \FM_Q\times\FM_Q\xleftarrow{q}\mathfrak{Exact}_{Q}\xrightarrow{p}\FM_Q,
\]
we assume that $p^*\CL\cong q^*(\CL\boxtimes\CL)$.

The line bundle $\CL$ induces a morphism $\mathrm{Det}\colon\FM_Q\rightarrow\mathrm{B}\BoG_{\mathrm{m}}$ which we call the \emph{determinant}. We denote by $\otimes\colon \mathrm{B}\BoG_{\mathrm{m}}\times\mathrm{B}\BoG_{\mathrm{m}}\rightarrow\mathrm{B}\BoG_{\mathrm{m}}$ the morphism induced by the multiplication of $\BoG_{\mathrm{m}}$. Then, the diagram
\[\begin{tikzcd}
	{\FM_Q\times\FM_Q} & {\mathfrak{Exact}_{Q}} & {\FM_Q} \\
	{\mathrm{B}\BoG_{\mathrm{m}}\times\mathrm{B}\BoG_{\mathrm{m}}} && {\mathrm{B}\BoG_{\mathrm{m}}}
	\arrow["{\mathrm{Det}\times\mathrm{Det}}"', from=1-1, to=2-1]
	\arrow["q"', from=1-2, to=1-1]
	\arrow["p", from=1-2, to=1-3]
	\arrow["\mathrm{Det}", from=1-3, to=2-3]
	\arrow["\otimes"', from=2-1, to=2-3]
\end{tikzcd}\]
commutes.

Since $\FM_{Q,\dd}=X_{Q,\dd}/\GL_{\dd}$ is a quotient stack of an affine algebraic variety $X_{Q,\dd}$, a line bundle over $\FM_Q$ is given by a one-dimensional representation of $\GL_{\dd}$, that is by a group homomorphism $\GL_{\dd}\rightarrow\BoG_{\mathrm{m}}$ of the form $(g_i)_{i\in Q_0}\mapsto\prod_{i\in Q_0}\det(g_i)^{m_{\dd,i}}$ for some $m_i\in\BoZ\setminus\{0\}$. The compatibility with short exact sequences means that $m_{\dd,i}$ does not depend on $\dd$ and we will simply write $m_i=m_{\dd,i}$. The non-vanishing of $\mathrm{act}_x^*(c_1(\CL))$ implies that $\sum_{i\in Q_0}\dd_im_i\neq 0$ for any $\dd\in\BoN^{Q_0}$ and so either all of the $m_i$'s are either positive or negative. Conversely, it is easily seen that any choice of $Q_0$-tuple $(m_i)_{i\in Q_0}$ in $(\BoN\setminus{0})^{Q_0}$ or $(-\BoN\setminus{0})^{Q_0}$ defines a positive determinant line bundle on $\FM_Q$.

\subsection{Ext-quivers}
\label{subsection:ext_quivers}
Let $\CC$ be an Abelian category. Let $\underline{S}=\{S_1,\hdots,S_r\}$ be a set of pairwise non-isomorphic objects of $\CC$ (which we do not assume to be simple for the definition, but which will be in practice). The Ext-quiver of $\underline{S}$ is the quiver $Q_{\underline{S}}$ with set of vertices $\underline{S}$ and having $\dim\Ext^1(S_i,S_j)$ arrows from $i$ to $j$ for $1\leq i,j\leq r$. If $M=\bigoplus_{j=1}^rS_j^{m_j}$ is a semisimple object of $\CC$ (where the $S_i$ are pairwise non-isomorphic simple objects of $\CC$), we may associate to $M$ the Ext-quiver $(Q_{\underline{S}},\underline{m}=(m_1,\hdots,m_r))$ of the set of its simple direct summands together with the dimension vector given by the multiplicities of the simple summands of $M$.

Let $Q=(Q_0,Q_1)$ be a quiver. We let $\CC=\mathrm{Rep}(Q)$ be the category of finite dimensional complex representations of $Q$.

We denote by $\zeta\colon \BoN^{r}\rightarrow\BoN^{Q_0}\,,(m_1,\hdots,m_r)\mapsto\sum_{j=1}^rm_j\dim S_j$ the morphism of monoids induced by a set $\underline{S}$.

\begin{lemma}
\label{lemma:compatibility_Eulerforms_extquiver}
 Let $\underline{S}$ be a collection of pairwise non-isomorphic simple representations of $Q$. Then, $\zeta^*\langle-,-\rangle_{Q}=\langle-,-\rangle_{Q_{\underline{S}}}$.
\end{lemma}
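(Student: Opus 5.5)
We have to show that for $\underline{m},\underline{n}\in\BoN^r$,
\[
\langle\zeta(\underline{m}),\zeta(\underline{n})\rangle_Q=\langle\underline{m},\underline{n}\rangle_{Q_{\underline{S}}}.
\]
Both sides are bilinear: the left because $\zeta$ is a monoid morphism (extend it linearly to $\BoZ^r\rightarrow\BoZ^{Q_0}$), the right by definition. So it suffices to check the identity on basis vectors, i.e. to prove $\langle\dim S_i,\dim S_j\rangle_Q=\langle e_i,e_j\rangle_{Q_{\underline{S}}}$ for all $1\leq i,j\leq r$.

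The plan for each pair $(i,j)$ is as follows.

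\emph{Left-hand side.} We use the standard homological interpretation of the Euler form for the hereditary category $\mathrm{Rep}(Q)$: for representations $M,N$,
\[
\langle\dim M,\dim N\rangle_Q=\dim\Hom(M,N)-\dim\Ext^1(M,N).
\]
This comes from the standard projective resolution
\[
0\rightarrow\bigoplus_{\alpha\in Q_1}P_{t(\alpha)}\otimes M_{s(\alpha)}\rightarrow\bigoplus_{i\in Q_0}P_i\otimes M_i\rightarrow M\rightarrow 0 .
\]
Applying $\Hom(-,N)$ gives a four-term exact sequence
\[
0\rightarrow\Hom(M,N)\rightarrow\bigoplus_i\Hom(M_i,N_i)\rightarrow\bigoplus_\alpha\Hom(M_{s(\alpha)},N_{t(\alpha)})\rightarrow\Ext^1(M,N)\rightarrow 0 .
\]
Taking the alternating sum of dimensions yields exactly $\sum_i\dd_i\ee_i-\sum_\alpha\dd_{s(\alpha)}\ee_{t(\alpha)}$, with $\dd=\dim M$ and $\ee=\dim N$. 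This sequence is the only genuinely substantive input, and I would cite it (or recall it briefly) rather than rederive it.

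\emph{Right-hand side.} By the definition of the Ext-quiver, $Q_{\underline{S}}$ has $\dim\Ext^1(S_i,S_j)$ arrows from $i$ to $j$. Hence
\[
\langle e_i,e_j\rangle_{Q_{\underline{S}}}=\delta_{ij}-\dim\Ext^1(S_i,S_j).
\]

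\emph{Comparison.} It remains to show $\dim\Hom(S_i,S_j)=\delta_{ij}$. This is Schur's lemma: the $S_i$ are simple and pairwise non-isomorphic, and they are finite-dimensional over the algebraically closed field $\BoC$, so $\mathrm{End}(S_i)=\BoC$ and $\Hom(S_i,S_j)=0$ for $i\neq j$. Combining this with the two formulas above gives the equality on basis vectors.

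The main point to keep straight is the orientation convention. The definition of $\langle-,-\rangle_Q$ in the paper subtracts $\dd_{s(\alpha)}\dd'_{t(\alpha)}$, which matches $\Ext^1(M,N)$ with $M$ in the source slot. The Ext-quiver likewise has arrows $i\to j$ counted by $\Ext^1(S_i,S_j)$. These conventions agree, so no transposition is needed.
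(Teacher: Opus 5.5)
Your proof is correct and is the natural way to carry out what the paper dismisses as ``a straightforward calculation''. You reduce by bilinearity to $\langle\dim S_i,\dim S_j\rangle_Q$, use $\langle\dim M,\dim N\rangle_Q=\dim\Hom(M,N)-\dim\Ext^1(M,N)$ in the hereditary category $\mathrm{Rep}(Q)$, and conclude with Schur's lemma and the arrow count of the Ext-quiver, with the orientation conventions correctly matched.
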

\begin{proof}
 This is a straightforward calculation.
\end{proof}

\subsection{Cohomological Hall algebras}
\label{subsection:cohas}

Let $Q$ be a quiver and $T'$ an auxiliary torus acting on the arrows of $Q$. We recall in this section how the cohomological Hall algebras for quivers with potentials are constructed following \cite{davison2020cohomological}, which is a sheafification over the good moduli space of the construction of Kontsevich and Soibelman \cite{kontsevich2011cohomological}. It is an algebra structure on the constructible complex $\SA_{Q}^{T'}\coloneqq (\JH_{Q}^{T'})_{*}\BoQ_{\FM_{Q}^{T'}}^{\vir}\in\CD_{\rmc}^+(\CM_{Q}^{T'})$, where for $\dd\in\BoN^{Q_0}$, $(\BoQ_{\FM_Q^{T'}}^{\vir})_{\FM_{Q,\dd}^{T'}}=\BoQ_{\FM_{Q,\dd}^{T'}}[-\langle\dd,\dd\rangle]$ is the perverse constant sheaf (\Cref{subsection:constructible_sheaves}).

We consider the commutative induction diagram:
\[\begin{tikzcd}
	{\FM_{Q,\dd}^{T'}\times_{\rmB T'}\FM_{Q,\dd'}^{T'}} & {\mathfrak{Exact}_{Q,\dd,\dd'}^{T'}} & {\FM_{Q,\dd+\dd'}^{T'}} \\
	{\CM_{Q,\dd}^{T'}\times_{\rmB T'}\CM_{Q,\dd'}^{T'}} && {\CM_{Q,\dd+\dd'}^{T'}}
	\arrow["{\JH_{Q,\dd}\times\JH_{Q,\dd'}}"', from=1-1, to=2-1]
	\arrow["{q_{\dd,\dd'}}"', from=1-2, to=1-1]
	\arrow["{p_{\dd,\dd'}}", from=1-2, to=1-3]
	\arrow["{\JH_{Q,\dd+\dd'}}", from=1-3, to=2-3]
	\arrow["{\oplus_{\dd,\dd'}}"', from=2-1, to=2-3]
\end{tikzcd}\]
The morphism $p_{\dd,\dd'}$ is proper, and therefore $(p_{\dd,\dd'})_*\cong (p_{\dd,\dd'})_!$. The natural pullback morphism
\[
 \BoQ_{\FM_{Q,\dd+\dd'}^{T'}}\rightarrow (p_{\dd,\dd'})_*\BoQ_{\mathfrak{Exact}_{Q,\dd,\dd'}^{T'}}
\]
therefore dualizes, thanks to the smoothness of all stacks under consideration, to a morphism
\begin{equation}
\label{equation:pushforward}
 (p_{\dd,\dd'})_*\BoQ_{\mathfrak{Exact}_{Q,\dd,\dd'}^{T'}}[2(-\langle\dd,\dd\rangle-\langle\dd',\dd'\rangle-\langle\dd',\dd\rangle)]\rightarrow\BoQ_{\FM_{Q,\dd+\dd'}^{T'}}[-2\langle\dd+\dd',\dd+\dd'\rangle]\,.
\end{equation}
The pullback by $q_{\dd,\dd'}$ is a morphism of constructible complexes
\begin{equation}
\label{equation:pullbackcohaproduct}
 \BoQ_{\FM_{Q,\dd}^{T'}\times_{\rmB T'}\FM_{Q,\dd'}^{T'}}\rightarrow(q_{\dd,\dd'})_*\BoQ_{\mathfrak{Exact}_{Q,\dd,\dd'}^{T'}}\,.
\end{equation}
 By composing $(\JH_{Q,\dd+\dd'})_*[\langle\dd,\dd\rangle+\langle\dd',\dd'\rangle+2\langle\dd',\dd\rangle]$ applied to \Cref{equation:pushforward} with $(\oplus_{\dd,\dd'})_*\circ (\JH_{Q,\dd}\times\JH_{Q,\dd'})_*[-\langle\dd,\dd\rangle-\langle\dd',\dd'\rangle]$ applied to \Cref{equation:pullbackcohaproduct}, we obtain the CoHA multiplication
 \begin{equation}
 \label{equation:coha_multiplication}
  \frakm_{\dd,\dd'}\colon (\JH_{Q,\dd})_*\BoQ_{\FM_{Q,\dd}^{T'}}^{\vir}\boxdot(\JH_{Q,\dd'})_*\BoQ_{\FM_{Q,\dd'}^{T'}}^{\vir}\rightarrow(\JH_{Q,\dd+\dd'})\BoQ_{\FM_{Q,\dd+\dd'}^{T'}}^{\vir}[\langle\dd',\dd\rangle-\langle\dd,\dd'\rangle]\,.
 \end{equation}
By putting together all $\frakm_{\dd,\dd'}$ for $\dd,\dd'\in\BoN^{Q_0}$, we obtain the CoHA product on $\SA_{Q}^{T'}$. If the Euler form $\langle-,-\rangle$ of $Q$ is symmetric, then the shift on the r.h.s. of the CoHA multiplication \Cref{equation:coha_multiplication} vanishes and we get an ordinary degree $0$ multiplication. Then, $\SA_{Q}^{T'}$ is an algebra object in $(\CD^+_{\rmc}(\CM_Q^{T'}),\boxdot)$. If $Q$ is not symmetric, it is necessary to twist the monoidal structure $\boxdot$ as explained in \Cref{remark:twist_monoidal_structure}. The twist $\theta$ is given by the antymmetrized Euler form $\theta(\dd,\dd')=\langle\dd,\dd'\rangle-\langle\dd',\dd\rangle$ of $Q$.

Let $W$ be a potential that is $T'$-invariant. Then, by applying the vanishing cycle sheaf functor $\varphi_{\Tr(W)}$ for the trace function $\Tr(W)\colon\FM_{Q}^{T'}\rightarrow\BoA^1$ to $\frakm$, we obtain the sheafified critical CoHA $\SA_{Q,W}^{T'}$. For this, we rely on the fact that the Jordan--H\"older morphism $\JH$ is approachable by proper maps \cite[\S4.1]{davison2020cohomological}, so that it commutes with the vanishing cycle sheaf functor $\varphi_{\Tr(W)}$. It is again an algebra object in the monoidal category $(\CD^+_{\rmc}(\CM_Q^{T'}),\boxdot)$. If $W=0$, $\SA_{Q,W}^{T'}=\SA_{Q}^{T'}$.

It is customary to twist the multiplication $\frakm\colon\SA_{Q,W}^{T'}\boxdot\SA_{Q,W}^{T'}\rightarrow\SA_{Q,W}^{T'}$ by a compatible system of signs to make the identification with enveloping algebras and quantum group related objects cleaner. The signs involved appear in \cite{kontsevich2011cohomological}, \cite{efimov2012cohomological}, \cite{davison2020cohomological}, \cite{davison2023bps} and the twist is defined as follows.

We let $\psi\colon\BoN^{Q_0}\times\BoN^{Q_0}\rightarrow\BoZ/2\BoZ$ be a bilinear form such that for any $\dd,\dd'\in\BoN^{Q_0}$,
\begin{equation}
\label{equation:equality_psi}
 \psi(\dd,\dd')+\psi(\dd',\dd)=\tau(\dd,\dd')
\end{equation}
where $\tau(\dd,\dd')=\langle\dd,\dd'\rangle+\langle\dd,\dd\rangle\langle\dd',\dd'\rangle\pmod{2}$ is a bilinear form (the quadratic term $\langle\dd,\dd\rangle\langle\dd',\dd'\rangle$ becomes linear after reduction modulo $2$). Then, we define the $\psi$-twisted product $\frakm^{\psi}$ on $\SA_{Q,W}^{T'}$ by $\frakm_{\dd,\dd'}^{\psi}\coloneqq (-1)^{\psi(\dd,\dd')}\frakm_{\dd,\dd'}$ for $\dd,\dd'\in\BoN^{Q_0}$. We denote by $\SA_{Q,W}^{T',\psi}$ the $\psi$-twisted cohomological Hall algebra. Note that the space of bilinear forms $\psi$ satisfying \Cref{equation:equality_psi} is stable under translation by \emph{symmetric} bilinear forms.

If $\tilde{Q}$ is the triple quiver, then the Euler form of $\tilde{Q}$ satisfies $\langle\dd,\dd\rangle_{\tilde{Q}}\equiv 0\pmod 2$ for any $\dd\in\BoN^{Q_0}$ so that $\tau$ can be identified with the symmetrized Euler form of $Q$ modulo $2$. Therefore, a choice for $\psi$ is given by the Euler form of $Q$ itself.

If $Q'$ is the Ext-quiver of a collection of simple representations of $Q$ and $\zeta\colon \BoN^{Q'_0}\rightarrow\BoN^{Q_0}$ is the morphism of monoids that it induces, then if $\psi$ is a bilinear form satisfying \Cref{equation:equality_psi} for $Q$, $\psi\circ \zeta$ is a bilinear form on $\BoN^{Q'_0}$ satisfying \Cref{equation:equality_psi} for $Q'$. This follows from the fact that $\langle-,-\rangle_{Q'}=\langle\zeta(-),\zeta(-)\rangle_Q$ (\Cref{lemma:compatibility_Eulerforms_extquiver}).

We define $\CA_{Q,W}^{T'}\coloneqq \rmH^*(\SA_{Q,W}^{T'})$ and $\CA_{Q,W}^{T',\psi}\coloneqq \rmH^*(\SA_{Q,W}^{T',\psi})$ the absolute cohomological Hall algebra and its $\psi$-twisted version.

\subsection{BPS Lie algebras and associated graded with respect to the perverse filtration}

Let $Q$ be a symmetric quiver and $W\in \BoC[Q]$ a potential. By \cite{davison2020cohomological}, the complex $\SA_{Q,W}^{T'}\in\CD^+_{\rmc}(\CM_{Q}^{T'})$ has no perverse cohomology in degrees $\leq 0$, that is $\pH^i(\SA_{Q,W}^{T'})=0$ for $i\leq 0$. The \emph{BPS sheaf} is defined as the lowest possibly non-vanishing perverse cohomology of $\SA_{Q,W}^{T'}$, that is $\BPS_{Q,W}^{T'}\coloneqq\pH^1(\SA_{Q,W}^{T'})$. There is an adjunction morphism $\BPS_{Q,W}^{T'}[-1]=\ptau^{\leq 1}\SA_{Q,W}^{T'}\rightarrow\SA_{Q,W}^{T'}$.

Since $\SA_{Q}^{T'}\in\CD^+_{\rmc}(\CM_Q^{T'})$ is a semisimple complex, there is an isomorphism of constructible complexes $\SA_{Q}^{T'}\cong\pH(\SA_{Q}^{T'})$ with the total perverse cohomology. Then, since $\SA_{Q,W}^{T'}=\varphi_{\Tr(W)}\SA_{Q}^{T'}$ and the functor $\varphi_{\Tr(W)}$ of vanishing cycles is perverse t-exact, there is an isomorphism of constructible complexes $\SA_{Q,W}^{T'}\cong \pH(\SA_{Q,W}^{T'})$. We define the perverse filtration of $\CA_{Q,W}^{T'}$ by $\FP^i\CA_{Q,W}^{T'}\coloneqq \rmH^*(\ptau^{\leq i}\SA_{Q,W}^{T'})\subset\CA_{Q,W}^{T'}$. It is compatible with the CoHA multiplication. The $\BoZ$-graded perverse sheaf $\pH(\SA_{Q,W}^{T'})\in\Perv^{\BoZ}(\CM_Q^{T'})$ has the associative algebra structure given by $\pH(\frakm)$. By taking the derived global sections, we obtain the perverse degeneration $\rmH^*(\pH(\SA_{Q,W}^{T'}))=\Gr^{\FP}\CA_{Q,W}^{T'}$ of the cohomological Hall algebra of $(Q,W)$.

The following theorem already appears in the literature (\cite[Theorem~3.4]{davison2022affine}, extending \cite[\S1.7]{davison2020cohomological} from the $T'=1$ case). It regards the description of the associated graded of the CoHA of symmetric quivers with potentials with respect to the perverse filtration $\FP$. We recall its statement and proof for completeness and comparison with the case of the \emph{less} perverse filtration $\FL$, defined for triple quivers with the canonical cubic potential, that we define study later (from \Cref{section:less_perverse}).

\begin{theorem}
\label{theorem:asso_graded_perverse_filt}
 Let $Q$ be a symmetric quiver and $W\in\BoC[Q]$ a potential. Let $T'$ be an auxiliary torus acting on the arrows of $Q$ so that $W$ is $T'$-invariant. Then, the cohomological Hall algebra product on $\SA_{Q,W}^{T',\psi}$ induces a Lie bracket on $\BPS_{Q,W}^{T'}[-1]=\pH^{1}(\SA_{Q,W}^{T',\psi})[-1]$. The associated graded of $\SA_{Q,W}^{T',\psi}$ with respect to the perverse filtration is a super-commutative algebra in $\Perv^{\BoZ}(\CM_{Q}^{T'})$. There is an isomorphism of algebra objects
 \[
  \Sym_{\boxdot}(\BPS_{Q,W}^{T'}[-1]\otimes\rmH^*_{\BoC^*}(\pt))\rightarrow \pH(\SA_{Q,W}^{T',\psi})\,.
 \]
\end{theorem}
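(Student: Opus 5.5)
The plan is to follow the strategy of Davison--Meinhardt \cite{davison2020cohomological}, extended to the equivariant setting. We first reduce to $W=0$ with $Q$ symmetric, and then apply the exact functor $\varphi_{\Tr(W)}$, working throughout in the $T'$-equivariant derived category over $\CM_Q^{T'}$.

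\textbf{Step 1: perverse amplitude.} Since $Q$ is symmetric, $\BoQ^{\vir}_{\FM_Q^{T'}}$ is self-dual up to the equivariant parameters, and $\JH_Q$ is approximable by proper maps. Hence $\SA_Q^{T'}$ is a semisimple complex concentrated in perverse degrees $\geq 1$. This is the decomposition theorem combined with the support/lowest degree result of Meinhardt--Reineke, as recalled before the statement.

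\textbf{Step 2: degree bookkeeping and commutativity.} The multiplication $\frakm^{\psi}$ has degree $0$ and $\oplus$ is finite, so $\oplus_*$ is perverse exact. Therefore $\pH(\frakm^{\psi})$ makes $\pH(\SA^{T',\psi}_{Q,W})$ an algebra in $\Perv^{\BoZ}(\CM_Q^{T'})$. The commutator $\frakm^\psi-\frakm^\psi\circ T$ lowers the perverse filtration by one. To see this, we compare $\frakm$ and $\frakm\circ T$ on the open locus where the two summands have disjoint supports, where the induction correspondence becomes an isomorphism up to Euler classes. The sign twist $\psi$ exactly cancels the Koszul signs, so the degenerate product is supercommutative. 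Applying the same argument to $\pH^1\boxdot\pH^1\to\pH^2$ yields a bracket $\pH^1[-1]\boxdot\pH^1[-1]\to\pH^1[-1]$, namely the perversely truncated commutator. Antisymmetry and Jacobi for this bracket follow from associativity of $\frakm^\psi$.

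\textbf{Step 3: the map and the $u$-action.} Next we construct $\BPS^{T'}_{Q,W}[-1]\otimes\rmH^*_{\BoC^*}(\pt)\to\pH(\SA^{T',\psi}_{Q,W})$. We use the adjunction $\BPS[-1]=\ptau^{\leq1}\SA\to\SA$ together with the action of $u=c_1(\CL)$ for a determinant line bundle from \Cref{subsection:determinant_line_bundle_Q}. The class $u$ acts on $\SA$ via cup product on $\FM_Q$, which gives a map $\SA\to\SA[2]$. This raises perverse degree by $2$, matching the cohomological degree of $u$. Supercommutativity lets us extend multiplicatively to $\Sym_{\boxdot}$.

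\textbf{Step 4: isomorphism.} It remains to show the map is an isomorphism. This is the main obstacle. Since $\pH$ is conservative, it suffices to check the statement on total perverse cohomology. By exactness of $\varphi_{\Tr(W)}$ and the fact that it commutes with $\oplus_*$ (Thom--Sebastiani), we reduce to $W=0$. Equivariance under $T'$ only changes the base by $\rmB T'$ and can be forgotten, since perversity is tested after pullback to $\CM_Q$. In this case the required statement is the cohomological integrality theorem of \cite{davison2020cohomological}: the PBW map $\Sym_{\boxdot}(\BPS\otimes\rmH^*_{\BoC^*}(\pt))\to\SA$ is an isomorphism. Its proof compares generic-stalk dimensions via the Meinhardt--Reineke DT/BPS factorization and uses semisimplicity to upgrade to an isomorphism.

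The delicate point is checking that the $\rmH^*_{\BoC^*}(\pt)$ factor defined via $c_1(\CL)$ matches the factor arising from the scalar $\BoC^*$ in the integrality isomorphism. We would handle this by restricting to the stable locus, where $\FM^{\mathrm{st}}\to\CM^{\mathrm{st}}$ is a $\rmB\BoC^*$-gerbe. There, $\mathrm{act}_x^*c_1(\CL)\neq0$ shows that $c_1(\CL)$ generates the cohomology of the fibre.
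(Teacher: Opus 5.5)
\textbf{The gap is in Step 2, supercommutativity of $\pH(\frakm^{\psi})$.} Your Steps 3 and 4 agree with the paper. You build the map from $\ptau^{\leq 1}\SA\to\SA$ and the $c_1(\CL)$-action, extend it to $\Sym_{\boxdot}$ by supercommutativity, and reduce bijectivity (via $\varphi_{\Tr(W)}$ and forgetting $T'$) to the Davison--Meinhardt integrality theorem. That theorem is already stated with the $\rmH^*_{\BoC^*}(\pt)$-action coming from a determinant line bundle, so your final ``delicate point'' needs no separate treatment. Your stable-locus argument for it would in any case miss BPS summands supported away from the locus of simples. Step 2, by contrast, is the real content of the statement, everything else in your proof depends on it, and your argument for it does not work.

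The generic-locus argument fails for two reasons. First, where the two semisimple representations share no simple summand, the induction correspondence is still not an isomorphism up to Euler classes, because non-isomorphic simples can have non-zero $\Ext^1$. For the symmetric quiver with two vertices and one arrow in each direction, $\Ext^1(S_1,S_2)\cong\BoC$, so the extension stack over $(S_1,S_2)$ is not $\FM\times\FM$.

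Second, and more fundamentally, even a correct generic statement would not suffice. The degenerate commutator $\pH^i(\SA)\boxdot\pH^j(\SA)\to\pH^{i+j}(\SA)$ is a morphism of semisimple perverse sheaves. It must vanish on \emph{every} simple summand of the source, and many of these are supported entirely where the two factors share simple summands. In the same example, $\BPS_{Q,\delta_1}\boxdot\BPS_{Q,\delta_1}$ is a skyscraper at $S_1^{\oplus 2}$, and $\CM_{Q,\delta_1}\times\CM_{Q,\delta_1}$ is a point, so your ``disjoint'' locus is empty. For the one-vertex quiver without arrows, $\CM_Q$ is discrete and your argument gives nothing.

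What is missing is a pointwise argument at arbitrary points of $\CM_Q$. The paper does this as follows:
\begin{enumerate}
\item Take a point $x\leftrightarrow M=\bigoplus_j M_j^{m_j}$ where the commutator would be non-zero.
\item Identify $\imath_{\underline{M}}^*\pH(\SA_Q^{\psi})$ with $\imath_{\Nil}^*\pH(\SA_{Q_{\underline{M}}}^{\zeta^*\psi})$, using the \'etale-local model given by the Ext-quiver $Q_{\underline{M}}$ near its nilpotent point.
\item Use the contracting $\BoC^*$-action to identify this with the absolute CoHA of $Q_{\underline{M}}$.
\item Conclude by Efimov's theorem that this sign-twisted absolute CoHA is supercommutative.
\end{enumerate}
Efimov's global shuffle-algebra computation is exactly what your sentence ``the sign twist $\psi$ exactly cancels the Koszul signs'' stands in for. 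It cannot be recovered from the generic locus. The paper also reduces commutativity to $T'=1$ via faithfulness of the forgetful functor $\Perv^{\BoZ}(\CM_Q^{T'})\to\Perv^{\BoZ}(\CM_Q)$.
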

\begin{proof}

We first prove that $\pH(\SA_{Q,W}^{T',\psi})$ is commutative. By the definition of the CoHA product at the sheaf level, it suffices to consider the case $W=0$. Indeed, if the perversely degenerated CoHA $\pH(\SA_{Q}^{T',\psi})$ is commutative, the same is true for $\pH(\SA_{Q,W}^{T',\psi})$ by applying the perverse exact vanishing cycle sheaf functor $\varphi_{\Tr(W)}$ since $\pH(\varphi_{\Tr(W)}\frakm_{\dd,\dd'}^{\psi})=\varphi_{\Tr(W)}\pH(\frakm_{\dd,\dd'}^{\psi})$.

It also suffices to consider the case $T'=1$ by faithfulness of the forgetful functor $\Perv^{\BoZ}(\CM_{Q}^{T'})\rightarrow\Perv^{\BoZ}(\CM_Q)$. Thus, we only need to prove that the perversely degenerated CoHA $\pH(\SA_{Q}^{\psi})$ is commutative.

We let $^{\frakp}[-,-]$ be the commutator Lie bracket on $\pH(\SA_{Q}^{\psi})$. We would like to show that it vanishes. For a contradiction, if it does not vanish, there exists $i,j\in\BoN$ such that the morphism
\[
 C\colon \pH^i(\SA_Q^{\psi})\boxdot \pH^j(\SA_{Q}^{\psi})\rightarrow\pH^{i+j}(\SA_{Q}^{\psi})
\]
induced by the Lie bracket does not vanish. Since $C$ is a morphism between semisimple perverse sheaves, there exists $x\in \CM_Q$ so that $\imath_x^*C$ does not vanish. The point $x$ corresponds to a semisimple representation $M\cong \bigoplus_{j=1}^rM_j^{m_j}$ of $Q$, where the $M_j$ are pairwise non-isomorphic simple representations of $Q$. We let $\underline{M}\coloneqq \{M_1,\hdots,M_r\}$ be the set of simple direct summands of $M$ and we denote by $Q_{\underline{M}}$ its Ext-quiver. There are embeddings of monoids
\[
\begin{matrix}
 \imath_{\underline{M}}&\colon&\BoN^r&\rightarrow&\CM_Q\\
 &&(m_j)_{1\leq j\leq r}&\mapsto&\bigoplus_{j=1}^rM_j^{\oplus m_j}
\end{matrix}
\]
and
\[
 \begin{matrix}
  \imath_{\Nil}&\colon&\BoN^r&\rightarrow&\CM_{Q_{\underline{M}}}\\
  &&\underline{m}=(m_j)_{1\leq j\leq r}&\mapsto&0_{\underline{m}}
 \end{matrix}\,.
\]
There is an isomorphism of algebras $\imath_{\underline{M}}^*\pH(\SA_{Q}^{\psi})\cong \imath_{\Nil}^*\pH(\SA_{Q_{\underline{M}}}^{\zeta^*\psi})$ where $\zeta\colon\BoN^r\rightarrow\BoN^{Q_0}$ is the monoid homomorphism in \Cref{subsection:ext_quivers}. This comes from the \'etale local description of $\FM_{Q}$ around the image of $\imath_{\underline{M}}$ by $\FM_{Q_{\underline{M}}}$ around the image if $\imath_{\Nil}$.

Moreover, by $\BoC^*$-equivariance, $\imath_{\Nil}^*\SA_{Q}^{\psi}\cong\rmH^*(\SA_{Q_{\underline{M}}}^{\psi})$ is the absolute cohomological Hall algebra for $Q_{\underline{M}}$. Since it is supercommutative \cite{efimov2012cohomological}, its Lie bracket vanishes, and therefore so does the Lie bracket of $i^*_{\underline{M}}\pH(\SA_Q^{\psi})$. It follows that $\imath_x^*C$ vanishes, and we reached the contradiction. This proves that $\pH(\SA_Q^{\psi})$ is commutative.

We now prove that $\pH(\SA_{Q,W}^{T',\psi})$ is isomorphic to $\Sym_{\boxdot}(\BPS_{Q,W}^{T'}[-1]\otimes\rmH^*_{\BoC^*}(\pt))$ as a supercommutative algebra.

Again, since $\varphi_{\Tr(W)}$ is strict monoidal for $\boxdot$ and is perverse $t$-exact, it suffices to prove the statement for $W=0$. There is a natural morphism $\BPS_{Q}^{T'}[-1]\rightarrow \SA_{Q}^{T',\psi}$. By using the $\rmH^*_{\BoC^*}(\pt)$-action on $\SA_Q^{T',\psi}$ induced by the choice of a determinant line bundle, we obtain the morphism $\BPS_{Q}^{T'}[-1]\otimes\rmH^*_{\BoC^*}(\pt)\rightarrow \SA_{Q}^{T',\psi}$. After taking the perverse degeneration, this becomes a morphism of Lie algebra objects in $\Perv^{\BoZ}(\CM_Q^{T'})$. The Lie brackets on both sides vanish. By the universal property of the universal enveloping algebra, we obtain a morphism of algebras $\Sym_{\boxdot}(\BPS_Q^{T'}[-1]\otimes\rmH^*_{\BoC^*}(\pt))\rightarrow\pH(\SA_Q^{T',\psi})$. To prove it is an isomorphism, it suffices to prove it after applying the (conservative) forgetful functor $\Perv^{\BoZ}(\CM_Q^{T'})\rightarrow\Perv^{\BoZ}(\CM_Q)$. Then, it is a consequence of the cohomological integrality isomorphism \cite[Theorem~C]{davison2020cohomological}.
\end{proof}

 The commutator Lie bracket on $\SA_{Q,W}^{T',\psi}$ induced by the associative multiplication $\frakm^{\psi}$ induces a Lie bracket on $\BPS_{Q,W}^{T'}[-1]$, which by taking derived global sections, gives a Lie bracket on $\rmBPS_{Q,W}^{T'}:= \mathrm{H}^{*}(\BPS_{Q,W}^{T'}[-1])$. The subspace $\rmBPS_{Q,W}^{T'}$ with this Lie bracket is called \emph{BPS Lie algebra}. We may also use the more convenient notation $\frakn_{Q,W}^{T',\rmBPS,+}\coloneqq\rmBPS_{Q,W}^{T'}$ when convenient.

\begin{corollary}
\label{corollary:equivariant_pwb_supercommutativity}
We assume that $\rmBPS_{Q,W}^{T'}$ is a free $\rmH^*_{T'}(\pt)$-module or that $\rmH^*(\rmBPS_{Q,W})$ carries a pure mixed Hodge structure. Let $\FP$ be the perverse filtration on the absolute cohomological Hall algebra $\CA_{Q,W}^{T',\psi}=\rmH^*(\SA_{Q,W}^{T',\psi})$. Then, there is an isomorphism of algebras $\Sym_{\rmH^*_{T'}(\pt)}(\rmBPS_{Q,W}^{T'}[u])\cong\Gr^{\FP}(\rmH^*(\SA_{Q,W}^{T',\psi}))=\Gr^{\FP}(\CA_{Q,W}^{T',\psi})$ where $\BPS_{Q,W}^{T'}[u]=\BPS_{Q,W}^{T'}\otimes_{\BoQ}\BoQ[u]$ under an identification $\rmH^*_{\BoC^*}(\pt)\cong\BoQ[u]$.
\end{corollary}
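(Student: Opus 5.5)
The plan is to apply the derived global sections functor $\rmH^*$ to the sheaf-level isomorphism of \Cref{theorem:asso_graded_perverse_filt}, and then identify the result with $\Sym_{\rmH^*_{T'}(\pt)}(\rmBPS_{Q,W}^{T'}[u])$. Since $\SA_{Q,W}^{T',\psi}\cong\pH(\SA_{Q,W}^{T',\psi})$ as constructible complexes, the perverse filtration $\FP$ splits. We get $\Gr^{\FP}(\CA_{Q,W}^{T',\psi})=\rmH^*(\pH(\SA_{Q,W}^{T',\psi}))$, with the algebra structure induced by $\pH(\frakm^{\psi})$. This uses that $\rmH^*$ is strict monoidal and that the product respects the perverse truncations. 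Hence
\[
\Gr^{\FP}(\CA_{Q,W}^{T',\psi})\cong\rmH^*\big(\Sym_{\boxdot}(\BPS_{Q,W}^{T'}[-1]\otimes\rmH^*_{\BoC^*}(\pt))\big)
\]
as algebras. Here $\rmH^*$ is taken on $\CM_Q^{T'}$, so all the cohomology groups are $\rmH^*_{T'}(\pt)$-modules.

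The key step is to commute $\rmH^*$ with $\Sym_{\boxdot}$. Strict monoidality gives $\rmH^*(\CF^{\boxdot n})\cong\rmH^*(\CF)^{\otimes n}$, but only as a tensor product over $\BoQ$ of global sections on the fibre product $\CM_Q^{T'}\times_{\rmB T'}\cdots\times_{\rmB T'}\CM_Q^{T'}$. The goal is a Künneth isomorphism over $\rmH^*_{T'}(\pt)$:
\[
\rmH^*\big(\CM^{T'}\times_{\rmB T'}\CM^{T'},\CF\boxtimes_{\rmB T'}\CG\big)\cong\rmH^*(\CF)\otimes_{\rmH^*_{T'}(\pt)}\rmH^*(\CG).
\]
This is exactly where the hypothesis enters.

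In the first case, $\rmBPS_{Q,W}^{T'}$ is a free $\rmH^*_{T'}(\pt)$-module. I would use the Eilenberg--Moore spectral sequence for the pullback along $\rmB T'$. Freeness makes all higher $\mathrm{Tor}$ terms vanish, so the spectral sequence degenerates and gives the Künneth isomorphism. Applied iteratively to tensor powers of $\BPS_{Q,W}^{T'}[-1]\otimes\rmH^*_{\BoC^*}(\pt)$, this yields free tensor powers. Taking $\frakS_n$-(super)invariants, which is exact in characteristic zero, gives $\Sym^n_{\rmH^*_{T'}(\pt)}(\rmBPS_{Q,W}^{T'}[u])$.

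In the second case, the non-equivariant BPS cohomology is pure. Then the spectral sequence computing $T'$-equivariant cohomology from non-equivariant cohomology degenerates for weight reasons. This gives equivariant formality, so $\rmBPS_{Q,W}^{T'}\cong\rmBPS_{Q,W}\otimes\rmH^*_{T'}(\pt)$ is free, and we are reduced to the first case.

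The main obstacle is making the Künneth and Eilenberg--Moore step rigorous for complexes on the stacks $\CM_Q^{T'}$ over $\rmB T'$, compatibly with the symmetric monoidal braiding and the Koszul signs. I would handle this by approximating $\rmB T'$ with finite-dimensional spaces $(\BoC^N\setminus 0)^{\mathrm{rk}\,T'}$ and passing to the limit. Finally, one must check that the resulting morphism $\Sym_{\rmH^*_{T'}(\pt)}(\rmBPS_{Q,W}^{T'}[u])\to\Gr^{\FP}\CA_{Q,W}^{T',\psi}$ is multiplicative. This holds because it is induced from the algebra morphism of \Cref{theorem:asso_graded_perverse_filt}, and its target is supercommutative.
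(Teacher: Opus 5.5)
Your proposal is correct and follows the paper's own argument. The paper takes derived global sections of \Cref{theorem:asso_graded_perverse_filt}, identifies $\Gr^{\FP}$ with $\rmH^*$ of the total perverse cohomology, and invokes the equivariant K\"unneth formula over $\rmH^*_{T'}(\pt)$, which is where freeness, or purity via equivariant formality, is used; your sketch only makes explicit what the paper leaves implicit: the Eilenberg--Moore degeneration, the iteration over tensor powers and the passage to $\frakS_n$-invariants.
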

\begin{proof}
The corollary follows by taking the derived global sections of \Cref{theorem:asso_graded_perverse_filt}. The freeness or purity of $\rmBPS_{Q,W}^{T'}$ over $\rmH^*_{T'}(\pt)$ is used to identify $\Sym_{\rmH^*_{T'}(\pt)}(\rmBPS_{Q,W}^{T'}\otimes\rmH^*_{\BoC^*(\pt)})$ with $\rmH^*(\Sym_{\boxdot}(\BPS_{Q,W}^{T'})\otimes\rmH^*_{\BoC^*}(\pt))$ via the equivariant K\"unneth formula.
\end{proof}

\Cref{corollary:equivariant_pwb_supercommutativity} applies for the triple quiver with the canonical cubic potential, as the BPS Lie algebra is known to carry a pure mixed Hodge structure, by \cite{davison2016integrality}.

\section{The cohomological Hall algebra of preprojective algebras}

\subsection{Preprojective algebras and moduli stacks}
\label{subsection:preprojective_algebra_stacks}

Let $Q=(Q_0,Q_1)$ be a quiver. We denote by $\overline{Q}=(Q_0,\overline{Q}_1=Q_1\sqcup Q_1^{\op})$ the double quiver. It has the same set of vertices as $Q$ and $Q_1^{\op}$ is the set of opposite arrows, that is the set of $\alpha^*\colon j\rightarrow i$ for $\alpha\colon i\rightarrow j$. We denote by $\rho\coloneqq\sum_{\alpha\in Q_1}[\alpha,\alpha^*]\in\BoC[\overline{Q}]$ the preprojective relation. The \emph{preprojective algebra} of $Q$ is the quotient $\BoC[\overline{Q}]/\langle\rho\rangle$ of the path algebra of $\overline{Q}$ by the two-sided ideal generated by the preprojective relation.

We denote by $\FM_{\Pi_Q}$ the stack of representations of the preprojective algebra. Its set of connected components is $\BoN^{Q_0}$ and we have a decomposition $\FM_{\Pi_Q}=\bigsqcup_{\dd\in\BoN^{Q_0}}\FM_{\Pi_Q,\dd}$ where for $\dd\in\BoN^{Q_0}$, $\FM_{\Pi_Q,\dd}$ is the stack of representations of $\Pi_Q$ with dimension vector $\dd$. It may be realized as the stacky Hamiltonian reduction of the stack of representations of $\overline{Q}$. Namely, the moment map induces a $\GL_{\dd}$-equivariant morphism $\mu_{\dd}\colon X_{\overline{Q},\dd}\rightarrow\mathfrak{gl}_{\dd}$, $(x_{\alpha},x_{\alpha^*})_{\alpha\in Q_1}\mapsto \sum_{\alpha\in Q_1}[x_{\alpha},x_{\alpha^*}]$, where $\GL_{\dd}$ acts on $\mathfrak{gl}_{\dd}$ by conjugation. Then, $\FM_{\Pi_Q,\dd}\simeq \mu_{\dd}^{-1}(0)/\GL_{\dd}$. We also define the stack of exact sequences of representations of the preprojective algebra $\mathfrak{Exact}_{\Pi_Q}=\bigsqcup_{\dd,\dd'\in\BoN^{Q_0}}\mathfrak{Exact}_{\Pi_Q,\dd,\dd'}$ where $\mathfrak{Exact}_{\Pi_Q,\dd,\dd'}=(X_{\overline{Q},\dd,\dd'}\cap\mu_{\dd+\dd'}^{-1}(0))/\GL_{\dd,\dd'}$ and $\GL_{\dd,\dd'}\subset\GL_{\dd+\dd'}$ is the parabolic subgroup of invertible matrices preserving a $\dd$-dimensional subspace. There are morphisms $p_{\dd,\dd'}\colon\mathfrak{Exact}_{\Pi_Q,\dd,\dd'}\rightarrow \FM_{\Pi_Q,\dd+\dd'}$ and $q_{\dd,\dd'}\colon\mathfrak{Exact}_{\Pi_Q,\dd,\dd'}\rightarrow\FM_{\Pi_Q,\dd}\times\FM_{\Pi_Q,\dd'}$ respectively sending a short exact sequence to its middle term and and its extreme terms.

We denote by $\CM_{\Pi_Q}=\bigsqcup_{\dd\in\BoN^{Q_0}}\CM_{\Pi_Q,\dd}$ the good moduli space of $\FM_{\Pi_Q}$. It can be realized as an affine GIT quotient $\CM_{\Pi_Q,\dd}\cong \mu_{\dd}^{-1}(0)\cms \GL_{\dd}\coloneqq\Spec(\BoC[\mu_{\dd}^{-1}(0)]^{\GL_{\dd}})$. It parametrizes semisimple $\dd$-dimensional $\Pi_Q$-representations. We denote by $\JH_{\Pi_Q,\dd}\colon \FM_{\Pi_Q,\dd}\rightarrow\CM_{\Pi_Q,\dd}$ the good moduli space morphism (also known as \emph{semisimplification morphism} or \emph{Jordan--H\"older} morphism in this case).

If $T$ is an auxiliary torus acting on the arrows of $\overline{Q}$, we say that the preprojective relation $\rho$ is \emph{homogeneous} if for any vertex $i\in Q_0$, $e_i\rho e_i$ is homogeneous of some weight, where $e_i\in\BoC[\overline{Q}]$ is the idempotent at the $i$th vertex. If we denote by $w_{\alpha}\in\Hom(T',\BoG_{\mathrm{m}})$ the weight of the $T'$-action on the arrow $\alpha\in \overline{Q}_1$, this means that for $\alpha\in Q_1$, the weight $w_{\alpha}w_{\alpha^*}$ of $\alpha\alpha^*$ only depends on $t(\alpha)$.

If $T'$ is an auxiliary torus acting on the arrows on $\overline{Q}$ so that the preprojective relation $\rho$ is homogeneous, there is an induced action of $T'$ on $\FM_{\Pi_Q}$ and on $\CM_{\Pi_Q}$. We denote by $\FM_{\Pi_Q}^{T'}$ and $\CM_{\Pi_Q}^{T'}$ the corresponding quotient stacks. The Jordan--H\"older morphism $\JH_{\Pi_Q}$ is $T'$-equivariant, thus inducing a morphism $\JH_{\Pi_Q}^{T'}\colon \FM_{\Pi_Q}^{T'}\rightarrow\CM_{\Pi_Q}^{T'}$.

\subsection{Determinant line bundles}
\label{subsection:determinant_line_bundle_preproj}
We define a determinant line bundle over $\FM_{\Pi_Q}$ exactly as in \Cref{subsection:determinant_line_bundle_Q} by replacing $\FM_Q$ by $\FM_{\Pi_Q}$ and $\mathfrak{Exact}_Q$ by $\mathfrak{Exact}_{\Pi_Q}$. Namely, this is a line bundle $\CL$ over $\FM_{\Pi_Q}$ such that in the correspondence $\FM_{\Pi_Q}\times\FM_{\Pi_Q}\xleftarrow{q}\mathfrak{Exact}_{\Pi_Q}\xrightarrow{p}\FM_{\Pi_Q}$, $q^*(\CL\boxtimes\CL)\cong p^*\CL$. We assume that $\CL$ is \emph{positive} in the sense that for any $\BoC$-point $x\in\FM_{\Pi_Q}$, for the morphism $\mathrm{act}_x\colon\rmB\BoG_{\rmm}\rightarrow\FM_{\Pi_Q}$ given by the inclusion of scalar automorphism of $x$, $\mathrm{act}_x^*c_1(\CL)\neq 0$.

\begin{lemma}
\label{lemma:weight_det_lb_preproj}
 There exists a monoid homomorphism $\abs{-}\colon \BoN^{Q_0}\rightarrow\BoZ$ such that for any $\BoC$-point $x\in\FM_{\Pi_Q,\dd}$, the line bundle $\mathrm{act}_x^*\CL$ on $\rmB\BoG_{\rmm}$ is given by the weight $\abs{\dd}$-representation of $\BoG_{\rmm}$.
\end{lemma}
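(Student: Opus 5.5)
The plan is to use the presentation $\FM_{\Pi_Q,\dd}=\mu_{\dd}^{-1}(0)/\GL_{\dd}$ to reduce the statement to one about characters of $\GL_{\dd}$, exactly as in \Cref{subsection:determinant_line_bundle_Q}. A line bundle on $\mu_{\dd}^{-1}(0)/\GL_{\dd}$ need not a priori come from a character, since $\mu_\dd^{-1}(0)$ is singular and may have nontrivial Picard group. However, we only need its restriction along $\mathrm{act}_x$. The key observation is that scalar automorphisms are central: the subgroup $\BoG_{\rmm}\subset\GL_{\dd}$ of scalars $t\mapsto(t\,\id_{\BoC^{\dd_i}})_{i\in Q_0}$ acts trivially on $\mu_{\dd}^{-1}(0)$. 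Hence $\BoG_{\rmm}$ acts on the fibre $\CL_y$ over each point $y\in\mu_{\dd}^{-1}(0)$ by a character $t\mapsto t^{w(y)}$, and $\mathrm{act}_x^*\CL$ is the weight $w(y)$ representation for any lift $y$ of $x$.

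First I show that $w(y)$ is independent of $y$ within $\FM_{\Pi_Q,\dd}$. The function $y\mapsto w(y)$ is locally constant, because the $\BoG_{\rmm}$-weight decomposition of a $\BoG_{\rmm}$-equivariant line bundle over a base with trivial $\BoG_{\rmm}$-action is locally constant. It then suffices to note that $\mu_{\dd}^{-1}(0)$ is connected: it is a cone, stable under scaling all coordinates by $\lambda\in\BoC$, so it contracts to $0$. We may therefore define $w_\dd\coloneqq w(0)$, which yields a map $\dd\mapsto w_\dd$ from $\BoN^{Q_0}$ to $\BoZ$.

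Next I prove additivity $w_{\dd+\dd'}=w_\dd+w_{\dd'}$ using the compatibility $q^*(\CL\boxtimes\CL)\cong p^*\CL$. Take $x=0_\dd\oplus0_{\dd'}$ and the split extension point $e\in\mathfrak{Exact}_{\Pi_Q,\dd,\dd'}$. The map $\BoG_{\rmm}\to \operatorname{Aut}(e)$ given by scalars maps under $p$ to scalars of $0_{\dd+\dd'}$, and under $q$ to the diagonal $\BoG_{\rmm}\to\BoG_{\rmm}\times\BoG_{\rmm}$ of scalars on each factor. Restricting the isomorphism along $\rmB\BoG_{\rmm}\to\mathfrak{Exact}_{\Pi_Q}$ therefore gives weight $w_{\dd+\dd'}$ on one side and $w_\dd+w_{\dd'}$ on the other. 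Together with $w_0=0$, since $\FM_{\Pi_Q,0}=\pt$ and the compatibility forces $\CL_0\otimes\CL_0\cong\CL_0$, this shows that $\abs{-}\coloneqq w$ is a monoid homomorphism.

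The only delicate point is the local constancy and connectedness step, i.e. making sure the weight of the central $\BoG_{\rmm}$ on fibres is well defined globally. The conic contraction handles this cleanly. Alternatively, one could pull $\CL$ back to $\rmB\GL_\dd$ via $0$ and compare, but the weight argument above suffices.
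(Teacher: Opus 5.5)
Your proof is correct and follows essentially the same route as the paper. Both arguments restrict $\CL$ along the central scalar $\BoG_{\rmm}$ over $\mu_{\dd}^{-1}(0)$ to read off a single weight, then use $q^*(\CL\boxtimes\CL)\cong p^*\CL$ to get additivity; your cone argument for the connectedness of $\mu_{\dd}^{-1}(0)$ and your explicit split-extension computation spell out steps the paper leaves implicit.
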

\begin{proof}
We write $\FM_{\Pi_Q,\dd}\simeq \mu_{\dd}^{-1}(0)/\GL_{\dd}$. There is a morphism of stacks $u_{\dd}\colon\mu_{\dd}^{-1}(0)\times\rmB\BoG_{\rmm}\rightarrow\FM_{\Pi_Q,\dd}$. The pullback $u_{\dd}^*\CL$ is a line bundle over $\mu_{\dd}^{-1}(0)\times\rmB\BoG_{\rmm}$. It is equivalent to the datum of a $\BoZ$-weight on a line bundle over $\mu_{\dd}^{-1}(0)$. We denote by $\abs{\dd}$ this weight. The compatibility of $\CL$ with short exact sequences implies that $\abs{-}\colon\BoN^{Q_0}\rightarrow\BoZ$ is a monoid homomorphism. This concludes.
\end{proof}

\subsection{Dimensional reduction}
\label{subsection:dimensionalreduction}

Recall the triple quiver with potential $(\widetilde{Q},\widetilde{W})$ associated to a quiver $Q$ from Section \ref{subsection:quivers-with-potentials}. Let $\mathrm{Jac}(\widetilde{Q},\widetilde{W})$ be the Jacobian algebra associated to the quiver with potential $(\widetilde{Q},\widetilde{W})$. There is an isomorphism of algebras $\mathrm{Jac}(\widetilde{Q},\widetilde{W}) \cong \Pi_Q[\omega]$, where $\Pi_Q$ is the preprojective algebra of $Q$ and $\omega=\sum_{i\in Q_0}\omega_i$ is a central element: the partial derivatives with respect to the $\omega_i$ impose the preprojcetive relation while the partial derivatives with respect to the $\alpha$ and $\alpha^*$ arrows impose respectively the commutation of $\omega$ with $\alpha^*$ and $\alpha$ respectively.

We have the following dimensional reduction isomorphism  due to Davison \cite{davison2017critical} relating the CoHA of the preprojective algebra and the of triple quiver with its canonical cubic potential.  Let \[ \pi: \mathfrak{M}^{T'}_{\tilde{Q},\bd} \rightarrow \mathfrak{M}^{T'}_{\overline{Q},\bd}\] be the natural forgetful morphism, where $\overline{Q}$ is the doubled quiver. We have a product decomposition
\begin{equation}\mathfrak{M}^{T'}_{\tilde{Q},\bd} \simeq \mathfrak{M}^{T'}_{\overline{Q},\bd}\times_{\mathrm{B}\GL_{\dd}} \mathfrak{M}^{T'}_{L, \bd} \end{equation} where $L$ is the quiver with vertices $Q_0$ and loops $\omega_i$ at each vertex $i$. We let $\BoQ_{\FM_{\Pi_Q,\dd}^{T'}}^{\vir}=\BoQ_{\FM_{\Pi_Q,\dd}^{T'}}[-(\dd,\dd)_Q]$ and $\BoQ_{\FM_{\tilde{Q},\dd}^{T'}}^{\vir}=\BoQ_{\FM_{\tilde{Q},\dd}^{T'}}[-\langle\dd,\dd\rangle_{\tilde{Q}}]$ is defined as in \Cref{subsection:cohas}. Then applying dimension reduction \cite{davison2017critical}, where we consider the $\mathbb{C}^{*}$ action scaling the loops, gives an isomorphism of complexes
\begin{equation} \label{equation:dimension_reduction_sheaf}
    \mathbb{D}\mathbf{Q}^{\vir}_{\mathfrak{M}^{T'}_{\Pi_Q,\bd}} \simeq  \pi_{*} \varphi_{\mathrm{Tr}(\tilde{W})} \mathbf{Q}^{\vir}_{\mathfrak{M}^{T'}_{\tilde{Q},\bd}}.\end{equation}

We have a commutative square
\[\begin{tikzcd}
	{\FM_{\tilde{Q}}^{T'}} & {\FM_{\overline{Q}}^{T'}} \\
	{\CM_{\tilde{Q}}^{T'}} & {\CM_{\overline{Q}}^{T'}}
	\arrow["\pi", from=1-1, to=1-2]
	\arrow["{\JH_{\tilde{Q}}^{T'}}"', from=1-1, to=2-1]
	\arrow["{\JH_{\overline{Q}}^{T'}}", from=1-2, to=2-2]
	\arrow["{\pi'}"', from=2-1, to=2-2]
\end{tikzcd}\]
where $\pi'$ is the morphism induced by $\pi$ between the good moduli spaces.

Taking cohomology gives an isomorphism of $\mathbf{N}^{Q_0} \times \mathbf{Z}$-graded $\rmH_{T'}(\pt)$-modules
\begin{equation} \label{equation: dimension_reduction_cohomology}
\textrm{DR}_{\bd}: \rmH^*(\mathfrak{M}_{\tilde{Q},\bd}^{T'}, \varphi_{\Tr(\tilde{W})}\BoQ_{\FM_{\tilde{Q},\dd}^{T'}}^{\vir}) \simeq \rmH^*(\mathfrak{M}_{\Pi_Q,\bd}^{T'},\BD\BoQ^{\vir}_{\FM_{\Pi_Q,\dd}^{T'}}) .
\end{equation}

\subsection{Cohomological Hall algebra structure}
Let $Q$ be a quiver. We consider an auxiliary torus $T'$ acting on the arrows of $\overline{Q}$ so that the preprojective relation is homogeneous. We let $\SA_{\Pi_Q}^{T'}\coloneqq (\JH_{\Pi_Q}^{T'})_{*}\BD\BoQ_{\FM_{\Pi_Q}^{T'}}^{\vir}\in\CD^+_{\rmc}(\CM_{\Pi_Q}^{T'})$. Using the closed immersion $\CM_{\Pi_Q}^{T'}\rightarrow\CM_{\overline{Q}}^{T'}$, which induces a fully faithful functor $\CD^+_{\rmc}(\CM_{\Pi_Q}^{T'})\rightarrow\CD^+_{\rmc}(\CM_{\overline{Q}})$, we may consider $\SA_{\Pi_Q}^{T'}$ as an object in $\CD^+_{\rmc}(\CM_{\overline{Q}}^{T'})$ when convenient.

There is a unique extension of the $T'$ action to the arrows of $\tilde{Q}$ so that the canonical cubic potential $\tilde{W}$ is $T'$-invariant.

We define the CoHA structure on $\SA_{\Pi_Q}^{T'}$ from the CoHA structure on $\SA_{\tilde{Q},\tilde{W}}^{T'}$ of the CoHA of the triple quiver with cubic potential via dimensional reduction. It is defined so that above dimensional reduction isomorphism \Cref{equation:dimension_reduction_sheaf} is an isomorphism of algebras: we have
\[ \SA_{\Pi_Q}^{T'} \simeq \pi'_*\SA^{T'}_{\tilde{Q},\tilde{W}}.\]
We will as usual twist the CoHA product on both sides by a bilinear form $\psi$ as in \Cref{subsection:cohas}. We denote by $\SA_{\Pi_Q}^{T',\psi}$ and $\SA_{\tilde{Q},\tilde{W}}^{T',\psi}$ the resulting twisted algebras.

By taking derived global sections, we obtain the cohomological Hall algebra structure on $\CA_{\Pi_Q}^{T'}\coloneqq \rmH^*(\SA_{\Pi_Q}^{T'})$. Taking the $\psi$-twist into account, we obtain the algebra $\CA_{\Pi_Q}^{T',\psi}$.

\subsection{Support Lemma and Less Perverse filtration}
\label{subsection:support_lessperverse}
We saw that the dimensional reduction morphism (\Cref{equation:dimension_reduction_sheaf})	 is an isomorphism of complexes. Combined with the integrality Theorem \ref{theorem:asso_graded_perverse_filt}, this gives the integrality theorem for the pushforward $(\JH_{\overline{Q}}^{T'})_{*} \mathbb{D}\mathbf{Q}^{\vir}_{\mathfrak{M}_{\mathrm{\Pi}_Q}^{T'}}$, where we may treat $\mathbb{D}\mathbf{Q}^{\vir}_{\mathfrak{M}_{\mathrm{\Pi}_Q}^{T}}$ as a complex on $\mathfrak{M}_{\overline{Q}}^{T'}$ via the closed immersion $\FM_{\Pi_Q}^{T'}\rightarrow\FM_{\overline{Q}}^{T'}$. By forgetting the additional loops $\omega_i$ at vertices, we obtain a commutative diagram
\[\begin{tikzcd}[ampersand replacement=\&]
	{\mathfrak{M}_{\tilde{Q}}^{T'}} \& {\mathfrak{M}_{\overline{Q}}^{T'}}\\
	{\mathcal{M}_{\tilde{Q}}^{T'}} \& {\mathcal{M}_{\overline{Q}}^{T'}}
	\arrow["\pi", from=1-1, to=1-2]
	\arrow["{\JH_{\tilde{Q}}^{T'}}"', from=1-1, to=2-1]
	\arrow["{\JH_{\overline{Q}}^{T'}}", from=1-2, to=2-2]
	\arrow["{\pi^{\prime}}"', from=2-1, to=2-2]
\end{tikzcd}\]

The dimensional reduction isomorphism together with the integrality gives an isomorphism of complexes

\begin{equation}
\label{equation:dim_red_coh_int}
(\JH_{\overline{Q}}^{T'})_{*} \mathbb{D}\mathbf{Q}^{\vir}_{\mathfrak{M}^{T'}_{\mathrm{\Pi}_Q}} \cong \mathrm{Sym}_{\boxdot}\left( \pi^{\prime}_{*}\mathcal{BPS}_{\tilde{Q},\tilde{W}}^{T'}[-1] \otimes \rmH_{\BoC^{*}}(\pt)\right)
\end{equation}

The sheaf $\pi^{\prime}_{*}\mathcal{BPS}_{\tilde{Q},\tilde{W}}^{T'}[-1]$ is perverse sheaf on $\mathcal{M}_{\overline{Q}}^{T'}$. This is a consequence of the following support lemma, proved in \cite{davison2016integrality}.

\begin{lemma}[Support lemma {\cite[Lemma 4.1]{davison2016integrality}}]\label{lemma:support_lemma_preprojective}

The support of the sheaf $\mathcal{BPS}_{\tilde{Q},\tilde{W}}$ is contained in the closed subvariety of $\mathcal{M}_{\tilde{Q}}$ consisting of representations where for each vertex $i \in Q_0$, the actions of all the loops $\omega_i$ are given by a single scalar multiple of the identity. In particular, there exists a semisimple perverse sheaf $\mathcal{BPS}_{\Pi_Q}$ on $\mathcal{M}_{\overline{Q}}$ such that \[\mathcal{BPS}_{\tilde{Q},\tilde{W}} \cong l_{*}(\mathcal{BPS}_{\Pi_Q} \boxtimes \mathbf{Q}^{\vir}_{\mathbb{A}^1}) \] where $l: \mathcal{M}_{\overline{Q}} \times \mathbb{A}^1 \rightarrow \mathcal{M}_{\tilde{Q}}$ is the closed embedding given by sending a representation $\rho$ of $\overline{Q}$ and a scalar $\lambda \in \mathbb{A}^1$ to the representation $\tilde{\rho}$ of $\tilde{Q}$ defined by $\tilde{\rho}(a) = \rho(a)$ for $a \in \overline{Q}_1$ and $\tilde{\rho}(\omega_i) = \lambda \cdot \mathrm{id}$ for each vertex $i \in Q_0$.
\end{lemma}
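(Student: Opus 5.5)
Two claims need proving: the support statement for $\BPS_{\tilde{Q},\tilde{W}}$, and the splitting $\BPS_{\tilde{Q},\tilde{W}}\cong l_*(\BPS_{\Pi_Q}\boxtimes\BoQ^{\vir}_{\BoA^1})$. I would get both from the translation symmetry of the potential along the central loops. The group $\BoA^1_{\mathrm{add}}$ acts on $\FM_{\tilde{Q}}$ by $\omega_i\mapsto\omega_i+t\,\mathrm{id}$ for all $i$ at once. Since $\Tr(\tilde{W})=\Tr(\omega\rho)$ and $\Tr(\mathrm{id}\cdot\rho)=\sum_{\alpha}\Tr[x_\alpha,x_{\alpha^*}]=0$, this action preserves $\Tr(\tilde{W})$. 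It commutes with $\JH_{\tilde Q}$ and with direct sums, so $\SA_{\tilde{Q},\tilde{W}}$ and its perverse cohomology $\BPS_{\tilde Q,\tilde W}$ are equivariant for the induced translation action on $\CM_{\tilde Q}$.

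The support claim is local. Take a point $x\in\CM_{\tilde Q}$ corresponding to a semisimple $\Jac(\tilde Q,\tilde W)=\Pi_Q[\omega]$-module $M=\bigoplus_j M_j^{m_j}$. The BPS sheaf is supported on the critical locus, so we may take $M$ to be a Jacobi-algebra module.
\begin{itemize}
\item Since $\omega$ is central, it acts on each simple summand $M_j$ by a scalar $\lambda_j$.
\item If two summands have $\lambda_j\neq\lambda_k$, then $M$ decomposes according to the eigenvalues of $\omega$. Near $x$, the moduli space and the function then split as a product. This is the étale-local description of \Cref{theorem:asso_graded_perverse_filt} via Ext-quivers of the triple quiver, which respects the splitting.
\item By the Thom--Sebastiani theorem, $\SA$ is locally a $\boxdot$-product of two factors with positive dimension vectors. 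Hence $\pH^1$, the lowest perverse cohomology, has no contribution there: in $\Sym_{\boxdot}(\BPS[-1]\otimes\rmH^*_{\BoC^*}(\pt))$, only the linear term sits in perverse degree $1$, and a product of two nonzero parts lands in perverse degree at least $2$.
\end{itemize}
So the BPS sheaf vanishes at such $x$, and its stalks are supported where all $\lambda_j$ coincide, i.e.\ where $\omega=\lambda\,\mathrm{id}$ after semisimplification. That locus is exactly the image of the closed embedding $l$.

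For the product decomposition, restrict to $\mathrm{im}(l)\cong\CM_{\overline Q}\times\BoA^1$. The translation action now acts on the $\BoA^1$ factor by translations. A perverse sheaf on $Y\times\BoA^1$ that is equivariant for the translation action of $\BoA^1$ on the second factor descends along the smooth projection, with connected fibres, to a perverse sheaf on $Y$. It is therefore of the form $\CF\boxtimes\BoQ_{\BoA^1}[1]$ with $\CF$ perverse, and this is fully faithful descent. Define $\BPS_{\Pi_Q}:=\CF$. Its semisimplicity follows from that of $\BPS_{\tilde Q,\tilde W}$, which holds because it is a direct summand of the pure complex $\SA_{\tilde Q,\tilde W}$; purity comes from the dimensional reduction isomorphism \Cref{equation:dimension_reduction_sheaf} together with \cite{davison2016integrality}.

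I expect the main obstacle to be the vanishing step off the locus $\omega\propto\mathrm{id}$. It needs a careful compatibility of the étale slice with the potential, so that the local model really is a Thom--Sebastiani product $(\tilde Q',\tilde W')\times(\tilde Q'',\tilde W'')$ with both factors nontrivial, and so that the local integrality isomorphism is compatible with the perverse grading. The equivariance argument is routine once one checks that the translation action is algebraic and preserves the perverse t-structure.
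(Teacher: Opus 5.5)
The paper does not prove this lemma; it imports it from \cite[Lemma~4.1]{davison2016integrality}, so your argument has to stand on its own. Your outline is reasonable: first kill $\BPS_{\tilde Q,\tilde W}$ near points where $\omega$ has two distinct eigenvalues, by a local product decomposition and a perverse-degree count, and then use $\BoA^1$-translation invariance. The translation step is fine as written: restricting $a^*\mathcal{P}\cong\mathrm{pr}^*\mathcal{P}$ to $\BoA^1\times(Y\times\{0\})$ already gives $\mathcal{P}\cong\CF\boxtimes\BoQ_{\BoA^1}[1]$.

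The gap is the step that carries all the content, the local product decomposition. As stated it is false, and the justification you give does not produce it. For $x=\bigoplus_jM_j^{m_j}$ with $\omega|_{M_j}=\lambda_j$, the Ext-quiver of the $M_j$ as $\BoC\tilde Q$-modules has $\sum_{a\in\overline Q_1}(\dim M_j)_{s(a)}(\dim M_k)_{t(a)}$ arrows from $j$ to $k$, whether or not $\lambda_j=\lambda_k$. For the Jordan quiver with one-dimensional summands, that is two arrows in each direction. Moreover, there are simple $\tilde Q$-representations arbitrarily close to $x$ on which $\omega$ has eigenvalues near both $\lambda_j$ and $\lambda_k$. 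So neither $\CM_{\tilde Q}$ near $x$ nor the étale local model splits. Only the critical locus splits, and vanishing cycles depend on the function on a neighbourhood of it. (If you meant the Ext-quiver in the $3$-Calabi--Yau sense, which does split, you would then have to compare the vanishing-cycle sheaves of two different critical charts; that is a genuine d-critical-locus argument, not a formality.) What is missing is a proof that the mixed directions enter the potential only through a nondegenerate quadratic form.

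One way to supply it avoids slices altogether.
\begin{enumerate}
\item Choose disjoint discs $U_1,U_2\subset\BoC$ separating the eigenvalues at $x$, and work on the analytic open substack $\FM^{U_1\sqcup U_2}_{\tilde Q,\dd}$.
\item The Riesz projectors of $\omega$ give a holomorphic, $\GL_{\dd}$-equivariant, vertexwise splitting $V=V'\oplus V''$. This identifies the substack with $\bigsqcup_{\dd'+\dd''=\dd}F_{\dd',\dd''}/(\GL_{\dd'}\times\GL_{\dd''})$. On $F_{\dd',\dd''}$ the loops are block diagonal, $\omega'\oplus\omega''$, and $x_\alpha,x_{\alpha^*}$ are arbitrary.
\item In blocks, and up to a global sign convention,
\[
\Tr(\tilde W)=f'+f''+\sum_{\alpha\in Q_1}\left(\Tr\left(x^{12}_{\alpha^*}\,S_\alpha(x^{21}_\alpha)\right)+\Tr\left(x^{21}_{\alpha^*}\,S'_\alpha(x^{12}_\alpha)\right)\right).
\]
Here $f',f''$ are the canonical cubic potentials of the diagonal blocks, and $S_\alpha(y)=\omega''_{t(\alpha)}y-y\,\omega'_{s(\alpha)}$ and $S'_\alpha(y)=\omega'_{t(\alpha)}y-y\,\omega''_{s(\alpha)}$ are Sylvester operators. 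They are invertible because the spectra are disjoint. There are no further cross terms, since $\omega$ is exactly block diagonal.
\item The equivariant holomorphic substitution $x^{12}_{\alpha^*}\mapsto S_\alpha^\vee(x^{12}_{\alpha^*})$, and its analogue for $x^{21}_{\alpha^*}$, turns the function into $f'\boxplus f''\boxplus q$, with $q$ a split nondegenerate quadratic form on the mixed blocks.
\item Thom--Sebastiani then gives
\[
\SA_{\tilde Q,\tilde W,\dd}|_{\CM^{U_1\sqcup U_2}_{\tilde Q}}\cong\bigoplus_{\dd'+\dd''=\dd}\oplus_*\left(\SA_{\tilde Q,\tilde W,\dd'}|_{\CM^{U_1}_{\tilde Q}}\boxtimes\SA_{\tilde Q,\tilde W,\dd''}|_{\CM^{U_2}_{\tilde Q}}\right).
\]
There is no extra shift, because the mixed blocks have rank $-\langle\dd',\dd''\rangle_{\tilde Q}-\langle\dd'',\dd'\rangle_{\tilde Q}$.
\item Near $x$ only the summand indexed by the eigenspace dimensions of $x$ survives, and both of these are nonzero. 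Your perverse-degree count then applies, and centrality of $\omega$ is only needed at the end, to convert ``single eigenvalue'' into $\omega=\lambda\,\mathrm{id}$.
\end{enumerate}
This is the sheaf-level form of the splitting $\CA^{U_1\cup U_2}_{\tilde Q,\tilde W}\cong\CA^{U_1}_{\tilde Q,\tilde W}\otimes\CA^{U_2}_{\tilde Q,\tilde W}$ that the paper quotes later for the coproduct.

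A smaller point concerns semisimplicity. You cannot take purity of $\SA_{\tilde Q,\tilde W}$ on $\CM_{\tilde Q}$ for granted. Vanishing cycles do not preserve purity, and dimensional reduction only controls $\pi'_*\SA_{\tilde Q,\tilde W}$. Argue on $\CM_{\overline Q}$ instead: $\BPS_{\Pi_Q}\cong\pi'_*(\BPS_{\tilde Q,\tilde W}[-1])$ is a direct summand of $\pi'_*\SA_{\tilde Q,\tilde W}\cong\SA_{\Pi_Q}$. Semisimplicity of $\SA_{\Pi_Q}$ then has to be taken as an input, for example from \cite{davison2021purity}.
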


In the presence of an auxiliary torus $T'$, all complexes and perverse sheaves appearing in \Cref{lemma:support_lemma_preprojective} are $T'$-equivariant, and we let $\BPS_{\Pi_Q}^{T'}\in\Perv(\CM_{\Pi_Q}^{T'})$ be the corresponding perverse sheaf, which satisfies $\BPS_{\tilde{Q},\tilde{W}}^{T'}\cong l_*(\BPS_{\Pi_Q}^{T'}\boxtimes\BoQ_{\BoA^1}^{\vir})$.

By \Cref{equation:dim_red_coh_int} and \Cref{lemma:support_lemma_preprojective}, we have an isomorphism of complexes
\begin{align} \label{equation:integrality_preprojective}
(\JH_{\overline{Q}}^{T'})_{*}\mathbb{D}\mathbf{Q}^{\vir}_{\mathfrak{M}_{\Pi_Q}^{T'}} \simeq \Sym_{\boxdot}\left(\mathcal{BPS}_{\mathrm{\Pi}_Q}^{T'} \otimes \rmH_{\BoC^{*}}(\pt)\right)
\end{align}

Since $\mathcal{BPS}_{\Pi_Q}^{T'}$ is a semisimple $T'$-equivariant perverse sheaf, the complex of constructible sheaves \eqref{equation:integrality_preprojective} splits, which means that
\[ (\JH_{\overline{Q}}^{T'})_{*}\mathbb{D}\mathbf{Q}^{\vir}_{\mathfrak{M}_{\Pi_Q}^{T'}} \simeq \bigoplus_{i \in 2\mathbb{Z}_{\geq 0}} \bigoplus_{j \in S_i} \mathcal{IC}_{\overline{Z}_j}(\mathcal{L}_j[\dim(Z_j)])[-i].\]
where $Z_j \subset \mathcal{M}_{\Pi_Q,\bd}$ are $T'$-invariant locally closed irreducible smooth subvarieties of $\mathcal{M}_{\Pi_Q,\bd}$ and $\mathcal{L}_j$ are $T'$-equivariant simple local systems on $Z_j$.

For any $i$, applying the natural transformation ${}^{\mathfrak{p}}\!(\tau^{\leq i}) \rightarrow \id$, gives a split morphism
\[ {}^{\mathfrak{p}}\!\tau^{\leq i} ((\JH_{\overline{Q}}^{T'})_{*}\mathbb{D}\mathbf{Q}^{\vir}_{\mathfrak{M}_{\Pi_Q}^{T'}} ) \rightarrow (\JH_{\overline{Q}}^{T'})_{*}\mathbb{D}\mathbf{Q}^{\vir}_{\mathfrak{M}_{\Pi_Q}^{T'}}.\]
Therefore, the morphism
\[ \mathfrak{L}^{i} \mathcal{A}_{\mathrm{\Pi}_Q,\bd}^{T'} := \rmH^*( \mathcal{M}_{\mathrm{\Pi}_Q,\bd}^{T'}, {}^{\mathfrak{p}}\!\tau^{\leq i}(\JH^{T'}_{\overline{Q},\bd})_{*}\mathbb{D}\mathbf{Q}^{\vir}_{\mathfrak{M}_{\mathrm{\Pi}_Q,\bd}^{T'}} ) \rightarrow \mathcal{A}_{\mathrm{\Pi}_Q,\bd}^{T'}\]
induced by taking derived global sections is an injection, giving an increasing filtration, called the \textit{Less Perverse Filtration} which starts from degree 0, i.e we have

\[\mathfrak{L}^{\bullet}\mathcal{A}_{\mathrm{\Pi}_Q,\bd}^{T'} = (0 \subset  \mathfrak{L}^{0} \mathcal{A}_{\mathrm{\Pi}_Q,\bd}^{T'} \subset  \mathfrak{L}^{1} \mathcal{A}_{\mathrm{\Pi}_Q,\bd}^{T'}  \subset \cdots \mathcal{A}_{\mathrm{\Pi}_Q,\bd}^{T'}).\]

It is proved in \cite[Proposition 5.1]{davison2020bps} that the Less Perverse filtration is compatible with the CoHA multiplication on $\mathcal{A}_{\mathrm{\Pi}_Q}^{T'}$. This follows formally from the fact that the CoHA multiplication $\frakm\colon \SA_{\Pi_Q}^{T'}\boxdot\SA_{\Pi_Q}^{T'}\rightarrow\SA_{\Pi_Q}^{T'}$ at the sheaf level is a morphism of constructible sheaves and standard properties of t-structures. Thus, we have an induced graded algebra structure on the associated graded algebra \[\mathrm{Gr}^{\mathfrak{L}}(\mathcal{A}_{\mathrm{\Pi}_Q}^{T'}) := \bigoplus_{i \geq 0} \mathfrak{L}^{i} \mathcal{A}_{\mathrm{\Pi}_Q}^{T'}/ \mathfrak{L}^{i-1} \mathcal{A}_{\mathrm{\Pi}_Q}^{T'}.\]

The algebra structure on $\Gr^{\FL}(\CA_{\Pi_Q}^{T'})$ is denoted by $^{\frakp}\frakm$. The commutator Lie bracket induced by the algebra structure on $\Gr^{\FL}(\CA_{\Pi_Q}^{T'})$ is denoted by $^{\frakp}\![-,-]$.

The main goal of this paper is to precisely describe the algebra $\mathrm{Gr}^{\mathfrak{L}}(\mathcal{A}_{\mathrm{\Pi}_Q}^{T'})$ in terms of the BPS Lie algebra of $\Pi_Q$.

\section{Coproduct on CoHAs and affinized BPS Lie algebras}

In this section we define a coproduct structure on the CoHA of the triple quiver with the canonical cubic potential, and use it to define the affinized BPS Lie algebra (\Cref{subsection:affinized_BPS_Lie_algebra}). Note that CoHAs of arbitrary quivers with potentials carry the \emph{localized coproduct} of Davison \cite{davison2017critical} and the advantage of the triple quiver with its canonical potential is that as outlined in \cite{davison2022affine}, one can define an actual, non-localized, cocommutative coproduct, leading to the definition of the \emph{affinized BPS Lie algebra}.

\subsection{Coproduct for the CoHA of the triple quiver}

Let $Q$ be a quiver. Recall (\Cref{subsection:dimensionalreduction}) that there is an isomorphism of algebras $\Jac(\widetilde{Q},\widetilde{W}) \cong \Pi_Q[\omega]$ between the Jacobi algebra of the triple quiver with its canonical cubic potential and the polynomial algebra over the preprojective algebra.

Let $U \subset \BoC$ be any analytic open subset. Then, we define  $\mathfrak{M}^{U}_{\tilde{Q},\bd} \subset \mathfrak{M}_{\tilde{Q},\bd}$ to be the open (analytic) substack of representations, such that the generalized eigenvalues of $\omega_i$ for all $i \in Q_0$ are all inside $U$. We can similarly define the coarse moduli space $\mathcal{M}_{\tilde{Q},\bd}^{U}$ as an analytic open subvariety of $\CM_{\tilde{Q},\dd}$. Let $\mathfrak{M}^{U}_{\tilde{Q}} = \coprod \mathfrak{M}^{U}_{\tilde{Q},\bd}$ and $\mathcal{M}_{\tilde{Q}}^{U} = \coprod \mathcal{M}_{\tilde{Q},\bd}^{U}$. Given a short exact sequence of representations \[0 \rightarrow \rho_1 \rightarrow \rho_2 \rightarrow \rho_3 \rightarrow 0,\] the eigenvalues of $\rho_1(\omega)$ and $\rho_3(\omega)$ are in $U$ if and only if the eigenvalues of $\rho_2(\omega)$ are in $U$. This means that the full subcategory of representations of $\tilde{Q}$ such that the eigenvalues of the extra loops at vertices are in $U$ forms a Serre subcategory. From \cite[Section 3]{davison2020cohomological}, we have that for any open $U \subset \mathbf{C}$,  \[ \mathcal{A}^{U}_{\tilde{Q},\tilde{W}} := \bigoplus_{\bd \in \mathbf{N}^{Q_0}} \mathrm{H}^{*}(\mathfrak{M}^{U}_{\tilde{Q},\bd},(\varphi_{\mathrm{Tr}(\tilde{W})})_{\mid_{U}})\] has a structure of cohomological Hall algebra.
Given two open sets $U_1$ and $U_2$ such that $U_1 \subset U_2$, we have an open immersion $\mathfrak{M}^{U_1}_{\tilde{Q}} \hookrightarrow \mathfrak{M}^{U_2}_{\tilde{Q}}$ and thus the restriction morphism in cohomology gives the morphism \[ \rho_{U_2,U_1}: \mathcal{A}^{U_2}_{\tilde{Q},\tilde{W}} \rightarrow \mathcal{A}^{U_1}_{\tilde{Q},\tilde{W}}.\] Then using the functorial properties of vanishing cycles and proper base change, it is checked in \cite[Proposition 6.1.3]{jindalthesis} that this morphism is compatible with the CoHA structures on both sides, i.e., it is a morphism of algebras. When $U_2=\mathbf{C}$ is the full affine line, we denote this morphism by $\rho_{U_1}:= \rho_{\mathbf{C},U_1}$.

When the set $U$ is an open ball (in particular, $U$ is contractible) then as a consequence of the support lemma, it is proved in \cite[Proposition 6.1.4]{jindalthesis} that the restriction morphism is an isomorphism of algebras, i.e we have an isomorphism of algebras \[\rho_{U}: \mathcal{A}_{\tilde{Q},\tilde{W}} \cong \mathcal{A}^{U}_{\tilde{Q},\tilde{W}} .\]

Let $U_1$ and $U_2$ be two disjoint open subsets of $\mathbf{C}$. Then, we have a closed embedding of stacks \[\mathfrak{M}^{U_1}_{\tilde{Q}} \times \mathfrak{M}^{U_2}_{\tilde{Q}} \hookrightarrow \mathfrak{M}^{U_1 \cup U_2}_{\tilde{Q}}\] given by taking direct sum of representations. Suppose $\rho \in \mathfrak{M}^{U_1 \cup U_2}_{\Jac(\tilde{Q},\tilde{W})}\subset \mathfrak{M}^{U_1 \cup U_2}_{\tilde{Q}}$. Then $\rho$ is a representation of $\Pi_Q$ with endomorphism $\rho(\omega)$ such that eigenvalues of $\rho(\omega)$ are in $U_1 \cup U_2$. Since $\rho(\omega)$ commutes with the $\Pi_Q$-action, the generalized eigenspaces of $\rho(\omega)$ are $\Pi_Q$-submodules. Thus, we have a decomposition \[\rho \cong \bigoplus_{z \in U_1 \cup U_2} \rho_z,\] where $\rho_z$ is the generalized eigenspace of $\rho(\omega)$ with eigenvalue $z$. Since $U_1$ and $U_2$ are disjoint, we have that $\rho = \rho_{U_1} \oplus \rho_{U_2}$, where $\rho_{U_i} = \bigoplus_{z \in U_i} \rho_z$. This shows that the above closed embedding restricts to an equivalence of stacks \[\mathfrak{M}^{U_1}_{\Jac(\tilde{Q},\tilde{W})} \times \mathfrak{M}^{U_2}_{\Jac(\tilde{Q},\tilde{W})} \simeq \mathfrak{M}^{U_1 \cup U_2}_{\Jac(\tilde{Q},\tilde{W})}. \]
Using the Thom--Sebastiani isomorphism and deformed dimensional reduction theorem, it is proved in \cite[Proposition 6.1.5]{jindalthesis} that the above equivalence of stacks induces an isomorphism of algebras \[\mathcal{A}^{U_1 \cup U_2}_{\tilde{Q},\tilde{W}} \cong \mathcal{A}^{U_1}_{\tilde{Q},\tilde{W}} \otimes \mathcal{A}^{U_2}_{\tilde{Q},\tilde{W}}.\]

Now let $U_1$ and $U_2$ be any two disjoint open balls in $\mathbf{C}$. Then we have the following commutative diagram of algebra morphisms:
\[\begin{tikzcd}
	{\mathcal{A}_{\tilde{Q},\tilde{W}}} & {\mathcal{A}_{\tilde{Q},\tilde{W}} \otimes \mathcal{A}_{\tilde{Q},\tilde{W}}} \\
	{\mathcal{A}^{U_1 \cup U_2}_{\tilde{Q},\tilde{W}}} & {\mathcal{A}^{U_1}_{\tilde{Q},\tilde{W}} \otimes \mathcal{A}^{U_2}_{\tilde{Q},\tilde{W}}}
	\arrow["\Delta", dashed, from=1-1, to=1-2]
	\arrow["{\rho_{U_1 \cup U_2}}"', from=1-1, to=2-1]
	\arrow["{\simeq }"', from=2-1, to=2-2]
	\arrow["{\rho_{U_1}^{-1} \otimes \rho_{U_2}^{-1}}"', from=2-2, to=1-2]
\end{tikzcd}\]
defining the coproduct $\Delta$ as the unique morphism making the diagram commute. It is proved in \cite[Proposition 6.1.10, 6.1.11]{jindalthesis} that this coproduct is coassociative and independent of the choice of open balls $U_1$ and $U_2$. Intuitively, since $U_1$ and $U_2$ can be exchanged via a homotopy, one can expect the coproduct to be cocommutative. It is indeed proved in \cite[Proposition 6.1.12]{jindalthesis} that the coproduct $\Delta$ is cocommutative up to a sign, i.e we have
\[\Delta_{\bd,\mathbf{e}} = (-1)^{\sum_{a \in Q_1} \bd_{s(a)}\mathbf{e}_{t(a)}+ \mathbf{e}_{s(a)}\bd_{t(a)}} \sigma(\Delta_{\mathbf{e},\bd}) .\] where $\sigma \colon \mathcal{A}_{\tilde{Q},\tilde{W}} \otimes \mathcal{A}_{\tilde{Q},\tilde{W}} \rightarrow \mathcal{A}_{\tilde{Q},\tilde{W}} \otimes \mathcal{A}_{\tilde{Q},\tilde{W}}$ is the braiding morphism defined by $\sigma(a \otimes b) = b \otimes a$. We define the \emph{twisted coproduct} \[\Delta^{\chi}: \mathcal{A}_{\tilde{Q},\tilde{W}} \to \mathcal{A}_{\tilde{Q},\tilde{W}} \otimes \mathcal{A}_{\tilde{Q},\tilde{W}} \] by $\Delta^{\chi}_{\bd,\mathbf{e}} = (-1)^{\chi(\bd,\mathbf{e})} \Delta_{\bd,\mathbf{e}}$ where $\chi$ is the euler form of the quiver $Q$. It is immediate to check that the twisted coproduct $\Delta^{\chi}$ is coassociative and cocommutative. Furthemore, since $\Delta$ is compatible with the CoHA multiplication then as checked in \cite[Proposition 6.1.13]{jindalthesis}, $\Delta^{\chi}$ is compatible with the twisted CoHA multiplication defined in \Cref{equation:equality_psi} for the particular case $\psi=\chi$. Let us denote the twisted CoHA by $\mathcal{A}_{\tilde{Q},\tilde{W}}^{\psi}$.

\subsection{Affinized BPS Lie algebra} \label{subsection:affinized_BPS_Lie_algebra}

Let $\mathcal{P}$ be the set of primitive elements of the cocommutative coalgebra $(\mathcal{A}_{\tilde{Q},\tilde{W}}^{\psi},\Delta^{\chi})$, i.e., \[\mathcal{P} := \{ x \in \mathcal{A}_{\tilde{Q},\tilde{W}}^{\psi} \mid \Delta^{\chi}(x) = x \otimes 1 + 1 \otimes x \}.\] Since $\Delta^{\chi}$ is compatible with the CoHA multiplication on $\mathcal{A}_{\tilde{Q},\tilde{W}}^{\psi}$, it follows that $\mathcal{P}$ is closed under the commutator Lie bracket $[a,b] = ab - ba$. Thus, $\mathcal{P}$ has a structure of Lie algebra. We call this Lie algebra \emph{the affinized BPS Lie algebra} associated to the quiver $Q$ and denote it by $\mathfrak{g}_{\tilde{Q},\tilde{W}}^{\mathrm{aff}}$.

It follows from the support lemma (\Cref{lemma:support_lemma_preprojective}) that $\mathfrak{n}_{\Pi_Q}^{+,\rmBPS} \otimes \rmH^{*}_{\BoC^{*}}(\mathrm{\pt})\subset \mathfrak{g}_{\tilde{Q},\tilde{W}}^{\mathrm{aff}}$, where $\mathfrak{n}_{\Pi_Q}^{+,\rmBPS}$ is the BPS Lie algebra defined in \cite{davison2020bps}. By the Milnor--Moore theorem, we have injections of algebras \[\bfU(\mathfrak{n}_{\Pi_Q}^{+,\rmBPS} \otimes \rmH_{\BoC^{*}}(\mathrm{\pt})) \subset \bfU(\mathfrak{g}_{\tilde{Q},\tilde{W}}^{\mathrm{aff}}) \hookrightarrow \mathcal{A}_{\tilde{Q},\tilde{W}}^{\psi}.\] By the integrality theorem, this composition is an isomorphism of algebras and hence we have a canonical isomorphism of graded vector spaces \[ \mathfrak{n}_{\Pi_Q}^{+,\rmBPS} \otimes \rmH_{\BoC^{*}}(\mathrm{\pt}) \cong \mathfrak{g}^{\mathrm{aff}}_{\tilde{Q},\tilde{W}} \] and an isomorphism of algebras \[ \bfU(\mathfrak{n}_{\Pi_Q}^{+,\rmBPS} \otimes \rmH_{\BoC^{*}}(\mathrm{\pt})) \cong \mathcal{A}_{\tilde{Q},\tilde{W}}^{\psi}.\]
Note that the Lie bracket on $\mathfrak{n}_{\Pi_Q}^{+,\rmBPS} \otimes \rmH_{\BoC^{*}}(\mathrm{\pt})$ is not usually the trivial $\rmH^*_{\BoC^*}(\pt)$-linear extension of the Lie bracket on $\frakn_{\Pi_Q}^+$ \cite{davison2022affine,jindal2024coha}. It is not yet known what the actual Lie bracket on the affinized BPS Lie algebra is. See however \Cref{proposition:trivial_extension_real_subquiver} for a statement regarding the degree $0$ cohomological piece. In this paper, we determine the less perverse degeneration of this Lie bracket.

\section{Less perverse degeneration of the affinized BPS Lie algebra}
\label{section:less_perverse}
In order to compute the less perversely degenerated affinized BPS Lie algebra, we recall the definitions of raising and lowering operators on the cohomological Hall algebras of quivers with potentials and preprojective algebras.

\subsection{Raising operators}
 Let $(Q,W)$ be a quiver with potential, $\CL$ a determinant line bundle over $\FM_Q$, and $u\coloneqq c_1(\CL)\in \rmH^*(\FM_Q)$ the first Chern class of $\CL$. We consider the morphism $l\coloneqq\Det\times\id\colon\FM_Q\rightarrow\mathrm{B}\BoG_{\mathrm{m}}\times\FM_Q$. The function $\Tr(W)$ on the source coincides with the pullback of the function $0\boxplus\Tr(W)$ on the target, giving a natural pullback morphism in vanishing cycle cohomology
 \[
  l^*\colon \rmH^*(\mathrm{B}\BoG_{\mathrm{m}})\otimes\rmH^*(\FM_Q,\varphi_{\Tr(W)})\rightarrow\rmH^*(\FM_Q,\varphi_{\Tr(W)})\,,
 \]
inducing an action of $\rmH^*(\mathrm{B}\BoG_{\mathrm{m}})\cong\BoQ[u]$ on $\CA_{Q,W}^{(\psi)}$.

The following is proven in \cite[Proposition~4.1]{davison2022affine}.

\begin{proposition}
Let $(Q,W)$ be a quiver with potential. The morphism $u\bullet-\coloneqq l^*(u\otimes -)\colon \CA_{Q,W}^{(\psi)}\rightarrow\CA_{Q,W}^{(\psi)}$ is a derivation.
\end{proposition}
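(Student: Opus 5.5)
The plan is to check the Leibniz rule $u\bullet\frakm(a\otimes b)=\frakm((u\bullet a)\otimes b)+\frakm(a\otimes(u\bullet b))$ for $a\in\CA_{Q,W,\dd}$ and $b\in\CA_{Q,W,\dd'}$, working directly on the induction correspondence $\FM_{Q,\dd}\times\FM_{Q,\dd'}\xleftarrow{q}\mathfrak{Exact}_{Q,\dd,\dd'}\xrightarrow{p}\FM_{Q,\dd+\dd'}$. The sign twist by $\psi$ is only a locally constant scalar on each pair of components, so it commutes with $u\bullet-$; it therefore suffices to treat the untwisted product $\frakm$.

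First I would note that $u\bullet-$ is just cup product with $c_1(\CL)\in\rmH^2(\FM_Q)$. Indeed, $l^*(u\otimes x)=l^*(\mathrm{pr}_1^*u\cup\mathrm{pr}_2^*x)=\Det^*u\cup x=c_1(\CL)\cup x$, using the module structure of vanishing cycle cohomology over ordinary cohomology. This module structure is compatible with pullbacks, and with proper pushforwards through the projection formula, including in the vanishing cycle setting. Next I would track $c_1(\CL)$ through the two halves of the product:
\begin{enumerate}
\item The pullback $q^*$ is a ring-module map, so $q^*(\mathrm{pr}_1^*c_1(\CL)\cup a\boxtimes b)=q^*\mathrm{pr}_1^*c_1(\CL)\cup q^*(a\boxtimes b)$, and similarly for the second factor.
\item The pushforward $p_*$ satisfies the projection formula $c_1(\CL)\cup p_*(y)=p_*(p^*c_1(\CL)\cup y)$. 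The Thom isomorphism/virtual shifts do not interfere, since they amount to multiplication by fixed classes or to degree shifts.
\end{enumerate}

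The key input is the compatibility assumption $p^*\CL\cong q^*(\CL\boxtimes\CL)$ from \Cref{subsection:determinant_line_bundle_Q}. It gives
\[
 p^*c_1(\CL)=q^*\bigl(\mathrm{pr}_1^*c_1(\CL)+\mathrm{pr}_2^*c_1(\CL)\bigr),
\]
since the first Chern class of a tensor product of line bundles is additive. Combining this with the two steps above yields
\[
 c_1(\CL)\cup\frakm(a\otimes b)=\frakm\bigl((c_1(\CL)\cup a)\otimes b\bigr)+\frakm\bigl(a\otimes(c_1(\CL)\cup b)\bigr).
\]
Because $c_1(\CL)$ has even degree, no Koszul signs appear.

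The main obstacle I expect is justifying the projection formula and the module structure for the critical (vanishing cycle) CoHA on stacks. The multiplication there uses $\varphi_{\Tr(W)}$ applied to the pullback and the dualized pushforward, together with the Thom--Sebastiani identification $\varphi_{\Tr(W)\boxplus\Tr(W)}\cong\varphi\boxtimes\varphi$. I would handle this at the sheaf level. The action of $c_1(\CL)$ is a morphism $\BoQ\to\BoQ[2]$ on $\FM_Q$, and $\varphi_{\Tr(W)}$ is a functor, so the action becomes a natural endomorphism of $\varphi_{\Tr(W)}\BoQ^{\vir}$. The morphisms \eqref{equation:pushforward} and \eqref{equation:pullbackcohaproduct} are natural with respect to cup product by pulled-back classes, by adjunction. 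I would reduce to finite-dimensional approximations of the quotient stacks, where the usual proper base change and projection formula apply, following the approach of \cite{davison2020cohomological}.
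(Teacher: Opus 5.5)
Your proposal is correct and matches the argument the paper relies on: the paper gives no proof of its own, citing \cite[Proposition~4.1]{davison2022affine}, whose argument is essentially yours. There one identifies $u\bullet-$ with cup product by $c_1(\CL)$ and uses $p^*\CL\cong q^*(\CL\boxtimes\CL)$ (equivalently, the square with $\Det$ and $\otimes$ in \Cref{subsection:determinant_line_bundle_Q} together with $\otimes^*u=u\otimes 1+1\otimes u$) to get $p^*c_1(\CL)=q^*\bigl(c_1(\CL)\boxtimes 1+1\boxtimes c_1(\CL)\bigr)$, then concludes by multiplicativity of $q^*$, the projection formula for $p_*$ (compatible with $\varphi_{\Tr(W)}$ and Thom--Sebastiani), and the harmlessness of the $\psi$-twist and shifts, exactly as you outline.
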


When we consider the triple quiver with its canonical cubic potential $(\tilde{Q},\tilde{W})$, we obtain a derivation of $\CA_{\Pi_Q}^{(\psi)}$. It may be defined directly in terms of $\FM_{\Pi_Q}$ by adapting the definition, replacing $l\colon\FM_Q\rightarrow\rmB\BoG_{\rmm} \times \FM_Q$ by $l'\coloneqq\Det\times\id\colon \FM_{\Pi_Q}\rightarrow\rmB\BoG_{\rmm}\times\FM_{\Pi_Q}$ and using that $l'^*(\BoQ_{\rmB\BoG_{\rmm}}\boxtimes\BD\BoQ_{\FM_{\Pi_Q}}^{\vir})\cong\BD\BoQ_{\FM_{\Pi_Q}}^{\vir}$. Here, $\Det\colon\FM_{\Pi_Q}\rightarrow\rmB\BoG_{\rmm}$ is the morphism describing the determinant line bundle $\CL$.

\subsection{Lowering operators}
\label{subsection:lowering_operators}
Let $(Q,W)$ be a quiver with potential. We define a morphism of stacks
\[
 A\colon \mathrm{B}\BoG_{\mathrm{m}}\times\FM_Q\rightarrow\FM_Q
\]
given by $(\CL',\CF)\mapsto \CL'\otimes \CF$ at the level of $X$-points for $X$ a scheme, where $\CL'$ is a line bundle over $X$ and $\CF$ is a flat family of $Q$-modules over $X$. This makes the stack $\FM_Q$ a module over $\mathrm{B}\BoG_{\mathrm{m}}$. The pullback in cohomology
\[
 A^*\colon\rmH^*(\FM_Q,\varphi_{\Tr(W)})\rightarrow \rmH^*(\mathrm{B}\BoG_{\mathrm{m}})\otimes \rmH^*(\FM_Q,\varphi_{\Tr(W)})
\]
makes $\CA_{Q,W}^{(\psi)}$ a comodule for the coalgebra $\rmH^*(\mathrm{B}\BoG_{\mathrm{m}})$. We write for $\alpha\in \CA_{Q,W}^{(\psi)}$, $A^*(\alpha)=\sum_{n\geq 0}u^n\otimes\alpha_n$, defining the coefficient $\alpha_n\in\CA_{Q,W}^{(\psi)}$. The following is \cite[Proposition~4.2]{davison2022affine}.

\begin{proposition}
 The morphism
 \[
\begin{matrix}
  \partial_u&\colon&\CA_{Q,W}^{(\psi)}&\rightarrow&\CA_{Q,W}^{(\psi)}\\
  &&\alpha&\mapsto&\alpha_1
\end{matrix}
 \]
is a derivation of of $\CA_{Q,W}^{(\psi)}$.
\end{proposition}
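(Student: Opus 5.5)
We must show that $\partial_u\colon\alpha\mapsto\alpha_1$ is a derivation. The plan is to deduce this from the fact that $A^*$ is compatible with the CoHA multiplication, with $\rmH^*(\rmB\BoG_{\rmm})$ carrying the \emph{primitive} coalgebra structure, and then extract the coefficient of $u^1$.

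First, I will establish the geometric input. The action $A\colon\rmB\BoG_{\rmm}\times\FM_Q\rightarrow\FM_Q$ extends to the stack of extensions, $(\CL',0\to\CF_1\to\CF_2\to\CF_3\to0)\mapsto(\CL'\otimes-)$, so that $p$ and $q$ are $\rmB\BoG_{\rmm}$-equivariant. Here $\rmB\BoG_{\rmm}$ acts on $\FM_Q\times\FM_Q$ diagonally, through $\Delta\colon\rmB\BoG_{\rmm}\rightarrow\rmB\BoG_{\rmm}\times\rmB\BoG_{\rmm}$. Tensoring by a line bundle preserves $\Tr(W)$, i.e.\ $\Tr(W)\circ A=0\boxplus\Tr(W)$, so all pullbacks in vanishing cycle cohomology are defined. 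The relative virtual dimensions are unchanged, since the twisted RHom complexes are canonically isomorphic. Consequently the square formed by $A$ on $\mathfrak{Exact}_Q$ and on $\FM_Q$ is Cartesian, and base change for the proper pushforward $p_*$ holds, with the Thom--Sebastiani isomorphism handling the product. Combining these facts, and noting that the $\psi$-twist involves only signs depending on dimension vectors (which $A$ preserves), I expect to obtain
\[
A^*(\frakm(\alpha\otimes\beta))=(\id\otimes\frakm)\bigl(\Delta^*\otimes\id\otimes\id\bigr)\bigl(A^*\alpha\boxtimes A^*\beta\bigr).
\]
In this expression $\Delta^*\colon\BoQ[u_1,u_2]\rightarrow\BoQ[u]$ is $u_i\mapsto u$, and the middle factors are reordered with the Koszul sign. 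No sign actually arises, since $u$ has even degree.

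Next comes the extraction of coefficients. Write $A^*\alpha=\sum_n u^n\otimes\alpha_n$ and $A^*\beta=\sum_m u^m\otimes\beta_m$. The displayed identity then reads
\[
\sum_k u^k\otimes(\alpha\beta)_k=\sum_{n,m}u^{n+m}\otimes\alpha_n\beta_m.
\]
Taking $k=1$ gives $(\alpha\beta)_1=\alpha_0\beta_1+\alpha_1\beta_0$. It remains to note that $\alpha_0=\alpha$. This follows from the unit axiom of the action: restricting along $\pt\rightarrow\rmB\BoG_{\rmm}$, the composite $\FM_Q\rightarrow\rmB\BoG_{\rmm}\times\FM_Q\xrightarrow{A}\FM_Q$ is the identity, and restriction kills $u^{\geq1}$. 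Hence $\partial_u(\alpha\beta)=\partial_u(\alpha)\beta+\alpha\partial_u(\beta)$.

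The main obstacle is the first step. It requires checking carefully that the pushforward morphism~\eqref{equation:pushforward}, i.e.\ the dualized pullback along $p$, commutes with $A^*$. This amounts to a base change statement for a proper representable map equivariant under a stacky group action, in the presence of vanishing cycles. I would handle it using the presentation $\FM_{Q,\dd}=X_{Q,\dd}/\GL_\dd$. In that presentation, $A$ is induced by the group homomorphism $\BoG_{\rmm}\times\GL_\dd\rightarrow\GL_\dd$, $(t,g)\mapsto tg$, since the central $\BoG_{\rmm}$ acts trivially on $X_{Q,\dd}$. The whole induction diagram is then pulled back along a morphism of classifying data, and the required compatibilities reduce to the functoriality of equivariant cohomology under group homomorphisms.
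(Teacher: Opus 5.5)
Your proposal is correct and is essentially the argument of \cite[Proposition~4.2]{davison2022affine}, which the paper cites without reproducing. There, $\rmB\BoG_{\rmm}$-equivariance of the induction correspondence (with the diagonal action on $\FM_Q\times\FM_Q$) makes $A^*$ an algebra map into $\rmH^*(\rmB\BoG_{\rmm})\otimes\CA_{Q,W}^{(\psi)}$, and comparing $u^1$-coefficients using $\alpha_0=\alpha$ gives the Leibniz rule. Two cosmetic slips: the $p$-square is Cartesian because $A$ is a group action and $p$ is equivariant, not because of the virtual dimensions; and the structure you actually use on $\rmH^*(\rmB\BoG_{\rmm})$ is the cup product via the diagonal, not a coalgebra structure.
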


Again, when $(\tilde{Q},\tilde{W})$ is the triple quiver with its canonical cubic potential, we obtain a derivation on $\CA_{\Pi_Q}^{(\psi)}$ which can be defined directly in terms of $\FM_{\Pi_Q}$.

\begin{lemma}
 \label{lemma:vanishing_delta_BPS_Lie_algebra}
Let $Q$ be a symmetric quiver and $W\in\BoC[Q]$ a potential. Then, $\partial_u(\rmBPS_{Q,W})=0$.
\end{lemma}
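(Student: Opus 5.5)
The plan is to prove the vanishing at the sheaf level, as a statement about the action map $A$, and then take global sections. First I would sheafify the lowering operator. The action $A\colon\rmB\BoG_{\rmm}\times\FM_Q\rightarrow\FM_Q$ covers the projection to the good moduli space: tensoring a representation by a line does not change its semisimplification. So $\JH_Q\circ A=\JH_Q\circ\mathrm{pr}_2$. This gives a morphism
\[
\SA_{Q,W}\rightarrow\rmH^*(\rmB\BoG_{\rmm})\otimes\SA_{Q,W}
\]
in $\CD^+_{\rmc}(\CM_Q)$. It is $A^*$ pushed forward along $\JH_Q$, using that $\varphi_{\Tr(W)}$ commutes with this pullback because $\Tr(W)\circ A=\Tr(W)\circ\mathrm{pr}_2$. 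Its $u^1$-coefficient is a morphism $\partial_u\colon\SA_{Q,W}\rightarrow\SA_{Q,W}[-2]$ whose global sections give the lowering operator $\partial_u$ of \Cref{subsection:lowering_operators}.

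Second, I would compose this with the inclusion $\BPS_{Q,W}[-1]=\ptau^{\leq 1}\SA_{Q,W}\rightarrow\SA_{Q,W}$. This yields a morphism $\BPS_{Q,W}[-1]\rightarrow\SA_{Q,W}[-2]$. Its source is concentrated in perverse degree $1$. Its target satisfies $\ptau^{\leq 2}(\SA_{Q,W}[-2])=(\ptau^{\leq 0}\SA_{Q,W})[-2]$, and this vanishes because $\pH^i(\SA_{Q,W})=0$ for $i\leq 0$, by \cite{davison2020cohomological} as recalled before \Cref{theorem:asso_graded_perverse_filt}. Thus the target lies in perverse degrees $\geq 3$. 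Any morphism from an object of ${}^{\frakp}\CD^{\leq 1}$ to an object of ${}^{\frakp}\CD^{\geq 3}$ is zero, by the axioms of a t-structure, so the composite vanishes. Taking $\rmH^*$ then gives $\partial_u(\rmBPS_{Q,W})=0$.

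The main obstacle is the first step: checking that the coefficient extraction $\alpha\mapsto\alpha_1$ is really induced by a sheaf morphism of cohomological degree $+2$ relative to $\CM_Q$. Concretely, $(\JH_Q\circ\mathrm{pr}_2)_*$ of the pulled-back sheaf is $\rmH^*(\rmB\BoG_{\rmm})\otimes\SA_{Q,W}$, by the Künneth formula over $\CM_Q$, and the projection onto the $u$-component is a map to $\SA_{Q,W}[-2]$. The shift bookkeeping needs care, since $u$ has degree $2$, so that $\partial_u$ lowers cohomological degree by $2$.

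If that sheafification proves awkward, I would fall back on a degree argument. Cohomological integrality and the support lemma identify $\SA_{Q,W}\cong\Sym_{\boxdot}(\BPS_{Q,W}[-1]\otimes\rmH^*_{\BoC^*}(\pt))$. In this description the perverse degree of $\BPS_{Q,W}[-1]$ is minimal and every other piece sits in strictly higher perverse degree. Because $\partial_u$ is defined relative to $\CM_Q$, it is compatible with the perverse filtration $\FP$, and by $\FP$-compatibility $\partial_u$ shifts perverse degree by $+2$. Hence the image of $\FP^1$ would land in $\FP^{-1}=0$.
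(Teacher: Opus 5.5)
Your main argument is correct and is essentially the paper's proof. The paper also sheafifies $A^*$ over $\CM_Q$ (using $\JH\circ A=\JH\circ\mathrm{pr}_2$) to a morphism $\SA_{Q,W}\rightarrow\rmH^*(\rmB\BoG_{\rmm})\otimes\SA_{Q,W}$, then argues by perverse degrees: $u^n$ sits in perverse degree $2n$, so $\ptau^{\leq 1}\SA_{Q,W}=\BPS_{Q,W}[-1]$ can only land in the $u^0$-summand. Your vanishing of morphisms from ${}^{\frakp}\CD^{\leq 1}$ to ${}^{\frakp}\CD^{\geq 3}$ on the $u^1$-component is the same t-structure fact, phrased a little more cleanly.

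Two small slips in your fallback. First, $\partial_u$ lowers the perverse degree by $2$ rather than raising it, as your own conclusion $\FP^1\mapsto\FP^{-1}$ shows. Second, the support lemma plays no role for a general symmetric $(Q,W)$.
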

\begin{proof}
After pushing the vanishing cycle sheaf on $\FM_Q$ to the good moduli space, the pullback morphism $\BoQ_{\FM_Q}^{\vir}\rightarrow A_*(\BoQ_{\rmB\BoG_{\rmm}}\boxtimes\BoQ_{\FM_Q}^{\vir})$ gives a morphism of contructible complexes
\[
 \SA_{Q,W}^{(\psi)}\rightarrow \rmH^*(\mathrm{B}\BoG_{\mathrm{m}})\otimes \SA_{Q,W}^{(\psi)}.
\]
After taking derived global sections, this morphism gives the morphism $A^*$ in critical cohomology. By formal properties of t-structures, the morphism $A^*$ respects the respective perverse filtrations on $\CA_{Q,W}^{(\psi)}$ and $\rmH^*(\mathrm{B}\BoG_{\mathrm{m}})\otimes\CA_{Q,W}^{(\psi)}$. Identifying $\rmH^*(\mathrm{B}\BoG_{\mathrm{m}})$ with $\BoQ[u]$, $u^n$ sits in perverse degree $2n$. Therefore, $A^*(\rmBPS_{Q,W})\subset \FP^{\leq 1}(\rmH^*(\mathrm{B}\BoG_{\mathrm{m}})\otimes\CA_{Q,W}^{(\psi)})=\rmBPS_{Q,W}$. In particular, for $x\in \rmBPS_{Q,W}\subset\CA_{Q,W}^{(\psi)}$, $A^*(x)=x_0$ has no terms with positive $u$-degree. This implies that $\partial_u(x)=0$.

\end{proof}

\subsection{Heisenberg algebra action}

We define the Heisenberg Lie algebra $\Heis$ as the Lie algebra over $\BoQ$ with basis $\{p,q,c\}$ and Lie bracket given by $[q,p]=c$, $[c,p]=[c,q]=0$.

We let $Q$ be a quiver and $\CL$ a determinant line bundle over $\FM_Q$. Since $\FM_Q\simeq X_{Q,\dd}/\GL_{\dd}$ and $X_{Q,\dd}$ is an affine scheme, the restriction $\CL_{\dd}$ of $\CL$ to $\FM_{Q,\dd}$ is given by a one-dimensional representation of $\GL_{\dd}$, that is by some representation $\GL_{\dd}\rightarrow\rmB\BoG_{\rmm}$, $(g_i)_{i\in Q_0}\mapsto \prod_{i\in Q_0}g_i^{r_{\dd,i}}$ for some integers $r_{\dd,i}\in\BoZ$. The compatibility with exact sequences (\Cref{subsection:determinant_line_bundle_Q}) ensures that $r_{\dd,i}$ does not depend on $\dd$ and also by the non-degeneracy of $\CL$ (\Cref{subsection:determinant_line_bundle_Q}) that either $r_{i}>0$ for all $i$, or $r_{i}<0$ for all $i$. For $\dd\in\BoN^{Q_0}$, we let $\abs{\dd}\coloneqq\sum_{i\in Q_0}\dd_i r_{i}$.

\begin{proposition}[{\cite[Proposition~4.3]{davison2022affine}}]
 \label{proposition:heisenberg_algebra_action}
 For any $\dd\in\BoN^{Q_0}$, there is an action of $\Heis$ on $\CA_{Q,W}^{(\psi)}$ where $p$ acts via $u\bullet-$ and $q$ by $\partial_u$. The central charge of the $\Heis$-action on $\CA_{Q,W,\dd}^{(\psi)}$ is $\abs{\dd}$. 
\end{proposition}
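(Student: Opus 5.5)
We have to show that $p=u\bullet-$ and $q=\partial_u$ satisfy the Heisenberg relation $[q,p]=\abs{\dd}\cdot\id$ on $\CA_{Q,W,\dd}^{(\psi)}$. Since $c$ is central in $\Heis$, the action of $c$ is then forced to be multiplication by $\abs{\dd}$, and this is the claimed central charge. Both operators preserve the dimension vector, so we may fix $\dd$ throughout.

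The plan is to compute $\partial_u(u\bullet\alpha)$ from the definitions. By definition $u\bullet\alpha=l^*(u\otimes\alpha)=c_1(\CL)\cdot\alpha$, where the critical cohomology $\rmH^*(\FM_{Q,\dd},\varphi_{\Tr(W)})$ is a module over $\rmH^*(\FM_{Q,\dd})$. The pullback $A^*$ is compatible with this module structure: for $\beta\in\rmH^*(\FM_{Q,\dd})$,
\[
A^*(\beta\cdot\alpha)=A^*(\beta)\cdot A^*(\alpha),
\]
where on the right $\rmH^*(\rmB\BoG_{\rmm}\times\FM_{Q,\dd})$ acts on $\rmH^*(\rmB\BoG_{\rmm})\otimes\rmH^*(\FM_{Q,\dd},\varphi_{\Tr(W)})$. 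This holds because $A^*$ is a pullback of a vanishing cycle sheaf along a map intertwining the functions $\Tr(W)\circ A=0\boxplus\Tr(W)$. We therefore need two inputs.
\begin{enumerate}
\item $A^*(c_1(\CL))=\abs{\dd}\,u\otimes1+1\otimes c_1(\CL)$. This follows because the action map $A$ is induced by the group map $\BoG_{\rmm}\times\GL_{\dd}\to\GL_{\dd}$, $(t,g)\mapsto tg$. Composing with the character $\prod_i\det(g_i)^{r_i}$ gives $t^{\sum_i\dd_ir_i}\prod_i\det(g_i)^{r_i}$, so $A^*\CL\cong\CO(\abs{\dd})\boxtimes\CL$, and we take first Chern classes.
\item $A^*(\alpha)=\sum_n u^n\otimes\alpha_n$ with $\alpha_0=\alpha$. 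This is seen by restricting along the unit $\pt\to\rmB\BoG_{\rmm}$, which is compatible with the comodule structure since $A\circ(\mathrm{unit}\times\id)=\id$.
\end{enumerate}

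Multiplying these two expressions gives
\[
A^*(u\bullet\alpha)=\sum_n\bigl(\abs{\dd}\,u^{n+1}\otimes\alpha_n+u^n\otimes u\bullet\alpha_n\bigr).
\]
Extracting the coefficient of $u^1$ yields
\[
\partial_u(u\bullet\alpha)=\abs{\dd}\,\alpha_0+u\bullet\alpha_1=\abs{\dd}\,\alpha+u\bullet\partial_u\alpha .
\]
Hence $[q,p]=\abs{\dd}\,\id$ on the degree-$\dd$ piece. The relations $[c,p]=[c,q]=0$ are then automatic because $c$ acts by a scalar on each graded piece, so $\Heis$ acts. The $\psi$-twist plays no role, since neither operator involves the multiplication.

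The main obstacle is justifying the multiplicativity of $A^*$ with respect to the cup-product action of ordinary cohomology on vanishing cycle cohomology. This is needed because $A$ is not a product map, and the compatibility has to come from the natural isomorphism $A^*\varphi_{\Tr(W)}\to\varphi_{0\boxplus\Tr(W)}A^*$. One further check is that, on the stacky quotient $X_{Q,\dd}/\GL_{\dd}$, the map $A$ is well defined and smooth. To handle this I would first treat $W=0$, where the statement reduces to ordinary equivariant cohomology and is standard. I would then use the functoriality of vanishing cycles with respect to the cup product with pulled-back classes, as in the sheafified formalism of \cite{davison2020cohomological}.
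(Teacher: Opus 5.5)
Your proof is correct. The key identity $A^*c_1(\CL)=\abs{\dd}\,u\otimes 1+1\otimes c_1(\CL)$ comes from the character $(t,g)\mapsto t^{\abs{\dd}}\prod_i\det(g_i)^{r_i}$. Combined with $\alpha_0=\alpha$ and the multiplicativity of $A^*$, it gives $[\partial_u,u\bullet-]=\abs{\dd}\cdot\id$ on $\CA^{(\psi)}_{Q,W,\dd}$, and since both operators preserve $\dd$ this yields the Heisenberg action with central charge $\abs{\dd}$. The paper does not reprove this statement; it imports it from \cite[Proposition~4.3]{davison2022affine}, and your computation is the natural argument for that result.
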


We have the following lemma that generalizes \Cref{lemma:vanishing_delta_BPS_Lie_algebra}.
\begin{lemma} \label{lemma: heisqlessperversecompatibility}
The action of $\partial_u$ decreases the Less perverse degree by $2$ and action of $u \bullet -$ increases the Less perverse degree by $2$, i.e we have

\[ \partial_u (\mathfrak{L}^{i}(\mathcal{A}_{\Pi_Q,\dd})) \subset \mathfrak{L}^{i-2}(\mathcal{A}_{\Pi_Q,\dd}) \] and 
\[ u \bullet (\mathfrak{L}^{i}(\mathcal{A}_{\Pi_Q,\dd})) \subset \mathfrak{L}^{i+2}(\mathcal{A}_{\Pi_Q,\dd}) \,. \]
\end{lemma}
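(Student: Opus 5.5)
Both operators come from morphisms of constructible complexes on $\CM_{\Pi_Q}$. The plan is to lift them to the sheaf level and then use that morphisms of complexes respect perverse truncations. This extends the proof of \Cref{lemma:vanishing_delta_BPS_Lie_algebra} from $\rmBPS$ to all filtration steps.

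\emph{Step 1: sheaf-level lowering operator.} Work with $\SA_{\Pi_Q}\coloneqq(\JH_{\Pi_Q})_*\BD\BoQ^{\vir}_{\FM_{\Pi_Q}}$. The action map $A\colon\rmB\BoG_{\rmm}\times\FM_{\Pi_Q}\to\FM_{\Pi_Q}$ commutes with $\JH_{\Pi_Q}$, since scaling automorphisms do not change the semisimplification. It therefore gives a morphism of complexes on $\CM_{\Pi_Q}$
\[
\SA_{\Pi_Q}\longrightarrow \rmH^*(\rmB\BoG_{\rmm})\otimes\SA_{\Pi_Q}=\bigoplus_{n\geq0}\SA_{\Pi_Q}[-2n]\,u^n.
\]
For the Borel--Moore/dual sheaf, one can obtain this either by dimensional reduction from the triple quiver, where it is the pullback by $A$ on $\varphi_{\Tr(\tilde W)}\BoQ^{\vir}$ pushed forward along $\pi'$, or directly using $A^!\BD\BoQ^{\vir}\cong\BD(\BoQ_{\rmB\BoG_{\rmm}}\boxtimes\BoQ^{\vir})$ up to the relevant shift. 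Composing with projection onto the $u^1$-component gives a morphism of complexes
\[
\tilde\partial_u\colon\SA_{\Pi_Q}\to\SA_{\Pi_Q}[-2].
\]
Taking $\rmH^*$ recovers $\partial_u$.

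\emph{Step 2: sheaf-level raising operator.} Similarly, $l'=\Det\times\id$ is compatible with $\JH$ over $\CM_{\Pi_Q}$, because $\Det$ is a morphism to $\rmB\BoG_{\rmm}$ over the point. Cup product with $u=c_1(\CL)$ therefore gives a morphism $\SA_{\Pi_Q}\to\SA_{\Pi_Q}[2]$. Concretely, this is multiplication by the Chern class viewed as a map $\BoQ\to\BoQ[2]$ on $\FM_{\Pi_Q}$, pushed forward along $\JH$.

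\emph{Step 3: effect on perverse truncations.} For any morphism $f\colon\CF\to\CG[k]$ of constructible complexes, functoriality of $\ptau^{\leq i}$ gives $\ptau^{\leq i}\CF\to\ptau^{\leq i}(\CG[k])=(\ptau^{\leq i+k}\CG)[k]$. Hence $\rmH^*(f)$ sends $\rmH^*(\ptau^{\leq i}\CF)$ into $\rmH^*(\ptau^{\leq i+k}\CG)$. The less perverse filtration is defined via truncations on $\CM_{\overline{Q}}$, and $\CM_{\Pi_Q}\to\CM_{\overline Q}$ is a closed immersion and so perverse exact, so we may work on either space. Applying this observation with $k=-2$ to $\tilde\partial_u$ gives $\partial_u(\FL^i)\subset\FL^{i-2}$, and applying it with $k=2$ to $u\bullet-$ gives $u\bullet\FL^i\subset\FL^{i+2}$.

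The main obstacle is Step 1. One must check that the shifts are such that the $u^n$-coefficient lands in $\SA[-2n]$, which is the same degree bookkeeping as "$u^n$ sits in perverse degree $2n$" in \Cref{lemma:vanishing_delta_BPS_Lie_algebra}. One must also check that the sheaf-level coaction is really defined over $\CM_{\Pi_Q}$, i.e.\ that $\JH\circ A=\JH\circ\mathrm{pr}_2$. For the Borel--Moore version I would first try deducing everything from the triple quiver via the isomorphism $\SA_{\Pi_Q}\simeq\pi'_*\SA_{\tilde Q,\tilde W}$, since $\pi'_*$ is a functor and transports the morphism of complexes.
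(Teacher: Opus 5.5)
Your proposal is correct and follows essentially the paper's route. You lift $A^*$ (and cup product with $u$) to morphisms of constructible complexes over the good moduli space, and use $\JH\circ A=\JH\circ\mathrm{pr}_2$ together with K\"unneth to identify the target with $\rmH^*(\rmB\BoG_{\rmm})\otimes\SA$; the paper does this on the triple quiver and transports it along $\pi'_*$ via dimensional reduction. You then conclude because $\ptau^{\leq i}$ of a map $\CF\to\CG[k]$ lands in $(\ptau^{\leq i+k}\CG)[k]$. The only difference is that you spell out the raising-operator case $u\bullet-$ yourself, whereas the paper defers it to Botta--Davison.
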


\begin{proof}
The morphism $A^{*}$ in \Cref{subsection:lowering_operators} is defined by taking derived global sections on the morphism of constructible complexes
\[ a\colon(\JH_{\tilde{Q}})_{*}\varphi_{\mathrm{Tr}(W)} (\BoQ^{\mathrm{vir}}_{\mathfrak{M}_{\tilde{Q}}}) \rightarrow (\JH)_{*} A_{*}(\BQ_{\mathrm{B}\BoG_{\mathrm{m}}} \boxtimes \varphi_{\mathrm{Tr}(W)}(\mathbb{Q}^{\mathrm{vir}}_{\mathfrak{M}_{\tilde{Q}}})). \]

Since the morphism $A$ commutes with the semisimplification, we have a commutative diagram

\[\begin{tikzcd}
	{\mathrm{B}\BoG_{\mathrm{m}} \times \mathfrak{M}_{\tilde{Q}}} & {\mathfrak{M}_{\tilde{Q}}} \\
	{\mathfrak{M}_{\tilde{Q}}} & {\mathcal{M}_{\tilde{Q}}} \\
	{\mathfrak{M}_{\overline{Q}}} & {\mathcal{M}_{\overline{Q}}}
	\arrow["A", from=1-1, to=1-2]
	\arrow["{{\pi_{2}}}", from=1-1, to=2-1]
	\arrow["{{\JH_{\tilde{Q}}}}", from=1-2, to=2-2]
	\arrow["{{\JH}_{\tilde{Q}}}", from=2-1, to=2-2]
	\arrow["\pi", from=2-1, to=3-1]
	\arrow["{\pi^{\prime}}", from=2-2, to=3-2]
	\arrow["{\JH_{\overline{Q}}}", from=3-1, to=3-2]\,.
\end{tikzcd}\]

Thus we have 
\[ (\JH_{\tilde{Q}})_{*} A_{*}(\BQ_{\mathrm{B}\BoG_{\mathrm{m}}} \boxtimes \varphi_{\mathrm{Tr}(W)}(\mathbb{Q}^{\mathrm{vir}}_{\mathfrak{M}_{\tilde{Q}}})) \cong \rmH^*(\mathrm{B}\BoG_{\mathrm{m}},\mathbb{Q}) \otimes (\JH_{\tilde{Q}})_{*}\varphi_{\mathrm{Tr}(W)} (\BQ^{\mathrm{vir}}_{\mathfrak{M}_{\tilde{Q}}}) \]

Pushing forward by the forgetful morphism $\pi^{\prime}$ and using dimensional reduction (\Cref{equation:dimension_reduction_sheaf}), we get an isomorphism of complexes
\[ \pi^{\prime}_{*}(\JH_{\tilde{Q}})_{*} A_{*}(\BQ_{\mathrm{B}\BoG_{\mathrm{m}}} \boxtimes \varphi_{\mathrm{Tr}(W)}(\mathbb{Q}^{\mathrm{vir}}_{\mathfrak{M}_{\tilde{Q}}})) \cong \rmH^*(\mathrm{B}\BoG_{\mathrm{m}},\mathbb{Q}) \otimes  (\JH_{\overline{Q}})_{*}\mathbb{D}\mathbf{Q}^{\vir}_{\mathfrak{M}_{\Pi_Q,\bd}} \]

Thus, the morphism \[ \ptau^{\leq i} \pi^{\prime}_{*} a \colon \ptau^{\leq i} (\JH_{\overline{Q}})_{*}\mathbb{D}\mathbf{Q}^{\vir}_{\mathfrak{M}_{\Pi_Q,\bd}}  \rightarrow \ptau^{\leq i} \pi^{\prime}_{*}( (\JH_{\overline{Q}})_{*} A_{*}(\BQ_{\mathrm{B}\BoG_{\mathrm{m}}} \boxtimes \varphi_{\mathrm{Tr}(W)}(\mathbb{Q}^{\mathrm{vir}}_{\mathfrak{M}_{\tilde{Q}}}))) \] factors through

\[ \oplus_{j \geq 0} \rmH^{2j}(\mathrm{B}\BoG_{\mathrm{m}},\mathbb{Q}) \otimes \tau^{\leq i-2j} \JH_{*}\mathbb{D}\mathbf{Q}^{\vir}_{\mathfrak{M}_{\Pi_Q}} \,. \]	

Taking derived sections, the first claim follows. The second claim is similar and is proved in \cite[Proposition 5.7]{botta2023okounkov}.

\end{proof}

\begin{lemma}
 \label{lemma:injectivity}
 The action of $\partial_u$ on $\frakn_{\Pi_Q}^{+,\rmBPS}\otimes_{\BoQ}(u\BoQ[u])$ is injective. Moreover, for $\dd\in\BoN^{Q_0}$, $x\in\rmBPS_{Q,W,\dd}$ and $n\in\BoN$, $\partial_u(xu^n)=n\abs{\dd}xu^{n-1}$.
\end{lemma}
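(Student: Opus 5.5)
The plan is to compute $\partial_u(xu^n)$ directly from the Heisenberg relations of \Cref{proposition:heisenberg_algebra_action}, using the vanishing result \Cref{lemma:vanishing_delta_BPS_Lie_algebra} as the base case; injectivity then follows at once. Throughout, $xu^n$ means $u\bullet(u\bullet\cdots(u\bullet x))$ with $n$ factors, i.e. the image of $x\otimes u^n$ under the $\rmH^*_{\BoC^*}(\pt)$-action used to embed $\frakn_{\Pi_Q}^{+,\rmBPS}\otimes\rmH^*_{\BoC^*}(\pt)$ into the CoHA. We work through dimensional reduction with the triple quiver $(\tilde{Q},\tilde{W})$, so that \Cref{proposition:heisenberg_algebra_action} applies. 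Note that $\Det$ pulls back along $\pi$, so the determinant line bundle on $\FM_{\tilde{Q}}$ has the same weights $r_i$ as on $\FM_{\Pi_Q}$, and the central charge on the $\dd$-graded piece is $\abs{\dd}$.

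First, by \Cref{lemma:vanishing_delta_BPS_Lie_algebra} applied to $(\tilde{Q},\tilde{W})$, we have $\partial_u(x)=0$ for $x\in\rmBPS_{\dd}$. I should check that the elements $x\in\frakn^{+,\rmBPS}_{\Pi_Q,\dd}$ are exactly the elements of perverse degree $\leq 1$ for the triple quiver, so that the lemma applies. This follows from \Cref{lemma:support_lemma_preprojective}, since $\rmH^*(\BoA^1,\BoQ^{\vir}_{\BoA^1})$ is one-dimensional.

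Second, the Heisenberg relation $[q,p]=c$ with central charge $\abs{\dd}$ gives, on $\CA_{\dd}$,
\[
\partial_u(u\bullet y)-u\bullet\partial_u(y)=\abs{\dd}\,y .
\]
This is the step to be careful about: I must make sure of the sign and normalization convention, namely that $c$ acts by $+\abs{\dd}$ rather than $-\abs{\dd}$ with this ordering. Here I would rely on the statement of \Cref{proposition:heisenberg_algebra_action} and the paper's convention.

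Third, induction on $n$. The case $n=0$ is the first step. Assuming $\partial_u(xu^{n-1})=(n-1)\abs{\dd}xu^{n-2}$, the commutation relation gives
\[
\partial_u(xu^n)=u\bullet\partial_u(xu^{n-1})+\abs{\dd}xu^{n-1}=(n-1)\abs{\dd}xu^{n-1}+\abs{\dd}xu^{n-1}=n\abs{\dd}xu^{n-1}.
\]

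For injectivity, decompose an element of $\frakn^{+,\rmBPS}_{\Pi_Q}\otimes u\BoQ[u]$ by dimension vector and $u$-degree: $z=\sum_{\dd,n\geq1}x_{\dd,n}u^n$. Then $\partial_u z=\sum n\abs{\dd}x_{\dd,n}u^{n-1}$. We have $\abs{\dd}\neq0$ for $\dd\neq0$, since all $r_i$ share a sign, and $\frakn_0=0$. Moreover, $\frakn^{+,\rmBPS}\otimes\BoQ[u]\to\CA$ is injective with the $u$-degrees independent, by the integrality isomorphism \eqref{equation:integrality_preprojective}. Hence $\partial_u z=0$ forces every $x_{\dd,n}=0$.
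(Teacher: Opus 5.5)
Your proposal is correct and follows essentially the same route as the paper. It obtains $\partial_u(x)=0$ from \Cref{lemma:vanishing_delta_BPS_Lie_algebra}, then inducts on $n$ via $\partial_u(xu^n)=[\partial_u,u](xu^{n-1})+u\bullet\partial_u(xu^{n-1})$ with $[\partial_u,u]$ acting by $+\abs{\dd}$ (the sign convention you flag is the one the paper uses), and reads off injectivity from $\abs{\dd}\neq 0$ together with the injectivity of $\frakn_{\Pi_Q}^{+,\rmBPS}\otimes\BoQ[u]\to\CA_{\Pi_Q}^{\psi}$; the only cosmetic difference is that you start the induction at $n=0$ rather than $n=1$.
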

\begin{proof}
The derivation $\partial_u$ sends $\frakn_{\Pi_Q}^{+,\rmBPS}\otimes_{\BoQ}\BoQ[u]_{\mathrm{deg}\leq n}$ to $\frakn_{\Pi_Q}^{+,\rmBPS}\otimes_{\BoQ}\BoQ[u]_{\mathrm{deg}\leq n-1}$ for any $n\geq 1$ and therefore, for the injectivity, it suffices to show that for $x\in\frakn_{\Pi_Q,\dd}^+$ and $n\geq 1$, writing $\partial_u(xu^n)=\sum_{j=0}^{n-1}x'_ju^j$, the coefficient $x'_{n-1}$ is non-zero. One may indeed prove by induction on $n\geq 1$ that $\partial_u(xu^{n})=n\abs{\dd}xu^{n-1}$, which would conclude. For $n=1$, the formula may be written $\partial_u(xu)=\abs{\dd}x$, which follows from $\partial_u(xu)=[\partial_u,u]x+u\partial_u(x)=[\partial_u,u]x=\abs{\dd}x$, using \Cref{proposition:heisenberg_algebra_action}, since $\partial_u(x)=0$ (\Cref{lemma:vanishing_delta_BPS_Lie_algebra}). The formula for general $n$ follows by induction, \Cref{proposition:heisenberg_algebra_action} and
\[
 \partial_u(xu^n)=[\partial_u,u](xu^{n-1})+u\bullet \partial_u(xu^{n-1})\,.
\]

\end{proof}

\subsection{The less perverse associated graded of the preprojective CoHA}

\begin{theorem}
\label{theorem:main_degeneration_affinized_BPS}
Let $Q$ be a quiver and $\mathfrak{n}_{\Pi_Q}^{+,\rmBPS}$ the BPS Lie algebra for the preprojective algebra $\Pi_Q$ of $Q$. We let $\mathfrak{n}_{\Pi_Q}^{+,\rmBPS}[u]\coloneqq \mathfrak{n}_{\Pi_Q}^{+,\rmBPS}\otimes \rmH^*_{\BoC^*}(\pt)$ be the affinized Lie algebra where $u$ is the first Chern class of a determinant line bundle $\CL$ on $\FM_{\Pi_Q}$. Then, $\mathfrak{n}_{\Pi_Q}^{+,\rmBPS}[u]\subset\CA_{\Pi_Q}^{\psi}$ is a subspace that does not depend on the choice of $\CL$, it is stable under the less perversely degenerated Lie bracket ${^{\frakp}[-,-]}$ on $\rmH^*(\SA_{\Pi_Q}^{\psi})=\CA_{\Pi_Q}^{\psi}$ and for any $\dd,\ee\in\BoN^{Q_0}$, $x\in \mathfrak{n}_{\Pi_Q,\dd}^{+,\rmBPS}$, $y\in \mathfrak{n}_{\Pi_Q,\ee}^{+,\rmBPS}$, $m,n\in\BoN$, we have
\[
 ^{\frakp}[xu^m,yu^n]=\frac{\abs{\dd}^m\abs{\ee}^n}{\abs{\dd+\ee}^{m+n}}[x,y]u^{m+n}\,,
\]
where we denote by $[-,-]$ the Lie bracket on the BPS Lie algebra $\frakn_{\Pi_Q}^{+,\rmBPS}$.
\end{theorem}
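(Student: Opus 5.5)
The plan is to use the Heisenberg action (\Cref{proposition:heisenberg_algebra_action}) together with \Cref{lemma: heisqlessperversecompatibility}, which says that $\partial_u$ and $u\bullet-$ are derivations shifting the less perverse degree by $\mp 2$. They therefore descend to derivations of the less perversely degenerated algebra $\Gr^{\FL}(\CA_{\Pi_Q}^{\psi})$, and in particular of its commutator bracket ${^\frakp[-,-]}$.

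\emph{Independence of $\CL$.} I first check that $\frakn_{\Pi_Q}^{+,\rmBPS}[u]\subset\CA_{\Pi_Q}^{\psi}$ does not depend on $\CL$. Its image is the image of the morphism $\BPS_{\Pi_Q}\otimes\rmH^*_{\BoC^*}(\pt)\to\SA_{\Pi_Q}^{\psi}$, and the summand $\BPS_{\Pi_Q}u^n$ is the part of $\pH^{2n}$ lying in the lowest piece of $\Sym_{\boxdot}$. I will argue, via \Cref{equation:integrality_preprojective}, that this is intrinsic on supports: it is the summand of $\pH^{2n}(\SA_{\Pi_Q})$ that is not in the image of products of positive-dimensional pieces. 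Changing $\CL$ multiplies $u$ on the $\dd$-component by a nonzero scalar $\abs{\dd}'/\abs{\dd}$ on this summand, modulo products, and modulo lower filtration degrees. An alternative route is to use the description as primitive elements of $\mathfrak{g}^{\mathrm{aff}}_{\tilde Q,\tilde W}$, which is manifestly independent of $\CL$, and compare. For the bracket statement I then only need to work in $\Gr^{\FL}$.

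\emph{Stability and the degree-zero case.} Since $u\bullet-$ is a derivation of $\Gr^{\FL}$, we have ${^\frakp[xu^m,yu^n]}$ with $x,y$ BPS. By \Cref{subsection:affinized_BPS_Lie_algebra}, $\frakn[u]\cong\mathfrak{g}^{\mathrm{aff}}$ is closed under the actual commutator. Hence $[xu^m,yu^n]\in\frakn[u]$, and by perverse degree it lies in $\bigoplus_{k\le m+n}\frakn u^k$. Its image in $\Gr^{\FL}$ is the coefficient $c\,u^{m+n}$ with $c\in\frakn_{\dd+\ee}$, so the subspace is stable. For $m=n=0$ we get $c=[x,y]$ by definition of the BPS bracket.

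\emph{The general formula.} Write ${^\frakp[xu^m,yu^n]}=c_{m,n}u^{m+n}$. Applying the derivation $\partial_u$ and using \Cref{lemma:injectivity} (which holds in $\Gr^{\FL}$ as well, since $\partial_u$ there is the top-degree part) gives
\[
(m+n)\abs{\dd+\ee}\,c_{m,n}=m\abs{\dd}\,c_{m-1,n}+n\abs{\ee}\,c_{m,n-1}.
\]
Together with $c_{0,0}=[x,y]$, this recursion determines all $c_{m,n}$ uniquely, because $\abs{\dd+\ee}\neq0$. One then checks directly that $c_{m,n}=\abs{\dd}^m\abs{\ee}^n\abs{\dd+\ee}^{-m-n}[x,y]$ satisfies it, since $m\abs{\dd}\cdot\abs{\dd}^{m-1}\abs{\ee}^n+n\abs{\ee}\cdot\abs{\dd}^m\abs{\ee}^{n-1}=(m+n)\abs{\dd}^m\abs{\ee}^n$.

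\emph{Main obstacle.} The delicate step is justifying that $\partial_u$ acts on $\Gr^{\FL}$ compatibly with the identification $\frakn[u]\subset\Gr^{\FL}$. Concretely, I need that $\partial_u(xu^k)\equiv k\abs{\dd}xu^{k-1}$ modulo $\FL^{2k-3}$, and that $\partial_u$ of elements in $\FL^{2k-1}$ lands in $\FL^{2k-3}$. Both follow from \Cref{lemma: heisqlessperversecompatibility} and \Cref{lemma:injectivity}. The remaining care is the $\psi$-sign twist, which is compatible because the derivations preserve dimension vectors.
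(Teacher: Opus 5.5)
This is essentially the paper's proof. Stability of $\frakn_{\Pi_Q}^{+,\rmBPS}[u]$ comes from its realisation as the primitive elements $\mathfrak{g}^{\mathrm{aff}}_{\tilde{Q},\tilde{W}}$, the less perverse degree bound gives ${^\frakp[xu^m,yu^n]}=c_{m,n}u^{m+n}$, and applying the derivation $\partial_u$ together with \Cref{lemma:injectivity} yields the recursion anchored at $c_{0,0}=[x,y]$. The one place to tighten is independence of $\CL$, together with the role of the twist $\psi$. Your support-based argument only pins the subspace down modulo products and lower pieces of $\FL$, so drop it in favour of the primitive-element argument you list as the alternative, which is the one the paper uses. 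Also, $\Delta^{\chi}$ is only compatible with the $\chi$-twisted product, so general $\psi$ is not handled by your remark about dimension vectors; it is handled by $[x,y]_{\psi}=(-1)^{\phi(\dd,\ee)}[x,y]_{\chi}$, where $\phi=\psi-\chi$ is symmetric.
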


\begin{proof}
The existence of the affinized BPS Lie algebra structure on $\mathfrak{n}^{+,\rmBPS}_{\Pi_Q}[u]$, that is that $\mathfrak{n}^{+,\rmBPS}_{\Pi_Q}[u]\subset\CA_{\Pi_Q}^{\psi}$ is realised as the space of primitive elements for a cocommutative coproduct on $\CA_{\Pi_Q}^{\chi}$ (Section \ref{subsection:affinized_BPS_Lie_algebra}). This implies that $\mathfrak{n}_{\Pi_Q}^{+,\rmBPS}[u]$ is stable under the Lie bracket on $\CA_{\Pi_Q}^{\psi}$ and therefore under the perversely degenerated Lie bracket ${^{\frakp}[-,-]}$. In particular, this also proves independence on the determinant line bundle, since the set $\mathfrak{n}^{+,\rmBPS}_{\Pi_Q}[u]$ is still primitive for the coproduct $\Delta^{\chi}$. Let $x,y\in \mathfrak{n}_{\Pi_Q}^{+,\rmBPS}$ and $m,n\in\BoN$. Then, $xu^m$ and $yu^n$ respectively sit in less perverse degrees $2m$ and $2n$. Therefore, the Lie bracket $[xu^m,yu^n]$ sits in cohomological degree $\leq 2(m+n)$ and by Lemma \ref{lemma: heisqlessperversecompatibility}, the perversely degenerated Lie bracket $^\frakp[xu^m,yu^n]$ sits in less perverse degree $\leq 2(m+n)$. This means that there exists $z\in \frakn_{\Pi_Q}^{+,\rmBPS}$ such that $^\frakp[xu^m,yu^n]=zu^{m+n}$. It remains to identify $z$ with $\frac{\abs{\dd}^m\abs{\ee}^n}{\abs{\dd+\ee}^{m+n}}[x,y]\in\frakn_{\Pi_Q}^{+,\rmBPS}$, by induction on $m+n$.

If $m=n=0$, then $z=[x,y]$ by definition of the BPS Lie algebra. If $m>0$ or $n>0$, then $m+n>0$ and $zu^{m+n}$ is determined by $\partial_u (zu^{m+n})$ by \Cref{lemma:injectivity}. Since $\partial_u$ acts as a derivation, $\partial_u(zu^{m+n})={^{\frakp}[\partial_ u(xu^m),yu^n]}+{^{\frakp}[xu^m,\partial_u(yu^n)]}$. We have by \Cref{lemma:injectivity}
\[
 \partial_u(zu^{m+n})=(m+n)\abs{\dd+\ee}zu^{m+n-1}\,,
\]
and by \Cref{lemma:injectivity} and induction on $m+n$,
\[
 {^{\frakp}[\partial_ u(xu^m),yu^n]}={^{\frakp}[m\abs{\dd}xu^{m-1},yu^n]}=m\abs{\dd}\frac{\abs{\dd}^{m-1}\abs{\ee}^n}{\abs{\dd+\ee}^{m+n-1}}[x,y]u^{m+n-1}
\]
and
\[
 {^{\frakp}[xu^m,\partial_u(yu^n)]}={^{\frakp}[xu^m,\abs{\ee}nyu^{n-1}]}=n\abs{\ee}\frac{\abs{\dd}^m\abs{\ee}^{n-1}}{\abs{\dd+\ee}^{m+n-1}}[x,y]u^{m+n-1}\,.
\]
By comparing the formulas, we obtain $z=\frac{\abs{\dd}^m\abs{\ee}^n}{\abs{\dd+\ee}^{m+n}}[x,y]$ as wanted.
\end{proof}

\begin{corollary}
\label{corollary:different_psi_twist}
Let $\psi$ be a bilinear form as in \Cref{subsection:cohas}. Then, the subspace $\frakn_{\Pi_Q}^{+,\rmBPS}\otimes\rmH^*_{\BoC^*}(\pt)\subset\CA_{\Pi_Q}^{\psi}$ is closed under the less perversely degenerated Lie bracket, which is given by the formula in \Cref{theorem:main_degeneration_affinized_BPS}.
\end{corollary}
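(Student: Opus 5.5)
The plan is to reduce to the case $\psi=\chi$ treated in \Cref{theorem:main_degeneration_affinized_BPS}. Any two admissible forms $\psi,\psi'$ differ by a symmetric bilinear form $\sigma\colon\BoN^{Q_0}\times\BoN^{Q_0}\rightarrow\BoZ/2\BoZ$, since both satisfy \Cref{equation:equality_psi}. Write $\psi=\chi+\sigma$. The products then compare as $\frakm^{\psi}_{\dd,\ee}=(-1)^{\sigma(\dd,\ee)}\frakm^{\chi}_{\dd,\ee}$. Because $\sigma$ is symmetric, the commutator brackets compare in the same way:
\[
[a,b]_{\psi}=(-1)^{\sigma(\dd,\ee)}[a,b]_{\chi}
\]
for $a\in\CA_{\Pi_Q,\dd}$ and $b\in\CA_{\Pi_Q,\ee}$. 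Here the Koszul sign is unchanged, since it depends only on cohomological degrees.

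The first step is to observe that the sign twist happens at the sheaf level. It multiplies each component $\frakm_{\dd,\ee}$ by a scalar, so it is compatible with the perverse truncations $\ptau^{\leq i}$. Hence the less perverse filtration $\FL$ is the same for all $\psi$, and the degenerated brackets satisfy
\[
{^{\frakp}[-,-]_{\psi}}=(-1)^{\sigma(\dd,\ee)}\,{^{\frakp}[-,-]_{\chi}}
\]
on $\Gr^{\FL}$. The subspace $\frakn_{\Pi_Q}^{+,\rmBPS}\otimes\rmH^*_{\BoC^*}(\pt)$ is defined as the image of $\BPS_{\Pi_Q}\otimes\rmH^*_{\BoC^*}(\pt)$ under the action of $u$. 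This action does not involve the product, so the subspace is independent of $\psi$. Combined with the formula for ${^{\frakp}[-,-]_{\chi}}$, this shows the subspace is closed under ${^{\frakp}[-,-]_{\psi}}$.

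The second step is to check that the formula keeps the same shape. Its right-hand side involves $[x,y]$, the BPS bracket, and this bracket is itself defined from $\frakm^{\psi}$. So the BPS bracket for $\psi$ equals $(-1)^{\sigma(\dd,\ee)}$ times the BPS bracket for $\chi$. Both sides of the identity therefore pick up the same sign, and the formula of \Cref{theorem:main_degeneration_affinized_BPS} holds verbatim for $\psi$.

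I expect the main point requiring care to be that the Heisenberg-operator argument is genuinely $\psi$-independent. The derivation properties of $u\bullet-$ and $\partial_u$ used in \Cref{lemma:injectivity} hold for any twist, since these operators are defined without the product. Alternatively, one can simply rerun the inductive proof of the theorem with $\psi$ in place of $\chi$. The only $\chi$-specific input there is closure of the subspace under the actual bracket, which transfers by the sign relation above.
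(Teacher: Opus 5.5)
Your proposal is correct and follows essentially the same route as the paper. Both write $\psi=\chi+\phi$ with $\phi$ symmetric and use $[x,y]_{\psi}=(-1)^{\phi(\dd,\ee)}[x,y]_{\chi}$ to transfer closure and the formula from \Cref{theorem:main_degeneration_affinized_BPS}; your explicit observations that the less perverse filtration, the subspace, and the BPS bracket on the right-hand side all transform compatibly are what the paper leaves implicit. (Minor point: $u\bullet-$ and $\partial_u$ remain derivations for every twist because they preserve the $\BoN^{Q_0}$-grading, not simply because they are defined without the product.)
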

\begin{proof}
As noted in \Cref{subsection:cohas}, the bilinear forms $\psi$ and the Euler form $\chi$ differ by a symmetric bilinear form, which we denote by $\phi$. Then, for any $\dd,\ee\in\BoN^{Q_0}$, $x\in\frakn_{\Pi_Q,\dd}^{+,\rmBPS}[u]$ and $y\in\frakn_{\Pi_Q,\ee}^{+,\rmBPS}[u]$, $[x,y]_{\psi}=(-1)^{\phi(\dd,\ee)}[x,y]_{\chi}$. This proves the stability of $\frakn_{\Pi_Q}^{+,\rmBPS}[u]$ under $^{\frakp}[-,-]_{\psi}$ and also the formula for the Lie bracket by using \Cref{theorem:main_degeneration_affinized_BPS}.
\end{proof}

\begin{corollary}
\label{corollary:coha=envelopingalgebra}
For $\dd\in\BoN^{Q_0}$, we set $u'_{\dd}=\frac{u}{\abs{\dd}}$. Then, the Lie bracket on $\frakn_{\Pi_Q}^{+,\rmBPS}[u']\coloneqq\bigoplus_{\dd\in\BoN^{Q_0}}\frakn_{\Pi_Q}^{+,\rmBPS}[u'_{\dd}]$ is the $\BoQ[u']$-linear extension of the Lie bracket on $\frakn_{\Pi_Q}^{+,\rmBPS}$.
\end{corollary}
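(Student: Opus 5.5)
This is a direct computation from the bracket formula of \Cref{theorem:main_degeneration_affinized_BPS}: we rewrite that formula in the rescaled variables $u'_{\dd}$ and check that all the $\abs{-}$-factors cancel. For $x\in\frakn_{\Pi_Q,\dd}^{+,\rmBPS}$ and $m\in\BoN$, set $x u'^m_{\dd}\coloneqq \abs{\dd}^{-m}xu^m$. This is legitimate because $\abs{\dd}\neq 0$ for $\dd\neq 0$, by positivity of the determinant line bundle (\Cref{lemma:weight_det_lb_preproj}). The BPS Lie algebra vanishes in dimension $0$, so only $\dd\neq0$ occurs. On each graded piece, $\frakn_{\Pi_Q,\dd}^{+,\rmBPS}[u'_{\dd}]$ is therefore the same subspace as $\frakn_{\Pi_Q,\dd}^{+,\rmBPS}[u]$, with a rescaled basis in $u$-degree.

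Next I compute the bracket. Take $x\in\frakn^{+,\rmBPS}_{\Pi_Q,\dd}$, $y\in\frakn^{+,\rmBPS}_{\Pi_Q,\ee}$ and $m,n\in\BoN$. By \Cref{theorem:main_degeneration_affinized_BPS}, extended to general $\psi$ by \Cref{corollary:different_psi_twist},
\[
 {^\frakp[xu'^m_{\dd},yu'^n_{\ee}]}=\abs{\dd}^{-m}\abs{\ee}^{-n}\frac{\abs{\dd}^m\abs{\ee}^n}{\abs{\dd+\ee}^{m+n}}[x,y]u^{m+n}=[x,y]\,\frac{u^{m+n}}{\abs{\dd+\ee}^{m+n}}=[x,y]\,u'^{m+n}_{\dd+\ee}.
\]
The bracket $[x,y]$ lies in degree $\dd+\ee$, and $\abs{-}$ is a monoid morphism, so the right-hand side is exactly the $\BoQ[u']$-linear extension of $[-,-]$. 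Here we read $u'$ as a formal variable, identified in degree $\dd$ with $u'_{\dd}$.

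The only point needing care is interpreting ``$\BoQ[u']$-linear'' correctly. Multiplication by $u'$ is not a single global operator; it is degreewise multiplication by $u/\abs{\dd}$. With this reading, the identity above shows that $[u'a,b]=[a,u'b]=u'[a,b]$ for homogeneous elements, which is what $\BoQ[u']$-linearity means here. There is no genuine obstacle; the corollary is a reformulation of the twisted bracket formula.
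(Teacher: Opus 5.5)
Your proposal is correct and follows the same route as the paper, which simply states that the corollary follows directly from \Cref{theorem:main_degeneration_affinized_BPS}. You spell out the omitted computation: rescaling by $\abs{\dd}^{-m}\abs{\ee}^{-n}$ cancels the twist and yields $[x,y]\,u'^{m+n}_{\dd+\ee}$. You also check $\abs{\dd}\neq 0$ for $\dd\neq 0$ and explain what $\BoQ[u']$-linearity means degreewise.
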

\begin{proof}
This follows directly from \Cref{theorem:main_degeneration_affinized_BPS}.
\end{proof}

\begin{corollary} \label{corollary:PBW_affinized_BPS}
 In the situation of \Cref{theorem:main_degeneration_affinized_BPS}, there is an isomorphism of algebras
 \[
  \bfU(\frakn_{\Pi_Q}^{+,\rmBPS}[u])\rightarrow (\rmH^*(\SA_{\Pi_Q}^{\psi})=\CA_{\Pi_Q}^{\psi},{^{\frakp}\mathfrak{m}^{\psi}})=\Gr^{\FL}\CA_{\Pi_Q}^{\psi}
 \]
where ${^{\frakp}\mathfrak{m}^{\psi}}$ denotes the perversely degenerated CoHA product on $\rmH^*(\SA_{\Pi_Q}^{\psi})$ and $\frakn_{\Pi_Q}^{+,\rmBPS}[u]$ has the $\abs{-}$-twisted $\BoQ[u]$-linear extension of the Lie bracket on $\frakn_{\Pi_Q}^{+,\rmBPS}$, defined by the formula in \Cref{theorem:main_degeneration_affinized_BPS}.
\end{corollary}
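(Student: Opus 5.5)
The plan is to combine \Cref{theorem:main_degeneration_affinized_BPS} with the universal property of the enveloping algebra and a PBW dimension count.

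\textbf{Step 1: construct the map.} By \Cref{theorem:main_degeneration_affinized_BPS} and \Cref{corollary:different_psi_twist}, the subspace $\frakn_{\Pi_Q}^{+,\rmBPS}[u]\subset\CA_{\Pi_Q}^{\psi}$, placed in less perverse degree $2m$ for $xu^m$, is closed under ${^\frakp[-,-]}$. The resulting Lie algebra is exactly the $\abs{-}$-twisted extension in the statement. This inclusion is a Lie algebra morphism into $(\Gr^{\FL}\CA_{\Pi_Q}^{\psi},{^\frakp[-,-]})$. The universal property of $\bfU$ then gives an algebra morphism
\[
\Phi\colon\bfU(\frakn_{\Pi_Q}^{+,\rmBPS}[u])\rightarrow\Gr^{\FL}\CA_{\Pi_Q}^{\psi}.
\]
This morphism respects the $\BoN^{Q_0}$-grading, the cohomological grading, and the less perverse grading (with $u$ in degree $2$).

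\textbf{Step 2: bijectivity via a symmetric algebra comparison.} Both sides carry the PBW filtration: on $\bfU$ by word length, and on the target by the sheaf-level filtration coming from \Cref{equation:integrality_preprojective}. I would compare the associated gradeds. By PBW, $\Gr\bfU(\frakn_{\Pi_Q}^{+,\rmBPS}[u])\cong\Sym(\frakn_{\Pi_Q}^{+,\rmBPS}[u])$.

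For the target, \Cref{equation:integrality_preprojective} gives a splitting of complexes
\[
(\JH)_*\BD\BoQ^{\vir}\cong\Sym_{\boxdot}(\BPS_{\Pi_Q}\otimes\rmH^*_{\BoC^*}(\pt)).
\]
The isomorphism is realized by symmetrized products of CoHA elements, as in the proof of \Cref{theorem:asso_graded_perverse_filt}, transported by dimensional reduction. Taking total perverse cohomology and global sections, and using purity of the BPS sheaf cohomology for the Künneth step as in \Cref{corollary:equivariant_pwb_supercommutativity}, I would conclude two things:
\begin{enumerate}[(i)]
\item $\Gr^{\FL}\CA_{\Pi_Q}^{\psi}$ has the same graded dimensions as $\Sym(\frakn_{\Pi_Q}^{+,\rmBPS}[u])$, graded piece by graded piece in $(\dd,\text{cohomological degree})$;
\item the map $\Sym(\frakn_{\Pi_Q}^{+,\rmBPS}[u])\to\Gr^{\FL}\CA_{\Pi_Q}^{\psi}$, $x_1\cdots x_k\mapsto$ symmetrized ${^\frakp\frakm}$-product, is a linear isomorphism.
\end{enumerate}
Since a PBW basis maps onto ordered products, this says exactly that $\Phi$ sends a PBW basis to a basis. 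The graded pieces are finite-dimensional: for fixed $\dd$, each cohomological degree is finite-dimensional, and cohomology is bounded below. So surjectivity together with the dimension equality gives bijectivity.

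\textbf{Main obstacle.} The hard step is (ii). The integrality isomorphism is stated via the CoHA product on the \emph{original} algebra. One must check that, after passing to $\pH$ (equivalently $\Gr^{\FL}$), the map is still given by the degenerated product ${^\frakp\frakm}$ and remains an isomorphism.

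I would argue at sheaf level. The morphism $\Sym_{\boxdot}(\BPS\otimes\rmH^*_{\BoC^*})\to\SA_{\Pi_Q}^{\psi}$ built from iterated multiplication is an isomorphism. Since $\pH$ is monoidal ($\oplus$ is finite), applying $\pH$ yields the morphism built from $\pH(\frakm)$, which is still an isomorphism because $\pH$ is a functor. Alternatively, one can avoid the sheaf argument by using \Cref{corollary:coha=envelopingalgebra} and the algebra isomorphism $\bfU(\mathfrak g^{\mathrm{aff}})\cong\CA^{\psi}$ from \Cref{subsection:affinized_BPS_Lie_algebra}, filtered by $\FL$. This realizes $\Gr^{\FL}$ of a PBW basis of $\bfU(\mathfrak g^{\mathrm{aff}})$ as the $\Phi$-image of a PBW basis. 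In that route, the point to check is that the less perverse degree of an ordered product of elements $x_iu^{m_i}$ is exactly $\sum 2m_i$, with nonzero leading term.
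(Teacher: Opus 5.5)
Your proposal is correct and follows essentially the same route as the paper: the inclusion $\frakn_{\Pi_Q}^{+,\rmBPS}[u]\to\Gr^{\FL}\CA_{\Pi_Q}^{\psi}$ is a Lie algebra morphism by \Cref{theorem:main_degeneration_affinized_BPS} and \Cref{corollary:different_psi_twist}, the universal property gives $\bfU(\frakn_{\Pi_Q}^{+,\rmBPS}[u])\to\Gr^{\FL}\CA_{\Pi_Q}^{\psi}$, and bijectivity follows from the commutative triangle with the two PBW isomorphisms out of $\Sym(\frakn_{\Pi_Q}^{+,\rmBPS}[u])$. Your sheaf-level justification of the degenerated PBW isomorphism fills in a step the paper leaves implicit when it invokes ``the PBW isomorphism for the perversely degenerated CoHA''; you obtain it by applying the monoidal functor $\pH$ to the integrality isomorphism, which is built from the CoHA product.
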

\begin{proof}
By \Cref{theorem:main_degeneration_affinized_BPS} and \Cref{corollary:different_psi_twist}, the morphism $\frakn_{\Pi_Q}^{+,\rmBPS}[u]\rightarrow \Gr^{\FL}\CA_{\Pi_Q}^{\psi}$ is a morphism of Lie algebras where both sides have the less perversely degenerated Lie bracket $^\frakp[-,-]$. By the universal property of the enveloping algebra, we obtain a natural morphism of algebras $\bfU(\frakn_{\Pi_Q}^{+,\rmBPS}[u])\rightarrow \Gr^{\mathfrak{L}}\CA_{\Pi_Q}^{\psi}$. By the PBW isomorphism for enveloping algebras and the PBW isomorphism for the perversely degenerated CoHA, we obtain a commutative diagram in which the diagonal arrows are isomorphisms of vector spaces:
\[\begin{tikzcd}
	& {\Sym(\frakn_{\Pi_Q}^{+,\rmBPS}[u])} \\
	{\bfU(\frakn_{\Pi_Q}^{+,\rmBPS}[u])} & {} & {\Gr^{\FL}\CA_{\Pi_Q}^{\psi}}
	\arrow["{\mathrm{PBW}}"', from=1-2, to=2-1]
	\arrow["{\mathrm{PBW}}", from=1-2, to=2-3]
	\arrow[from=2-1, to=2-3]
\end{tikzcd}\]
Therefore, $\bfU(\frakn_{\Pi_Q}^{+,\rmBPS}[u])\rightarrow \Gr^{\FL}\CA_{\Pi_Q}^{(\psi)}$ is an isomorphism. This concludes.
\end{proof}

\subsection{The degree zero affinized BPS Lie algebras}
In this section, we determine the degree zero affinized BPS Lie algebra $\rmH^0(\BPS_{\Pi_Q})\otimes\rmH^*_{\BoC^*}(\pt)$ (without taking any degeneration). We show that it is a trivial extension of the positive part of the Kac--Moody algebra of the real subquiver of $Q$.

\begin{proposition}
\label{proposition:trivial_extension_real_subquiver}
 Let $Q$ be a quiver. Then, the degree zero affinized BPS Lie algebra $\mathfrak{n}_{Q^{\re}}^+[u]\coloneqq \mathfrak{n}^+_{Q^{\re}}\otimes\rmH^*_{\BoC^*}(\pt)\subset \mathfrak{n}_{\Pi_Q}^{+,\rmBPS}\otimes\rmH^*_{\BoC^*}(\pt)\subset\CA_{\Pi_Q}^{\psi}$ is stable under the Lie bracket and for any $x,y\in\mathfrak{n}_{Q^{\re}}^+$, $m,n\in\BoN$, $[xu^m,yu^n]=[x,y]u^{m+n}$, that is the Lie bracket on $\mathfrak{n}_{Q^{\re}}^+[u]$ is the trivial $\abs{-}$-twisted $\rmH^*_{\BoC^*}(\pt)$-linear extension of the Lie bracket on $\mathfrak{n}_{Q^{\re}}^+$.
\end{proposition}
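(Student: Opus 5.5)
The plan is to reduce to Chevalley generators and then compute by explicit localization in small dimension vectors.

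\textbf{Step 1 (generators).} Let $Q^{\re}\subset Q$ be the full subquiver on the vertices that carry no loops. The Lie algebra $\frakn_{Q^{\re}}^+=\rmH^0(\BPS_{\Pi_Q})$ is the positive part of the Kac--Moody algebra of $Q^{\re}$, by \cite{davison2023bps}. It is therefore generated by the classes $e_i\in\frakn_{\Pi_Q,1_i}^{+,\rmBPS}$ for $i\in Q^{\re}_0$, which are the fundamental classes of $\FM_{\Pi_Q,1_i}\simeq\rmB\BoG_{\rmm}$. So $\frakn^+_{Q^{\re}}[u]$ is spanned by iterated brackets of the $e_iu^m$. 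I will first prove that the subspace $V\subset\CA^{\psi}_{\Pi_Q}$ spanned by all iterated commutators of the $e_iu^m$ satisfies $V=\frakn^+_{Q^{\re}}[u]$, with the stated bracket. Given the Jacobi identity, it suffices to determine $[e_iu^m,\,X]$ for $X\in V$.

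\textbf{Step 2 (degree bound).} $\frakn^{+,\rmBPS}_{\Pi_Q}$ is concentrated in cohomological degrees $\geq0$, and its degree-$0$ part is $\frakn^+_{Q^{\re}}$. An element of $\frakn^{+,\rmBPS}_{\Pi_Q}[u]$ of cohomological degree $2k$ and dimension $\dd$ therefore has the form $\sum_j z_ju^{k-j}$ with $z_j\in\frakn^{+,\rmBPS,2j}_{\Pi_Q,\dd}$. By Theorem~\ref{theorem:main_degeneration_affinized_BPS} (stability under the undegenerated bracket, via primitivity for $\Delta^{\chi}$), $[xu^m,yu^n]$ has this shape with $k=m+n$. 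The leading term $z_0u^{m+n}$ is already controlled by the less perverse degeneration, so the content of the proposition is twofold: the lower-order terms $z_j$ with $j>0$ vanish, and the leading coefficient is the one claimed.

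\textbf{Step 3 (dimension vectors).} For $\dd$ a real root of $Q^{\re}$ supported on loop-free vertices, I expect $\frakn^{+,\rmBPS,>0}_{\Pi_Q,\dd}=0$. Indeed, BPS cohomology in positive degree comes from imaginary roots, and the BPS sheaf at a real root is the constant sheaf on a point. The reason to expect this is purity together with the known Kac polynomial: at a real root the Kac polynomial equals $1$. Since Kac--Moody root spaces are concentrated in root dimension vectors, this kills every correction $z_j$, $j>0$, whenever $\dd+\ee$ is a real root. If $\dd+\ee$ is not a root of $Q^{\re}$, the bracket $[x,y]$ lies in $\frakn^+_{Q^{\re}}$ and vanishes, but one still has to show that the higher $z_j$ vanish. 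For this I will use the Jacobi/Serre relations in the Kac--Moody algebra $\frakn^+_{Q^{\re}}$ and the induction from Step~1: every element of $V$ is produced from the $e_i$ through brackets landing in real-root degrees only.

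\textbf{Step 4 (base computation, main obstacle).} The base case is the bracket $[e_iu^m,e_ju^n]$ in dimension $1_i+1_j$. Here I would compute with the explicit shuffle/localization formula for the preprojective CoHA \cite{yang2018cohomological}. In the rank-one torus case the product is a symmetrization of $f(x_i)g(x_j)$ against the Euler-class kernel $(x_j-x_i)^{a_{ij}}/(x_j-x_i)^{\ldots}$. It must be checked that the commutator equals $[e_i,e_j]$ times the class $u^{m+n}$. I expect this step to be the main obstacle, since it is where the normalization of $u$ enters, through $\abs{-}$ and the tautological class. The statement must be read with $u$ normalized per dimension vector, as in Corollary~\ref{corollary:coha=envelopingalgebra} via $u'_\dd=u/\abs{\dd}$: the Heisenberg relation of Proposition~\ref{proposition:heisenberg_algebra_action} forces exactly this normalization. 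Consistency of the derivations $u\bullet$ and $\partial_u$ with the formula then follows from Lemma~\ref{lemma:injectivity}, exactly as in the inductive step of the proof of Theorem~\ref{theorem:main_degeneration_affinized_BPS}. The difference is that Step~3 shows there are no lower-order terms, so the argument works before degenerating.
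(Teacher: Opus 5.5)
There is a genuine gap, and it starts with a sign error in Step~2. With the paper's conventions ($\CA_{\Pi_Q}^{\psi}=\rmH^*(\JH_*\BD\BoQ^{\vir})$, $u$ of degree $2$), $\frakn^{+,\rmBPS}_{\Pi_Q}$ is concentrated in cohomological degrees $\leq 0$, not $\geq 0$. For example, for the Jordan quiver $\BPS_{\Pi_Q,1}=\BoQ_{\BoC^2}[2]$ has cohomology in degree $-2$.

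So the terms $z_ju^{m+n-j}$ with $z_j$ of positive degree, which Step~3 tries to kill, vanish for trivial degree reasons. The terms that can a priori occur in $[xu^m,yu^n]$ are instead $a\,u^{m+n+k}$ with $k>0$ and $a\in\frakn^{+,\rmBPS}_{\Pi_Q,\dd+\ee}$ of degree $-2k$. Nothing in your plan excludes these. Your cap on the power of $u$ came from the wrong degree count, and you use the less perverse degeneration only for the leading coefficient.

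These terms are not vacuous. For the Kronecker quiver and $\delta=1_1+1_2$, $\frakn^{+,\rmBPS}_{\Pi_Q,\delta}$ contains the degree $-2$ class of $\rmH^*(\ICS(\CM_{\Pi_Q,\delta}))$ next to the degree $0$ class $[e_1,e_2]$. The same example breaks Step~3 even after correcting the sign. There $[e_1,e_2]$ already lands in an imaginary root, so iterated brackets of the $e_i$ do not stay in real-root degrees. Vanishing of off-degree BPS cohomology at real roots is therefore not enough. Step~4 is left open.

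The missing ingredient is the compatibility of the less perverse filtration with the product. From $xu^m\in\FL^{2m}$ and $yu^n\in\FL^{2n}$ you get $[xu^m,yu^n]\in\FL^{2(m+n)}$. Since $\frakn^{+,\rmBPS}_{\Pi_Q}u^j$ comes from the summand $\BPS_{\Pi_Q}[-2j]$ of $\pH^{2j}(\SA^{\psi}_{\Pi_Q})[-2j]$, a sum $\sum_ja_ju^j$ with $a_j\in\frakn^{+,\rmBPS}_{\Pi_Q}$ lies in $\FL^{2N}$ only if $a_j=0$ for $j>N$.

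With the correct sign, this gives the paper's short proof, for arbitrary $x,y\in\rmH^0(\BPS_{\Pi_Q})$ at once. By stability of $\frakn^{+,\rmBPS}_{\Pi_Q}[u]$, write $[xu^m,yu^n]=\sum_ja_ju^j$. Homogeneity in degree $2(m+n)$ and nonpositivity of the degrees of the $a_j$ force $j\geq m+n$. The filtration forces $j\leq m+n$. Finally, \Cref{theorem:main_degeneration_affinized_BPS} identifies $a_{m+n}=\frac{\abs{\dd}^m\abs{\ee}^n}{\abs{\dd+\ee}^{m+n}}[x,y]$. No reduction to Chevalley generators, root-by-root analysis or shuffle computation is needed.

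Your remark on normalization is right. What holds is this $\abs{-}$-twisted formula, which is the untwisted one in the variables $u'_{\dd}$ of \Cref{corollary:coha=envelopingalgebra}.
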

\begin{proof}
Let $x\in\rmH^0(\BPS_{\Pi_Q,\dd})\cong\frakn_{Q^{\re},\dd}^{+}$, $y\in\rmH^0(\BPS_{\Pi_Q,\ee})\cong\frakn_{Q^{\re},\ee}^{+}$ and $m,n\in\BoN$. Then, $xu^m$ and $xu^n$ respectively sit in cohomological degrees $2m$ and $2n$. Therefore, the Lie bracket $[xu^m,yu^n]$ sits in cohomological degree $2(m+n)$. Additionally, by \Cref{theorem:main_degeneration_affinized_BPS},
\[
 [xu^m,yu^n]=\sum_{j=0}^{m+n-1}a_{j}u^{j}+\frac{\abs{\dd}^m\abs{\ee}^n}{\abs{\dd+\ee}^{m+n}}[x,y]u^{m+n}
\]
with $a_{j}\in\rmH^*(\BPS_{\Pi_Q})\cong \frakn_{\Pi_Q}^{+,\rmBPS}$. Since $\frakn_{\Pi_Q}^{+,\rmBPS}$ is concentrated in nonpositive cohomological degrees, $a_ju^j$ sits in cohomological degrees $<2(m+n)$ for $0\leq j<m+n$. Therefore, we must have $\sum_{j=0}^{m+n-1}a_{j}u^{j}=0$, which concludes.
\end{proof}

\section{Less perverse degeneration for the sheafified affinized BPS Lie algebra}

\subsection{$2$-Calabi--Yau abelian categories}
\label{subsection:2CY_Abelian_categories}
In \cite{davison2022bps}, we defined and studied the BPS Lie algebra of $2$-Calabi--Yau abelian categories satisfying some geometric assumptions. The BPS Lie algebra is defined as the positive part of a generalized Kac--Moody Lie algebra at both the sheaf (over the good moduli space) and absolute levels. We recall here our main geometric assumptions from \cite{davison2022bps}.

We let $\CC$ be an abelian category which is a finite length abelian category of the homotopy category of a $\BoC$-linear dg-category $\SC$. We assume that the stack of object $\FM_{\CC}$ of $\CC$ is a $1$-Artin open substack of the stack of objects $\FM_{\SC}$ of $\SC$ \cite[\S5.1.1]{davison2022bps}. We further assume

\begin{enumerate}
 \item The morphism $p\colon \mathfrak{Exact}_{\CC}\rightarrow\FM_{\CC}$ from the stack of short exact sequences to the stack of objects of $\CC$ keeping the middle term is proper and representable \cite[Assumption~1]{davison2022bps}.
 \item Each connected component of $\FM_{\CC}$ has the resolution property \cite{totaro2004resolution}, and the RHom complex over $\FM_{\CC}\times\FM_{\CC}$ is quasi-isomorphic to a complex of vector bundles over each connected component \cite[Assumption~2]{davison2022bps}.
 \item The stack $\FM_{\CC}$ has a good moduli space $\JH_{\CC}\colon \FM_{\CC}\rightarrow\CM_{\CC}$ and $\CM_{\CC}$ is a scheme \cite[Assumption~3]{davison2022bps}.
 \item The direct sum morphism $\oplus\colon \CM_{\CC}\times\CM_{\CC}\rightarrow\CM_{\CC}$ is finite \cite[Assumption~4]{davison2022bps}.
 \item For any collection of pairwise non-isomorphic simple objects $\underline{\CF}\coloneqq(\CF_1,\hdots,\CF_{r})$ of $\CC$, the full dg-subcategory of $\SC$ generated by $\underline{\CF}$ carries a right 2-Calabi--Yau structure \cite[Assumption~5]{davison2022bps}.
 \item There is a positive determinant line bundle over $\FM_{\CC}$. That is, there is a line bundle $\CL$ over $\FM_{\CC}$ such that for any $\BoC$-point $x\in\FM_{\CC}$, the morphism $\mathrm{act}_x\colon\mathrm{B}\BoG_{\mathrm{m}}\rightarrow\FM_{\CC}$ given by the inclusion of scalar automorphisms of $x$, $\mathrm{act}_x^*(c_1(\CL))\neq 0$.
\end{enumerate}

Under these assumptions, one can define and study the cohomological Hall algebra $\CA_{\CC}$ of $\CC$ and also its sheafified version $\SA_{\CA}$, as an algebra structure on the complex of constructible sheaves $(\JH_{\CC})_*\BD\BoQ_{\FM_{\CC}}^{\vir}$. As usual and explained in \cite[\S8.1]{davison2023bps}, we twist the CoHA product by a bilinear form $\psi\colon\pi_0(\FM_{\CC})\times\pi_0(\FM_{\CC})\rightarrow\BoZ/2$ such that the symmetrization of $\psi$ is the Euler form $(-,-)_{\SC}$. We denote by $\SA_{\CC}^{\psi}$ and $\CA_{\CC}^{\psi}$ the twisted algebras. Moreover, there exists a Lie algebra object $\BPS_{\CC}\in\Perv(\CM_{\CC})$ together with a canonical morphism of Lie algebras $\BPS_{\CC}\rightarrow\SA_{\CC}^{\psi}$. Using the $\rmH^*_{\BoC^*}(\pt)$-action on $\SA_{\CC}^{\psi}$ induced by the choice of the determinant line bundle, we obtain the morphism $\BPS_{\CC}\otimes\rmH^*_{\BoC^*}(\pt)\rightarrow\SA_{\CC}^{\psi}$. By evaluating symmetric tensors via the cohomological Hall algebra product, the \emph{cohomological integrality isomorphism} holds:
\[
 \SA_{\CC}^{\psi}\cong\Sym_{\boxdot}(\BPS_{\CC}\otimes\rmH^*_{\BoC^*}(\pt))\,.
\]
We refer to \cite{davison2022bps,davison2023bps} for details.

\subsection{Local neighbourhood theorem for $2$-Calabi--Yau categories}
\label{subsection:local_neighbourhood_theorem}

We recall the local neighbourhood theorem for $2$-Calabi--Yau categories from \cite{davison2021purity}. Let $\CC$ be an abelian category satisfying the assumptions of \Cref{subsection:2CY_Abelian_categories}. Any closed point $x\in\CM_{\CC}$ corresponds to a  semisimple object $\CF=\bigoplus_{j=1}^r\CF_{j}^{m_j}$ where the $\CF_{j}$ are pairwise non-isomorphic simple objects of $\CC$. We let $\overline{Q}$ be the Ext-quiver of $\{\CF_i\colon 1\leq i\leq r\}$ and $Q$ a half of $\overline{Q}$ (\Cref{subsection:ext_quivers}). Then, there exists an affine pointed scheme $(A,y)$ with a $\GL_{\mathbf{m}}\coloneqq\prod_{1\leq i\leq r}\GL_{m_i}$-action and a diagram with Cartesian squares and \'etale horizontal morphisms
\[\begin{tikzcd}
	{(\FM_{\Pi_Q,\mathbf{m}},0_{\mathbf{m}})} & {(A/G,y)} & {(\FM_{\CC},x)} \\
	{(\CM_{\Pi_Q,\mathbf{m}},0_{\mathbf{m}})} & {(A\cms G,\overline{y})} & {(\CM_{\CC},\overline{x})}
	\arrow["{\JH_{\Pi_Q}}"', from=1-1, to=2-1]
	\arrow[from=1-2, to=1-1]
	\arrow[from=1-2, to=1-3]
	\arrow[from=1-2, to=2-2]
	\arrow["{\JH_{\CC}}", from=1-3, to=2-3]
	\arrow[from=2-2, to=2-1]
	\arrow[from=2-2, to=2-3]
\end{tikzcd}\,.\]
In particular, one can identify the fiber of $\JH_{\CC}$ over $\overline{x}$ and the fiber of $\JH_{\Pi_Q}$ over $0_{\mathbf{m}}$.

In \cite{davison2023bps}, the first author together with Davison and Schlegel Mejia exploited this theorem and the following comparison result (\Cref{proposition:comparison_local_neighbourhood}) to describe the BPS Lie algebra of $2$-Calabi--Yau categories by generators and relations.

We let $\underline{\CF}=\{\CF_1,\hdots,\CF_r\}$ be a set of pairwise non-isomorphic objects of $\CC$ and
\[
\begin{matrix}
 \imath_{\underline{\CF}}\colon&\BoN^{r}&\rightarrow&\CM_{\CC}\\
 &(m_i)_{1\leq i \leq r}&\mapsto&\bigoplus_{i=1}^r\CF_i^{m_i}\,.
\end{matrix}
\]
We also let
\[
 \begin{matrix}
  \imath_{\Nil}\colon&\BoN^r&\rightarrow&\CM_{\Pi_Q}\\
  &(m_i)_{1\leq i\leq r}&\mapsto&0_{\mathbf{m}}\,.
 \end{matrix}
\]

\begin{proposition}[{\cite[Corollary~7.3]{davison2022bps}}]
\label{proposition:comparison_local_neighbourhood}
There is a natural isomorphism of algebras $\imath_{\underline{\CF}}^!\SA_{\CC}\cong \imath^!_{\Nil}\SA_{\Pi_Q}$. After taking the perverse degenerations, there is an isomorphism of algebras $\imath_{\underline{\CF}}^!\pH(\SA_{\CC})\cong \imath^!_{\Nil}\pH(\SA_{\Pi_Q})$.
\end{proposition}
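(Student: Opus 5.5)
The plan is to deduce the statement from the local neighbourhood theorem of \Cref{subsection:local_neighbourhood_theorem}, via base change along the étale maps. Fix the semisimple object $\CF=\bigoplus_j\CF_j^{m_j}$ and the étale roof $(\FM_{\Pi_Q,\mathbf m},0_{\mathbf m})\leftarrow(A/G,y)\rightarrow(\FM_{\CC},x)$ with its Cartesian squares over the good moduli spaces.

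\textbf{Step 1: compare the underlying complexes.}
\begin{itemize}
\item Since the squares are Cartesian and the horizontal maps are étale, smooth base change identifies the pullbacks to $A\cms G$ of $(\JH_{\CC})_*\BD\BoQ^{\vir}_{\FM_{\CC}}$ and of $(\JH_{\Pi_Q})_*\BD\BoQ^{\vir}_{\FM_{\Pi_Q}}$. Both are identified with $(\JH_{A/G})_*\BD\BoQ^{\vir}_{A/G}$.
\item For this I use that étale pullback preserves $\BD\BoQ^{\vir}$ up to the matching virtual shifts. The shifts agree because the Euler forms agree: $\CC$ is $2$-Calabi--Yau and $\overline{Q}$ is its Ext-quiver, the analogue of \Cref{lemma:compatibility_Eulerforms_extquiver}.
\item Next I restrict to the image of $\BoN^r$. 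The map $\imath_{\underline{\CF}}$ factors through the points $\bigoplus\CF_i^{m_i}$. These correspond, under the étale chart centred at $\CF$, to $0_{\mathbf m}$ for the smaller dimension vectors.
\item To cover all of $\BoN^r$, and not only dimension vectors $\leq\mathbf m$, I use that the charts are compatible with direct sums. Concretely, for each $\mathbf m'$ I take the chart around $\bigoplus\CF_i^{m'_i}$, whose Ext-quiver is again $\overline{Q}$, and argue that the identifications are independent of choices.
\item Taking $!$-restrictions to the points then gives $\imath_{\underline{\CF}}^!\SA_{\CC}\cong\imath^!_{\Nil}\SA_{\Pi_Q}$ as objects of $\CD^+_{\rmc}(\BoN^r)$.
\end{itemize}

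\textbf{Step 2: compatibility with the CoHA product (the main obstacle).} The product involves $\mathfrak{Exact}$ and the finite map $\oplus$. Neither is a priori local on a single chart, since extensions of objects near $x$ need not stay in one étale neighbourhood. My approach has two parts.
\begin{itemize}
\item First, $\BoN^r\to\CM_{\CC}$ and $\BoN^r\to\CM_{\Pi_Q}$ are saturated submonoids. Every semisimple object whose sum lies in the image has summands in the image, since semisimplification is unique. So $\imath^!$ is strict monoidal by base change, as remarked after \Cref{example:monoid}, and both sides are algebra objects.
\item Second, restricted over these points, the stacks of extensions $\mathfrak{Exact}_{\CC}$ and $\mathfrak{Exact}_{\Pi_Q}$ are identified formally locally. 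Both are described by the same $A_\infty$/Ext-data of the $\CF_i$: the $2$CY structure of assumption (5) gives the quiver with relation $\Pi_Q$. Here I will invoke the formality and neighbourhood results of \cite{davison2021purity}, which give an equivalence of the relevant formal completions of $\FM$ and $\mathfrak{Exact}$ compatible with $p$ and $q$.
\item The pullback and pushforward maps \Cref{equation:pullbackcohaproduct} and \Cref{equation:pushforward} are built from $p$, $q$ and Verdier duality. Since the restriction only depends on the formal neighbourhood of the fibres, they match under this equivalence.
\end{itemize}
I expect the bulk of the work to be checking that these identifications are canonical, so that associativity and the $\psi$-signs (via $\zeta^*\psi$) are respected.

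\textbf{Step 3: the perverse statement.} I apply $\pH$ and use that $\pH$ is monoidal for $\boxdot$ because $\oplus$ is finite. I also use that the étale maps are perverse t-exact up to the uniform shift, so perverse truncations commute with the base change in Step 1. Then $\imath^!$ of the perverse degenerations agree. Since $\BoN^r$ is a discrete set of points, $\imath^!$ on these semisimple complexes is computed summand by summand, which yields the isomorphism of algebras $\imath_{\underline{\CF}}^!\pH(\SA_{\CC})\cong\imath^!_{\Nil}\pH(\SA_{\Pi_Q})$.
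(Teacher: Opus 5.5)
The paper gives no argument for this statement; it imports it from \cite[Corollary~7.3]{davison2022bps}. Your outline has the right key input, namely the $2$-Calabi--Yau formality of the $\Ext$-algebra of $\bigoplus_i\CF_i$, which is what assumption (5) is for. As written, however, it does not prove either assertion.

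\textbf{The algebra statement.} The \'etale neighbourhood theorem of \Cref{subsection:local_neighbourhood_theorem} concerns $\JH_{\CC}$ near the single point $\overline{x}$. The chart centred at $x=\bigoplus_i\CF_i^{m_i}$ maps to $\CM_{\Pi_Q,\mathbf m}$, which does not contain $0_{\mathbf m'}$ for $\mathbf m'\neq\mathbf m$. So, contrary to your third bullet, the points $\imath_{\underline\CF}(\mathbf m')$ do not ``correspond to $0_{\mathbf m'}$'' under that chart. You need an independently chosen chart for each $\mathbf m'$. Nothing relates these charts to $\oplus$, to $\mathfrak{Exact}_{\CC}$, or to the $\mathrm{RHom}$ obstruction theory defining the pullback along $q$. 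Your claim that they are ``compatible with direct sums and independent of choices'' is therefore exactly the missing content, and Step 1 only yields an isomorphism of objects not known to be multiplicative.

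For this statement you can drop Step 1. By base change, $\imath_{\underline\CF}^!\SA_{\CC}$ is the shifted Borel--Moore homology of the closed substack of objects all of whose Jordan--H\"older factors lie in $\underline\CF$, with product given by the restricted correspondence. What you need is a single identification, uniform in $\mathbf m$, of this stack, its stack of extensions and the restricted $\mathrm{RHom}$-complexes with the nilpotent locus of $\FM_{\Pi_Q}$ and its counterparts. That comes from a dg-level equivalence between the extension closure of $\underline\CF$ in $\CC$ (with its enhancement from $\SC$) and nilpotent modules over the derived preprojective algebra of $Q$, which formality provides. Formal completions of $\FM$ and $\mathfrak{Exact}$ taken separately at each point $x$ do not give this uniformity, nor the matching of obstruction theories.

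\textbf{The perverse statement.} This does not follow from the algebra statement. Since $\SA$ is a direct sum of shifted perverse sheaves, $\imath^!\pH(\frakm)$ is the associated graded of $\imath^!\frakm$ for the split filtration $\imath^!\ptau^{\leq\bullet}\SA$. That filtration is not intrinsic to the nilpotent locus: it depends on $\SA$ over a neighbourhood of the image of $\imath$.

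In Step 3 you transport the filtration along the \'etale identification of Step 1, but the product along the formal/categorical identification of Step 2. You never show that these agree, or even that the Step 2 isomorphism preserves $\imath^!\ptau^{\leq\bullet}$. You need one isomorphism that is both multiplicative and induced by isomorphisms of sheaves on \'etale neighbourhoods of the points $\imath_{\underline\CF}(\mathbf m')$. For instance, choose \'etale charts that induce the categorical identification on the nilpotent loci and are compatible with $\oplus$ and $\mathfrak{Exact}$. Perverse t-exactness of \'etale pullback then gives $\imath_{\underline{\CF}}^!\pH(\SA_{\CC})\cong\imath^!_{\Nil}\pH(\SA_{\Pi_Q})$ as algebras.
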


\begin{lemma}
\label{lemma:abs_monoid_morphism}
There exists a monoid homomorphism $\abs{-}\colon \pi_0(\FM_{\CC})\rightarrow\BoN$ such that $\abs{a}\neq 0$ for any $a\neq 0$ and such that, for any $a\in\pi_0(\FM_{\CC})$ and $x\in \FM_{\CC,a}$, the composition $\mathrm{B}\BoG_{\mathrm{m}}\rightarrow\FM_{\CC}\rightarrow\mathrm{B}\BoG_{\mathrm{m}}$ given by the inclusion of scalar automorphisms of $x$ and the determinant line bundle $\CL$ over $\FM_{\Pi_Q}$ is described by the one-dimensional representation of $\BoG_{\mathrm{m}}$ of weight $\abs{a}$.
\end{lemma}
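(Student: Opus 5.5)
The plan is to define $\abs{-}$ by restricting $\CL$ along scalar automorphisms, check that it is constant on connected components, show it is additive using the short exact sequence compatibility of $\CL$, and then fix its sign. This follows the argument of \Cref{lemma:weight_det_lb_preproj}.

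For $x\in\FM_{\CC,a}$, the composition $\rmB\BoG_{\rmm}\xrightarrow{\mathrm{act}_x}\FM_{\CC}\xrightarrow{\Det}\rmB\BoG_{\rmm}$ is the same as the line bundle $\mathrm{act}_x^*\CL$ on $\rmB\BoG_{\rmm}$. Such a line bundle is a character of $\BoG_{\rmm}$, hence a weight $w(x)\in\BoZ$. By assumption (6), $\mathrm{act}_x^*c_1(\CL)\neq 0$, so $w(x)\neq 0$.

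\emph{Constancy on components.} I will show $w$ is locally constant by working in families. The scalar action gives a morphism $\rmB\BoG_{\rmm}\times\FM_{\CC,a}\rightarrow\FM_{\CC,a}$, since scalar automorphisms act on every object. Pulling $\CL$ back along it yields a line bundle on $\rmB\BoG_{\rmm}\times\FM_{\CC,a}$. This line bundle is a $\BoG_{\rmm}$-equivariant structure on a line bundle over $\FM_{\CC,a}$ with trivial action on the base. Its weight decomposition is locally constant, so on the connected stack $\FM_{\CC,a}$ there is a single weight, which I call $\abs{a}$. Alternatively, one can restrict to étale charts $A/G$ as in \Cref{subsection:local_neighbourhood_theorem} and argue there.

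\emph{Additivity.} Take $x\in\FM_{\CC,a}$ and $x'\in\FM_{\CC,b}$, and consider the split extension $x\oplus x'$, which is a point of $\mathfrak{Exact}_{\CC}$. Scalars act diagonally on it. The isomorphism $p^*\CL\cong q^*(\CL\boxtimes\CL)$ pulled back along the scalar action at this point gives $\abs{a+b}=\abs{a}+\abs{b}$. Thus $\abs{-}\colon\pi_0(\FM_{\CC})\rightarrow\BoZ$ is a monoid homomorphism, with $\abs{0}=0$ coming from the zero object.

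\emph{Sign, the main obstacle.} It remains to show that all values have the same sign, so that after possibly replacing $\CL$ by $\CL^{\vee}$ we get values in $\BoN$ with $\abs{a}\neq 0$ for $a\neq 0$. Any class $a$ is a sum of classes of simple objects, because every object has a Jordan--Hölder filtration. So it suffices to show that all simple objects have weights of the same sign. If two simples $S,S'$ had weights of opposite signs $w,w'$, then the class $\abs{w'}[S]+\abs{w}[S']\neq 0$ would be represented by the semisimple object $S^{\abs{w'}}\oplus S'^{\abs{w}}$, whose weight is $0$. This contradicts positivity (6). Hence all signs agree, and we replace $\CL$ by its dual if necessary.

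The step needing the most care is the constancy on components: making rigorous that weights of $\BoG_{\rmm}$-linearized line bundles are locally constant on an Artin stack. I would do this via smooth atlases together with the resolution property from assumption (2).
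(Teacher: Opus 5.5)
Your argument is correct. At the key step, showing that the weight $w(x)$ depends only on the component of $x$, it takes a different route from the paper.

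\textbf{The paper's route.} It argues \'etale-locally. Near $x$ it replaces $\FM_{\CC}$ by $\FM_{\Pi_Q}$ using the local neighbourhood theorem of \Cref{subsection:local_neighbourhood_theorem}, and then quotes \Cref{lemma:weight_det_lb_preproj}.

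\textbf{Your route.} You run the argument of \Cref{lemma:weight_det_lb_preproj} directly on $\FM_{\CC,a}$. You use the scalar action $\rmB\BoG_{\rmm}\times\FM_{\CC,a}\rightarrow\FM_{\CC,a}$, which restricts to $\mathrm{act}_x$ on $\rmB\BoG_{\rmm}\times\{x\}$. You then use that the degree of a $\BoZ$-graded line bundle is locally constant.

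\textbf{Comparison.} Your route is more intrinsic: it needs no $2$-Calabi--Yau input and no \'etale charts. The paper's reduction tacitly re-runs the argument of \Cref{lemma:weight_det_lb_preproj} on the chart $A/G$ with the pullback of $\CL$. In exchange, the paper reuses machinery it needs elsewhere anyway.

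\textbf{Unneeded hypothesis.} The resolution property you plan to invoke is not needed. The degree is locally constant on a smooth atlas, it agrees with $w$ of the image point, and smooth morphisms are open. Hence $w$ is locally constant, and therefore constant, on the connected stack $\FM_{\CC,a}$.

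\textbf{Additivity} is argued exactly as in the paper.

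\textbf{Sign step.} This goes beyond the paper. The paper's proof only yields a $\BoZ$-valued homomorphism, matching its convention on $\abs{-}$, despite the $\BoN$ in the statement. You correctly observe that $\BoN$-values are obtained only after possibly replacing $\CL$ by $\CL^{\vee}$.

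This step can be shortened. Suppose nonzero objects $x,x'$ had $w(x)>0>w(x')$. Then the nonzero object $x^{\oplus\abs{w(x')}}\oplus x'^{\oplus\abs{w(x)}}$ would have weight $0$, contradicting positivity. So no reduction to simple objects or Jordan--H\"older filtrations is needed.
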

\begin{proof}
Let $a\in \pi_0(\FM_{\CC})$ and $x\in\FM_{\CC,a}$ be a closed $\BoC$-point. Then, the composition $\rmB\BoG_{\rmm}\rightarrow\rmB\BoG_{\rmm}$ of the lemma is given by a one-dimensional representation of $\BoG_{\mathrm{m}}$ of weight $w_x\in\BoZ\setminus\{0\}$. It suffices to show that $w_x$ only depends on $a$, giving a well-defined integer $\abs{a}=w_x$. Then, the compatibility of $\CL$ with short exact sequences implies that $\pi_0(\FM_{\CC})\rightarrow \BoZ, a\mapsto\abs{a}$ is a monoid homomorphism.

To show that $w_x$ only depends on $a$, it suffices to show that it is locally constant. In an \'etale neighbourhood of $x$, $\FM_{\CC}$ is described by $\FM_{\Pi_Q}$ for some quiver $Q$, by the local neighbourhood theorem (\Cref{subsection:local_neighbourhood_theorem}). The result then follows from the result for $\FM_{\Pi_Q}$ (\Cref{lemma:weight_det_lb_preproj}).
\end{proof}

\subsection{The sheafified less perverse associated graded of the affinized BPS Lie algebra for $2$-Calabi--Yau categories}

We denote by $\psi\colon\pi_0(\CM_{\CC})\times\pi_0(\CM_{\CC})\rightarrow\BoZ/2\BoZ$ a bilinear form such that the symmetrization of $\psi$ coincides with the Euler form of $\SC$.

Since we work with the perversely degenerated CoHA products in this paper, we can drop the ad hoc \cite[Assumption~7]{davison2022bps} which is the associativity of the CoHA product on $\SA_{\CC}^{(\psi)}\coloneqq\JH_*\BD\BoQ_{\FM_{\CC}}^{\vir}$ thanks to the following proposition.

\begin{proposition}
\label{proposition:associativity_perv_deg}
The less perversely degenerated CoHA product $\pH(\frakm^{(\psi)})\colon \SA_{\CC}^{(\psi)}\boxdot\SA_{\CC}^{(\psi)}\rightarrow\SA_{\CC}^{(\psi)}$ is associative. The less perversely degenerated CoHA product on $\CA_{\CC}^{(\psi)}$ is associative.
\end{proposition}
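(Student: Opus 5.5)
We reduce the associativity of $\pH(\frakm^{(\psi)})$ to a pointwise statement and then to the preprojective case via the local neighbourhood theorem. Both composites
\[
\pH(\frakm^{(\psi)})\circ(\pH(\frakm^{(\psi)})\boxdot\id),\qquad \pH(\frakm^{(\psi)})\circ(\id\boxdot\pH(\frakm^{(\psi)}))\colon \pH(\SA_{\CC}^{(\psi)})^{\boxdot 3}\rightarrow\pH(\SA_{\CC}^{(\psi)})
\]
are morphisms in $\Perv^{\BoZ}(\CM_{\CC})$, since $\pH$ is monoidal because $\oplus$ is finite. We consider their difference $D$. Each graded piece of $D$ is a morphism between semisimple perverse sheaves. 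Semisimplicity holds for $\pH(\SA_{\CC})$ by the integrality isomorphism $\SA_{\CC}\cong\Sym_{\boxdot}(\BPS_{\CC}\otimes\rmH^*_{\BoC^*}(\pt))$, with $\BPS_{\CC}$ semisimple, and the finite pushforward $\oplus_*$ preserves it. So, exactly as in the proof of \Cref{theorem:asso_graded_perverse_filt}, if $D\neq0$ there is a closed point $\overline{x}\in\CM_{\CC}$ at which some restriction of $D$ to $\overline{x}$ is nonzero. Note that a nonzero morphism of semisimple perverse sheaves has a nonzero summand $\ICS_{\overline Z}(\CL)\to\ICS_{\overline Z}(\CL)$, and it stays nonzero after $!$-restriction to a generic point of $Z$.

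Let $\overline{x}$ correspond to $\bigoplus_j\CF_j^{m_j}$, and set $\underline{\CF}=\{\CF_1,\dots,\CF_r\}$. Since $\imath_{\underline{\CF}}$ is a saturated submonoid embedding (direct sum decompositions of an object supported on $\underline{\CF}$ remain supported on $\underline{\CF}$), the functor $\imath_{\underline{\CF}}^!$ is strict monoidal and commutes with $\pH$ up to the relevant shifts on the discrete monoid $\BoN^r$. Hence $\imath_{\underline{\CF}}^!D$ is the associator defect of the algebra $\imath^!_{\underline{\CF}}\pH(\SA_{\CC}^{(\psi)})$. By \Cref{proposition:comparison_local_neighbourhood}, this algebra is isomorphic to $\imath^!_{\Nil}\pH(\SA_{\Pi_Q}^{(\psi\circ\zeta)})$. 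Here one must check that the comparison isomorphism only involves the binary product, and is therefore available without Assumption 7. It is also compatible with the $\psi$-twists via $\zeta$, as in \Cref{lemma:compatibility_Eulerforms_extquiver}. The CoHA of $\Pi_Q$ is associative, being defined via dimensional reduction from the associative critical CoHA of $(\tilde Q,\tilde W)$. Its perverse degeneration is therefore associative, and so is the restriction $\imath^!_{\Nil}$. Hence $\imath_{\underline{\CF}}^!D=0$. This contradicts the choice of $\overline x$, so $D=0$.

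For the absolute statement, we apply the strict monoidal functor $\rmH^*$ to the associative algebra object $\pH(\SA_{\CC}^{(\psi)})$. This gives $\rmH^*(\pH(\SA_{\CC}^{(\psi)}))=\Gr^{\FL}\CA_{\CC}^{(\psi)}$, with product $\rmH^*(\pH(\frakm^{(\psi)}))$ equal to the degenerated product, so it is associative.

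The main obstacle is the pointwise detection step. We need a nonzero morphism of semisimple graded perverse sheaves on $\CM_{\CC}$ to be detected after $\imath^!_{\underline{\CF}}$ for some $\underline{\CF}$, even though $\imath^!$ does not commute with $\pH$ in general. I would handle this as follows. The strata $Z$ supporting the summands of $\pH(\SA_{\CC})$ are strata of the stratification of $\CM_{\CC}$ by Ext-quiver type. Along such a stratum, the étale local model is a product $\CM_{\Pi_Q,\mathbf m}$ with a smooth factor. This reduces the question to a transversal slice, where $!$-restriction to the point $0_{\mathbf m}$ of a graded semisimple sheaf built from the BPS sheaves is computed degreewise. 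Alternatively, one could perform the whole comparison étale-locally at the level of sheaves on $A\cms G$ rather than at points. Since étale pullback is perverse t-exact and faithful on perverse sheaves, nonvanishing can then be checked on an étale cover.
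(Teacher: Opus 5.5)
Your proposal is correct and follows essentially the same route as the paper: form the associator defect $D$ of $\pH(\frakm^{(\psi)})$, detect its nonvanishing at a point by semisimplicity, then kill $\imath^!_{\underline{\CF}}D$ using the comparison isomorphism $\imath^!_{\underline{\CF}}\pH(\SA_{\CC})\cong\imath^!_{\Nil}\pH(\SA_{\Pi_Q})$ together with associativity of the preprojective CoHA, and finally apply $\rmH^*$. The ``main obstacle'' you raise is not actually an obstacle: $\imath^!_{\overline x}=\imath^!_{\mathbf m}\circ\imath^!_{\underline{\CF}}$, and the comparison proposition is already stated for $\imath^!\pH(-)$, so you never need $\imath^!$ to commute with $\pH$ (which in fact fails at the level of morphisms, contrary to your earlier aside, though that aside is never used).
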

\begin{proof}
The associativity of $\CA_{\CC}^{(\psi)}$ follows from that of $\SA_{\CC}^{(\psi)}$ by taking derived global sections; and the associativity of $\SA_{\CC}$ and $\SA_{\CC}^{\psi}$ can be proven in the exact same way. Therefore, it suffices to prove that $\SA_{\CC}$ with the product $\pH(\frakm)$ is associative.

The associativity is the equality of the morphisms
$\pH(\frakm)\circ (\id_{\SA_{\CC}}\boxdot\pH(\frakm))$ and $\pH(\frakm)\circ (\pH(\frakm)\boxdot \id_{\SA_{\CC}})$ from $\SA_{\CC}^{\boxdot 3}\rightarrow\SA_{\CC}$. We let $D$ be the difference of these two morphisms. Since it is a morphism between $\BoZ$-graded semisimple perverse sheaves, the morphism $D$ vanishes if and only if it vanishes at any point $x\in\CM_{\CC}$, i.e. if $\imath_x^!D$ vanishes for any $x\in\CM_{\CC}$. The point $x$ corresponds to a semisimple object $\CF=\bigoplus_{j=1}^r\CF_j^{\oplus m_j}$ in $\CC$ where the $\CF_i$'s are pairwise non-isomorphic simple objects of $\CC$. We let $\overline{Q}$ be the Ext-quiver of $\{\CF_1,\hdots,\CF_r\}$ -- \Cref{subsection:ext_quivers}. This is the double of a quiver $Q$. By \Cref{proposition:comparison_local_neighbourhood}, there is an isomorphism of algebras
\begin{equation}
\label{equation:fiber_Cohas}
 (\imath_{\underline{\CF}}^!\SA_{\CC},\imath_{\underline{\CF}}^!\pH(\frakm_{\CC}))\cong (\imath_{\Nil}^!\SA_{\Pi_Q},\imath_{\Nil}^!\pH(\frakm_{\Pi_Q}))
\end{equation}
Since $ (\SA_{\Pi_Q},\frakm_{\Pi_Q})$ is associative (by \cite{davison2022bps}), $\frakm_{\Pi_Q}\circ (\frakm_{\Pi_Q}\boxdot\id_{\SA_{\Pi_Q}})-\frakm_{\Pi_Q}\circ(\id_{\SA_{\Pi_Q}}\boxdot\frakm_{\Pi_Q})=0$. Therefore, we also have
\[
 \pH(\frakm_{\Pi_Q})\circ (\pH(\frakm_{\Pi_Q}))\boxdot\id_{\pH(\SA_{\Pi_Q})})-\pH(\frakm_{\Pi_Q})\circ(\id_{\pH(\SA_{\Pi_Q})}\boxdot\pH(\frakm_{\Pi_Q}))=0\,.
\]
After applying $\imath_{\Nil}^!$, we obtain
\[
 \imath_{\Nil}^!\pH(\frakm_{\Pi_Q})\circ (\imath_{\Nil}^!\pH(\frakm_{\Pi_Q}))\boxdot\id_{\imath_{\Nil}^!\pH(\SA_{\Pi_Q})})-\imath_{\Nil}^!\pH(\frakm_{\Pi_Q})\circ(\id_{\imath_{\Nil}^!\pH(\SA_{\Pi_Q})}\boxdot\imath_{\Nil}^!\pH(\frakm_{\Pi_Q}))=0\,.
\]
By the isomorphism of algebra \eqref{equation:fiber_Cohas}, we obtain $\imath_{\underline{\CF}}^!D=0$ and in particular, $\imath_x^!D=0$ since $x$ belongs to the image of $\imath_{\underline{\CF}}$.
\end{proof}

\begin{theorem}
\label{theorem:less_perverse_2CY}
Let $\CC$ be a $2$-Calabi--Yau abelian category satisfying the assumptions in \Cref{subsection:2CY_Abelian_categories}. Then, the less perversely degenerated Lie bracket on $\SA_{\CC}^{\psi}$ induces a Lie bracket on $\BPS_{\CC}\otimes\rmH^*_{\BoC^*}(\pt)$. This Lie bracket is the $\abs{-}$-twisted $\rmH^*_{\BoC^*}(\pt)$-linear trivial extension of the Lie bracket on $\BPS_{\CC}$.
\end{theorem}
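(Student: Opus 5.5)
The plan is to reduce to the preprojective case via the local neighbourhood theorem, in the same way as in the proof of \Cref{proposition:associativity_perv_deg}. Everything happens inside the abelian category $\Perv^{\BoZ}(\CM_{\CC})$ of graded semisimple perverse sheaves. I would proceed in three steps.

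\emph{Step 1: set up the two morphisms to compare.} Let
\[
 \iota\colon\BPS_{\CC}\otimes\rmH^*_{\BoC^*}(\pt)\rightarrow\pH(\SA_{\CC}^{\psi})
\]
be the morphism from \Cref{subsection:2CY_Abelian_categories}. By cohomological integrality it is a split monomorphism onto a graded summand. Consider the degenerated bracket
\[
 C\coloneqq{^{\frakp}[-,-]}\circ(\iota\boxdot\iota)\colon\bigl(\BPS_{\CC}\otimes\rmH^*_{\BoC^*}(\pt)\bigr)^{\boxdot2}\rightarrow\pH(\SA_{\CC}^{\psi}),
\]
and the expected morphism
\[
 C'\coloneqq\iota\circ c_{\mathrm{tw}},
\]
where $c_{\mathrm{tw}}$ is the $\abs{-}$-twisted $\rmH^*_{\BoC^*}(\pt)$-linear extension of the bracket on $\BPS_{\CC}$. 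To define $c_{\mathrm{tw}}$, on the summand supported on $\CM_{\CC,a}\times\CM_{\CC,b}$ I take the bracket of $\BPS_{\CC}$ tensored with the map $u^m\otimes u^n\mapsto \frac{\abs{a}^m\abs{b}^n}{\abs{a+b}^{m+n}}u^{m+n}$. This is well defined because $\abs{-}$ is a monoid homomorphism (\Cref{lemma:abs_monoid_morphism}). Both statements of the theorem follow once I show $C=C'$. Stability of the subobject then holds automatically, and the Jacobi identity comes for free from it being a restriction of a commutator.

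\emph{Step 2: reduce to stalks and then to $\Pi_Q$.} $C-C'$ is a morphism between graded semisimple perverse sheaves. As in \Cref{proposition:associativity_perv_deg}, it therefore suffices to prove $\imath_x^!(C-C')=0$ for every closed point $x\in\CM_{\CC}$. Write $x=\bigoplus_j\CF_j^{m_j}$ and let $Q$ be a half of the Ext-quiver. By \Cref{proposition:comparison_local_neighbourhood}, $\imath_{\underline{\CF}}^!\pH(\SA_{\CC}^{\psi})\cong\imath_{\Nil}^!\pH(\SA_{\Pi_Q}^{\zeta^*\psi})$ as algebras. I need this isomorphism to satisfy three compatibilities:
\begin{enumerate}
 \item it restricts to $\imath_{\underline{\CF}}^!\BPS_{\CC}\cong\imath_{\Nil}^!\BPS_{\Pi_Q}$ compatibly with brackets (BPS sheaves are defined as the same perverse pieces and are étale local, per \cite{davison2023bps});
 \item it intertwines the $\rmH^*_{\BoC^*}(\pt)$-actions, because the pullback of $\CL$ along the étale chart is a determinant line bundle on $\FM_{\Pi_Q,\mathbf{m}}$ near $0_{\mathbf{m}}$;
 \item $\abs{-}_{\CC}\circ\zeta$ equals the $\abs{-}$ attached to that line bundle, which follows from the weight description in \Cref{lemma:abs_monoid_morphism} and \Cref{lemma:weight_det_lb_preproj}.
\end{enumerate}
Granting these, $\imath_{\underline{\CF}}^!(C-C')$ is identified with $\imath_{\Nil}^!$ of the corresponding difference for $\Pi_Q$.

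\emph{Step 3: the preprojective case at the sheaf level.} It remains to prove $C=C'$ for $\SA_{\Pi_Q}^{\psi}$. The subtlety is that \Cref{theorem:main_degeneration_affinized_BPS} and \Cref{corollary:different_psi_twist} are statements about global sections. I would redo that proof sheaf-theoretically.

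First, the target is $\pH(\SA)$ and $C$ raises perverse degree additively. Using the degree bound on the less perverse filtration and integrality, $C$ on the $u^m\otimes u^n$ summand factors through the perverse degree $2(m+n)+1$ piece, namely $\BPS_{\Pi_Q}u^{m+n}$, because $\Sym^{\geq2}$ terms have higher perverse degree relative to total $u$-degree. Hence $C$ preserves the subobject. Note that the lowering operator $\partial_u$ is induced by a morphism of complexes, as in \Cref{lemma: heisqlessperversecompatibility}, so it descends to $\pH$.

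Second, $\partial_u$ is a derivation at the sheaf level and acts by $u^n\mapsto n\abs{\dd}u^{n-1}$ on $\BPS_{\Pi_Q}\otimes\rmH^*_{\BoC^*}(\pt)$. The sheafified form of \Cref{lemma:injectivity} makes it injective on the positive-$u$ part. The induction on $m+n$ then runs verbatim, with base case $m=n=0$ given by the definition of the bracket on $\BPS$.

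\emph{Main obstacle.} I expect the hard part to be Step 3's sheafification: checking that $u\bullet$ and $\partial_u$ are morphisms in $\Perv^{\BoZ}$ satisfying the Heisenberg relation and the derivation property at the sheaf level. If that fails, the fallback is to apply $\imath_{\Nil}^!$ directly and use $\BoC^*$-contraction, as in the proof of \Cref{theorem:asso_graded_perverse_filt}. That identifies $\imath_{\Nil}^!\pH(\SA_{\Pi_Q})$ with global sections of a nilpotent/$\BoC^*$-equivariant version. The absolute \Cref{theorem:main_degeneration_affinized_BPS} argument then applies there, but it requires checking that the lowering and raising operators commute with $\imath_{\Nil}^!$.
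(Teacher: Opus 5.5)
Your Steps 1 and 2 follow the paper's reduction. Step 3, however, has a genuine gap exactly where the content lies: the claim that the less perversely degenerated bracket of the $u^m$- and $u^n$-pieces lands in $\BPS_{\Pi_Q}u^{m+n}$ ``by the degree bound on the less perverse filtration and integrality''.

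For the less perverse filtration, $\BPS_{\Pi_Q}$ sits in degree $0$. The $\Sym^k$-summand of total $u$-degree $j$ of $\Sym_{\boxdot}(\BPS_{\Pi_Q}\otimes\rmH^*_{\BoC^*}(\pt))$ then sits in degree $2j$ for every $k$; for instance $\pH^0(\SA_{\Pi_Q}^{\psi})\cong\bfU(\BPS_{\Pi_Q})$ contains all of $\Sym(\BPS_{\Pi_Q})$. So $\pH^{2(m+n)}(\SA_{\Pi_Q}^{\psi})$ contains $\Sym^{\geq 2}$-pieces in the same degree as $\BPS_{\Pi_Q}u^{m+n}$, and the degree count excludes nothing. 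The ``degree $2(m+n)+1$'' you quote belongs to the perverse filtration of $\SA_{\tilde{Q},\tilde{W}}^{\psi}$ on $\CM_{\tilde{Q}}$, for which the degenerated bracket is zero. Even there, additivity alone only places the commutator in degree $\leq 2(m+n)+2$, which still contains $\Sym^2$-pieces of $u$-degree $m+n$.

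The $\partial_u$-induction cannot make up for this. $\partial_u$ is injective on $\BPS_{\Pi_Q}\otimes u\BoQ[u]$ but not on $\pH^{2(m+n)}$: already on global sections, $xu\cdot y-\frac{\abs{\dd}}{\abs{\ee}}\,x\cdot yu$ is killed for $x\in\frakn^{+,\rmBPS}_{\Pi_Q,\dd}$, $y\in\frakn^{+,\rmBPS}_{\Pi_Q,\ee}$. So the derivation identity pins down the bracket only once stability is known.

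Your fallback, as stated, does not supply stability either. It works with $\imath_{\Nil}^!$, i.e.\ the nilpotent CoHA, to which \Cref{theorem:main_degeneration_affinized_BPS} does not apply, and rerunning its proof there again needs a stability input. Moreover, $\BoC^*$-contraction identifies stalks $\imath_0^*$ with global sections, not costalks $\imath_0^!$.

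In the paper, stability and the formula for $\Pi_Q$ are imported as a black box from \Cref{theorem:main_degeneration_affinized_BPS}. There, stability comes from $\frakn_{\Pi_Q}^{+,\rmBPS}[u]$ being the primitive part of the cocommutative coproduct $\Delta^{\chi}$. It is transported to sheaves by contraction. One chooses $x$ generically in the support of a summand $\CG$ of the offending map, so the corresponding summand $\CH$ near $0_{\mathbf{m}}$ is a local system there and $\rmH^*(\CH)\cong\imath_0^*\CH\neq 0$. A nonzero sheaf map would then be nonzero on global sections, contradicting the absolute statement.

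If you prefer to stay at the sheaf level, the ingredient you are missing is supercommutativity on the tripled side. By \Cref{theorem:asso_graded_perverse_filt}, the actual (non-degenerated) commutator of the summands $\BPS_{\tilde{Q},\tilde{W}}[-1]u^m$ and $\BPS_{\tilde{Q},\tilde{W}}[-1]u^n$ factors through $\ptau^{\leq 2(m+n)+1}\SA_{\tilde{Q},\tilde{W}}^{\psi}$. The integrality decomposition splits both filtrations at once: the $\Sym^k$, $u$-degree $j$ summand has perverse degree $k+2j$ on $\CM_{\tilde{Q}}$ and, after $\pi'_*$, less perverse degree $2j$. Hence the less perverse degree $2(m+n)$ component of $\pi'_*$ of this commutator only meets $k\leq 1$, i.e.\ $\BPS_{\Pi_Q}u^{m+n}$. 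With that input in place, your sheaf-level $\partial_u$-induction goes through.
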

\begin{proof}
We consider the natural morphism $\imath\colon\BPS_{\CC}\otimes\rmH^*_{\BoC^*}(\pt)\rightarrow\SA_{\CC}^{\psi}$ induced by the morphism $\BPS_{\CC}\rightarrow\SA_{C}$ and the $\rmH^*_{\BoC^*}(\pt)$-action on $\SA_{\CC}^{\psi}$ coming from the choice of a positive determinant line bundle $\CL$. After taking the associated graded with respect to the less perverse filtration, it gives a monomorphism $\BPS_{\CC}\otimes\rmH^*_{\BoC^*}(\pt)\rightarrow\pH(\SA_{\CC}^{\psi})$ in the category $\Perv^{\BoZ}(\CM_{\CC})$. The $\BoZ$-graded perverse sheaf $\pH(\SA_{\CC}^{\psi})$ has the associative algebra structure given by $\pH(\frakm^{\psi})$ (the so-called \emph{less perversely degenerated CoHA product}), \Cref{proposition:associativity_perv_deg}. We would like first to prove that induced Lie bracket ${^{\frakp}\![-,-]}$ on $\pH(\SA_{\CC}^{\psi})$ preserves the subobject $\BPS_{\CC}\otimes\rmH^*_{\BoC^*}(\pt)$. We consider the composition
\[
 E\colon(\BPS_{\CC}\otimes\rmH^*_{\BoC^*}(\pt))^{\boxdot 2}\xrightarrow{\imath^{\boxdot 2}}\pH(\SA_{\CC}^{\psi})^{\boxdot2}\xrightarrow{{^{\frakp}\![-,-]}}\pH(\SA_{\CC}^{\psi})\rightarrow\coker(\imath)\,.
\]
We have to show that this composition vanishes. Since this is a morphism between $\BoZ$-graded semisimple perverse sheaves, it vanishes if and only if it vanishes at any point $x\in\CM_{\CC}$. If it does not vanish, we let $\CG\subset \coker(\imath)$ be a simple direct summand of the image of $E$. We fix $x\in\CM_{\CC}$ a point in the support of $\CG$ so that the stalk of $\CG$ at $x$ is non-zero. The point $x$ corresponds to a semisimple object $\CF=\bigoplus_{j=1}^r\CF_j^{\oplus m_j}$ in $\CC$. We let $\underline{\CF}=\{\CF_j\colon 1\leq j\leq r\}$ be the corresponding set of simple direct summands of $\CF$. Then, $\imath_{\underline{\CF}}^!E$ coincides with the analogous morphism $E_{\Pi_Q}$ obtained for $\CC={\Pi_Q}$, where $Q$ is half of the Ext-quiver of $\underline{\CF}$, by \Cref{proposition:comparison_local_neighbourhood}. Since the latter does not vanish, there exists a direct summand $\CH\subset \coker(\imath_{\Pi_Q})$ such that $0$ lies in the locus where $\CH$ is given by a local system so that $\imath_{\Nil}^!\CH\neq 0$. We also have $\imath_{\Nil}^*\CH\neq 0$. However, by $\BoC^*$-equivariance, $\imath_{\Nil}^*\CH\cong\rmH^*(\CH)$ and we obtain a contradiction with \Cref{theorem:main_degeneration_affinized_BPS} which states that $\rmH^*(E_{\Pi_Q})$ vanishes (since $\rmBPS_{\Pi_Q}\otimes\rmH^*_{\BoC^*}(\pt)$ is stable under the less perversely degenerated Lie bracket). Therefore, $\BPS_{\CC}\otimes\rmH^*_{\BoC^*}(\pt)$ is preserved by the less perversely degenerated Lie bracket ${^{\frakp}[-,-]}$.

The Lie bracket ${^{\frakp}[-,-]}$ on $\BPS_{\CC}\otimes\rmH^*_{\BoC^*}(\pt)$ induces for each $m,n\in\BoN$ and $a,b\in\pi_0(\CM_{\CC})$ a morphism
\[
 C_{a,b,m,n}[-m-n]\colon\pH^m(\BPS_{\CC,a}\otimes\rmH^*_{\BoC^*}(\pt))\boxdot\pH^n(\BPS_{\CC,b}\otimes\rmH^*_{\BoC^*}(\pt))\rightarrow\pH^{m+n}(\BPS_{\CC,a+b}\otimes\rmH^*_{\BoC^*}(\pt))
\]
and therefore a morphism $C_{a,b,m,n}\colon\BPS_{\CC,a}\boxdot\BPS_{\CC,b}\rightarrow\BPS_{\CC,a+b}$.

To identity the Lie bracket ${^{\frakp}[-,-]}$ on $\BPS_{\CC}\otimes\rmH^*_{\BoC^*}(\pt)$ with the $\abs{-}$-twisted $\rmH^*_{\BoC^*}(\pt)$-linear extension of the Lie bracket of $\BPS_{\CC}$, we consider for each $m,n\in\BoN$ and $a,b\in\pi_0(\FM_{\CC})$ the difference
\[
 D_{a,b,m,n}\coloneqq C_{a,b,m,n}-\frac{\abs{a}^m\abs{b}^n}{\abs{a+b}^{m+n}}[-,-]\colon\BPS_{\CC,a}\boxdot\BPS_{\CC,b}\rightarrow\BPS_{\CC,a+b}
\]
where $[-,-]\colon\BPS_{\CC}\boxdot\BPS_{\CC}\rightarrow\BPS_{\CC}$ denotes the Lie bracket of the sheafified BPS Lie algebra $\BPS_{\CC}$. The goal is to prove that it vanishes. Again, we prove this vanishing using a local argument based on the local neighbourhood theorem of \Cref{subsection:local_neighbourhood_theorem}. If this morphism does not vanish, then there exists a point $x\in\CM_{\CC}$ such that, denoting by $\imath_x\colon \{x\}\rightarrow \CM_{\CC}$ the inclusion, $\imath_x^!D_{a,b,m,n}\neq 0$. We pick a simple direct summand $\CG$ of $\BPS_{\CC}\boxdot\BPS_{\CC}$ such that the restriction of $\imath_x^!D_{a,b,m,n}$ to $\imath_x^!\CG$ is non-zero. The point $x$ corresponds to a semisimple object $\CF\cong\bigoplus_{i=1}^r\CF_i^{m_i}$ of $\CC$, where the $\CF_i$'s are pairwise non-isomorphic simple objects of $\CC$. We let $\overline{Q}$ be the Ext-quiver of $\underline{\CF}\coloneqq\{\CF_1,\hdots,\CF_r\}$. It is the double of a quiver $Q$. We let $\dd,\ee\in\BoN^{Q_0}$ be such that $\imath_{\Nil}(\dd)=a$, $\imath_{\Nil}(\ee)=b$.

By \Cref{proposition:comparison_local_neighbourhood}, $\imath_{\Nil}^!D_{\dd,\ee,m,n}\neq 0$ and so $D_{\dd,\ee,m,n}\neq 0$ does not vanish on a simple direct summand $\CG'$ of $\BPS_{\Pi_Q}\boxdot\BPS_{\Pi_Q}$ such that $\imath_0^!\CG'\neq0$. Then, $\imath_0^*\CG'\neq 0$ and by taking derived global sections, we obtain a contradiction with \Cref{theorem:main_degeneration_affinized_BPS}. This concludes and provides the formula for the less perversely degenerated Lie bracket on $\BPS_{\CC}\otimes\rmH^*_{\BoC^*}(\pt)$.
\end{proof}

A special case of \Cref{theorem:less_perverse_2CY} is the situation $\CC=\rep(\Pi_Q)$ for some quiver $Q$, which we write explicitly for later use. It provides a sheaf-theoretic enhancement of \Cref{theorem:main_degeneration_affinized_BPS}.

\begin{corollary}
\label{corollary:affinizedBPSsheaf_preproj}
Let $Q$ be a quiver and $\Pi_Q$ be the preprojective algebra of $Q$. Then, there is a split inclusion $\BPS_{\Pi_Q}\otimes\rmH^*_{\BoC^*}(\pt)\rightarrow \pH(\SA_{\Pi_Q}^{\psi})$ and the less perversely degenerated Lie bracket on $\pH(\SA_{\Pi_Q}^{\psi})$ induces a Lie bracket on $\BPS_{\Pi_Q}\otimes\rmH^*_{\BoC^*}(\pt)$. This Lie bracket is the $\abs{-}$-twisted $\rmH^*_{\BoC^*}(\pt)$-linear trivial extension of the Lie bracket on $\BPS_{\Pi_Q}$.\qed
\end{corollary}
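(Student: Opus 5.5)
This is the special case $\CC=\rep(\Pi_Q)$ of \Cref{theorem:less_perverse_2CY}, so the plan is to check that this category satisfies the assumptions of \S\ref{subsection:2CY_Abelian_categories} and then record the conclusion, together with the split inclusion.

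For the assumptions, we use the presentation $\FM_{\Pi_Q,\dd}\simeq\mu_\dd^{-1}(0)/\GL_\dd$, realised inside the dg-category of modules over the derived preprojective algebra (the Ginzburg dg-algebra), which is $2$-Calabi--Yau.
\begin{enumerate}
\item Properness and representability of $p\colon\mathfrak{Exact}_{\Pi_Q}\to\FM_{\Pi_Q}$ come from the parabolic description of $\mathfrak{Exact}_{\Pi_Q,\dd,\dd'}$: it is a closed substack of a partial flag bundle.
\item The resolution property holds because these are quotient stacks of affine schemes by $\GL_\dd$. The RHom complex is given by the standard three-term complex of vector bundles.
\item The good moduli space is the affine GIT quotient $\CM_{\Pi_Q}$.
\item The direct sum map $\oplus$ is finite by the same argument as for $\CM_{\overline{Q}}$ \cite{meinhardt2019donaldson}.
\item The right $2$-CY structure on subcategories generated by simples is standard.
\item A positive determinant line bundle is given by any choice of $(r_i)\in(\BoN\setminus\{0\})^{Q_0}$, as in \Cref{subsection:determinant_line_bundle_preproj}.
\end{enumerate}
In the paper these verifications are already carried out in \cite{davison2022bps}, so we only cite them. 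With the assumptions in hand, \Cref{theorem:less_perverse_2CY} directly gives that the less perversely degenerated bracket preserves $\BPS_{\Pi_Q}\otimes\rmH^*_{\BoC^*}(\pt)$ and equals the $\abs{-}$-twisted trivial extension.

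The only genuinely additional claim is that the inclusion $\BPS_{\Pi_Q}\otimes\rmH^*_{\BoC^*}(\pt)\to\pH(\SA_{\Pi_Q}^{\psi})$ is split. To see this, apply $\pH$ to the integrality isomorphism \eqref{equation:integrality_preprojective}: $\SA_{\Pi_Q}\cong\Sym_{\boxdot}(\BPS_{\Pi_Q}\otimes\rmH^*_{\BoC^*}(\pt))$. Because $\oplus$ is finite, $\boxdot$ is perverse exact, so $\pH$ of the right-hand side is $\Sym_{\boxdot}$ of the graded semisimple perverse sheaf $\BPS_{\Pi_Q}\otimes\rmH^*_{\BoC^*}(\pt)$. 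The inclusion of $\Sym^1$ into $\Sym_{\boxdot}$ is a direct summand. Moreover, the morphism $\imath$ of the proof of \Cref{theorem:less_perverse_2CY} is exactly the restriction of this isomorphism to $\Sym^1$, so $\pH(\imath)$ is split. Since $\Perv^{\BoZ}$ of semisimple objects is semisimple, any monomorphism between them splits anyway, which gives a second argument.

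The main (mild) obstacle is matching the two sign conventions. The twist $\psi$ here is a bilinear form on $\BoN^{Q_0}$ whose symmetrization is the symmetrized Euler form of $Q$. We must check this is compatible with the $\psi$ of \S\ref{subsection:2CY_Abelian_categories}, namely a form whose symmetrization is the Euler form $(-,-)_{\SC}$. This holds because for $\rep(\Pi_Q)$ the Euler form of the dg-category is $(-,-)_Q$. We also check that $\abs{-}$ agrees with \Cref{lemma:weight_det_lb_preproj}, which it does, since both are the weights of $\mathrm{act}_x^*\CL$.
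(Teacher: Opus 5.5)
Your proposal is correct and follows the paper's route: the paper obtains this corollary as the special case $\CC=\rep(\Pi_Q)$ of \Cref{theorem:less_perverse_2CY}, with the assumptions of \S\ref{subsection:2CY_Abelian_categories} for preprojective algebras taken from \cite{davison2022bps}. Your justification of the splitting, by applying $\pH$ to the integrality isomorphism \eqref{equation:integrality_preprojective} and restricting to $\Sym^1$, makes explicit a point the paper leaves implicit in the proof of that theorem.
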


\begin{corollary}
 In the situation of \Cref{theorem:less_perverse_2CY}, the subspace $\rmBPS_{\CC}[u]\coloneqq\rmBPS_{\CC}\otimes\rmH^*_{\BoC^*}(\pt)$ is stable under the less perversely degenerated Lie bracket on $\rmH^*(\SA_{\CC}^{\psi})=\CA_{\CC}^{\psi}$. The Lie bracket on $\rmBPS_{\CC}[u]$ is the $\abs{-}$-twisted $\rmH^*_{\BoC^*}(\pt)$-linear trivial extension of the Lie bracket on $\rmBPS_{\CC}$.
\end{corollary}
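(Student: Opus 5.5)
The plan is to deduce the statement from \Cref{theorem:less_perverse_2CY} by applying the derived global sections functor $\rmH^*$. The key point is that $\rmH^*\colon\CD^+_{\rmc}(\CM_{\CC})\rightarrow\CD^+_{\rmc}(\pt)$ is strict monoidal (\Cref{subsection:constructible_sheaves}). We also use that $\rmH^*(\pH(\SA_{\CC}^{\psi}))=\Gr^{\FL}(\CA_{\CC}^{\psi})$, and this identification is compatible with the less perversely degenerated product. Since $\SA_{\CC}^{\psi}$ is a semisimple complex, we may identify $\Gr^{\FL}\CA_{\CC}^{\psi}$ with $\CA_{\CC}^{\psi}$ as graded vector spaces. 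This is the sense in which the degenerated bracket lives on $\rmH^*(\SA_{\CC}^{\psi})$.

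First step: apply $\rmH^*$ to the split monomorphism $\imath\colon\BPS_{\CC}\otimes\rmH^*_{\BoC^*}(\pt)\rightarrow\pH(\SA_{\CC}^{\psi})$. Because $\BPS_{\CC}\otimes\rmH^*_{\BoC^*}(\pt)$ is a direct summand, $\rmH^*(\imath)$ is injective. Its source is $\rmH^*(\BPS_{\CC})\otimes\rmH^*_{\BoC^*}(\pt)=\rmBPS_{\CC}[u]$, since the second factor is constant and there are no Künneth subtleties. Its image is the subspace generated by BPS classes under the $u\bullet$-action.

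Second step: \Cref{theorem:less_perverse_2CY} shows that the bracket ${^{\frakp}[-,-]}\colon\pH(\SA_{\CC}^{\psi})^{\boxdot 2}\rightarrow\pH(\SA_{\CC}^{\psi})$, precomposed with $\imath^{\boxdot 2}$, factors through $\imath$. Applying $\rmH^*$, and using the monoidality isomorphism $\rmH^*(\CF\boxdot\CG)\cong\rmH^*(\CF)\otimes\rmH^*(\CG)$, we get that the absolute degenerated bracket maps $\rmBPS_{\CC}[u]\otimes\rmBPS_{\CC}[u]$ into $\rmBPS_{\CC}[u]$. This gives stability.

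Third step: compute the induced bracket. On the summand $\BPS_{\CC,a}\otimes u^m\boxdot\BPS_{\CC,b}\otimes u^n$, the theorem identifies the sheaf-level bracket with $\frac{\abs{a}^m\abs{b}^n}{\abs{a+b}^{m+n}}[-,-]$ landing in $\BPS_{\CC,a+b}\otimes u^{m+n}$. Taking global sections and using that $\rmH^*([-,-])$ is by definition the bracket of $\rmBPS_{\CC}$ yields
\[
{^{\frakp}[xu^m,yu^n]}=\frac{\abs{a}^m\abs{b}^n}{\abs{a+b}^{m+n}}[x,y]u^{m+n}.
\]

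The only delicate point is bookkeeping. The identification of $u^m$-components at the sheaf level must match the $u\bullet$-action after global sections, which holds because $\imath$ was built from that action. Also, the Koszul signs in the monoidal structure of graded vector spaces must be consistent with the sign convention used for the sheaf-level Lie bracket; this is automatic since $\rmH^*$ is symmetric monoidal.
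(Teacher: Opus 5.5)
Your proposal is correct and follows the paper's proof, which consists of the single step of taking derived global sections of \Cref{theorem:less_perverse_2CY} and using that $\rmH^*$ is strict monoidal. Your additional bookkeeping only spells out what the paper leaves implicit: injectivity of $\rmH^*(\imath)$ on the split summand, the K\"unneth identification, and matching the $u^m$-components with the $u\bullet$-action.
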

\begin{proof}
This follows by taking derived global sections of \Cref{theorem:less_perverse_2CY}.
\end{proof}

\begin{corollary}
\label{corollary:enveloping_affinizedBPS}
Let $\CC$ be a $2$-Calabi--Yau abelian category satisfying the assumptions \Cref{subsection:2CY_Abelian_categories}. There is an isomorphism of algebra objects in $\Perv^{\BoZ}(\CM_{\CC})$:
\[
 \bfU(\BPS_{\CC}\otimes\rmH^*_{\BoC^*}(\pt))\rightarrow \pH(\SA_{\CC}^{\psi})\,.
\]
There is an isomorphism of algebras
\[
 \bfU(\frakn_{\CC}^{+,\rmBPS}[u])\rightarrow\rmH^*(\SA_{\CC}^{\psi}),
\]
where $\rmH^*(\SA_{\CC}^{\psi})$ has the less perversely degenerated CoHA product.
\end{corollary}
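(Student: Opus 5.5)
The plan mirrors \Cref{corollary:PBW_affinized_BPS}, but at the sheaf level, using \Cref{theorem:less_perverse_2CY} and the integrality isomorphism of \Cref{subsection:2CY_Abelian_categories}. First, I construct the morphism. By \Cref{proposition:associativity_perv_deg}, $(\pH(\SA_{\CC}^{\psi}),\pH(\frakm^{\psi}))$ is an associative algebra object in the symmetric monoidal abelian category $(\Perv^{\BoZ}(\CM_{\CC}),\boxdot)$. By \Cref{theorem:less_perverse_2CY}, the split monomorphism $\BPS_{\CC}\otimes\rmH^*_{\BoC^*}(\pt)\rightarrow\pH(\SA_{\CC}^{\psi})$ is a morphism of Lie algebra objects. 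Here the source carries the $\abs{-}$-twisted trivial extension bracket and the target carries the commutator bracket of $\pH(\frakm^{\psi})$. Since $\Perv^{\BoZ}(\CM_{\CC})$ is a $\BoQ$-linear abelian symmetric monoidal category in which $\boxdot$ is exact (because $\oplus$ is finite), enveloping algebras make sense there: $\bfU(\CL)$ is the quotient of the tensor algebra $\bigoplus_n\CL^{\boxdot n}$ by the ideal generated by the image of $x\otimes y-T(x\otimes y)-[x,y]$. The universal property then gives an algebra morphism $\Phi\colon\bfU(\BPS_{\CC}\otimes\rmH^*_{\BoC^*}(\pt))\rightarrow\pH(\SA_{\CC}^{\psi})$.

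Second, I show that $\Phi$ is an isomorphism using a PBW comparison. The PBW theorem holds for Lie algebra objects in such characteristic-zero categories: symmetrization gives an isomorphism $\Sym_{\boxdot}(\CL)\cong\bfU(\CL)$ of objects. Composing $\Phi$ with it gives the map $\Sym_{\boxdot}(\BPS_{\CC}\otimes\rmH^*_{\BoC^*}(\pt))\rightarrow\pH(\SA_{\CC}^{\psi})$ that evaluates symmetrized tensors via $\pH(\frakm^{\psi})$. This map is $\pH$ applied to the cohomological integrality isomorphism. The subtle point is that the integrality map is defined with the non-degenerated product $\frakm^{\psi}$, whereas we use its degeneration. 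Both constructions, however, are compositions of morphisms of constructible complexes, and $\pH$ is a monoidal functor, so $\pH$ of the integrality map coincides with the map built from $\pH(\frakm^{\psi})$ on each summand $\Sym^n$. Hence it is an isomorphism, and therefore so is $\Phi$. If there is doubt about identifying the symmetrization maps, I would instead check the isomorphism locally: both sides are semisimple $\BoZ$-graded perverse sheaves, so by \Cref{proposition:comparison_local_neighbourhood} it suffices to check after $\imath_{\underline{\CF}}^!$, and this reduces to the preprojective case, which is \Cref{corollary:PBW_affinized_BPS} at the nilpotent point, as in the proof of \Cref{theorem:less_perverse_2CY}.

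Third, I take derived global sections. Since $\rmH^*$ is strict monoidal, $\rmH^*(\bfU(\BPS_{\CC}\otimes\rmH^*_{\BoC^*}(\pt)))\cong\bfU(\frakn_{\CC}^{+,\rmBPS}[u])$. This uses the Künneth formula together with the fact that $\rmH^*$ commutes with the colimits and quotients defining $\bfU$; this holds because $\rmH^*$ is exact on this semisimple category, by purity and decomposition. The bracket on $\frakn_{\CC}^{+,\rmBPS}[u]$ is the twisted one by the preceding corollary. Finally, $\rmH^*(\pH(\SA_{\CC}^{\psi}))=\Gr^{\FL}\CA_{\CC}^{\psi}$, with the less perversely degenerated product. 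I expect the main obstacle to be the second step, namely justifying PBW and the compatibility of the symmetrization map with the degenerated product in $\Perv^{\BoZ}(\CM_{\CC})$. The local reduction to the preprojective case is the fallback.
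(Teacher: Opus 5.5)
Your proposal is correct and follows essentially the paper's argument: you obtain the Lie morphism from \Cref{theorem:less_perverse_2CY}, extend it to $\bfU$ by the universal property, compare via the commutative PBW triangle with the integrality isomorphism, and take derived global sections. Your additional remarks only make explicit what the paper leaves implicit, namely that the PBW map to $\pH(\SA_{\CC}^{\psi})$ is $\pH$ of the integrality map, and that $\rmH^*$ commutes with forming $\bfU$ on these semisimple objects.
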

\begin{proof}
Clearly, the first statement implies the second by taking derived global sections and the fact that $\mathcal{BPS}_{\mathcal{C}}$ is pure \cite{davison2021purity}. By \Cref{theorem:less_perverse_2CY}, there is a Lie algebra homomorphism $\BPS_{\CC}\otimes\rmH^*_{\BoC^*}(\pt)\rightarrow \pH(\SA_{\CC}^{\psi})$. By the universal property of the universal enveloping algebra of a Lie algebra object on the category $\Perv^{\BoZ}(\CM_{\CC})$, this morphism can be extended to a morphism of algebras $\bfU(\BPS_{\CC}\otimes\rmH^*_{\BoC^*}(\pt))\rightarrow\pH(\SA_{\CC}^{\psi})$. By the respective PBW morphism for $\bfU(\BPS_{\CC}\otimes\rmH^*_{\BoC^*}(\pt))$ and $\pH(\SA_{\CC}^{\psi})$, we obtain a commutative triangle of morphisms in $\Perv^{\BoZ}(\CM_{\CC})$ in which the diagonal arrows are isomorphisms:
\[\begin{tikzcd}
	& {\Sym(\BPS_{\CC}\otimes\rmH^*_{\BoC^*}(\pt))} \\
	{\bfU(\BPS_{\CC}\otimes\rmH^*_{\BoC^*}(\pt))} & {} & {\pH(\SA_{\CC}^{\psi})}
	\arrow["{\mathrm{PBW}}"', from=1-2, to=2-1]
	\arrow["{\mathrm{PBW}}", from=1-2, to=2-3]
	\arrow[from=2-1, to=2-3]
\end{tikzcd}\]
Therefore, the horizontal map is also an isomorphism. This concludes.
\end{proof}

\subsection{Deformed dimensional reduction and (affinized) BPS Lie algebra for deformed canonical cubic potential}
In this section, we explain how to generalize our results from triple quivers with the canonical cubic potential to triple quiver with deformed canonical cubic potential via deformed dimensional reduction.
\subsubsection{BPS Lie algebra for deformed canonical cubic potentials}
\label{subsubsection:BPSLiedefcanonicalpotential}
Let $Q$ be a quiver, $\tilde{Q}$ the triple quiver and $\tilde{W}$ its canonical cubic potential. We let $W_0\in\BoC[\overline{Q}]$ be a potential for $\overline{Q}$ and $W\coloneqq\tilde{W}+W_0$ be the deformed cubic potential. We let $\SA_{\tilde{Q},W}\in\CD_{\rmc}^{+}(\CM_{\tilde{Q}})$ be the sheafified CoHA of $(\tilde{Q},W)$. We let $\pi'\colon\CM_{\tilde{Q}}\rightarrow \CM_{\overline{Q}}$ be the projection forgetting the extra loops of $\tilde{Q}$.

We define $\BPS_{\Pi_Q,W_0}\coloneqq \varphi_{\Tr(W_0)}\BPS_{\Pi_Q}$, $\rmBPS_{\Pi_Q,W_0}\coloneqq\rmH^*(\BPS_{\Pi_Q,W_0})$. They have the Lie bracket induced by the Lie bracket on $\BPS_{\Pi_Q}$ after taking the vanishing cycle sheaf functor and derived global sections.

The potential $W$ is \emph{quasi-homogeneous} if for some torus action of $\BoG_{\rmm}^{\tilde{Q}_1}$ on the arrows of $\tilde{Q}$, the potential $W$ has positive weight. Deformed dimensional reduction \cite{davison2022deformed} yields the following proposition.
\begin{proposition}
\label{proposition:deformed_dim_red}
 We assume that $W$ is a quasi-homogeneous potential for $\tilde{Q}$. Then, there is a canonical isomorphism $\pi'_*\SA_{\tilde{Q},W}\cong\varphi_{\Tr(W_0)}\SA_{\Pi_Q}$ of constructible complexes in $\CD^+_{\rmc}(\CM_{\Pi_Q})$.
\end{proposition}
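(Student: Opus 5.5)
The plan is to combine the sheafified dimensional reduction isomorphism \Cref{equation:dimension_reduction_sheaf} with the deformed dimensional reduction theorem of Davison--P\u adurariu \cite{davison2022deformed}. Both are applied after pushing forward along the good moduli space morphisms.

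\textbf{Step 1: reduce to the stack level.} Write $\SA_{\tilde{Q},W}=(\JH_{\tilde{Q}})_*\varphi_{\Tr(W)}\BoQ^{\vir}_{\FM_{\tilde{Q}}}$. Since $\JH$ is approachable by proper maps, vanishing cycles commute with $\JH_*$. By the commutative square $\pi'\circ\JH_{\tilde{Q}}=\JH_{\overline{Q}}\circ\pi$, we get
\[
\pi'_*\SA_{\tilde{Q},W}\cong(\JH_{\overline{Q}})_*\pi_*\varphi_{\Tr(W)}\BoQ^{\vir}_{\FM_{\tilde{Q}}}.
\]
Here $\Tr(W)=\Tr(\tilde{W})+\pi^*\Tr(W_0)$, where $\Tr(\tilde{W})$ is linear in the fibre coordinates $\omega_i$ of the vector bundle $\pi$.

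\textbf{Step 2: apply deformed dimensional reduction on $\FM_{\overline{Q}}$.} The theorem of \cite{davison2022deformed} treats a function of the form $f=g+\pi^*h$ on the total space of a vector bundle, with $g$ fibrewise linear (given by a section $s$) and $h$ a function on the base. Under a $\BoC^*$-equivariance or quasi-homogeneity hypothesis, it yields
\[
\pi_*\varphi_{f}\BoQ\cong\varphi_{h}\bigl(\BD\BoQ_{Z(s)}\bigr)[\text{shift}],
\]
where $Z(s)$ is the zero locus of $s$. We apply it with $s=\mu$ the moment map, so $Z(s)=\FM_{\Pi_Q}$, and $h=\Tr(W_0)$. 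The quasi-homogeneity assumption on $W$ is exactly what is needed to apply this theorem. Matching the shifts with the virtual normalizations as in \Cref{equation:dimension_reduction_sheaf} gives
\[
\pi_*\varphi_{\Tr(W)}\BoQ^{\vir}_{\FM_{\tilde{Q}}}\cong\varphi_{\Tr(W_0)}\BD\BoQ^{\vir}_{\FM_{\Pi_Q}}.
\]

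\textbf{Step 3: push forward to $\CM_{\overline{Q}}$.} Apply $(\JH_{\overline{Q}})_*$ and commute it with $\varphi_{\Tr(W_0)}$. This is legitimate because $\Tr(W_0)$ is pulled back from $\CM_{\overline{Q}}$, being a $\GL$-invariant function, and $\JH$ is again approachable by proper maps. We obtain $\varphi_{\Tr(W_0)}(\JH_{\Pi_Q})_*\BD\BoQ^{\vir}_{\FM_{\Pi_Q}}=\varphi_{\Tr(W_0)}\SA_{\Pi_Q}$, supported on $\CM_{\Pi_Q}$. Canonicity follows because each isomorphism used is canonical.

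\textbf{Main obstacle.} The key difficulty is Step 2. Deformed dimensional reduction is usually stated for global sections or schemes, so it has to be applied on stacks and in sheafified form. I would handle this by approximating $\FM_{\overline{Q},\dd}$ by the finite-dimensional quotients $X\times_{\GL_\dd}\mathrm{Fr}$ that appear in the approachable-by-proper-maps argument. Deformed dimensional reduction is then applied on each smooth approximation, and one checks compatibility with the transition maps. A second point to check is the support condition $\mathrm{crit}\subset$ zero fibre: the quasi-homogeneity of $W$ ensures this, and this condition is what allows $\varphi$ to be applied without splitting into fibres.
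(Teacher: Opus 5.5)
Your outline follows the paper's route: the paper gives no argument beyond invoking \cite{davison2022deformed}. The gap is in Step~2, where all the content lies. It asserts more than a quasi-homogeneity-based deformed dimensional reduction can give.

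Quasi-homogeneity is a global hypothesis. The $\BoC^*$-action moves the points of the base, so it can only be exploited after pushing forward to a point, or to a base on which $\BoC^*$ acts trivially. In fact, the sheaf-level statement $\pi_*\varphi_f\BoQ\cong\varphi_h(\BD\BoQ_{Z(s)})[\text{shift}]$ on the base of the bundle is false in the generality in which you state it.

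Here is a counterexample. Take the base $B=\BoA^2_{a,b}$ with the trivial line bundle of fibre coordinate $x$, section $s=a^2$ and $h=ab$, so $f=xa^2+ab$.
\begin{itemize}
\item $f$ is quasi-homogeneous of weight $3$ for the positive weights $(1,1,2)$ on $(x,a,b)$, and $\mathrm{crit}(f)=\{a=b=0\}\subset f^{-1}(0)$.
\item In the coordinates $(x,a,u)$ with $u=b+xa$ one has $f=au$. Hence $\varphi_f\BoQ_{\BoA^3}$ is a shifted constant sheaf on the $x$-axis, and $\pi_*\varphi_f\BoQ_{\BoA^3}$ is a skyscraper at the origin of $B$.
\item On the other hand, $h$ vanishes on $Z(s)_{\red}=\{a=0\}$. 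So $\varphi_h(\BD\BoQ_{Z(s)})$ is a shifted constant sheaf on the whole line $\{a=0\}$.
\end{itemize}
After the appropriate shift the two sides have the same global sections, as the global theorem predicts. But they have different supports, so they are not isomorphic for any shift. Here the failure comes from $h$ vanishing on $Z(s)_{\red}$ without lying in the ideal generated by $s$.

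Your proposed remedy does not touch this point. Approximating $\FM_{\overline{Q},\dd}$ by $X_{\overline{Q},\dd}\times_{\GL_{\dd}}\mathrm{Fr}$ only removes the stackiness. Applying a global theorem on each approximation still yields only an isomorphism of global cohomologies, i.e.\ of the absolute CoHAs. It does not give an isomorphism of complexes on $\FM_{\overline{Q}}$ or on $\CM_{\overline{Q}}$. Note also that your Step~2 lives on $\FM_{\overline{Q}}$, before pushing forward along $\JH$, so it is even stronger than the proposition.

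To close the gap you need one of two things:
\begin{itemize}
\item a form of deformed dimensional reduction that is genuinely relative to the good moduli space in this specific setting (this is what the paper's bare citation is being asked to supply); or
\item an argument that uses the specific structure of $\mu$ and of the $\GL_{\dd}$-invariant function $\Tr(W_0)$.
\end{itemize}

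The remaining ingredients are fine: commuting $\varphi_{\Tr(W_0)}$ with $\JH_*$ via approachability by proper maps, the identity $\pi'\circ\JH_{\tilde{Q}}=\JH_{\overline{Q}}\circ\pi$, and the inclusion $\mathrm{crit}\subset$ zero fibre via Euler's identity.
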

We define the \emph{dimensionally reduced} cohomological Hall algebra of $(\tilde{Q},W)$ as $\SA_{\tilde{Q},W}^{\red}\coloneqq \pi^{\prime}_*\SA_{\tilde{Q},W}$. We also define the dimensionally reduced BPS sheaf by $\mathcal{BPS}^{\mathrm{red}}_{\tilde{Q},W}=\pi^{\prime}_*\mathcal{BPS}_{\tilde{Q},W} [-1]$. By \Cref{lemma:dimredBPS_deformedpotential}, it is perverse.

\subsubsection{Structure of the BPS Lie algebra for deformed canonical cubic potential}
In this section, we determine the structure of the BPS Lie algebra for a triple quiver $\tilde{Q}$ with deformed canonical cubic potential $W=\tilde{W}+W_0$, where $W_0$ is a potential for the double quiver $\overline{Q}$.

\begin{lemma}
\label{lemma:dimredBPS_deformedpotential}
Let $\tilde{Q}$ be the triple of a quiver $Q$ and $W=\tilde{W}+W_0$ be a quasi-homogeneous potential where $\tilde{W}$ is the canonical cubic potential of $\tilde{Q}$ and $W_0\in\BoC[\overline{Q}]$. The complex of constructible sheaves $\pi'_*\BPS_{\tilde{Q},W}[-1]\in\CD^{\rmb}_{\rmc}(\CM_{\overline{Q}})$ is perverse. It is isomorphic to $\varphi_{\Tr(W_0)}\BPS_{\Pi_Q}$.
\end{lemma}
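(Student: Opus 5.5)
The plan is to reduce to the undeformed case by a support argument, in the spirit of \Cref{lemma:support_lemma_preprojective}. The key step is to show that the BPS sheaf for the deformed potential is obtained from the undeformed one by applying $\varphi_{\Tr(W_0)}$ on the $\CM_{\overline{Q}}$ factor:
\[
\BPS_{\tilde{Q},W}\cong l_*\bigl(\varphi_{\Tr(W_0)}\BPS_{\Pi_Q}\boxtimes\BoQ_{\BoA^1}^{\vir}\bigr),
\]
where $l\colon\CM_{\overline{Q}}\times\BoA^1\rightarrow\CM_{\tilde{Q}}$ is the closed embedding of \Cref{lemma:support_lemma_preprojective}. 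Granting this, $\pi'\circ l$ is the projection $\CM_{\overline{Q}}\times\BoA^1\rightarrow\CM_{\overline{Q}}$. Its pushforward of $\CF\boxtimes\BoQ_{\BoA^1}[1]$ is $\CF[1]$, so
\[
\pi'_*\BPS_{\tilde{Q},W}[-1]\cong\varphi_{\Tr(W_0)}\BPS_{\Pi_Q}.
\]
This is perverse because $\varphi_{\Tr(W_0)}$ is perverse t-exact and $\BPS_{\Pi_Q}$ is perverse.

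To obtain the displayed isomorphism, I will work with the full sheafified CoHA rather than the BPS sheaf alone. First, $\Tr(W)=\Tr(\tilde{W})+\Tr(W_0)\circ\pi$, and $\Tr(W_0)\circ\pi$ is pulled back from $\CM_{\overline{Q}}$. The vanishing cycle functor commutes with the pushforward $\JH_{\tilde{Q}*}$, since $\JH$ is approachable by proper maps. I therefore aim for
\[
\SA_{\tilde{Q},W}\cong\varphi_{\Tr(W_0)\circ\pi'}\SA_{\tilde{Q},\tilde{W}}.
\]
This is not literally a formal identity, because the vanishing cycles of a sum are not those of a composite. The point is that the critical locus of $\Tr(W)$ lies where $\omega$ is central, and there the deformed dimensional reduction of \Cref{proposition:deformed_dim_red} applies. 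Concretely, \Cref{proposition:deformed_dim_red} gives $\pi'_*\SA_{\tilde{Q},W}\cong\varphi_{\Tr(W_0)}\SA_{\Pi_Q}$. Combining this with \eqref{equation:integrality_preprojective} and the t-exactness of $\varphi_{\Tr(W_0)}$ yields
\[
\pi'_*\SA_{\tilde{Q},W}\cong\Sym_{\boxdot}\bigl(\varphi_{\Tr(W_0)}\BPS_{\Pi_Q}\otimes\rmH^*_{\BoC^*}(\pt)\bigr).
\]
Here I use that $\varphi_{\Tr(W_0)}$ is monoidal for $\boxdot$ (Thom--Sebastiani), since $\Tr(W_0)$ is additive under $\oplus$.

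On the other side, cohomological integrality for $(\tilde{Q},W)$ (\Cref{theorem:asso_graded_perverse_filt}) gives $\SA_{\tilde{Q},W}\cong\Sym_{\boxdot}(\BPS_{\tilde{Q},W}[-1]\otimes\rmH^*_{\BoC^*}(\pt))$. Pushing forward by $\pi'$, which is compatible with $\oplus$, we get
\[
\pi'_*\SA_{\tilde{Q},W}\cong\Sym_{\boxdot}\bigl(\pi'_*\BPS_{\tilde{Q},W}[-1]\otimes\rmH^*_{\BoC^*}(\pt)\bigr).
\]
To compare the two expressions, I take the lowest perverse-degree piece. The piece $\ptau^{\leq 0}$ of each side, restricted to the generating part (the summand of $\Sym$-degree one in the $\BoN^{Q_0}$-grading), should recover $\varphi_{\Tr(W_0)}\BPS_{\Pi_Q}$ on one side and $\pi'_*\BPS_{\tilde{Q},W}[-1]$ on the other. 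Proving this needs a priori that $\pi'_*\BPS_{\tilde{Q},W}[-1]$ lives in perverse degrees $\geq 0$. I plan to get this from the support statement: it suffices to show that $\BPS_{\tilde{Q},W}$ is supported on $l(\CM_{\overline{Q}}\times\BoA^1)$ and is $\BoA^1$-translation invariant. That in turn follows from the $\BoA^1$-action translating $\omega$ by scalars, which preserves $\Tr(W)$ up to $\Tr(\lambda\rho)=0$ on the relevant locus.

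The main obstacle is this support and translation-invariance step for the deformed potential, since \cite[Lemma~4.1]{davison2016integrality} is stated only for $\tilde{W}$. My first approach will be to redo its argument: the BPS sheaf is computed \'etale locally via $\varphi_{\Tr(W)}$ of IC sheaves of the simple locus, and the function $\Tr(W_0)$ is pulled back from $\CM_{\overline{Q}}$. If this direct route is too heavy, the fallback is to deduce the comparison purely from the two $\Sym$ isomorphisms above. Both are free $\boxdot$-symmetric algebras, and the generators can be recovered inductively on the dimension vector by Krull--Schmidt for semisimple $\BoZ$-graded perverse sheaves, using that $\pi'$ is proper on the relevant support.
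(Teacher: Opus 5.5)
\textbf{Gap in the main route.} The problem is the step you yourself flag. For the deformed potential, the support and translation statement $\BPS_{\tilde{Q},W}\cong l_*(\varphi_{\Tr(W_0)}\BPS_{\Pi_Q}\boxtimes\BoQ^{\vir}_{\BoA^1})$ is not available, and the heuristic you give for it is false. On $\mathrm{crit}(\Tr(W))$ one has $\alpha^*\omega-\omega\alpha^*=-\partial_\alpha W_0$, and the analogous relation holds for $\alpha$. So once $W_0\neq 0$, the Jacobi algebra is no longer $\Pi_Q[\omega]$ and $\omega$ is not central.

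A concrete example: take the one-loop quiver with $W_0=\alpha^2$, which is quasi-homogeneous for suitable weights. The $3\times 3$ matrices $\alpha=E_{12}$, $\omega=E_{13}$, with $\alpha^*$ a suitable multiple of $E_{32}$, give a critical point with $[\omega,\alpha^*]\neq 0$.

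The eigenvalue-splitting argument behind \cite[Lemma~4.1]{davison2016integrality} uses centrality of $\omega$ essentially, so it does not transfer. Moreover, the support statement you aim for is strictly stronger than the lemma, which only asserts an abstract isomorphism after pushing forward. The same false centrality claim is what you use to motivate $\SA_{\tilde{Q},W}\cong\varphi_{\Tr(W_0)\circ\pi'}\SA_{\tilde{Q},\tilde{W}}$; fortunately, you only ever use the pushed-forward statement of \Cref{proposition:deformed_dim_red}.

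Your fallback also rests on a false premise. The map $\pi'$ is not proper on the relevant support: already for $W_0=0$, the composite $\pi'\circ l$ is the projection along $\BoA^1$. So you cannot get semisimplicity or perverse-splitness of $\pi'_*\BPS_{\tilde{Q},W}[-1]$ from the decomposition theorem.

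\textbf{The missing observation, which is how the paper argues.} Push the integrality isomorphism for $(\tilde{Q},W)$ forward along $\pi'$ and apply \Cref{proposition:deformed_dim_red}. This shows that $\pi'_*\BPS_{\tilde{Q},W}[-1]$ is a direct summand of $\pi'_*\SA_{\tilde{Q},W}\cong\varphi_{\Tr(W_0)}\SA_{\Pi_Q}$.

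The latter complex is isomorphic to its total perverse cohomology and lives in perverse degrees $\geq 0$. This holds because $\SA_{\Pi_Q}\cong\Sym_{\boxdot}(\BPS_{\Pi_Q}\otimes\rmH^*_{\BoC^*}(\pt))$ has both properties and $\varphi_{\Tr(W_0)}$ is perverse t-exact. Both properties pass to direct summands, since the truncation triangles of a summand are retracts of split triangles. This is exactly the a priori bound you wanted from the support lemma.

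With it, compare the perverse-degree-$0$ parts of your two $\Sym$ decompositions. This gives $\Sym_{\boxdot}(\varphi_{\Tr(W_0)}\BPS_{\Pi_Q})\cong\Sym_{\boxdot}(\pH^0(\pi'_*\BPS_{\tilde{Q},W}[-1]))$. Induction on the dimension vector then yields $\pH^0(\pi'_*\BPS_{\tilde{Q},W}[-1])\cong\varphi_{\Tr(W_0)}\BPS_{\Pi_Q}$. Comparing the full decompositions once more forces the higher perverse cohomology to vanish.

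None of this needs properness or semisimplicity. Krull--Schmidt holds in each cohomological degree because perverse sheaves have finite length. So your fallback, with this direct-summand argument in place of the support lemma, is the paper's proof, and the primary route should be dropped.
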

\begin{proof}
The cohomological integrality isomorphism for $\SA_{\Pi_Q}$ reads
\[
 \SA_{\Pi_Q}\cong\Sym_{\boxdot}\left(\BPS_{\Pi_Q}\otimes\rmH^*_{\BoC^*}(\pt)\right)\,.
\]
By the cohomological integrality isomorphism for $\SA_{\tilde{Q},W}$, $\pi'_*\BPS_{\tilde{Q},W}[-1]$ is a direct factor of $\pi'_*\SA_{\tilde{Q},W}\cong\varphi_{\Tr(W_0)}\SA_{\Pi_Q}$, where the isomorphism comes from \Cref{proposition:deformed_dim_red}. Since the latter complex is isomorphic to its total perverse cohomology (by perverse t-exactness of the vanishing cycle sheaf functor), the same is true for $\pi'_*\BPS_{\tilde{Q},W}[-1]$. Moreover, $\varphi_{\Tr(W_0)}\SA_{\Pi_Q}$ is concentrated in nonnegative perverse degrees since $\varphi_{\Tr(W_0)}$ is perverse t-exact, and therefore so is $\pi'_*\BPS_{\tilde{Q},W}[-1]$. We obtain $\pi'_*\BPS_{\tilde{Q},W}[-1]\cong\bigoplus_{j\geq 0}\pH^j(\pi'_*\BPS_{\tilde{Q,W}}[-1])[-j]$.

The cohomological integrality for $\SA_{\tilde{Q},W}$ gives, after applying the strict monoidal functor $\pi'_*\colon\CD^+_{\rmc}(\CM_{\tilde{Q}})\rightarrow\CD^+_{\rmc}(\CM_{\overline{Q}})$:
\[
 \pi'_*\SA_{\tilde{Q},W}\cong \Sym_{\boxdot}\left(\pi'_*\BPS_{\tilde{Q},W}[-1]\otimes\rmH^*_{\BoC^*}(\pt)\right)\,.
\]
The perverse degree $0$ parts of the cohomological integrality isomorphisms for $\SA_{\Pi_Q}$ and $\pi'_*\SA_{\tilde{Q},W}$ give
\[
 \Sym(\varphi_{\Tr(W_0)}\BPS_{\Pi_Q})\cong\Sym(\pH^0(\pi'_*\BPS_{\tilde{Q},W}[-1]))\,.
\]
By induction on the dimension vector, we obtain the existence of an abstract isomorphism $\varphi_{\Tr(W_0)}\BPS_{\Pi_Q}\cong\pH^0(\pi'_*\BPS_{\tilde{Q},W}[-1])$. Now, all of $\varphi_{\Tr(W_0)}\BPS_{\Pi_Q}$, $\pi_*\BPS_{\tilde{Q},W}[-1]$ and $\pH^0(\pi'_*\BPS_{\tilde{Q},W}[-1])$ satisfy the cohomological integrality isomorphism for $\pi_*\SA_{\tilde{Q},W}$ which concludes.
\end{proof}

\begin{proposition}
\label{proposition:sheaf_env_algebras}
 The enveloping algebras in the category $\Perv(\CM_{\overline{Q}})$ of $\varphi_{\Tr(W_0)}\BPS_{\Pi_Q}$ and $\pi'_*\BPS_{\tilde{Q},W}[-1]$ are both isomorphic to $\pH^0(\varphi_{\Tr(W_0)}\SA_{\Pi_Q}^{\psi})$.
\end{proposition}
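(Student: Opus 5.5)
The plan is to realise both enveloping algebras inside the single algebra object $\pH^0(\varphi_{\Tr(W_0)}\SA_{\Pi_Q}^{\psi})$, and then to reduce both isomorphisms to the perverse-degree-zero part of an integrality isomorphism.

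First, the perverse degree $0$ part of $\pH(\SA_{\Pi_Q}^{\psi})$ is a subalgebra. This holds because $\pH(\frakm^{\psi})$ is graded and $\pH(\SA_{\Pi_Q}^{\psi})$ is concentrated in nonnegative perverse degrees. By \Cref{corollary:enveloping_affinizedBPS} (the case $\CC=\rep(\Pi_Q)$), together with \Cref{corollary:affinizedBPSsheaf_preproj}, there is an isomorphism of algebra objects $\bfU(\BPS_{\Pi_Q}\otimes\rmH^*_{\BoC^*}(\pt))\cong\pH(\SA_{\Pi_Q}^{\psi})$. Restricting to perverse degree $0$ picks out the degree-$0$ part $\BPS_{\Pi_Q}\otimes u^0$ of the source. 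On this part the twisted bracket is just the bracket of $\BPS_{\Pi_Q}$, since for $m=n=0$ the factor $\frac{\abs{a}^0\abs{b}^0}{\abs{a+b}^0}$ equals $1$. This gives $\bfU(\BPS_{\Pi_Q})\cong\pH^0(\SA_{\Pi_Q}^{\psi})$ in $\Perv(\CM_{\Pi_Q})$.

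Next, apply $\varphi_{\Tr(W_0)}$, viewing everything on $\CM_{\overline{Q}}$ via the closed immersion. This functor is perverse t-exact, so it commutes with $\pH^0$. It is also strict monoidal for $\boxdot$ by Thom--Sebastiani, as already used in the proof of \Cref{theorem:asso_graded_perverse_filt}. Hence it sends Lie algebra objects, their enveloping algebras and the PBW isomorphism to the corresponding objects. We obtain
\[
\bfU(\varphi_{\Tr(W_0)}\BPS_{\Pi_Q})\cong\varphi_{\Tr(W_0)}\bfU(\BPS_{\Pi_Q})\cong\pH^0(\varphi_{\Tr(W_0)}\SA_{\Pi_Q}^{\psi}).
\]
The step to check carefully is that $\varphi_{\Tr(W_0)}$ commutes with $\bfU$ in $\Perv$. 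I would argue this through the PBW description $\bfU\cong\Sym_{\boxdot}$ as objects, together with the universal property applied to the induced algebra map; both sides then have compatible PBW filtrations, so the map is an isomorphism.

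For $\pi'_*\BPS_{\tilde{Q},W}[-1]$, I use the cohomological integrality for $(\tilde{Q},W)$, pushed forward by the strict monoidal functor $\pi'_*$ as in the proof of \Cref{lemma:dimredBPS_deformedpotential}. By \Cref{theorem:asso_graded_perverse_filt}, $\pi'_*\BPS_{\tilde{Q},W}[-1]$ carries a Lie bracket induced from the CoHA. The canonical map $\pi'_*\BPS_{\tilde{Q},W}[-1]\to\pi'_*\SA^{\psi}_{\tilde{Q},W}\cong\varphi_{\Tr(W_0)}\SA_{\Pi_Q}^{\psi}$ is a Lie morphism. By \Cref{lemma:dimredBPS_deformedpotential}, its source is perverse, so it lands in $\pH^0$. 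The universal property then yields an algebra map $\bfU(\pi'_*\BPS_{\tilde{Q},W}[-1])\to\pH^0(\varphi_{\Tr(W_0)}\SA_{\Pi_Q}^{\psi})$. To show it is an isomorphism, compare the two PBW maps from $\Sym_{\boxdot}(\pi'_*\BPS_{\tilde{Q},W}[-1])$. One is PBW for $\bfU$. The other is the perverse-degree-$0$ piece of the pushed-forward integrality isomorphism, which is an isomorphism because $\rmH^*_{\BoC^*}(\pt)$ contributes only its degree-$0$ part there. Both PBW maps are given by symmetrised products, so the triangle commutes, exactly as in \Cref{corollary:enveloping_affinizedBPS}.

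I expect the main obstacle to be compatibility of the algebra structures. The isomorphism of \Cref{proposition:deformed_dim_red} must be one of algebra objects, twisted by $\psi$, and not just of complexes; only then are the two products on $\pH^0(\varphi_{\Tr(W_0)}\SA_{\Pi_Q}^{\psi})$ the same. My first approach would be to argue as in the undeformed case, where the CoHA structure on $\SA_{\Pi_Q}$ was defined by transport via dimensional reduction. I would then check multiplicativity of deformed dimensional reduction using its functoriality for the induction correspondences, noting that $\Tr(W_0)$ pulls back compatibly along $p$ and $q$.
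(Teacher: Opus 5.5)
Your proposal is correct and follows essentially the same route as the paper. For the first algebra you apply the perverse t-exact, strict monoidal functor $\varphi_{\Tr(W_0)}$ to $\bfU(\BPS_{\Pi_Q})\cong\pH^0(\SA_{\Pi_Q}^{\psi})$; for $\pi'_*\BPS_{\tilde{Q},W}[-1]$ you combine \Cref{lemma:dimredBPS_deformedpotential} with the universal property of $\bfU$ and the PBW comparison, exactly as the paper does. The only difference is that you extract $\bfU(\BPS_{\Pi_Q})\cong\pH^0(\SA_{\Pi_Q}^{\psi})$ from the perverse-degree-zero part of \Cref{corollary:enveloping_affinizedBPS}, where the paper cites \cite[Theorem~1.1]{davison2023bps}; this is harmless and not circular, and the paper likewise takes for granted the algebra compatibility of \Cref{proposition:deformed_dim_red} that you flag.
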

\begin{proof}
By \cite[Theorem~1.1]{davison2023bps}, $\pH^0(\SA_{\Pi_Q}^{\psi})$ is isomorphic to the enveloping algebra of $\BPS_{\Pi_Q}$ in $\Perv(\CM_{\Pi_Q})$. By applying the perverse exact functor $\varphi_{\Tr(W_0)}$, it follows that $\varphi_{\Tr(W_0)}\pH^0(\SA_{\Pi_Q}^{\psi})\cong \pH^0(\varphi_{\Tr(W_0)}\SA_{\Pi_Q}^{\psi})$ is the universal enveloping algebra of $\varphi_{\Tr(W_0)}\BPS_{\Pi_Q}$ in $\Perv(\CM_{\Pi_Q})$.

Moreover, by \Cref{lemma:dimredBPS_deformedpotential}, there is an inclusion of a direct factor $\pi'_*\BPS_{\tilde{Q},W}[-1]\rightarrow \pH^0(\pi'_*\SA_{\tilde{Q},W}^{\psi})$ that is a Lie algebra homomorphism. By the universal property of the universal enveloping algebra, we obtain an algebra homomorphism $\bfU(\pi'_*\BPS_{\tilde{Q},W}[-1])\rightarrow \pH^0(\varphi_{\Tr(W_0)}\SA_{\Pi_Q}^{\psi})$ which must be an isomorphism by the respective PBW isomorphisms for $\varphi_{\Tr(W_0)}\BPS_{\Pi_Q}$ and $\pi_*\BPS_{\tilde{Q},W}[-1]$ and \Cref{lemma:dimredBPS_deformedpotential}.
\end{proof}

\begin{corollary}
\label{corollary:env_alg_def_dim_red}
There is an isomorphism of algebras $\bfU(\rmBPS_{\tilde{Q},W})\cong \bfU(\rmBPS_{\Pi_Q,W_0})$. Both are isomorphic to $\rmH^*(\varphi_{\Tr(W_0)}\pH^0(\SA_{\Pi_Q}^{\psi}))$.
\end{corollary}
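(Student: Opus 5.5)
The plan is to take derived global sections of \Cref{proposition:sheaf_env_algebras}. That proposition already gives, inside $\Perv(\CM_{\overline{Q}})$, isomorphisms of algebra objects
\[
\bfU(\varphi_{\Tr(W_0)}\BPS_{\Pi_Q})\cong\pH^0(\varphi_{\Tr(W_0)}\SA_{\Pi_Q}^{\psi})\cong\bfU(\pi'_*\BPS_{\tilde{Q},W}[-1]),
\]
and $\pH^0(\varphi_{\Tr(W_0)}\SA_{\Pi_Q}^{\psi})\cong\varphi_{\Tr(W_0)}\pH^0(\SA_{\Pi_Q}^{\psi})$ by perverse exactness of $\varphi_{\Tr(W_0)}$. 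Since $\rmH^*$ is strict monoidal from $(\CD^+_{\rmc}(\CM_{\overline{Q}}),\boxdot)$ to graded vector spaces, it sends these algebra isomorphisms to algebra isomorphisms.

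The key step is to check that $\rmH^*$ commutes with forming universal enveloping algebras in our setting, i.e. that
\[
\rmH^*(\bfU_{\boxdot}(\CL))\cong\bfU(\rmH^*(\CL))
\]
for $\CL=\varphi_{\Tr(W_0)}\BPS_{\Pi_Q}$ and for $\CL=\pi'_*\BPS_{\tilde{Q},W}[-1]$. There is a natural algebra map $\bfU(\rmH^*(\CL))\rightarrow\rmH^*(\bfU_{\boxdot}(\CL))$ obtained from the Lie algebra map $\rmH^*(\CL)\rightarrow\rmH^*(\bfU_\boxdot\CL)$ and the universal property. Composing with the two PBW isomorphisms reduces the question to the Künneth-type identification $\rmH^*(\Sym_\boxdot\CL)\cong\Sym(\rmH^*\CL)$.

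By definition, $\rmH^*(\varphi_{\Tr(W_0)}\BPS_{\Pi_Q})=\rmBPS_{\Pi_Q,W_0}$. On the other side, $\rmH^*(\pi'_*\BPS_{\tilde{Q},W}[-1])=\rmH^*(\BPS_{\tilde{Q},W}[-1])=\rmBPS_{\tilde{Q},W}$, and its bracket agrees with the one pushed forward, since $\pi'_*$ is strict monoidal. Both sides are thus identified with $\rmH^*(\varphi_{\Tr(W_0)}\pH^0(\SA^{\psi}_{\Pi_Q}))$, which gives the corollary.

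The main obstacle is this Künneth step. For non-equivariant constructible complexes on the schemes $\CM_{\overline{Q}}$, the Künneth isomorphism $\rmH^*(\CF\boxtimes\CG)\cong\rmH^*(\CF)\otimes\rmH^*(\CG)$ holds with $\BoQ$-coefficients. Pushforward along the finite map $\oplus$ preserves cohomology, so $\rmH^*(\CF\boxdot\CG)\cong\rmH^*\CF\otimes\rmH^*\CG$. Taking $\frakS_n$-invariants, which is exact in characteristic $0$, then gives $\rmH^*(\Sym^n_\boxdot\CL)\cong\Sym^n(\rmH^*\CL)$ with the Koszul signs. Since no torus equivariance is present, no purity or freeness hypothesis is needed, unlike in \Cref{corollary:equivariant_pwb_supercommutativity}; I would only double-check that the signs in the symmetric monoidal structure match.
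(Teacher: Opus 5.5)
Your proposal is correct and follows the paper's route: the paper's proof consists only of taking derived global sections of \Cref{proposition:sheaf_env_algebras}. Your extra verification that $\rmH^*$ commutes with $\bfU$ is the step the paper leaves implicit, and it is sound. That verification uses the natural algebra map, the two PBW isomorphisms, and the non-equivariant K\"unneth isomorphism $\rmH^*(\Sym_{\boxdot}\CL)\cong\Sym(\rmH^*\CL)$, which needs no purity or freeness hypothesis.
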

\begin{proof}
It follows by taking the derived global sections of \Cref{proposition:sheaf_env_algebras}.
\end{proof}

\begin{remark}
 It is not immediate to decide from \Cref{corollary:env_alg_def_dim_red} whether $\rmBPS_{\tilde{Q},W}$ and $\rmBPS_{\Pi_Q,W_0}$ are isomorphic as Lie algebras or not. In general, given two Lie algebras $\frakg$ and $\frakh$, it is not true that an isomorphisnm $\bf{U}(\frakg)\cong\bfU(\frakh)$ implies an isomorphism $\frakg\cong\frakh$. This is called the \emph{isomorphism problem of universal enveloping algebras}, see \cite{schneider2011isomorphism} for some references. In the rest of this paper, we use the algebra structure on $\CA_{\tilde{Q},W}$ given by the isomorphism $\CA_{\tilde{Q},W}^{\psi}\cong\rmH^*(\varphi_{\Tr(W_0)}\SA_{\Pi_Q}^{\psi})$.
\end{remark}

As in \cite{davison2023bps}, we define the set of \emph{primitive simple positive roots} $\Sigma_{\Pi_Q}\subset\BoN^{Q_0}$ as the subset of dimension vectors $\dd\in\BoN^{Q_0}$ for which there exists a simple representation of $\Pi_Q$ of dimension $\dd$. We define the set of simple positive roots $\Phi_{\Pi_Q}^+\coloneqq \Sigma_{\Pi_Q}\sqcup\{\ell\dd\colon \ell\geq 2, \dd\in\Sigma_{\Pi_Q}, (\dd,\dd)_Q=0\}$. For $\dd\in\Sigma_{\Pi_Q}$, we define $\SG_{\dd}\coloneqq \ICS(\CM_{\Pi_Q,\dd})$ and $\SG_{\ell\dd}=(\Delta_{\ell})_*\ICS(\CM_{\Pi_Q,\dd})$ where $\Delta_{\ell}\colon\CM_{\Pi_Q,\dd}\rightarrow\CM_{\Pi_Q,\ell\dd}$, $x\mapsto x^{\oplus\ell}$ is the $\ell$-th direct sum power of $x$.

\begin{theorem}
\label{theorem:BPS_deformed}
The Lie algebra object $\BPS_{\Pi_Q,W_0}=\varphi_{\Tr(W_0)}\BPS_{\Pi_Q}$ may be described by generators and relations as the quotient of the free Lie algebra generated by $\varphi_{\Tr(W_0)}\SG_{\dd}$ for $\dd\in \Phi_{\Pi_Q}^+$ by the Serre relations
\[
 \ad(\varphi_{\Tr(W_0)}\SG_{\dd})^{1-(\dd,\dd')}(\varphi_{\Tr(W_0)}\SG_{\dd'}) \quad\text{for $\dd,\dd'\in\Phi_{\Pi_Q}^+$ such that $(\dd,\dd)_Q=2$ or $(\dd,\dd')=0$}.
\]
\end{theorem}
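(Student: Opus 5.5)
The plan is to transport the known generators-and-relations description of $\BPS_{\Pi_Q}$ through the functor $\varphi_{\Tr(W_0)}$. By \cite{davison2023bps}, the Lie algebra object $\BPS_{\Pi_Q}\in\Perv(\CM_{\Pi_Q})$ is the quotient of the free Lie algebra object $\mathrm{Free}_{\boxdot}(\bigoplus_{\dd\in\Phi^+_{\Pi_Q}}\SG_{\dd})$ by the ideal $I$ generated by the Serre relations $\ad(\SG_{\dd})^{1-(\dd,\dd')}(\SG_{\dd'})$, for the indicated pairs $(\dd,\dd')$. All objects here are semisimple perverse sheaves on $\CM_{\Pi_Q}$, or on $\CM_{\overline{Q}}$ via the closed embedding. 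Free Lie algebras and ideals are built from $\boxdot$-products, direct sums, images and cokernels.

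The key steps are as follows.

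First, I note that $\varphi_{\Tr(W_0)}$ is perverse t-exact, hence exact on $\Perv(\CM_{\overline{Q}})$. The functor $\varphi_{\Tr(W_0)}$ (with $\Tr(W_0)$ descended to $\CM_{\overline{Q}}$, which is possible since it is $\GL_\dd$-invariant) is strict monoidal for $\boxdot$. This follows from the Thom--Sebastiani isomorphism combined with the fact that $\oplus$ is finite and $\Tr(W_0)\circ\oplus=\Tr(W_0)\boxplus\Tr(W_0)$. This is the same input used in the proof of \Cref{theorem:asso_graded_perverse_filt}.

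Second, I deduce that $\varphi_{\Tr(W_0)}$ carries Lie algebra objects to Lie algebra objects. Since it commutes with $\boxdot$, direct sums and the symmetric braiding (Thom--Sebastiani is compatible with the swap), it sends the free Lie algebra on $\CG$ to the free Lie algebra on $\varphi_{\Tr(W_0)}\CG$. Concretely, $\mathrm{Free}(\CG)$ is a direct summand of the tensor algebra $\bigoplus_n\CG^{\boxdot n}$, cut out by the symmetric-group idempotents (Dynkin idempotents). These idempotents are preserved by any additive strict symmetric monoidal functor.

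Third, I apply exactness. The presentation gives an exact sequence $0\to I\to\mathrm{Free}\to\BPS_{\Pi_Q}\to 0$, where $I$ is the image of the morphism from $\mathrm{Free}\boxdot R\boxdot\cdots$ (iterated brackets with the Serre relation object $R$). Applying $\varphi_{\Tr(W_0)}$ keeps the sequence exact, and it identifies $\varphi_{\Tr(W_0)}I$ with the ideal generated by $\varphi_{\Tr(W_0)}R$. The latter is exactly the object spanned by the images of $\ad(\varphi_{\Tr(W_0)}\SG_{\dd})^{1-(\dd,\dd')}(\varphi_{\Tr(W_0)}\SG_{\dd'})$. This yields the claimed presentation of $\BPS_{\Pi_Q,W_0}=\varphi_{\Tr(W_0)}\BPS_{\Pi_Q}$, whose Lie bracket is by definition the one induced from $\BPS_{\Pi_Q}$.

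The main obstacle I expect is justifying the symmetric monoidality, including signs, of $\varphi_{\Tr(W_0)}$ with respect to $\boxdot$ on the good moduli space. One must check that the Thom--Sebastiani isomorphism intertwines the braidings, so that the Koszul-type sign conventions in the Lie structure are preserved. For the free-Lie-algebra identification one could alternatively avoid Dynkin idempotents: use \cite[Theorem~1.1]{davison2023bps} to identify $\bfU$ of the presented Lie algebra with $\pH^0(\SA_{\Pi_Q}^{\psi})$, then invoke \Cref{proposition:sheaf_env_algebras} and compare primitive parts.
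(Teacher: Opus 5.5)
Your proposal is correct and is essentially the paper's argument. The paper's proof is the one-line observation that the presentation of $\BPS_{\Pi_Q}$ from \cite{davison2023bps} transfers because $\varphi_{\Tr(W_0)}$ is a perverse t-exact, strict monoidal functor on $\Perv(\CM_{\Pi_Q})$; your extra points (preservation of images and of free Lie algebras, and compatibility of Thom--Sebastiani with the braiding, which the transported bracket needs in order to be antisymmetric) fill in details the paper leaves implicit rather than giving a different route.
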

\begin{proof}
The presentation of $\BPS_{\Pi_Q,W_0}$ by generators and relations follows from that of $\BPS_{\Pi_Q}$ given in \cite{davison2023bps} since $\varphi_{\Tr(W_0)}\colon\Perv(\CM_{\Pi_Q})\rightarrow\Perv(\CM_{\Pi_Q})$ is a strict monoidal functor.
\end{proof}

\begin{corollary}
\label{corollary:inclusion_real_subalgebra}
There is an inclusion of Lie algebras $\frakn_{Q^{\re}}^+\subset \rmH^0(\BPS_{\tilde{Q},W}^{\red})\subset\rmBPS_{\tilde{Q},W}$.
\end{corollary}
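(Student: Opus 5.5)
The inclusion will be obtained by applying the perverse exact, strict monoidal functor $\varphi_{\Tr(W_0)}$ to the known undeformed inclusion, and then passing to derived global sections. The second inclusion is the easy one. By \Cref{lemma:dimredBPS_deformedpotential}, $\BPS^{\red}_{\tilde{Q},W}=\pi'_*\BPS_{\tilde{Q},W}[-1]$ is a perverse sheaf isomorphic to $\varphi_{\Tr(W_0)}\BPS_{\Pi_Q}$. Taking $\rmH^*$ identifies $\rmBPS_{\tilde{Q},W}$ with $\rmH^*(\BPS^{\red}_{\tilde{Q},W})$, so $\rmH^0(\BPS^{\red}_{\tilde{Q},W})$ is the cohomological-degree-zero piece of $\rmBPS_{\tilde{Q},W}$. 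Since the Lie bracket is cohomologically graded of degree $0$, this piece is a Lie subalgebra.

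For the first inclusion, I would use \Cref{theorem:BPS_deformed}. It presents $\BPS_{\Pi_Q,W_0}$ as the free Lie algebra object on $\varphi_{\Tr(W_0)}\SG_{\dd}$, $\dd\in\Phi^+_{\Pi_Q}$, modulo the Serre relations. Restrict to the real simple roots, i.e. the $\dd\in\Sigma_{\Pi_Q}$ with $(\dd,\dd)_Q=2$. These include the vertices $e_i$ of $Q$ without loops, and $\CM_{\Pi_Q,e_i}$ is a point. For such $e_i$, $\SG_{e_i}=\BoQ_{\pt}$ is supported at the nilpotent point $0_{e_i}$, where $\Tr(W_0)$ vanishes since $W_0$ is a combination of cycles; therefore $\varphi_{\Tr(W_0)}\SG_{e_i}\cong\SG_{e_i}$. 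Each generator thus contributes a degree-zero class $x_i\in\rmH^0(\BPS^{\red}_{\tilde{Q},W})$, and these classes satisfy the Serre relations $\ad(x_i)^{1-(e_i,e_j)}(x_j)=0$ for $i\neq j$, which are exactly the relations of $\frakn^+_{Q^{\re}}$. We obtain a Lie algebra morphism $\frakn^+_{Q^{\re}}\rightarrow\rmH^0(\BPS^{\red}_{\tilde{Q},W})$. More canonically, apply $\varphi_{\Tr(W_0)}$ to the Lie subobject of $\BPS_{\Pi_Q}$ generated by the $\SG_{e_i}$. Its global sections give $\frakn^+_{Q^{\re}}\subset\rmH^0(\BPS_{\Pi_Q})$, as used in \Cref{proposition:trivial_extension_real_subquiver}.

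The main obstacle is injectivity. I would argue that this Lie subobject is supported on the nilpotent points. It is a summand generated under $\boxdot$ by skyscrapers at $0_{e_i}$, and the direct sum of nilpotent points is nilpotent. Hence it is a direct sum of skyscraper sheaves at points $0_{\dd}$, and $\Tr(W_0)$ vanishes there. Consequently $\varphi_{\Tr(W_0)}$ acts as the identity on this subobject, compatibly with the bracket, because $\varphi$ is strict monoidal and its restriction to sheaves supported in $\Tr(W_0)^{-1}(0)$ is canonically the identity. Since this subobject is a direct summand of $\BPS_{\Pi_Q}$, by semisimplicity, its image stays a direct summand of $\varphi_{\Tr(W_0)}\BPS_{\Pi_Q}$. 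Taking $\rmH^0$ therefore gives an injective Lie algebra map $\frakn^+_{Q^{\re}}\hookrightarrow\rmH^0(\BPS^{\red}_{\tilde{Q},W})$. Composing with the first step yields the claim.
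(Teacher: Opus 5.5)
Your proof is correct. The first half matches the paper's proof: $\Tr(W_0)$ vanishes at $0_{e_i}$ for loop-free vertices, and \Cref{theorem:BPS_deformed} supplies the Serre relations. You differ from the paper on injectivity.

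\textbf{How the paper gets injectivity.} It reads off directly from the presentation in \Cref{theorem:BPS_deformed} that the subalgebra generated by the real pieces $\BPS_{\Pi_Q,W_0,e_i}=\BPS_{\Pi_Q,e_i}$ is $\frakn^+_{Q^{\re}}$. This implicitly requires a Kac--Moody-type fact: in such a Borcherds-type presentation, a subset of real generators satisfies no relations beyond the Serre relations among themselves. The $\BoN^{Q_0}$-grading alone does not settle this. For a loop-free affine quiver, for instance, the imaginary generator $\SG_{\delta}$ sits in a degree spanned by real vertices. The paper also needs an unstated passage from the sheaf-level presentation to global sections.

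\textbf{How you get it.}
\begin{enumerate}
\item The whole real Lie subobject $\CN\subset\BPS_{\Pi_Q}$ generated by the $\SG_{e_i}$ is a sum of skyscrapers at the points $0_{\dd}$, where $\Tr(W_0)$ vanishes.
\item Hence $\varphi_{\Tr(W_0)}$ is canonically the identity on $\CN$, compatibly with the bracket.
\item By semisimplicity, $\varphi_{\Tr(W_0)}\CN$ remains a direct summand of $\varphi_{\Tr(W_0)}\BPS_{\Pi_Q}$.
\item Injectivity then follows from the undeformed inclusion $\frakn^+_{Q^{\re}}\subset\rmBPS_{\Pi_Q}$.
\end{enumerate}

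\textbf{What your route buys.} It gives a slightly stronger statement: the real part is literally untouched by the deformation and survives as a Lie direct summand. It also makes the appeal to \Cref{theorem:BPS_deformed} unnecessary, since the Serre relations are inherited from the undeformed side. The cost is one check: that the canonical isomorphism $\varphi_{\Tr(W_0)}\CF\cong\CF$, for $\CF$ supported in $\Tr(W_0)^{-1}(0)$, is compatible with the Thom--Sebastiani monoidal structure.

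\textbf{A caveat shared with the paper.} You transfer the Lie structure from $\varphi_{\Tr(W_0)}\BPS_{\Pi_Q}$ to $\BPS^{\red}_{\tilde{Q},W}$. \Cref{lemma:dimredBPS_deformedpotential} does not supply this, since it only gives an abstract isomorphism of perverse sheaves. Both your argument and the paper's rest here on the convention fixed in the remark following \Cref{corollary:env_alg_def_dim_red}.
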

\begin{proof}
The explanation from \cite[\S7.2.1]{davison2020bps} in the case of affine type A quivers works \emph{mutatis mutandis}. Namely, for $\dd$ concentrated at a single real vertex, the function induced by $\Tr(W_0)$ on $\FM_{\Pi_Q,\dd}$ vanishes and therefore, $\BPS_{\Pi_Q,\dd}=\BPS_{\Pi_Q,W_0,\dd}$. By \Cref{theorem:BPS_deformed}, the subalgebra of $\rmBPS_{\Pi_Q,W_0}$ generated by $\rmBPS_{\Pi_Q,W_0,\dd}$ for $\dd$ concentrated at a single real vertex is isomorphic to $\frakn_{Q^{\re}}^+$.
\end{proof}

In \cite[\S7.2.1]{davison2020bps}, Davison asks the question whether on can always find a potential $W_0\in\BoC[\overline{Q}]$ such that $\rmBPS_{\Pi_Q,W_0}=\frakn_Q^+$ and positively answered the question of affine type A quivers \cite[\S7.2.1]{davison2020bps}. The same strategy gives a positive answer in types $\mathrm{D}_n$ and $\mathrm{E}_6$.

\begin{lemma}
\label{lemma:function_vanishing_imaginary_roots}
 Let $\tilde{S}$ be the minimal resolution of the type $\mathrm{D}_n$ singularity with equation $x^2+y^{n-1}+yz^2=0$. Then, the function $y$ on $\tilde{S}$ is such that $\rmH^*(\varphi_y\BoQ_{\tilde{S}})\cong\BoQ^n$.
\end{lemma}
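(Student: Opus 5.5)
Since $y$ is a regular function on the smooth surface $\tilde{S}$, the vanishing cycle complex $\varphi_y\BoQ_{\tilde{S}}$ is supported on the critical locus of $y$ inside the zero fibre $y^{-1}(0)$. The plan is to describe this zero fibre explicitly on the minimal resolution and then compute its hypercohomology. Working first on the singular surface $S=\{x^2+y^{n-1}+yz^2=0\}$, setting $y=0$ gives $x^2=0$. So the reduced zero fibre of $y$ on $S$ is the $z$-line $\{x=y=0\}$, which passes through the singular point, with multiplicity $2$ along it. Pulling back to $\tilde{S}$, the divisor $\mathrm{div}(y)$ is $2\tilde{L}+\sum_i a_iE_i$, where $\tilde{L}$ is the strict transform of the $z$-line and the $E_i$ are the $n$ exceptional $(-2)$-curves of the $\mathrm{D}_n$ configuration. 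The multiplicities $a_i$ are obtained from the intersection conditions $\mathrm{div}(y)\cdot E_i=0$.

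The first step is to find where $\tilde{L}$ meets the $\mathrm{D}_n$ chain and to compute the $a_i$. My guess is that $\tilde{L}$ meets an end node of the long arm, and that the $a_i$ come out as the coefficients of a specific root, something like $2,2,\dots,2,1,1$. Next I would show that $\varphi_y\BoQ_{\tilde{S}}$ is supported on the whole fibre $F=y^{-1}(0)$. This holds because every component appears with multiplicity at least $1$ and the function is $\BoC^*$-equivariant of positive weight for the scaling action $x\mapsto t^{n-1}x$, $y\mapsto t^2y$, $z\mapsto t^{n-2}z$. By equivariance, $\rmH^*(\tilde{S},\varphi_y\BoQ)$ equals the relative cohomology $\rmH^*(\tilde{S},y^{-1}(1);\BoQ)$ shifted by one, because the hypercohomology of $\varphi_y$ computes the reduced cohomology of the global Milnor fibre modulo $\rmH^*(y^{-1}(0))$.

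So the key computation is the comparison between $\rmH^*(\tilde{S})\simeq\rmH^*(F)$ (the retraction onto $F$ under the contracting $\BoC^*$-action) and $\rmH^*(y^{-1}(1))$. On the generic fibre, $y$ is invertible, so $x^2+yz^2=-y^{n-1}$ with $y=1$ gives the affine conic $x^2+z^2=-1$, which is $\BoC^*$. Since $\tilde{S}\to S$ is an isomorphism away from $0$, the generic fibre has cohomology $\BoQ$ in degrees $0$ and $1$. The zero fibre $F$ is a tree of $\mathbb{P}^1$'s (the $n$ exceptional curves) with the affine line $\tilde{L}$ attached. It is therefore connected with $\rmH^2(F)=\BoQ^n$ and vanishing $\rmH^1$. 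The long exact sequence of the pair then gives total dimension $n$ for $\rmH^*(\varphi_y\BoQ_{\tilde{S}})$, up to checking the connecting maps. The restriction $\rmH^0(\tilde S)\to\rmH^0(y^{-1}(1))$ is an isomorphism, $\rmH^1(y^{-1}(1))=\BoQ$ contributes to relative $\rmH^2$, and $\rmH^2(\tilde S)=\BoQ^n$ contributes to relative $\rmH^2$ as well, which seems to give $n+1$.

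The main obstacle is exactly this count: getting $\BoQ^n$ and not $\BoQ^{n+1}$. That requires the monodromy/variation to be properly accounted for, or equivalently identifying $\varphi_y$ as a perverse sheaf on $F$ and computing its hypercohomology directly. I would first try the direct perverse computation. Along the multiplicity-$2$ component $\tilde{L}$, $\varphi_y$ is a rank-one local system, the $(-1)$-eigenspace part, possibly with nontrivial monodromy coming from the double cover. Along the multiplicity-$1$ components $E_i$ it is the constant sheaf; along the multiplicity-$2$ components $E_i$ it is again a local system, which is trivial on the simply connected $\mathbb{P}^1$ minus the intersection points only when parity allows. A Mayer--Vietoris computation over the tree, with corrections at the nodes, should then produce exactly $n$ classes. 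If the count is off, I would take the monodromy-invariant part or the full unipotent+nonunipotent decomposition into account, and adjust the choice of $\tilde{L}$-attachment accordingly.
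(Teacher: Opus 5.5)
Your computation is correct, and your route is the paper's own: the paper also identifies the nearby-cycle cohomology with $\rmH^*(\{y=1\})$ by $\BoC^*$-equivariance. It then uses the sequence $0\to\rmH^i(\varphi_y\BoQ)\to\rmH^i(\{y=0\})\to\rmH^i(\{y=1\})\to 0$. The paper gets $\BoQ^n$ only because it asserts that $\{y=1\}=\{x^2+z^2=-1\}$ is isomorphic to $\BoA^1$, which would make the restriction map surjective.

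As you noticed, this conic is $\{uv=-1\}\cong\BoC^*$, with $u=x+iz$ and $v=x-iz$. So $0=\rmH^1(\tilde S)\to\rmH^1(y^{-1}(1))\cong\BoQ$ is not surjective. The exact sequence of the pair then gives $\rmH^*(\tilde S,\varphi_y\BoQ_{\tilde S})\cong\BoQ^{n+1}$, concentrated in a single degree.

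The discrepancy you flag is therefore not something a finer analysis will remove. The statement is false for this $y$, and neither your proposal nor the paper's proof can establish it. In particular, your plan to reach $\BoQ^n$ cannot succeed, whether through monodromy or variation corrections, a Mayer--Vietoris count over the tree, or moving the point where $\tilde L$ attaches. The Euler characteristic is insensitive to all such refinements and already rules this out: $\chi(\rmH^*(\tilde S,\varphi_y\BoQ))=\pm(\chi(\tilde S)-\chi(y^{-1}(1)))=\pm(n+1)$.

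It helps to see where the extra class lives. For the minimal resolution $\pi\colon\tilde S\to S$ one has $R\pi_*\BoQ_{\tilde S}\cong\BoQ_S\oplus\BoQ_{\{0\}}^{\oplus n}[-2]$, and $\varphi_y$ commutes with proper pushforward. The $n$ skyscrapers give $\BoQ^n$. The summand $\BoQ_S$ gives exactly one more class, since $S$ is contractible while $\{y=1\}\cap S\cong\BoC^*$. Locally, the Milnor fibre of $y|_S$ at the singular point is $\{x^2+\epsilon z^2=-\epsilon^{n-1}\}\simeq\BoC^*$, which is not contractible. Compare type $\mathrm{A}$, where for instance $u$ on $uv=w^{n+1}$ has level set $\{u=1\}\cong\BoA^1$, and the analogous count does give the rank.

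Monodromy acts on this extra class by $-1$: it is induced by $t=-1\in\BoC^*$, which acts as $u\mapsto\pm u^{-1}$ on the conic. So only the unipotent part of $\varphi_y$ has $n$-dimensional cohomology. That is what your fallback of taking the monodromy-invariant part would compute, but it is not $\varphi_y$ as in the statement.

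Two of your intermediate claims are also wrong. $\varphi_y\BoQ_{\tilde S}$ is not supported on all of $y^{-1}(0)$, and it is not the constant sheaf along the multiplicity-one curves $E_{n-1},E_n$. At their smooth points $y$ is a submersion, so $\varphi_y$ vanishes there, and the support is $\tilde L\cup E_1\cup\dots\cup E_{n-2}$. Your guess $\mathrm{div}(y)=2\tilde L+2E_1+\dots+2E_{n-2}+E_{n-1}+E_n$ is right, but the final count does not depend on it.
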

\begin{proof}
The strategy is the same for all $n$. The reduced subscheme of $y=0$ inside $\tilde{S}$ is the union of the exceptional curves over $0$ and the line parametrized by $z$. The equation $y=1$ defines the curve $x^2+z^2=-1$ which is isomorphic to $\BoA^1$. Therefore, the long exact sequence in cohomology involving vanishing and nearby cycles splits (by purity) and gives the result. Namely, by $\BoC^*$-equivariance, one may identify the nearby cycle cohomology with the cohomology of $\{y=1\}$ and we obtain as in \cite[]{davison2020bps}
\[
0\rightarrow\rmH^i(\varphi_y\BoQ)\rightarrow\rmH^i(\{y=0\})\rightarrow\rmH^i(\{y=1\})\rightarrow 0\,.
\]
\end{proof}

\begin{remark}
 For the type $\mathrm{E}_8$ singularity given by $x^2+y^3+z^5=0$, it is much harder to find an explicit function $g$ on the minimal resolution $\tilde{S}$ such that $\rmH^*(\varphi_g\BoQ_{\tilde{S}})\cong\BoQ^8$. Typically, we obtain a $\rmH^2(\varphi_g\BoQ_{\tilde{S}})$ with too high dimension. For example, if $g=z^5$, then the reduced subscheme of $g^{-1}(0)\subset \tilde{S}$ is the union the the $8$ exceptional curves and the affine curve $x^2+y^3=0$ and the nearby fiber $C'$ is an affine curve of genus one. Therefore, the long-exact sequence gives
 \[
  \rmH^1(g^{-1}(0))=0\rightarrow\rmH^1(C')\cong\BoQ^2\rightarrow   \rmH^2(\varphi_{z^3}\BoQ_{\tilde{S}})\rightarrow \rmH^2(g^{-1}(0))\rightarrow\rmH^2(C')\,.
 \]
Since $C'$ is an affine curve, $\rmH^2(C')=0$. Therefore, $\rmH^2(\varphi_{z^3}\BoQ_{\tilde{S}})$ has dimension $2+8=10>8$.
\end{remark}

\begin{proposition}
\label{proposition:potentialBPSalgebra}
Let $Q$ be a quiver of type $\mathrm{D}_n$. Then, there exists a potential $W_0\in\BoC[\overline{Q}]$ such that $\rmBPS_{\Pi_Q,W_0}\cong\frakn_{Q}^+$.
\end{proposition}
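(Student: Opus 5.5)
Following Davison's strategy for affine type A in \cite[\S7.2.1]{davison2020bps}, I would work with $W_0=\Tr$ of a function pulled back from a $\BoC^*$-equivariant function on the Kleinian singularity. By \Cref{theorem:BPS_deformed}, $\BPS_{\Pi_Q,W_0}$ is generated by $\varphi_{\Tr(W_0)}\SG_{\dd}$ for $\dd\in\Phi^+_{\Pi_Q}$, subject to the Serre relations. For $Q$ of type $\mathrm{D}_n$ (a Dynkin quiver), the preprojective algebra has only the vertex simples, so $\Sigma_{\Pi_Q}$ consists of the simple roots. The presentation therefore already matches that of $\frakn_Q^+$, and the statement reduces to \Cref{corollary:inclusion_real_subalgebra} with nothing left to kill.

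That reading seems too easy, so I suspect the intended $Q$ is the affine type $\widetilde{\mathrm{D}}_n$ quiver; otherwise \Cref{lemma:function_vanishing_imaginary_roots} would be irrelevant. I would treat that case as follows. Let $\delta$ be the imaginary root. Then $\Sigma_{\Pi_Q}$ consists of the real simple roots $e_i$ together with $\delta$, since $\CM_{\Pi_Q,\delta}$ is the $\mathrm{D}_n$ Kleinian singularity $S=\{x^2+y^{n-1}+yz^2=0\}$. The imaginary generators are $\SG_{\ell\delta}=(\Delta_\ell)_*\ICS(S)$ for $\ell\geq1$. The choices would then be:

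\begin{enumerate}
 \item Pick $W_0\in\BoC[\overline{Q}]$ whose trace on $\CM_{\Pi_Q,\delta}\cong S$ is the function $y$. This is possible because invariant functions on $\CM_{\Pi_Q,\delta}$ are traces of cycles in $\overline{Q}$ (Le Bruyn--Procesi), and $y$ is homogeneous, so $W=\tilde W+W_0$ is quasi-homogeneous after reweighting.
 \item Compute the vanishing cycles on the imaginary generators. Since $S$ has a small resolution and $\ICS(S)$ is the pushforward of $\BoQ_{\tilde S}[2]$ along the minimal resolution $\tilde S\to S$, proper base change gives $\rmH^*(\varphi_{\Tr(W_0)}\SG_{\delta})\cong\rmH^*(\varphi_y\BoQ_{\tilde S})$ up to a shift.
 \item By \Cref{lemma:function_vanishing_imaginary_roots}, this cohomology is $\BoQ^n$, placed in the degree corresponding to $\rmH^2$ of the exceptional curves. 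For $\ell\geq2$, apply $\varphi$ after $(\Delta_\ell)_*$. The function $\Tr(W_0)\circ\Delta_\ell$ equals $\ell y$, so $\varphi$ commutes with the finite pushforward and gives the same $\BoQ^n$.
\end{enumerate}

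The main obstacle is step 1, and in particular matching the result with $\frakn_Q^+$ rather than only dimension-wise. First, $\frakn_Q^+$ has $\dim\frakn_{\ell\delta}=n$, the rank of the finite part, since $\widetilde{\mathrm{D}}_n$ has $n+1$ vertices. Second, the free Lie algebra on generators modulo Serre relations must be compared with the positive part of the affine Kac--Moody algebra. For that comparison I would use \Cref{corollary:env_alg_def_dim_red} together with a character count. The characters of $\bfU(\rmBPS_{\Pi_Q,W_0})$ and $\bfU(\frakn_Q^+)$ agree once each imaginary weight space has dimension $n$, and \Cref{corollary:inclusion_real_subalgebra} provides a Lie map $\frakn_Q^+\to\rmBPS_{\Pi_Q,W_0}$, defined using the real subalgebra. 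This map is injective, because $\frakn_Q^+$ has no ideals meeting the real part trivially beyond zero. Equality of graded dimensions then forces it to be an isomorphism.
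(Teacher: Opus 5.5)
You follow the paper's route: choose $W_0$ with $\Tr(W_0)|_S=y$ via Le Bruyn--Procesi, then combine \Cref{lemma:function_vanishing_imaginary_roots}, \Cref{theorem:BPS_deformed} and \Cref{corollary:inclusion_real_subalgebra}. Your remarks that Dynkin $\mathrm{D}_n$ is trivial ($W_0=0$) and that affine $\widetilde{\mathrm{D}}_n$ is intended are both right. However, steps 2--3 misidentify what the lemma computes, and the final count does not follow.

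The minimal resolution $\pi\colon\tilde{S}\to S$ is semismall but not small. In fact $\pi_*\BoQ_{\tilde{S}}[2]\cong\ICS(S)\oplus\BoQ_0^{\oplus n}$, with one skyscraper at the cone point per exceptional curve, and $\ICS(S)\cong\BoQ_S[2]$ since $S$ is a quotient singularity. So $\rmH^*(\varphi_y\BoQ_{\tilde{S}})$ is the vanishing-cycle cohomology of the whole weight-$\delta$ BPS sheaf. Its $n$ skyscraper classes are $\frakn^+_{Q,\delta}$, already generated by the real vertices, while the generator $\varphi_{\Tr(W_0)}\SG_\delta$ contributes only $\rmH^*(\varphi_y\BoQ_S)$.

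Since $(\ell\delta,\dd')=0$ for every $\dd'$, the Serre relations of \Cref{theorem:BPS_deformed} make each $\varphi_{\Tr(W_0)}\SG_{\ell\delta}$ central. Hence $\rmBPS_{\Pi_Q,W_0}\cong\frakn_Q^+\oplus\bigoplus_{\ell\geq1}\rmH^*(\varphi_{\Tr(W_0)}\SG_{\ell\delta})$, with the second summand an abelian ideal. On your accounting every imaginary weight space would be $2n$-dimensional; your character count treats the generator as if it were the whole weight space. What is actually needed is $\rmH^*(\varphi_{\Tr(W_0)}\SG_{\ell\delta})=0$ for all $\ell$. By your correct remark on $\Delta_\ell$, this reduces to $\rmH^*(\varphi_y\BoQ_S)=0$. (Injectivity of $\frakn_Q^+\to\rmBPS_{\Pi_Q,W_0}$ is given by \Cref{corollary:inclusion_real_subalgebra}. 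Your own justification is invalid: a map out of $\frakn_Q^+$ that is injective on the $e_i$ can still kill all root spaces of large height.)

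That vanishing fails for $y$. Here your step 3 inherits an error in the proof of \Cref{lemma:function_vanishing_imaginary_roots}, so the paper's argument breaks at the same point. The curve $\{y=1\}=\{x^2+z^2=-1\}$ is $\{uv=-1\}\cong\BoC^*$ (with $u=x+iz$, $v=x-iz$), not $\BoA^1$. Since $y$ is homogeneous of positive weight, the Milnor fibre of $y|_S$ at the cone point is $\BoC^*$. So $\rmH^*(\varphi_y\BoQ_S)$ is one-dimensional, and the long exact sequence in that proof actually gives $\rmH^*(\varphi_y\BoQ_{\tilde{S}})\cong\BoQ^{n+1}$. With this $W_0$ a nonzero central class survives in every weight $\ell\delta$, and $\rmBPS_{\Pi_Q,W_0}\not\cong\frakn_Q^+$.

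Davison's type A argument works because $xy=z^m$ carries a second torus, for which $x$ is homogeneous with $x^{-1}(1)=\{y=z^m\}\cong\BoA^1$. On $S=\BoC^2/\Gamma$ of type $\mathrm{D}_n$ there is no such freedom. A $\BoC^*$-action lifts to $\BoC^2$, can be linearized, and must commute with the irreducible $\Gamma$, so it reparametrizes the scalar action. Quasi-homogeneity of $W$ therefore forces $f\coloneqq\Tr(W_0)|_S$ to be either $0$ (nothing is killed) or homogeneous of positive degree. In the latter case, $f^{-1}(1)$ is a free $\Gamma$-quotient of a finite cover of $\mathbb{P}^1$ minus $r\geq2$ points, since $\Gamma$ fixes no line. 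It therefore has Euler characteristic $\leq0$, and $\rmH^*(\varphi_f\ICS(S))\neq0$. So the route through \Cref{theorem:BPS_deformed} with a quasi-homogeneous $W_0$ cannot produce $\frakn_Q^+$ in type $\widetilde{\mathrm{D}}_n$; a genuinely different input would be needed.
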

\begin{proof}
By \Cref{theorem:BPS_deformed} and \Cref{corollary:inclusion_real_subalgebra}, it suffices by the arguments of \cite[\S7.2.1]{davison2020bps} to find a potential $W_0$ such that the function $\Tr(W_0)$ induced on the resolution of the minimal singularity of type $\mathrm{D}_n$ is the function given by \Cref{lemma:function_vanishing_imaginary_roots}. This is clear, since a potential is a linear combination of cycles and any function on $\CM_{\Pi_Q,\delta}$ where $\delta$ is the indivisible imaginary root of $Q$ is given by the trace along a linear combination of cycles \cite{le1990semisimple}.
\end{proof}

\begin{remark}
 By studying the generators of $\CM_{\Pi_Q,\delta}$ (the surface singularity of Dynkin type $Q$) for an affine quiver $Q$ and the indivisible imaginary root $\delta$, one can provide explicit choices of potentials satisfying $\rmBPS_{\Pi_Q,W_0}\cong\frakn_Q^+$. Namely, we consider the affine quiver of type $\mathrm{D}_n$ with the labeling of vertices:
\[\begin{tikzcd}
	0 &&&&& n \\
	& 2 & 3 & \hdots & {n-2} \\
	1 &&&&& {n-1}
	\arrow["{x_{0,2}}", shift left, from=1-1, to=2-2]
	\arrow["{x_{n,n-2}}", shift left, from=1-6, to=2-5]
	\arrow["{x_{2,0}}", shift left, from=2-2, to=1-1]
	\arrow["{x_{2,3}}", shift left, from=2-2, to=2-3]
	\arrow["{x_{2,1}}", shift left, from=2-2, to=3-1]
	\arrow["{x_{3,2}}", shift left, from=2-3, to=2-2]
	\arrow["{x_{3,4}}", shift left, from=2-3, to=2-4]
	\arrow["{x_{4,3}}", shift left, from=2-4, to=2-3]
	\arrow["{x_{n-3,n-2}}", shift left, from=2-4, to=2-5]
	\arrow["{x_{n-2,n}}", shift left, from=2-5, to=1-6]
	\arrow["{x_{n-2,n-3}}", shift left, from=2-5, to=2-4]
	\arrow["{x_{n-2,n-1}}", shift left, from=2-5, to=3-6]
	\arrow["{x_{1,2}}", shift left, from=3-1, to=2-2]
	\arrow["{x_{n-1,n-2}}", shift left, from=3-6, to=2-5]
\end{tikzcd}\]
where $x_{i,j}$ are the elements in the path algebra of $\overline{Q}$.

Then, generators for the ring of functions on $\CM_{\Pi_Q,\delta}=\mu_{\delta}^{-1}(0)\cms \GL_{\delta}$ are given by
\[
y=\Tr(x_{2,0}x_{0,2}),\quad x=\Tr\bigl(
x_{0,2} x_{2,3} \cdots x_{n-3,n-2}
(x_{n-2,n-1} x_{n-1,n-2})
(x_{n-2,n} x_{n,n-2})
x_{n-2,n-3} \cdots x_{3,2} x_{2,0}
\bigr),
\]
and
\[
 z=u-v
\]
where
\[
 u=\Tr(x_{0,2}x_{2,3}\hdots x_{n-3,n-2}x_{n-2,n-1}x_{n-1,n-2}x_{n-2,n-3}\hdots,x_{3,2}x_{2,0}),
\]
\[
 v=\Tr(x_{0,2}x_{2,3}\hdots x_{n-3,n-2}x_{n-2,n}x_{n,n-2}x_{n-2,n-3}\hdots,x_{3,2}x_{2,0})\,.
\]
Therefore, a choice for $W_0$ is $x_{2,0}x_{0,2}\in\BoC[\overline{Q}]$.
\end{remark}

\subsubsection{The less perverse associated graded for deformed canonical cubic potential}

In this section, we extend our results regarding the affinized BPS Lie algebra for triple quivers with their canonical cubic potential to deformed canonical potentials.

\begin{theorem}
\label{theorem:perverse_degeneration_deformed_potential}
 Let $Q$ be a quiver, $\tilde{Q}$ the triple quiver and $\overline{Q}$ the double quiver. We let $\tilde{W}\in \BoC[\tilde{Q}]$ be the canonical cubic potential and $W_0\in \BoC[\overline{Q}]$ a potential for $\overline{Q}$. We let $W=\tilde{W}+W_0$ be the deformed potential. We assume that $W$ is quasi-homogeneous. Then, $\BPS_{\tilde{Q},W}^{\red}\otimes\rmH^*_{\BoC^*}(\pt)\in \CD^+_{\rmc}(\CM_{\Pi_Q})$ is stable under the less perversely degenerated Lie bracket which is given by the trivial $\abs{-}$-twisted extension of the Lie bracket on $\BPS_{\tilde{Q},W}^{\red}$.
\end{theorem}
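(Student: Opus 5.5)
The plan is to reduce the statement to the undeformed sheaf-level result, \Cref{corollary:affinizedBPSsheaf_preproj}, by applying the vanishing cycle functor $\varphi_{\Tr(W_0)}$. The guiding principle is that this functor is perverse t-exact and strict monoidal for $\boxdot$. It therefore commutes with $\pH$ and carries Lie algebra objects, sub-objects stable under a bracket, and identities between morphisms in $\Perv^{\BoZ}(\CM_{\Pi_Q})$ to the corresponding data.

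First I identify the objects involved. By \Cref{proposition:deformed_dim_red}, $\SA_{\tilde{Q},W}^{\red}=\pi'_*\SA_{\tilde{Q},W}\cong\varphi_{\Tr(W_0)}\SA_{\Pi_Q}$. As in the remark following \Cref{corollary:env_alg_def_dim_red}, I take the algebra structure on the left-hand side to be the one transported from $\varphi_{\Tr(W_0)}\frakm^{\psi}$. By \Cref{lemma:dimredBPS_deformedpotential}, $\BPS^{\red}_{\tilde{Q},W}\cong\varphi_{\Tr(W_0)}\BPS_{\Pi_Q}$ is perverse. The $\rmH^*_{\BoC^*}(\pt)$-action given by $u$ comes from the determinant map $\Det\colon\FM_{\Pi_Q}\rightarrow\rmB\BoG_{\rmm}$ via $l'$. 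The function $\Tr(W_0)$ pulls back from $\FM_{\Pi_Q}$ along $l'$ (it corresponds to $0\boxplus\Tr(W_0)$). Hence $\varphi_{\Tr(W_0)}$ applied to the map $\BPS_{\Pi_Q}\otimes\rmH^*_{\BoC^*}(\pt)\rightarrow\SA_{\Pi_Q}^{\psi}$ gives the natural map $\BPS^{\red}_{\tilde{Q},W}\otimes\rmH^*_{\BoC^*}(\pt)\rightarrow\SA^{\red,\psi}_{\tilde{Q},W}$.

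Next I pass to the less perverse degeneration. Since $\varphi_{\Tr(W_0)}$ is perverse t-exact, $\pH(\varphi_{\Tr(W_0)}\frakm^{\psi})=\varphi_{\Tr(W_0)}\pH(\frakm^{\psi})$, and likewise for the commutator bracket. \Cref{corollary:affinizedBPSsheaf_preproj} gives three facts: a split inclusion $\BPS_{\Pi_Q}\otimes\rmH^*_{\BoC^*}(\pt)\rightarrow\pH(\SA^{\psi}_{\Pi_Q})$; the vanishing of the composite $E$ (bracket followed by projection to the cokernel); and the vanishing of the differences $D_{\dd,\ee,m,n}=C_{\dd,\ee,m,n}-\frac{\abs{\dd}^m\abs{\ee}^n}{\abs{\dd+\ee}^{m+n}}[-,-]$. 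Applying the additive functor $\varphi_{\Tr(W_0)}$ preserves each of these. Using $\varphi_{\Tr(W_0)}(\CF\boxdot\CG)\cong\varphi_{\Tr(W_0)}\CF\boxdot\varphi_{\Tr(W_0)}\CG$ (Thom--Sebastiani together with the finiteness of $\oplus$), the conclusions become the corresponding statements for $\BPS^{\red}_{\tilde{Q},W}\otimes\rmH^*_{\BoC^*}(\pt)\subset\pH(\SA^{\red,\psi}_{\tilde{Q},W})$. The bracket on $\BPS^{\red}_{\tilde{Q},W}$ is by definition $\varphi_{\Tr(W_0)}[-,-]$. The result is exactly the $\abs{-}$-twisted trivial extension, with the same constants, because $\abs{-}$ depends only on $\CL$ on $\FM_{\Pi_Q}$.

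The main obstacle is checking compatibility of monoidal structures. $\varphi_{\Tr(W_0)}$ is strict monoidal only once one has the Thom--Sebastiani isomorphism $\varphi_{\Tr(W_0)}\circ\oplus_*\cong\oplus_*\circ(\varphi_{\Tr(W_0)}\boxtimes\varphi_{\Tr(W_0)})$, which uses that $\Tr(W_0)\circ\oplus=\Tr(W_0)\boxplus\Tr(W_0)$. One must also check that this identification is compatible with the braiding, so that commutator brackets, and not only products, are transported. I would verify this on the correspondence diagram defining $\frakm$, where it holds because the products were defined by applying $\varphi$ to that diagram.

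A second, smaller point is that the deformed-dimensional-reduction isomorphism of \Cref{proposition:deformed_dim_red} must be an isomorphism of algebras for the CoHA of $(\tilde{Q},W)$ if one wants the statement for that structure. If this is not available, I would fall back on the convention of the remark and state everything for the transported structure. With that convention the argument above is complete.
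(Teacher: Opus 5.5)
Your proposal is correct and is essentially the paper's argument. The paper also applies the perverse t-exact, strict monoidal functor $\varphi_{\Tr(W_0)}$ to \Cref{corollary:affinizedBPSsheaf_preproj}, using the algebra structure transported from $\varphi_{\Tr(W_0)}\SA_{\Pi_Q}^{\psi}$; your treatment of Thom--Sebastiani and the braiding, and of the identification $\BPS^{\red}_{\tilde{Q},W}\cong\varphi_{\Tr(W_0)}\BPS_{\Pi_Q}$ via \Cref{lemma:dimredBPS_deformedpotential}, spells out steps the paper leaves implicit.
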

\begin{proof}
The les perversely degenerated Lie bracket on $\SA_{\Pi_Q,W_0}^{\psi}$ is given by applying $\varphi_{\Tr(W_0)}$ to the less perversely degenerated Lie bracket on $\SA_{\Pi_Q}$. Since the later perserves $\BPS_{\Pi_Q}\otimes\rmH^*_{\BoC^*}(\pt)$ (\Cref{corollary:affinizedBPSsheaf_preproj}), the same is true after applying $\varphi_{\Tr(W_0)}$. The description of the Lie bracket on $\BPS_{\tilde{Q},W}^{\red}\otimes\rmH^*_{\BoC^*}(\pt)$ also follows from \Cref{corollary:affinizedBPSsheaf_preproj}.
\end{proof}

\begin{corollary}
In the situation of \Cref{theorem:perverse_degeneration_deformed_potential}, the subspace $\rmBPS_{\tilde{Q},W}^{\red}[u]\coloneqq\rmBPS_{\tilde{Q},W}^{\red}\otimes\rmH^*_{\BoC^*}(\pt)\subset \rmH^*\SA_{\tilde{Q},W}^{\red,\psi}$ is stable under the less perversely degenerated Lie bracket and the Lie bracket on $\rmBPS_{\tilde{Q},W}^{\red}[u]$ is the trivial extension of the Lie bracket on $\rmBPS_{\tilde{Q},W}^{\red}$.
\end{corollary}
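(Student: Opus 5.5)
The plan is to deduce the corollary from \Cref{theorem:perverse_degeneration_deformed_potential} by taking derived global sections, as was done for \Cref{theorem:less_perverse_2CY}. Global sections $\rmH^*\colon\CD^+_{\rmc}(\CM_{\Pi_Q})\rightarrow\CD^+_{\rmc}(\pt)$ is strict monoidal for $\boxdot$. So it sends the Lie algebra object $\BPS_{\tilde{Q},W}^{\red}\otimes\rmH^*_{\BoC^*}(\pt)$ inside the algebra object $\pH(\SA_{\tilde{Q},W}^{\red,\psi})$ to a Lie subalgebra of $\rmH^*(\pH(\SA_{\tilde{Q},W}^{\red,\psi}))$. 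Here the bracket on the target is the commutator of the less perversely degenerated product.

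First I will identify the target with the less perversely degenerated algebra $\rmH^*\SA_{\tilde{Q},W}^{\red,\psi}$. By \Cref{proposition:deformed_dim_red}, $\SA_{\tilde{Q},W}^{\red}\cong\varphi_{\Tr(W_0)}\SA_{\Pi_Q}$. Since $\SA_{\Pi_Q}$ splits as a sum of shifted semisimple perverse sheaves and $\varphi_{\Tr(W_0)}$ is perverse t-exact, $\SA_{\tilde{Q},W}^{\red}$ is isomorphic to its total perverse cohomology. Hence $\rmH^*(\pH(\SA_{\tilde{Q},W}^{\red,\psi}))$ is canonically the associated graded of $\rmH^*\SA_{\tilde{Q},W}^{\red,\psi}$ for the filtration by $\ptau^{\leq i}$, and under this identification the degenerated product is the associated graded product.

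Next I will compute the global sections of the subobject. On it, $\rmH^*(\BPS^{\red}_{\tilde{Q},W}\otimes\rmH^*_{\BoC^*}(\pt))=\rmBPS^{\red}_{\tilde{Q},W}\otimes\BoQ[u]$, because $\rmH^*_{\BoC^*}(\pt)$ is a constant graded vector space. The map into $\rmH^*\SA^{\red,\psi}_{\tilde{Q},W}$ is injective because the inclusion is split: $\BPS^{\red}_{\tilde{Q},W}$ is a direct factor by \Cref{lemma:dimredBPS_deformedpotential}. This gives the stability statement.

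Finally, the bracket formula is obtained by applying $\rmH^*$ to the formula $C_{a,b,m,n}=\frac{\abs{a}^m\abs{b}^n}{\abs{a+b}^{m+n}}[-,-]$ in \Cref{theorem:perverse_degeneration_deformed_potential}, using strict monoidality of $\rmH^*$ once more. This yields the $\abs{-}$-twisted $\BoQ[u]$-linear extension of the bracket on $\rmBPS^{\red}_{\tilde{Q},W}$.

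The only delicate point is a Künneth issue. The bracket on global sections factors through $\rmH^*(\CF\boxdot\CG)=\rmH^*(\CF)\otimes\rmH^*(\CG)$, and for the identification of the bracket on $\rmBPS^{\red}[u]\otimes\rmBPS^{\red}[u]$ this holds on the nose since $T'$ is trivial here. So no purity or freeness input, as in \Cref{corollary:equivariant_pwb_supercommutativity}, is needed. I therefore expect no real obstacle beyond bookkeeping the identification of $\Gr$ with $\rmH^*\pH$.
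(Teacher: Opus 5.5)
Your proposal is correct and follows the paper's argument, which consists precisely of taking derived global sections of \Cref{theorem:perverse_degeneration_deformed_potential}. Your extra bookkeeping makes explicit what the paper leaves implicit: identifying $\Gr^{\FL}$ with $\rmH^*(\pH(-))$ via the splitting of $\varphi_{\Tr(W_0)}\SA_{\Pi_Q}$, and using the non-equivariant K\"unneth formula. One small precision is needed: the splitting of the whole subobject $\BPS^{\red}_{\tilde{Q},W}\otimes\rmH^*_{\BoC^*}(\pt)$, not just of $\BPS^{\red}_{\tilde{Q},W}$, should be read off from the $\Sym^1$ summand of the cohomological integrality isomorphism for $\pi'_*\SA_{\tilde{Q},W}$.
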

\begin{proof}
This follows from \Cref{theorem:perverse_degeneration_deformed_potential} by taking derived global sections.
\end{proof}

\begin{corollary}
In the situation of \Cref{theorem:perverse_degeneration_deformed_potential}, there is an isomorphism of algebra objects in $\Perv^{\BoZ}(\CM_{\Pi_Q})$:
\[
 \bfU(\BPS_{\tilde{Q},W}^{\red}\otimes\rmH^*_{\BoC^*}(\pt))\rightarrow\pH\SA_{\tilde{Q},W}^{\red,\psi}\,.
\]
There is an isomorphism of algebras
\[
 \bfU(\rmBPS_{\tilde{Q},W}[u])\rightarrow\rmH^*(\SA_{\tilde{Q},W}^{\psi})
\]
where $\rmH^*(\SA_{\CC})$ is given the perversely degenerated CoHA product.
\end{corollary}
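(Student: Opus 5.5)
The plan is to copy the proof of \Cref{corollary:enveloping_affinizedBPS}, with $\BPS_{\CC}$ replaced by $\BPS_{\tilde{Q},W}^{\red}$ and $\SA_{\CC}^{\psi}$ replaced by $\SA_{\tilde{Q},W}^{\red,\psi}\cong\varphi_{\Tr(W_0)}\SA_{\Pi_Q}^{\psi}$. The dimensionally reduced CoHA is identified with this vanishing cycle complex by \Cref{proposition:deformed_dim_red}. On it we use the algebra structure obtained by transporting the CoHA structure, as stipulated in the remark following \Cref{corollary:env_alg_def_dim_red}.

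\textbf{Step 1: the Lie algebra morphism.} By \Cref{theorem:perverse_degeneration_deformed_potential}, the natural morphism
\[
\BPS_{\tilde{Q},W}^{\red}\otimes\rmH^*_{\BoC^*}(\pt)\rightarrow\pH(\SA_{\tilde{Q},W}^{\red,\psi})
\]
is a morphism of Lie algebra objects in $\Perv^{\BoZ}(\CM_{\Pi_Q})$. The source carries the $\abs{-}$-twisted trivial extension bracket, and the target carries the commutator of the less perversely degenerated product. That product is associative because $\varphi_{\Tr(W_0)}$ is perverse t-exact and monoidal, and \Cref{proposition:associativity_perv_deg} already gives associativity for $\SA_{\Pi_Q}^{\psi}$. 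The morphism is a split monomorphism: by \Cref{lemma:dimredBPS_deformedpotential}, $\BPS_{\tilde{Q},W}^{\red}\cong\varphi_{\Tr(W_0)}\BPS_{\Pi_Q}$, and this inclusion is $\varphi_{\Tr(W_0)}$ applied to the split inclusion of \Cref{corollary:affinizedBPSsheaf_preproj}. The universal property of $\bfU$ in the symmetric monoidal abelian category $\Perv^{\BoZ}(\CM_{\Pi_Q})$ then produces an algebra morphism
\[
\bfU(\BPS_{\tilde{Q},W}^{\red}\otimes\rmH^*_{\BoC^*}(\pt))\rightarrow\pH\SA_{\tilde{Q},W}^{\red,\psi}.
\]

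\textbf{Step 2: it is an isomorphism.} We compare two PBW maps out of $\Sym_{\boxdot}(\BPS_{\tilde{Q},W}^{\red}\otimes\rmH^*_{\BoC^*}(\pt))$. The first is the PBW isomorphism for enveloping algebras in a $\BoQ$-linear symmetric monoidal abelian category. The second is the cohomological integrality isomorphism for $\SA_{\tilde{Q},W}$, pushed forward along $\pi'_*$ as in the proof of \Cref{lemma:dimredBPS_deformedpotential}, followed by $\pH$. Equivalently, the second is $\varphi_{\Tr(W_0)}$ applied to the integrality isomorphism for $\SA_{\Pi_Q}$. Both PBW maps are given by symmetrized multiplication of the images of the generators, so the triangle with the morphism from Step 1 commutes. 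Hence that morphism is an isomorphism.

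\textbf{Step 3: global sections.} We take derived global sections. This requires $\rmH^*$ to commute with $\bfU$, i.e. the K\"unneth identification $\rmH^*(\CF\boxdot\CG)\cong\rmH^*(\CF)\otimes\rmH^*(\CG)$, which holds because $\rmH^*$ is strict monoidal (\Cref{subsection:constructible_sheaves}). We also identify $\rmH^*(\BPS^{\red}_{\tilde{Q},W})$ with $\rmBPS_{\tilde{Q},W}$ through $\pi'_*$ and the shift conventions, so that the source becomes $\bfU(\rmBPS_{\tilde{Q},W}[u])$. Finally, $\rmH^*\pH(\SA)\cong\rmH^*(\SA)$ as vector spaces, because $\varphi_{\Tr(W_0)}\SA_{\Pi_Q}$ splits as the sum of its perverse cohomologies. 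This gives the second isomorphism, with the perversely degenerated product on the target.

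The main obstacle is the commutativity of the PBW triangle in Step 2. The integrality map for $\SA_{\tilde{Q},W}$ is defined using the CoHA product of $(\tilde{Q},W)$, whereas the algebra structure on $\SA^{\red}$ is the one transported from $\varphi_{\Tr(W_0)}\SA_{\Pi_Q}^{\psi}$, and these must be shown compatible. I would avoid this by working throughout with $\varphi_{\Tr(W_0)}$ applied to \Cref{corollary:enveloping_affinizedBPS} for $\CC=\rep(\Pi_Q)$. The functor $\varphi_{\Tr(W_0)}$ is strict monoidal and perverse exact, so it sends $\bfU$ to $\bfU$ and algebra isomorphisms to algebra isomorphisms. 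Then \Cref{lemma:dimredBPS_deformedpotential} is used only to rename the generators as $\BPS_{\tilde{Q},W}^{\red}$.
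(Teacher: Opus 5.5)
Your proposal is correct and matches the paper's approach. The paper's own proof of this corollary only derives the second isomorphism from the first, by taking derived global sections and using that the less perverse filtration on $\pH(\SA_{\tilde{Q},W}^{\red,\psi})$ splits (your Step 3); your Steps 1--2 supply the first isomorphism by the same Lie-morphism, universal-property and PBW-triangle argument the paper uses in \Cref{corollary:enveloping_affinizedBPS} and in the $T'$-equivariant deformed-potential analogue, and your handling of the compatibility issue (applying $\varphi_{\Tr(W_0)}$ to the preprojective statement and using \Cref{lemma:dimredBPS_deformedpotential} only to rename generators) is exactly the convention the paper adopts in the remark after \Cref{corollary:env_alg_def_dim_red}.
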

\begin{proof}
The second statement follows from the first by taking derived global sections and the fact that the less perverse filtration on $\pH(\SA_{\tilde{Q},W}^{\red,(\psi)})$ is split.
\end{proof}

\section{The deformed cohomological Hall algebra}
In this section, we turn on an extra torus action acting on the arrows of the quiver. We explain how to describe the associated graded of the affinized BPS Lie algebra in this deformed situation.
\subsection{Equivariant parameters}


Let $Q$ be a quiver and $W\in\BoC[Q]$ a potential for $Q$. We let $T'$ be an auxiliary torus acting by rescaling the arrows of $Q$ so that the potential is $T'$-invariant. Then, we may consider the quotient by $T'$ of the good moduli space morphism $\JH^{T'}\colon \FM_{Q}^{T'}\rightarrow \CM_{Q}^{T'}$, where $\FM_{Q}^{T'}$ and $\CM_{Q}^{T'}$ are the quotient stacks $\FM_{Q}^{T'}\coloneqq \FM_{Q}/T'$ and $\CM_{Q}^{T'}\coloneqq \CM_{Q}/T'$. The construction of the CoHA product gives a product on $(\JH^{T'})_*\varphi_{\Tr(W)}\BoQ_{\FM_Q}^{\vir}\in\CD^+_{\rmc,T'}(\CM_{Q})$ (\Cref{subsection:cohas}). If $Q$ is a symmetric quiver, we may define the BPS Lie algebra sheaf $\BPS_{Q,W}^{T'}\coloneqq \pH^1(\SA_{Q,W}^{T',\psi})[-1]$. We let $\rmBPS_{Q,W}^{T'}\coloneqq\rmH^*(\BPS_{Q,W}^{T'})$. By \cite[Theorem~3.4]{davison2022affine}, it carries a Lie algebra structure.

We now let $\tilde{Q}$ be the triple of some quiver and $\tilde{W}$ be the canonical cubic potential on $\tilde{Q}$. Then, the torus $T'$ acts on the arrows of $\overline{Q}$ and the preprojective relation is homogeneous. We let $\pi\colon\CM_{\tilde{Q}}^{T'}\rightarrow\CM_{\overline{Q}}^{T'}$ be the natural morphism forgetting the loops of $\tilde{Q}$. We define the dimensionally reduced BPS Lie algebra $\BPS_{\Pi_Q}^{T'}\coloneqq\pi_*\BPS_{\tilde{Q},W}^{T'}[-1]$. We let $\rmBPS_{\Pi_Q}^{T'}\coloneqq \rmH^*(\BPS_{\Pi_Q}^{T'})$. It is a trivial deformation of the BPS Lie algebra $\BPS_{\Pi_Q}$:

\begin{proposition}[{\cite[Proposition~11.13]{davison2023bps}}]
\label{proposition:dhm2023_deformedBPS}
There is an isomorphism of Lie algebras $\rmBPS_{\Pi_Q}^{T'}\cong\rmBPS_{\Pi_Q}\otimes\rmH^*_{T'}(\pt)$: the Lie bracket on $\rmBPS_{\Pi_Q}^{T'}$ is the trivial $\rmH^*_{T'}(\pt)$-linear extension of the Lie bracket on $\rmBPS_{\Pi_Q}$.
\end{proposition}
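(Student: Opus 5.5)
The plan is to deduce the statement from the sheaf-level description of $\BPS_{\Pi_Q}^{T'}$ together with purity and the formality of $T'$-equivariant cohomology. The underlying graded space and the bracket of $\rmBPS_{\Pi_Q}^{T'}$ are produced by applying $\rmH^*$ to a Lie algebra object in $\Perv(\CM_{\Pi_Q}^{T'})$. The first step is to identify this object: by the $T'$-equivariant version of \Cref{lemma:support_lemma_preprojective}, we have $\BPS_{\tilde{Q},\tilde{W}}^{T'}\cong l_*(\BPS_{\Pi_Q}^{T'}\boxtimes\BoQ_{\BoA^1}^{\vir})$. Pushing forward along $\pi$ recovers $\BPS_{\Pi_Q}^{T'}$, and under the conservative, faithful forgetful functor to $\Perv(\CM_{\Pi_Q})$ it becomes the non-equivariant $\BPS_{\Pi_Q}$ with its Lie bracket.

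Next we treat the underlying modules. $\BPS_{\Pi_Q}$ is pure, since $\rmBPS_{\Pi_Q}$ carries a pure mixed Hodge structure (\cite{davison2016integrality}, see also \cite{davison2021purity}). So $\rmH^*(\CM_{\Pi_Q},\BPS_{\Pi_Q})$ is pure and the Leray spectral sequence for $\CM_{\Pi_Q}^{T'}\to\rmB T'$ degenerates. This makes $\rmBPS_{\Pi_Q}^{T'}$ a free $\rmH^*_{T'}(\pt)$-module whose reduction modulo the augmentation ideal is $\rmBPS_{\Pi_Q}$. Choosing a splitting gives a linear isomorphism $\rmBPS_{\Pi_Q}\otimes\rmH^*_{T'}(\pt)\cong\rmBPS_{\Pi_Q}^{T'}$ of $\rmH^*_{T'}(\pt)$-modules, and the restriction map to non-equivariant cohomology intertwines the brackets.

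The hard step is to show that the bracket, which is $\rmH^*_{T'}(\pt)$-bilinear, restricts to the bracket of $\rmBPS_{\Pi_Q}$ on a suitable $\BoQ$-form $\rmBPS_{\Pi_Q}\otimes 1$. For this we need a \emph{canonical} splitting compatible with the bracket. I would use weights: $\rmH^*_{T'}(\pt)$ is pure of Tate type, and $\BPS_{\Pi_Q}^{T'}$ is pure. An equivariant degree–weight argument would identify $\rmBPS_{\Pi_Q}$ with a subspace of $\rmBPS_{\Pi_Q}^{T'}$, namely the part where the weight equals a fixed function of the cohomological degree, that maps isomorphically onto the non-equivariant space. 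If mixed Hodge theory does not separate these cleanly, I would instead use the generators-and-relations presentation of $\BPS_{\Pi_Q}$ from \cite{davison2023bps}. Each generator $\SG_{\dd}$ (an IC sheaf, or its diagonal pushforward) is $T'$-equivariant, and $\rmH^*_{T'}$ of an IC sheaf with pure cohomology is free with a canonical lift of the intersection cohomology classes in minimal weight. The Serre relations hold sheaf-theoretically in $\Perv(\CM_{\Pi_Q}^{T'})$, because the relevant morphisms vanish after forgetting $T'$ and the forgetful functor is faithful on these semisimple objects.

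Finally, the free $\rmH^*_{T'}(\pt)$-Lie algebra generated by $\rmH^*_{T'}$ of the generators, modulo the Serre relations, maps onto $\rmBPS_{\Pi_Q}^{T'}$. By the rank count from the previous step this map is an isomorphism. That identifies the bracket as the trivial $\rmH^*_{T'}(\pt)$-linear extension of the bracket on $\rmBPS_{\Pi_Q}$. The main obstacle is the compatibility between the free $\rmH^*_{T'}(\pt)$-structure and the sheaf-level Lie presentation, i.e. that taking global sections commutes with forming the free Lie algebra modulo relations; purity of every term is what should make this work.
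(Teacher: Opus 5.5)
The paper gives no argument of its own here. It cites \cite[Proposition~11.13]{davison2023bps} and records only that purity of $\rmBPS_{\Pi_Q}$ is the key input. Your generators-and-relations route does lead to a proof: transport the presentation of $\BPS_{\Pi_Q}$ by the $\SG_{\dd}$ and the Serre relations to $\Perv(\CM_{\Pi_Q}^{T'})$, then use equivariant formality. However, the justification you give at the step you call hard is wrong. Since $\rmH^*(\BPS_{\Pi_Q})$ and $\rmH^*_{T'}(\pt)$ are both pure, every class in $\rmH^n_{T'}$ has weight $n$. So no degree--weight condition singles out a copy of $\rmBPS_{\Pi_Q}$ inside $\rmBPS_{\Pi_Q}^{T'}$. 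For the same reason there is no canonical ``minimal weight'' lift of intersection cohomology classes to $\rmH^*_{T'}(\SG_{\dd})$: a lift of a degree $n$ class is determined only modulo the degree $n$ part of $\rmH^{>0}_{T'}(\pt)\cdot\rmH^*_{T'}(\SG_{\dd})$. As written, your final claim that the comparison ``identifies the bracket as the trivial extension'' is therefore unsupported.

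What rescues the argument is that no canonical splitting is needed.
\begin{enumerate}
\item Each Serre relation is the image of a morphism out of an entire generating object, $\SG_{\dd}^{\boxdot(1-(\dd,\dd'))}\boxdot\SG_{\dd'}$ or $\SG_{\dd}\boxdot\SG_{\dd'}$. So in the free $\rmH^*_{T'}(\pt)$-Lie algebra on $M\coloneqq\bigoplus_{\dd}\rmH^*_{T'}(\SG_{\dd})$, the relations are the iterated brackets of arbitrary elements of the $M_{\dd}$.
\item Pick any graded module isomorphisms $M_{\dd}\cong\rmH^*(\SG_{\dd})\otimes\rmH^*_{T'}(\pt)$, which exist by freeness. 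Multilinearity turns the relations into $R_0\otimes\rmH^*_{T'}(\pt)$, where $R_0$ denotes the non-equivariant Serre relations.
\item Forming free Lie algebras, and the Lie ideal generated by a subspace, commutes with $-\otimes_{\BoQ}\rmH^*_{T'}(\pt)$. Hence the quotient $L$ is $\rmBPS_{\Pi_Q}\otimes\rmH^*_{T'}(\pt)$ with the trivially extended bracket.
\item To define $L\to\rmBPS_{\Pi_Q}^{T'}$ you must lift the inclusions $\SG_{\dd}\to\BPS_{\Pi_Q}$ equivariantly. This needs the forgetful functor to be full, not merely faithful, which holds because $T'$ is connected.
\item The relations hold in the target by functoriality of $\rmH^*_{T'}$ applied to the sheaf-level vanishing.
\item Surjectivity follows from graded Nakayama. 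The target is free with fibre $\rmBPS_{\Pi_Q}$, the fibre is generated by the $\rmH^*(\SG_{\dd})$, and each $\dd$-component is bounded below.
\item Your rank count then gives injectivity.
\end{enumerate}
In particular, the ``main obstacle'' you name, commuting $\rmH^*_{T'}$ with free Lie algebras and quotients, never has to be confronted. The resulting isomorphism is non-canonical, which is all the statement asks for.
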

The purity of $\rmBPS_{\Pi_Q}$ is used to prove the isomorphism $\rmBPS_{\Pi_Q}^{T'}\cong\rmBPS_{\Pi_Q}\otimes\rmH^*_{T'}(\pt)$.

\subsection{Associated graded of the affinized BPS Lie algebra of the deformed cohomological Hall algebra}
\label{subsection:associated_graded_deformedCoHA}


\begin{theorem}
\label{theorem:Tprime_deformed_sheaf}
 Let $Q$ be a quiver and $\Pi_Q$ its preprojective algebra. We let $T'$ be extra torus acting on the arrows of $\overline{Q}$ so that the preprojective relation is homogeneous. Then, the perversely degenerated Lie bracket on the deformed CoHA $\SA_{\Pi_Q}^{T',\psi}$ induces a Lie bracket on $\BPS_{\Pi_Q}^{T'}\otimes\rmH^*_{\BoC^*}(\pt)$. It is the trivial $\abs{-}$-twisted extension of the Lie bracket on $\BPS_{\Pi_Q}^{T'}$.
\end{theorem}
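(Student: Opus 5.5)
The plan is to reduce to the non-equivariant statement, \Cref{corollary:affinizedBPSsheaf_preproj}, using the forgetful functor $\mathrm{For}\colon\Perv^{\BoZ}(\CM_{\Pi_Q}^{T'})\rightarrow\Perv^{\BoZ}(\CM_{\Pi_Q})$. This is the same device used in the proof of \Cref{theorem:asso_graded_perverse_filt}. It relies on two facts:
\begin{itemize}
\item[(a)] $\mathrm{For}$ is faithful and conservative on $T'$-equivariant perverse sheaves, since $T'$ is connected and equivariance is a property rather than extra structure;
\item[(b)] $\mathrm{For}$ is strict monoidal for $\boxdot$ and commutes with $\pH$, because our perverse t-structure on $\CM^{T'}$ is defined by pullback to $\CM$.
\end{itemize}

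The first step is to set up the morphism $\imath^{T'}\colon\BPS_{\Pi_Q}^{T'}\otimes\rmH^*_{\BoC^*}(\pt)\rightarrow\pH(\SA_{\Pi_Q}^{T',\psi})$. It comes from the canonical map $\BPS_{\Pi_Q}^{T'}\rightarrow\SA_{\Pi_Q}^{T',\psi}$ together with the $u$-action; the determinant line bundle is $T'$-equivariant, since $T'$ acts on arrows and commutes with $\GL_{\dd}$. Next I would check that $\mathrm{For}$ sends $\SA_{\Pi_Q}^{T',\psi}$, its product $\frakm^{\psi}$, the $u$-action and $\BPS_{\Pi_Q}^{T'}$ to their non-equivariant counterparts. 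For the BPS sheaf, $\mathrm{For}(\BPS_{\Pi_Q}^{T'})\cong\BPS_{\Pi_Q}$, because pulling back along $\CM\rightarrow\CM/T'$ commutes with $\pi'_*$, with vanishing cycles and with perverse truncation, and \Cref{lemma:support_lemma_preprojective} holds equivariantly. With these identifications, $\mathrm{For}(\imath^{T'})$ is the split monomorphism $\imath$ of \Cref{corollary:affinizedBPSsheaf_preproj}, so $\imath^{T'}$ is a monomorphism.

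The second step is stability under the bracket. Let $E^{T'}$ be the composite of $(\imath^{T'})^{\boxdot2}$, the degenerated bracket ${^{\frakp}[-,-]}$ and the projection to $\coker(\imath^{T'})$. Because $\mathrm{For}$ is exact and monoidal, $\mathrm{For}(E^{T'})$ is the corresponding map $E$ in the non-equivariant setting. That map vanishes by \Cref{corollary:affinizedBPSsheaf_preproj}, so faithfulness gives $E^{T'}=0$ and the bracket restricts to $\BPS_{\Pi_Q}^{T'}\otimes\rmH^*_{\BoC^*}(\pt)$.

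The third step identifies the bracket. For each $\dd,\ee,m,n$, define the difference
\[
D^{T'}_{\dd,\ee,m,n}=C^{T'}_{\dd,\ee,m,n}-\frac{\abs{\dd}^m\abs{\ee}^n}{\abs{\dd+\ee}^{m+n}}[-,-]^{T'},
\]
where $[-,-]^{T'}$ is the bracket on $\BPS_{\Pi_Q}^{T'}$, i.e. the perverse degree one part of the commutator on $\SA_{\Pi_Q}^{T',\psi}$. Applying $\mathrm{For}$ yields the non-equivariant difference, which vanishes by \Cref{corollary:affinizedBPSsheaf_preproj}; faithfulness then gives $D^{T'}=0$.

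The main obstacle is the compatibility in the first step, namely that forgetting equivariance intertwines the equivariant sheafified CoHA product and the dimensionally reduced BPS sheaf with the non-equivariant ones. This is the reason the theorem is stated at the sheaf level rather than the global-sections level, where $\mathrm{For}$ is not faithful and \Cref{proposition:dhm2023_deformedBPS} with purity would otherwise be needed. I would handle it by base change along the smooth cover $\CM\rightarrow\CM/T'$ applied to the induction diagram and to \eqref{equation:dimension_reduction_sheaf}, both of which are natural in the torus.
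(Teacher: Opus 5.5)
Your proposal is correct and follows essentially the same route as the paper: build the equivariant map $\BPS_{\Pi_Q}^{T'}\otimes\rmH^*_{\BoC^*}(\pt)\rightarrow\pH(\SA_{\Pi_Q}^{T',\psi})$, then show that the composite to its cokernel and the componentwise discrepancy with the $\abs{-}$-twisted bracket both vanish, by applying the faithful, exact, monoidal forgetful functor $\Perv^{\BoZ}(\CM_{\Pi_Q}^{T'})\rightarrow\Perv^{\BoZ}(\CM_{\Pi_Q})$ and invoking \Cref{corollary:affinizedBPSsheaf_preproj}. You are more explicit than the paper, which simply asserts that forgetting equivariance matches the equivariant CoHA product, $u$-action and BPS sheaf with their non-equivariant counterparts; the only slip is a normalization one, since $\BPS_{\Pi_Q}^{T'}$ sits in (less) perverse degree $0$ of $\SA_{\Pi_Q}^{T',\psi}$, not degree one.
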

\begin{proof}
We consider the natural morphism $\imath_{T'}\colon\BPS_{\Pi_Q}^{T'}\otimes\rmH^*_{\BoC^*}(\pt)\rightarrow\SA_{\Pi_Q}^{T',\psi}$. After taking the perverse perverse degeneration with respect to $\JH_{\Pi_Q}^{T'}$, it becomes a monomorphism in the category $\Perv^{\BoZ}(\CM_{\Pi_Q}^{T'})$ of $T'$-equivariant $\BoZ$-graded perverse sheaves on $\CM_{\Pi_Q}$. After forgetting the $T'$-equivariance, we recover the natural morphism $\imath\colon\BPS_{\Pi_Q}\otimes\rmH^*_{\BoC^*}(\pt)\rightarrow\SA_{\Pi_Q}^{\psi}$. We consider the composition
\[
 (\BPS_{\Pi_Q}^{T'}\otimes\rmH^*_{\BoC^*}(\pt))\boxdot (\BPS_{\Pi_Q}^{T'}\otimes\rmH^*_{\BoC^*}(\pt))\xrightarrow{\imath_{T'}^{\boxdot 2}}\SA_{\Pi_Q}^{T',\psi}\boxdot \SA_{\Pi_Q}^{T',\psi}\xrightarrow{{^{\frakp}\![-,-]}}\SA_{\Pi_Q}^{T',\psi}\rightarrow \coker(\imath_{T'})\,.
\]
If this composition does not vanish, then the same composition after applying the faithful forgetful functor $\Perv^{\BoZ}(\CM_{\Pi_Q}^{T'})\rightarrow\Perv^{\BoZ}(\CM_{\Pi_Q})$ does not vanish either. This contradicts \Cref{corollary:affinizedBPSsheaf_preproj}.

The Lie bracket on $\BPS_{\Pi_Q}^{T'}\otimes\rmH^*_{\BoC^*}(\pt)\cong \bigoplus_{j\geq 0}\BPS_{\Pi_Q}^{T'}[-2j]$ induced by the perversely degenerated Lie bracket is given by morphisms of $T'$-equivariant perverse sheaves
\[
 \BPS_{\Pi_Q}^{T'}[-2i]\boxdot\BPS_{\Pi_Q}^{T'}[-2j]\rightarrow\BPS_{\Pi_Q}^{T'}[-2(i+j)]
\]
for any $i,j\geq 0$. Applying the forgetful functor again, we recover the components of the Lie bracket on $\BPS_{\Pi_Q}\otimes\rmH^*_{\BoC^*}(\pt)$. Therefore, this morphism is given by the shift by $-2(i+j)$ of the Lie bracket on $\BPS_{\Pi_Q}^{T'}$ by \Cref{corollary:affinizedBPSsheaf_preproj}. The proves the last statement of the theorem.
\end{proof}

\begin{corollary}
\label{corollary:TprimeCoHA}
The perversely degenerated $T'$-deformed affinized BPS Lie algebra of $\Pi_Q$ is the $\rmH^*_{T'}(\pt)$-module $\rmBPS_{\Pi_Q}^{T'}\otimes\rmH^*_{\BoC^*}(\pt)\cong \rmBPS_{\Pi_Q}\otimes\rmH^*_{T'\times\BoC^*}(\pt)$ with the $\abs{-}$-twisted trivial extension of the Lie bracket on $\rmBPS_{\Pi_Q}$.
\end{corollary}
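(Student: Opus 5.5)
The plan is to take derived global sections of \Cref{theorem:Tprime_deformed_sheaf} and identify the resulting $\rmH^*_{T'}(\pt)$-module with the right-hand side using \Cref{proposition:dhm2023_deformedBPS}.

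First, I apply the strict monoidal functor $\rmH^*\colon\CD^+_{\rmc}(\CM_{\Pi_Q}^{T'})\rightarrow\CD^+_{\rmc}(\pt)$ to the inclusion $\BPS_{\Pi_Q}^{T'}\otimes\rmH^*_{\BoC^*}(\pt)\rightarrow\pH(\SA_{\Pi_Q}^{T',\psi})$. By \Cref{theorem:Tprime_deformed_sheaf} this inclusion is a morphism of Lie algebra objects, where the source carries the $\abs{-}$-twisted trivial extension of the bracket on $\BPS_{\Pi_Q}^{T'}$. This yields a Lie subalgebra $\rmBPS_{\Pi_Q}^{T'}\otimes\rmH^*_{\BoC^*}(\pt)$ of $\Gr^{\FL}(\rmH^*(\SA_{\Pi_Q}^{T',\psi}))$. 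Since the bracket components on the source are the shifts $\BPS_{\Pi_Q}^{T'}[-2i]\boxdot\BPS_{\Pi_Q}^{T'}[-2j]\rightarrow\BPS_{\Pi_Q}^{T'}[-2(i+j)]$ of the BPS bracket, rescaled by $\abs{\dd}^i\abs{\ee}^j/\abs{\dd+\ee}^{i+j}$, the induced bracket on global sections is the $\abs{-}$-twisted $\BoQ[u]$-linear extension of the bracket on $\rmBPS_{\Pi_Q}^{T'}$. Moreover, the global sections of $\pH(\SA_{\Pi_Q}^{T',\psi})$ compute $\Gr^{\FL}$ because the less perverse filtration splits, as in \Cref{subsection:support_lessperverse}.

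Second, I invoke \Cref{proposition:dhm2023_deformedBPS}: $\rmBPS_{\Pi_Q}^{T'}\cong\rmBPS_{\Pi_Q}\otimes\rmH^*_{T'}(\pt)$ as Lie algebras, with the $\rmH^*_{T'}(\pt)$-linear trivial extension. Tensoring with $\rmH^*_{\BoC^*}(\pt)$ then gives
\[
\rmBPS_{\Pi_Q}^{T'}\otimes\rmH^*_{\BoC^*}(\pt)\cong\rmBPS_{\Pi_Q}\otimes\rmH^*_{T'\times\BoC^*}(\pt).
\]
The two extensions are compatible: the bracket is $\rmH^*_{T'}(\pt)$-linear, and the $\abs{-}$-twisting only involves $u$.

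The main obstacle is justifying that the global sections of $\BPS_{\Pi_Q}^{T'}\otimes\rmH^*_{\BoC^*}(\pt)$ and of the relevant $\boxdot$-products are computed factorwise, i.e.\ an equivariant K\"unneth statement, so that the sheaf-level bracket really induces the stated formula on $\rmH^*$. I would handle this exactly as in \Cref{corollary:equivariant_pwb_supercommutativity}, using the purity (equivalently, $\rmH^*_{T'}(\pt)$-freeness) of $\rmBPS_{\Pi_Q}^{T'}$ that underlies \Cref{proposition:dhm2023_deformedBPS}.
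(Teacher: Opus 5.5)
Your proposal is correct and follows the paper's proof: take derived global sections of \Cref{theorem:Tprime_deformed_sheaf} to get the $\abs{-}$-twisted $\rmH^*_{\BoC^*}(\pt)$-linear extension of the bracket on $\rmBPS_{\Pi_Q}^{T'}$, then combine with \Cref{proposition:dhm2023_deformedBPS}. Your extra care about the splitting of the less perverse filtration and the equivariant K\"unneth map makes explicit what the paper leaves implicit; note only that purity implies $\rmH^*_{T'}(\pt)$-freeness rather than being equivalent to it.
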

\begin{proof}
By taking derived global sections of \Cref{theorem:Tprime_deformed_sheaf}, the Lie bracket on $\rmBPS_{\Pi_Q}^{T'}\otimes\rmH^*_{\BoC^*}(\pt)$ is the trivial $\abs{-}$-twisted $\rmH^*_{\BoC^*}(\pt)$-linear extension of the Lie bracket on $\rmBPS_{\Pi_Q}^{T'}$. By combining this with \Cref{proposition:dhm2023_deformedBPS}, we obtain further that $\rmBPS_{\Pi_Q}^{T'}\otimes\rmH^*_{\BoC^*}(\pt)\cong \rmBPS_{\Pi_Q}\otimes\rmH^*_{T'\times\BoC^*}(\pt)$ has the Lie bracket from $\BPS_{\Pi_Q}$ linearly $\rmH^*_{T'\times\BoC^*}$ extended with the $\abs{-}$-twist.
\end{proof}

\begin{corollary}
\label{corollary:Tprimeenveloping}
There is an isomorphism of algebras in the category $\Perv(\CM_{\Pi_Q}^{T'})$:
\[
 \bfU(\BPS_{\Pi_Q}^{T'}\otimes\rmH^*_{\BoC^*}(\pt))\rightarrow\pH(\SA_{\Pi_Q}^{T',\psi})\,.
\]
There is an isomorphism of algebras
\[
 \bfU(\rmBPS_{\Pi_Q}^{T'}[u])\rightarrow\rmH^*(\SA_{\Pi_Q}^{T',\psi})
\]
where $\SA_{\Pi_Q}^{T'}$ has the perversely degenerated CoHA product.
\end{corollary}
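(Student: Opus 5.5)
We follow the proofs of \Cref{corollary:enveloping_affinizedBPS} and \Cref{corollary:PBW_affinized_BPS}, now in the $T'$-equivariant category $\Perv^{\BoZ}(\CM_{\Pi_Q}^{T'})$.

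\textbf{Step 1: the algebra morphism.} By \Cref{theorem:Tprime_deformed_sheaf}, the morphism $\imath_{T'}\colon\BPS_{\Pi_Q}^{T'}\otimes\rmH^*_{\BoC^*}(\pt)\rightarrow\pH(\SA_{\Pi_Q}^{T',\psi})$ is a monomorphism of Lie algebra objects. The target carries the commutator of the less perversely degenerated product, which is associative. To see this, apply the faithful forgetful functor to $\Perv^{\BoZ}(\CM_{\Pi_Q})$: the difference of the two associativity composites vanishes there by \Cref{proposition:associativity_perv_deg}, hence vanishes equivariantly. The universal property of the enveloping algebra in the symmetric monoidal abelian category $(\Perv^{\BoZ}(\CM_{\Pi_Q}^{T'}),\boxdot)$ then gives an algebra morphism
\[
\Phi\colon\bfU(\BPS_{\Pi_Q}^{T'}\otimes\rmH^*_{\BoC^*}(\pt))\rightarrow\pH(\SA_{\Pi_Q}^{T',\psi}).
\]

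\textbf{Step 2: $\Phi$ is an isomorphism.} The forgetful functor $\Perv^{\BoZ}(\CM_{\Pi_Q}^{T'})\rightarrow\Perv^{\BoZ}(\CM_{\Pi_Q})$ is exact, faithful and conservative, and it is strict monoidal. It therefore sends $\bfU$ of a Lie object to $\bfU$ of the underlying Lie object, and it sends $\Phi$ to the morphism of \Cref{corollary:enveloping_affinizedBPS} for $\CC=\rep(\Pi_Q)$ (equivalently \Cref{corollary:affinizedBPSsheaf_preproj}). That morphism is an isomorphism, so $\Phi$ is an isomorphism.

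Alternatively, one can argue directly. Compose with the PBW isomorphism $\Sym_{\boxdot}\rightarrow\bfU$ and compare with the integrality isomorphism \eqref{equation:integrality_preprojective}, which symmetrizes via the CoHA product. Perverse degeneration commutes with this, giving a commutative PBW triangle as in the earlier corollaries. The forgetful-functor argument is cleaner, so we use it.

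\textbf{Step 3: derived global sections.} Apply the strict monoidal functor $\rmH^*$ and use that the less perverse filtration splits, so $\rmH^*(\pH(\SA_{\Pi_Q}^{T',\psi}))=\Gr^{\FL}\rmH^*(\SA_{\Pi_Q}^{T',\psi})$ with the degenerated product. The remaining point, which we expect to be the main obstacle, is to identify
\[
\rmH^*(\bfU(\BPS_{\Pi_Q}^{T'}\otimes\rmH^*_{\BoC^*}(\pt)))\cong\bfU_{\rmH^*_{T'}(\pt)}(\rmBPS_{\Pi_Q}^{T'}[u]).
\]
This requires an equivariant Künneth formula for $\rmH^*$ of $\boxdot$-powers over $\rmB T'$. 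It holds because $\rmBPS_{\Pi_Q}^{T'}\cong\rmBPS_{\Pi_Q}\otimes\rmH^*_{T'}(\pt)$ is a free $\rmH^*_{T'}(\pt)$-module (\Cref{proposition:dhm2023_deformedBPS}, using purity), as in \Cref{corollary:equivariant_pwb_supercommutativity}.

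Granting this, the map $\bfU(\rmBPS_{\Pi_Q}^{T'}[u])\rightarrow\rmH^*(\SA_{\Pi_Q}^{T',\psi})$ is the image of the isomorphism $\Phi$ and is an algebra isomorphism. If the algebra is meant over $\BoQ$ rather than $\rmH^*_{T'}(\pt)$, note that $\rmH^*_{T'}(\pt)$ is central and acts through the bracket-trivial factor described in \Cref{corollary:TprimeCoHA}.
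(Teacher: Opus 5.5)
Your proposal is correct and follows the paper's argument. You extend the Lie morphism of \Cref{theorem:Tprime_deformed_sheaf} to $\bfU$ by the universal property, show it is an isomorphism because the exact, faithful, strict monoidal forgetful functor sends it to the isomorphism of \Cref{corollary:enveloping_affinizedBPS}, and then take derived global sections.

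Your Step 3 (a K\"unneth formula over $\rmB T'$ using the freeness from \Cref{proposition:dhm2023_deformedBPS}) spells out what the paper leaves implicit. Drop the closing hedge, though: the enveloping algebra must be taken $\rmH^*_{T'}(\pt)$-linearly, since by PBW the $\BoQ$-linear $\bfU$ of $\rmBPS_{\Pi_Q}^{T'}[u]$ is strictly larger than the CoHA.
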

\begin{proof}
The second statement follows from the first by applying the derived global sections functor. By \Cref{theorem:Tprime_deformed_sheaf}, there is a morphism of Lie algebras $\BPS_{\Pi_Q}^{T'}\otimes\rmH^*_{\BoC^*}(\pt)\rightarrow\pH(\SA_{\Pi_Q}^{T',\psi})$. By the universal property of universal enveloping algebras, it can be extended to a morphism of algebras $\bfU(\BPS_{\Pi_Q}^{T'}\otimes\rmH^*_{\BoC^*}(\pt))\rightarrow\pH(\SA_{\Pi_Q}^{T',\psi})$. This is a morphism between cohomologically graded $T'$-equivariant perverse sheaves which becomes an isomorphism after forgetting the $T'$-equivariance by \Cref{corollary:enveloping_affinizedBPS}. Therefore, by it is an isomorphism, which concludes the proof.
\end{proof}

\subsection{Associated graded for the BPS Lie algebra of deformed cohomological algebra of the triple quiver with deformed canonical cubic potential}

We may now generalize the results of \Cref{subsection:associated_graded_deformedCoHA} to CoHAs of triple quivers with deformed canonical cubic potential.

Let $Q$ be a quiver, $\tilde{Q}$ the triple quiver and $\tilde{W}$ the canonical cubic potential. We let $W_0\in\BoC[\overline{Q}]\subset\BoC[\tilde{Q}]$ be a potential only involving the arrows of $\overline{Q}$. We let $T'$ be a torus acting on the arrows of $\tilde{Q}$ so that the potential $W\coloneqq \tilde{W}+W_0$ is invariant and quasi-homogeneous. All constructions of \Cref{subsubsection:BPSLiedefcanonicalpotential} work $T'$-equivariantly, which gives the BPS Lie algebra sheaf $\BPS_{\tilde{Q},W}^{T'}\in \Perv(\CM_{\tilde{Q}}^{T'})$, and the dimensionally reduced BPS Lie algebra sheaf $\BPS_{\tilde{Q},W}^{T',\red}\in\Perv(\CM_{\overline{Q}}^{T'})$. We define similarly the less perverse filtration on $\SA_{\tilde{Q},W}^{T',\red,(\psi)}\cong\varphi_{\Tr(W_0)}\SA_{\Pi_Q}^{T',(\psi)}$.

By combining \Cref{theorem:Tprime_deformed_sheaf} with the perverse t-exactness of the vanishing sheaf functor, we obtain the following.

\begin{theorem}
\label{theorem:afinizedT'deformedpotential}
The affinized BPS sheaf $\BPS_{\tilde{Q},W}^{T',\red}\otimes\rmH^*_{\BoC^*}$ is stable under the perversely degenerated Lie bracket on $\SA_{\Pi_Q}^{T',\red}$. The Lie bracket on $\BPS_{\tilde{Q},W}^{T',\red}\otimes\rmH^*_{\BoC^*}$ is the $\abs{-}$-twisted trivial extension of the Lie bracket on $\BPS_{\tilde{Q},W}^{T',\red}$.
\end{theorem}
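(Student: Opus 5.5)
The strategy is to apply the perverse t-exact, strict monoidal vanishing cycle functor $\varphi_{\Tr(W_0)}$ to the $T'$-equivariant statement of \Cref{theorem:Tprime_deformed_sheaf}, mirroring the passage from \Cref{corollary:affinizedBPSsheaf_preproj} to \Cref{theorem:perverse_degeneration_deformed_potential}.

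\textbf{Step 1: identify both sides.} By the $T'$-equivariant version of \Cref{proposition:deformed_dim_red}, there is an isomorphism $\SA_{\tilde{Q},W}^{T',\red,\psi}\cong\varphi_{\Tr(W_0)}\SA_{\Pi_Q}^{T',\psi}$. The algebra structure on the right is $\varphi_{\Tr(W_0)}$ applied to the CoHA product, in line with the convention adopted after \Cref{corollary:env_alg_def_dim_red}. By the $T'$-equivariant version of \Cref{lemma:dimredBPS_deformedpotential}, we also have $\BPS_{\tilde{Q},W}^{T',\red}\cong\varphi_{\Tr(W_0)}\BPS_{\Pi_Q}^{T'}$. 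Both proofs only use integrality and perverse exactness, so they go through $T'$-equivariantly once $W$ is $T'$-invariant.

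\textbf{Step 2: commute $\varphi_{\Tr(W_0)}$ with perverse cohomology.} Since $\varphi_{\Tr(W_0)}$ is perverse t-exact, it commutes with ${}^{\frakp}\tau^{\leq i}$ and with $\pH$. Since it is compatible with $\boxdot$ (Thom--Sebastiani, using that $\Tr(W_0)$ is additive under $\oplus$), it is strict monoidal. Therefore the less perversely degenerated product on $\varphi_{\Tr(W_0)}\SA_{\Pi_Q}^{T',\psi}$ equals $\varphi_{\Tr(W_0)}\pH(\frakm^{\psi})$, and its commutator is $\varphi_{\Tr(W_0)}$ applied to ${}^{\frakp}[-,-]$.

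\textbf{Step 3: transport the statement.} Apply $\varphi_{\Tr(W_0)}$ to the composite
\[
(\BPS_{\Pi_Q}^{T'}\otimes\rmH^*_{\BoC^*}(\pt))^{\boxdot2}\rightarrow\pH(\SA_{\Pi_Q}^{T',\psi})\rightarrow\coker(\imath_{T'}),
\]
which vanishes by \Cref{theorem:Tprime_deformed_sheaf}. Because $\varphi_{\Tr(W_0)}$ is exact on the abelian category $\Perv^{\BoZ}$, it preserves the cokernel. The image composite is therefore zero, which gives stability of $\BPS_{\tilde{Q},W}^{T',\red}\otimes\rmH^*_{\BoC^*}(\pt)$ under the bracket.

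For the formula, \Cref{theorem:Tprime_deformed_sheaf} says each component $\BPS^{T'}_{\Pi_Q}[-2i]\boxdot\BPS^{T'}_{\Pi_Q}[-2j]\rightarrow\BPS^{T'}_{\Pi_Q}[-2(i+j)]$ equals $\frac{\abs{a}^i\abs{b}^j}{\abs{a+b}^{i+j}}$ times the shifted BPS bracket. Here $\abs{-}$ is additive on dimension vectors, so the scalar is constant on each pair of components. This identity is preserved by the additive functor $\varphi_{\Tr(W_0)}$, and it is sent to the same scalar times the bracket on $\BPS_{\tilde{Q},W}^{T',\red}$. That bracket is by definition $\varphi_{\Tr(W_0)}$ of the bracket on $\BPS^{T'}_{\Pi_Q}$, as in \Cref{subsubsection:BPSLiedefcanonicalpotential}.

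\textbf{Main obstacle.} The delicate point is Step 1: checking that the identifications of Step 1 are compatible with the algebra structures and with the inclusion $\BPS\otimes\rmH^*_{\BoC^*}(\pt)\rightarrow\pH(\SA)$, including the $u$-action. The $u$-action is compatible because the determinant line bundle is pulled back from $\FM_{\Pi_Q}$ and $\varphi$ commutes with pullback along $\Det\times\id$. For the algebra structures I would simply adopt the remark's convention, which defines the product on the reduced side via $\varphi_{\Tr(W_0)}\SA_{\Pi_Q}^{T',\psi}$.
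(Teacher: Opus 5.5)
Your proposal is correct and follows the paper's route. The paper likewise obtains the statement by applying the perverse t-exact, strict monoidal functor $\varphi_{\Tr(W_0)}$ to \Cref{theorem:Tprime_deformed_sheaf}, tacitly identifying $\BPS^{T',\red}_{\tilde{Q},W}$ with $\varphi_{\Tr(W_0)}\BPS^{T'}_{\Pi_Q}$ via \Cref{lemma:dimredBPS_deformedpotential}; your write-up just spells out what the paper leaves implicit: exactness preserving the cokernel, the constant scalar $\abs{a}^i\abs{b}^j/\abs{a+b}^{i+j}$, and compatibility of the $u$-action.
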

\begin{proof}
The Lie bracket on $\SA_{\Pi_Q,W_0}$ is obtained by applying $\varphi_{\Tr(W_0)}$ to the Lie bracket on $\SA_{\Pi_Q}$. The latter preserves $\BPS_{\Pi_Q}\otimes\rmH^*_{\BoC^*}(\pt)$ and so, the former preserves $\varphi_{\Tr(W_0)}\BPS_{\Pi_Q}\otimes\rmH^*_{\BoC^*}(\pt)$. Moreover, the Lie bracket on $\BPS_{\Pi_Q}\otimes\rmH^*_{\BoC^*}(\pt)$ is the $\abs{-}$-twisted trivial extension of the Lie bracket on $\BPS_{\Pi_Q}$, and therefore so is the Lie bracket on $\BPS_{\Pi_Q,W_0}\otimes\rmH^*_{\BoC^*}(\pt)$.
\end{proof}

\begin{corollary}
In the situation of \Cref{theorem:afinizedT'deformedpotential}, the affinized BPS Lie algebra $\rmBPS_{\tilde{Q},W}^{T',\red}\otimes\rmH^*_{\BoC^*}(\pt)\subset\rmH^*(\SA_{\tilde{Q},W}^{T',\red,\psi})$ is stable under the perversely degenerated Lie bracket. The Lie induced Lie bracket on $\rmBPS_{\tilde{Q},W}^{T',\red}\otimes\rmH^*_{\BoC^*}(\pt)$ is the trivial $\abs{-}$-twisted $\rmH^*_{\BoC^*}(\pt)$-linear extension of the Lie bracket on $\rmBPS_{\tilde{Q},W}^{T',\red}$.
\end{corollary}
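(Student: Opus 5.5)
The plan is to take derived global sections of \Cref{theorem:afinizedT'deformedpotential}, in the same way the earlier absolute corollaries were obtained from their sheaf-level counterparts.

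First, I would record where the Lie bracket comes from. The less perversely degenerated CoHA product on $\rmH^*(\SA_{\tilde{Q},W}^{T',\red,\psi})$ is obtained by applying the strict monoidal functor $\rmH^*$ to the degenerated product $\pH(\frakm^{\psi})$ on $\pH(\SA_{\tilde{Q},W}^{T',\red,\psi})\cong\varphi_{\Tr(W_0)}\pH(\SA_{\Pi_Q}^{T',\psi})$. The latter identification uses the perverse t-exactness of $\varphi_{\Tr(W_0)}$ and \Cref{proposition:deformed_dim_red} made $T'$-equivariant. It follows that the commutator bracket ${^\frakp[-,-]}$ on global sections is $\rmH^*$ of the sheaf-level bracket.

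Second, I would apply $\rmH^*$ to the inclusion $\BPS_{\tilde{Q},W}^{T',\red}\otimes\rmH^*_{\BoC^*}(\pt)\rightarrow\pH(\SA_{\tilde{Q},W}^{T',\red,\psi})$. This inclusion is a split monomorphism of $\BoZ$-graded perverse sheaves, being a direct summand by the integrality isomorphism. Since $\rmH^*$ preserves split monomorphisms, the result is an injection. Its image is $\rmBPS_{\tilde{Q},W}^{T',\red}\otimes\rmH^*_{\BoC^*}(\pt)$, because $\rmH^*(\CF\otimes\rmH^*_{\BoC^*}(\pt))=\rmH^*(\CF)\otimes\rmH^*_{\BoC^*}(\pt)$ for the trivial tensor factor. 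By \Cref{theorem:afinizedT'deformedpotential}, the sheaf bracket restricted to this summand factors through it. Applying $\rmH^*$ together with the monoidality of $\rmH^*$ then gives stability of the subspace under ${^\frakp[-,-]}$.

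Third, for the formula I would use the component-wise description from the theorem. For each $a,b$ and $m,n$, the bracket component $\BPS_a[-2m]\boxdot\BPS_b[-2n]\rightarrow\BPS_{a+b}[-2(m+n)]$ equals $\frac{\abs{a}^m\abs{b}^n}{\abs{a+b}^{m+n}}[-,-][-2(m+n)]$. Taking $\rmH^*$ of each component yields exactly the $\abs{-}$-twisted $\rmH^*_{\BoC^*}(\pt)$-linear extension of the bracket on $\rmBPS_{\tilde{Q},W}^{T',\red}$.

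The only step needing care is the compatibility of $\rmH^*$ with the $\boxdot$-product in the $T'$-equivariant setting, where the Künneth map $\rmH^*(\CF)\otimes_{\rmH^*_{T'}(\pt)}\rmH^*(\CG)\rightarrow\rmH^*(\CF\boxdot\CG)$ need not be an isomorphism. However, the argument only needs the bracket on global sections to be induced through this natural map, and that holds without any Künneth isomorphism. So I expect no real obstacle here; purity or freeness of $\rmBPS$ is irrelevant for the stated claim.
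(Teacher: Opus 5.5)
Your proposal is correct and follows the paper's route: the paper proves this corollary in one line, by applying the derived global sections functor to \Cref{theorem:afinizedT'deformedpotential}. Your extra points are also accurate. The K\"unneth map $\rmH^*(\CF)\otimes_{\rmH^*_{T'}(\pt)}\rmH^*(\CG)\rightarrow\rmH^*(\CF\boxdot\CG)$ is only needed as a natural map, not an isomorphism. Purity is only needed later, for the identification $\rmBPS^{T'}_{\tilde{Q},W}\cong\rmBPS_{\tilde{Q},W}\otimes\rmH^*_{T'}(\pt)$, and not for this statement.
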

\begin{proof}
This follows from \Cref{theorem:afinizedT'deformedpotential} by applying the derived global section functor.
\end{proof}

If we assume further that $\rmH^*(\varphi_{\Tr(W_0)}\BPS_{\Pi_Q})=\rmBPS_{\tilde{Q},W}$ is pure, then there we obtain the $T'$-equivariant formality of $\rmH^*_{T'}(\BPS_{\Pi_Q,W_0})$ and so, there is an isomorphism $\rmBPS_{\tilde{Q},W}^{T'}\cong\rmBPS_{\Pi_Q,W_0}\otimes\rmH^*_{T'}(\pt)$.

\begin{corollary}
In the situation of \Cref{theorem:afinizedT'deformedpotential}, if $\rmBPS_{\tilde{Q},W}$ carries a pure mixed Hodge structure, then there is an isomorphism of Lie algebras $\rmBPS_{\tilde{Q},W}^{T'}\otimes\rmH^*_{\BoC^*}(\pt)\cong \rmBPS_{\tilde{Q},W}\otimes\rmH^*_{T'\times\BoC^*}(\pt)$ where the right-hand-side has the $\abs{-}$-twisted $\rmH^*_{T'\times\BoC^*}(\pt)$-linear extension of the Lie bracket on $\rmBPS_{\tilde{Q},W}$.
\end{corollary}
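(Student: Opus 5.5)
The plan is to deduce the statement from \Cref{theorem:afinizedT'deformedpotential} and the purity hypothesis, exactly as \Cref{corollary:TprimeCoHA} was deduced from \Cref{theorem:Tprime_deformed_sheaf} and \Cref{proposition:dhm2023_deformedBPS}. There are two steps: first identify $\rmBPS_{\tilde{Q},W}^{T'}$ with $\rmBPS_{\tilde{Q},W}\otimes\rmH^*_{T'}(\pt)$ \emph{as Lie algebras}, and then tensor with $\rmH^*_{\BoC^*}(\pt)$ and invoke the less perverse description.

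\textbf{Step 1 (linear identification).} By the remark preceding the statement, purity of $\rmBPS_{\tilde{Q},W}=\rmH^*(\varphi_{\Tr(W_0)}\BPS_{\Pi_Q})$ gives $T'$-equivariant formality. Concretely, the Leray spectral sequence for $\CM_{\Pi_Q}^{T'}\to\CM_{\Pi_Q}\times\rmB T'$ (equivalently for the fibration over $\rmB T'$) degenerates by weights. This yields a $\rmH^*_{T'}(\pt)$-module isomorphism $\rmBPS_{\tilde{Q},W}^{T'}\cong\rmBPS_{\tilde{Q},W}\otimes\rmH^*_{T'}(\pt)$. Its reduction modulo the augmentation ideal $\rmH^{>0}_{T'}(\pt)$ is the forgetful (restriction) map to the non-equivariant cohomology.

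\textbf{Step 2 (Lie compatibility).} Mimicking \cite[Proposition~11.13]{davison2023bps}, I will argue as follows.
\begin{enumerate}
\item The Lie bracket on $\rmBPS_{\tilde{Q},W}^{T'}$ is $\rmH^*_{T'}(\pt)$-bilinear, since it comes from a morphism of $T'$-equivariant sheaves.
\item The restriction map $\rmBPS^{T'}_{\tilde{Q},W}\to\rmBPS_{\tilde{Q},W}$ is a Lie algebra morphism, since the forgetful functor is monoidal.
\item The issue is that a bilinear bracket is determined by its values on a set of module generators, but those values are only known modulo the augmentation ideal. To fix this, choose the splitting $s\colon\rmBPS_{\tilde{Q},W}\to\rmBPS_{\tilde{Q},W}^{T'}$ compatible with weight (mixed Hodge) structures, so that $s$ identifies $\rmBPS_{\tilde{Q},W}$ with the pure part of weight equal to cohomological degree.
\item For pure $x,y$, the bracket $[s x,s y]$ is pure of the appropriate weight. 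Its components along $\rmH^{>0}_{T'}(\pt)\otimes s(\rmBPS)$ have strictly lower cohomological degree in the $\rmBPS$ factor at the same weight, so purity forces them to vanish. Hence $[sx,sy]=s[x,y]$.
\end{enumerate}
This weight-argument in Step 2 is the main obstacle. I would first attempt to carry it out sheaf-theoretically on $\varphi_{\Tr(W_0)}\BPS_{\Pi_Q}$, using that $\varphi$ preserves the relevant monodromic mixed Hodge module structure. Failing that, I would restrict the hypothesis to the cases where purity comes from a weight-zero statement as in \cite{davison2023bps}.

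\textbf{Step 3 (conclusion).} Taking derived global sections in \Cref{theorem:afinizedT'deformedpotential}, the bracket on $\rmBPS_{\tilde{Q},W}^{T'}\otimes\rmH^*_{\BoC^*}(\pt)$ is the $\abs{-}$-twisted $\rmH^*_{\BoC^*}(\pt)$-linear extension of the bracket on $\rmBPS^{T'}_{\tilde{Q},W}$; this is the corollary following \Cref{theorem:afinizedT'deformedpotential}. Substituting Step 2, this bracket becomes the $\rmH^*_{T'}(\pt)\otimes\rmH^*_{\BoC^*}(\pt)=\rmH^*_{T'\times\BoC^*}(\pt)$-linear extension, with the $\abs{-}$-twist in the $u$-variable only. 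Here we use that the twist factor $\abs{\dd}^m\abs{\ee}^n/\abs{\dd+\ee}^{m+n}$ involves only $u$-degrees and dimension vectors, so it commutes with $\rmH^*_{T'}(\pt)$-linearity. This is precisely the claimed isomorphism of Lie algebras.
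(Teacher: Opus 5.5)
Your overall route is the paper's: it combines the $T'$-equivariant formality coming from purity (the remark before the statement), the resulting Lie isomorphism $\rmBPS^{T'}_{\tilde{Q},W}\cong\rmBPS_{\tilde{Q},W}\otimes\rmH^*_{T'}(\pt)$, and the global-sections corollary of \Cref{theorem:afinizedT'deformedpotential}, exactly as in \Cref{corollary:TprimeCoHA}. The paper leaves the Lie isomorphism to the analogue of \cite[Proposition~11.13]{davison2023bps}. Your Steps 1 and 3 are fine.

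\textbf{The gap is Step 2(3)--(4).} This is the only non-formal point, and the weight argument there fails. Once $\rmBPS_{\tilde{Q},W}$ is pure, \emph{all} of $\rmBPS^{T'}_{\tilde{Q},W}\cong\rmBPS_{\tilde{Q},W}\otimes\rmH^*_{T'}(\pt)$ is pure with weight equal to degree. So ``the pure part'' singles out no subspace, and a weight-compatible splitting $s$ is as non-canonical as any other. Concretely, take $x\in\rmBPS^k_{\tilde{Q},W}$ and $y\in\rmBPS^l_{\tilde{Q},W}$. A possible error term in $[sx,sy]-s[x,y]$ lies in $\rmH^{2j}_{T'}(\pt)\cdot s(\rmBPS^{k+l-2j}_{\tilde{Q},W})$, which is pure of weight $2j+(k+l-2j)=k+l$, the same weight as $s[x,y]$. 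Weights only record total degree, not how it is split between the two tensor factors, so purity imposes no constraint. For example, a graded deformation $[x,y]+t\,c(x,y)$ with $t\in\rmH^2_{T'}(\pt)$ and $c$ of degree $-2$ is compatible with all weights. As written, Step 2 does not show the bracket is the trivial extension, and your proposed fallback is not an argument.

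\textbf{How to repair Step 2.} What you need is a bracket-compatible lift, and it comes from the presentation in \Cref{theorem:BPS_deformed}, not from weights.
\begin{enumerate}
\item The generators $\varphi_{\Tr(W_0)}\SG_{\dd}$ are direct summands of $\BPS_{\Pi_Q,W_0}$. Hence their cohomology is pure, and they are $T'$-equivariantly formal as in your Step 1. Choose lifts $s$ only on the spaces $\rmH^*(\varphi_{\Tr(W_0)}\SG_{\dd})$.
\item The Serre relations hold for \emph{arbitrary} elements of $\rmH^*_{T'}(\varphi_{\Tr(W_0)}\SG_{\dd})$, because they are vanishing statements about morphisms of ($T'$-equivariant) perverse sheaves. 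For real $\dd$ the module $\rmH^*_{T'}(\varphi_{\Tr(W_0)}\SG_{\dd})\cong\rmH^*_{T'}(\pt)$ has rank one, so $\ad(he)^N=h^N\ad(e)^N$. For $(\dd,\dd')=0$ the bracket map $\SG_{\dd}\boxdot\SG_{\dd'}\to\BPS$ is zero.
\item Hence $s$ extends to an $\rmH^*_{T'}(\pt)$-linear Lie algebra morphism $\rmBPS_{\tilde{Q},W}\otimes\rmH^*_{T'}(\pt)\to\rmBPS^{T'}_{\tilde{Q},W}$, with the trivially extended bracket on the source. It reduces to the identity modulo $\rmH^{>0}_{T'}(\pt)$, since $\rmBPS_{\tilde{Q},W}$ is generated by the generators.
\item Graded Nakayama gives surjectivity. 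Your Step 1 shows both sides are free of the same finite graded rank in each dimension vector, which gives injectivity.
\end{enumerate}
With this replacing Step 2, your Step 3 goes through.
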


\begin{corollary}
In the situation of \Cref{theorem:afinizedT'deformedpotential}, there an isomorphism of algebra objects in $\Perv(\CM_{\Pi_Q}^{T'})$
\[
 \bfU(\BPS_{\tilde{Q},W}^{T',\red}\otimes\rmH^*_{\BoC^*}(\pt))\rightarrow\pH(\SA_{\tilde{Q},W}^{T',\red,\psi})\,.
\]
There is an isomorphism of algebras
\[
 \bfU(\rmBPS_{\tilde{Q},W}^{T'}[u])\rightarrow\rmH^*(\SA_{\tilde{Q},W}^{T',\red,\psi})
\]
where $\rmH^*(\SA_{\tilde{Q},W}^{T',\red,\psi})$ has the perversely degenerated CoHA product.
\end{corollary}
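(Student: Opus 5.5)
The plan is to follow the same pattern as \Cref{corollary:Tprimeenveloping} and \Cref{corollary:enveloping_affinizedBPS}. The sheaf-level statement comes first, and the absolute statement is then deduced by taking derived global sections.

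By \Cref{theorem:afinizedT'deformedpotential}, the natural morphism
\[
\BPS_{\tilde{Q},W}^{T',\red}\otimes\rmH^*_{\BoC^*}(\pt)\rightarrow\pH(\SA_{\tilde{Q},W}^{T',\red,\psi})
\]
is a morphism of Lie algebra objects in $\Perv^{\BoZ}(\CM_{\Pi_Q}^{T'})$. Here the source carries the $\abs{-}$-twisted trivial extension of the bracket, and the target carries the less perversely degenerated commutator bracket. This morphism is obtained by applying $\varphi_{\Tr(W_0)}$ to the split inclusion of \Cref{corollary:affinizedBPSsheaf_preproj}, made $T'$-equivariant as in \Cref{theorem:Tprime_deformed_sheaf}, and then using \Cref{lemma:dimredBPS_deformedpotential} to identify $\varphi_{\Tr(W_0)}\BPS_{\Pi_Q}$ with $\BPS_{\tilde{Q},W}^{\red}$. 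Associativity of the degenerated product is preserved because $\varphi_{\Tr(W_0)}$ is strict monoidal and perverse exact, combined with \Cref{proposition:associativity_perv_deg}. The universal property of the enveloping algebra object then gives an algebra morphism
\[
\Phi\colon\bfU(\BPS_{\tilde{Q},W}^{T',\red}\otimes\rmH^*_{\BoC^*}(\pt))\rightarrow\pH(\SA_{\tilde{Q},W}^{T',\red,\psi}).
\]

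To show that $\Phi$ is an isomorphism, I would use that $\Phi$ is obtained by applying $\varphi_{\Tr(W_0)}$ to the isomorphism of \Cref{corollary:Tprimeenveloping}. This holds because $\varphi_{\Tr(W_0)}$ commutes with $\Sym_{\boxdot}$ and with enveloping algebras: it is strict monoidal and exact on $\Perv^{\BoZ}$, so it sends the PBW triangle to a PBW triangle. An exact functor sends isomorphisms to isomorphisms, so $\Phi$ is an isomorphism. Alternatively, one can argue as in \Cref{corollary:Tprimeenveloping}. Forgetting $T'$ is conservative on $\Perv^{\BoZ}$, which reduces the question to the non-equivariant statement. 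There, the PBW morphisms for $\bfU$ and for $\pH(\SA^{\red})$, via integrality for $\SA_{\tilde{Q},W}$ pushed forward by $\pi'$, give a commuting triangle whose diagonal arrows are isomorphisms.

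For the absolute statement, I apply $\rmH^*$. By \Cref{proposition:deformed_dim_red}, $\rmH^*(\pH(\SA^{\red}))$ is $\rmH^*(\SA_{\tilde{Q},W}^{T',\psi})$ with the degenerated product, since the less perverse filtration splits. What remains is to identify $\rmH^*(\bfU(\BPS^{T',\red}\otimes\rmH^*_{\BoC^*}(\pt)))$ with $\bfU(\rmBPS_{\tilde{Q},W}^{T'}[u])$. I expect this to be the main obstacle, since it requires a Künneth-type statement $\rmH^*(\CF\boxdot\CG)\cong\rmH^*(\CF)\otimes_{\rmH^*_{T'}(\pt)}\rmH^*(\CG)$ for the relevant $\boxdot$-powers.

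I would first try to avoid this issue by noting that $\rmH^*$ is strict monoidal, so $\rmH^*$ of a Lie algebra object is a Lie algebra. The PBW isomorphism at sheaf level then gives a map $\bfU(\rmBPS^{T'}[u])\rightarrow\rmH^*(\pH(\SA^{\red}))$ via the universal property, and this map should be compared to the sheaf-level $\Sym$ after global sections. Non-equivariantly, the Künneth isomorphism holds over $\BoQ$, so the map is an isomorphism by a dimension count on each graded piece. Equivariantly, the relative Künneth formula over $\rmB T'$ needs freeness of $\rmBPS^{T'}$ over $\rmH^*_{T'}(\pt)$, which follows from purity as in the preceding corollary. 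Without purity, I would state the result with $\bfU$ taken over $\rmH^*_{T'}(\pt)$ of the $\rmH^*_{T'}(\pt)$-module $\rmBPS^{T'}[u]$.
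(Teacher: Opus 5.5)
Your sheaf-level argument is correct. Your alternative (forget $T'$, then use the PBW triangle) is essentially the paper's proof, except that the paper runs the PBW triangle directly in $\Perv^{\BoZ}(\CM_{\Pi_Q}^{T'})$ for $\BPS_{\Pi_Q,W_0}^{T'}\otimes\rmH^*_{\BoC^*}(\pt)$ and $\pH(\SA_{\Pi_Q,W_0}^{T',\psi})$.

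Your primary route is a mild variant. You transport the isomorphism of \Cref{corollary:Tprimeenveloping} through $\varphi_{\Tr(W_0)}$, which needs two things:
\begin{itemize}
\item $\varphi_{\Tr(W_0)}\bfU(L)\cong\bfU(\varphi_{\Tr(W_0)}L)$, which holds because $\varphi_{\Tr(W_0)}$ is strict monoidal and perverse exact, so it commutes with tensor algebras and quotients;
\item the uniqueness clause of the universal property, to identify your $\Phi$ with $\varphi_{\Tr(W_0)}$ applied to the earlier map.
\end{itemize}
This avoids quoting a separate PBW statement for the deformed CoHA. That statement is itself obtained by applying $\varphi_{\Tr(W_0)}$ to integrality, so the two proofs have the same content.

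On the absolute statement, the paper's proof only treats the sheaf-level claim. Your observation is legitimate: passing to $\rmH^*$ in the $T'$-equivariant setting needs an equivariant K\"unneth formula, hence freeness of $\rmH^*_{T'}(\BPS^{T',\red}_{\tilde{Q},W})$ over $\rmH^*_{T'}(\pt)$. Purity gives this, as in \Cref{corollary:equivariant_pwb_supercommutativity} and the purity hypothesis of the corollary just before this one. The paper glosses over this point.

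Your proposed fallback without purity does not resolve it. Taking $\bfU$ over $\rmH^*_{T'}(\pt)$ does not address the real issue, because the comparison map $\Sym_{\rmH^*_{T'}(\pt)}(\rmH^*_{T'}(L))\rightarrow\rmH^*_{T'}(\Sym_{\boxdot}L)$ can still fail to be an isomorphism when $\rmH^*_{T'}(L)$ is not free. Without an equivariant formality input, the second isomorphism should be stated only in the pure (or free) case.
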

\begin{proof}
By \Cref{theorem:afinizedT'deformedpotential}, there is a morphism of Lie algebras $\BPS_{\Pi_Q,W_0}^{T'}\otimes\rmH^*_{\BoC^*}(\pt)\rightarrow\pH(\SA_{\Pi_Q,W_0}^{T',\psi})$, which extends to a morphism of algebras
\[
 \bfU(\BPS_{\Pi_Q,W_0}^{T'}\otimes\rmH^*_{\BoC^*}(\pt))\rightarrow\pH(\SA_{\Pi_Q,W_0}^{T',\psi})\,.
\]
By the perversely degenerated PBW theorem for $\pH(\SA_{\Pi_Q,W_0}^{T'})$ and the PBW theorem for enveloping algebras for $\BPS_{\Pi_Q,W_0}^{T'}\otimes\rmH^*_{\BoC^*}(\pt)$, there is a commutative diagram in which diagonal arrows are isomorphism of cohomologically graded perverse sheaves:
\[\begin{tikzcd}
	& {\Sym(\BPS_{\Pi_Q,W_0}^{T'}\otimes\rmH^*_{\BoC^*}(\pt))} \\
	{\bfU(\BPS_{\Pi_Q,W_0}^{T'}\otimes\rmH^*_{\BoC^*}(\pt))} && {\pH(\SA_{\Pi_Q,W_0}^{T',\psi})}
	\arrow["{\mathrm{PBW}}"', from=1-2, to=2-1]
	\arrow["{\mathrm{PBW}}", from=1-2, to=2-3]
	\arrow[from=2-1, to=2-3]
\end{tikzcd}\]
Therefore, the horizontal morphism is an isomorphism of algebra objects in $\Perv^{\BoZ}(\CM_{\Pi_Q}^{T'})$.

\end{proof}

\section{Degenerations of nilpotent CoHAs}
\label{section:degenerations-nilpotent}
It has proven useful in the literature to consider \emph{nilpotent} versions of CoHAs in order to study the structure of non-nilpotent CoHAs, see \cite{schiffmann2020cohomological,schiffmann2023cohomological,davison2023bps}. Nilpotent CoHAs, obtained by restricting to substacks of nilpotent, or semi-nilpotent or strictly semi-nilpotent representations of a quiver, are dual (in a sense that can be made more precise, but irrelevant for this paper) to CoHAs without nilpotent support. They also enjoy their own structural results, such as tautological generation. Our degeneration results apply to the nilpotent versions of CoHAs.

Let $\CC$ be a two Calabi--Yau category satisfying the assumptions of \Cref{subsection:2CY_Abelian_categories}. We consider the good moduli space morphism $\JH\colon\FM_{\CC}\rightarrow\CM_{\CC}$. We define CoHAs associated to submonoids $\CN\subset\CM_{\CC}$ as in \cite[\S7.1.2]{davison2022bps}. For any saturated submoid $\imath\colon\CN\rightarrow\CM_{\CC}$, which means that the square
\[\begin{tikzcd}
	{\CN\times\CN} & \CN \\
	{\CM_{\CC}\times\CM_{\CC}} & {\CM_{\CC}}
	\arrow["\oplus", from=1-1, to=1-2]
	\arrow[from=1-1, to=2-1]
	\arrow["\lrcorner"{anchor=center, pos=0.125}, draw=none, from=1-1, to=2-2]
	\arrow[from=1-2, to=2-2]
	\arrow[from=2-1, to=2-2]
\end{tikzcd}\]
is Cartesian, we may define the $\CN$-CoHA $\CA_{\CC}^{\CN}$ and the sheafified $\CN$-CoHA $\SA_{\CC}^{\CN}$ as follows. If $(\SA_{\CC}^{(\psi)},\mathfrak{m}^{(\psi)})$ denotes the $\psi$-twisted or not sheafified CoHA of $\CC$, which is an algebra object in the category of constructible sheaves over $\CD^+_{\rmc}(\CM_{\CA})$, the $\CN$-sheafified CoHA is defined by $\SA_{\CC}^{\CN,(\psi)}\coloneqq\imath^!\SA_{\CC}^{(\psi)}$, and it has the product given by  $\imath^!\mathfrak{m}^{(\psi)}$. By applying the derived global sections functor, we obtain the $\CN$-CoHA $\CA_{\CC}^{\CN,(\psi)}\coloneqq\rmH^*(\SA_{\CC}^{\CN,(\psi)})$ with multiplication $\rmH^*(\mathfrak{m}^{(\psi)})$.

The nilpotent, semi-nilpotent and strictly semi-nilpotent CoHAs of the preprojective algebra of a quiver $Q$ are defined in way via appropriate choices of saturated submonoids $\CN\subset\CM_{\Pi_Q}$. Namely, the nilpotent CoHA $\CA_{\Pi_Q}^{\nil}$ is obtained for the choice $\imath\colon\BoN^{Q_0}\rightarrow\CM_{\Pi_Q}$, $\dd\mapsto 0_{\dd}$, sending a dimension vector $\dd\in\BoN^{Q_0}$ to the $\dd$-dimensional representation of $\Pi_Q$ on which all arrows of $\overline{Q}$ act by the zero morphism. The semi-nilpotent CoHA $\CA_{\Pi_Q}^{\mathcal{SN}}$ is define using the saturated submonoid $\CM_{Q}\subset\CM_{\Pi_Q}$ extending $Q$-representations to $\Pi_Q$-representations by setting all opposite arrows to $0$. Last, we denote by $Q^{\mathrm{loops}}\subset Q$ the full subquiver of $Q$ consisting of vertices of $Q$ together with loops. The strictly seminilpotent CoHA is obtained for the saturated submonoid $\CM_{Q^{\mathrm{loops}}}\subset\CM_{\Pi_Q}$ extending a $Q^{\mathrm{loops}}$-representation to a $\Pi_Q$-representation by setting all loops in $\overline{Q}$ but not $Q^{\mathrm{loops}}$ to $0$. Note that in the case when $Q$ has no loops and no oriented cycles, all three versions of nilpotent CoHAs $\CA_{\Pi_Q}^{\nil,(\psi)},\CA_{\Pi_Q}^{\mathcal{SN},(\psi)}$ and $\CA_{\Pi_Q}^{\mathcal{SSN},(\psi)}$ coincide.

For all sorts of nilpotent CoHAs and more generally $\CN$-CoHAs of $2$-Calabi--Yau categories, we define the less perverse filtration on $\CA_{\CC}^{\CN,(\psi)}$ by $!$-pullback of the less perverse filtration on $\SA_{\CC}^{(\psi)}$. Namely, we let $\mathfrak{L}^i\CA_{\CC}^{\CN,(\psi)}\coloneqq\rmH^*(\imath^!\ptau^{\leq i}\SA_{\CC}^{(\psi)})$. Then, it gives an increasing filtration of $\CA_{\CC}^{\CN,(\psi)}$ starting in degree $0$. The $\CN$-BPS Lie algebra of $\CC$ is defined as $\rmBPS_{\CC}^{\CN}\coloneqq\rmH^*(\imath^!\BPS_{\CC})$. It has the Lie bracket induced by the CoHA multiplication on $\CA_{\CC}^{\CN,(\psi)}$.

\begin{corollary}
There is an isomorphism of algebras
\[
 \Gr^{\FL}\CA_{\CC}^{\CN,\psi}\cong \bfU(\rmBPS_{\CC}^{\CN}\otimes\rmH^*_{\BoC^*}(\pt)).
\]
The Lie bracket on $\rmBPS_{\CC}^{\CN}\otimes\rmH^*_{\BoC^*}(\pt)$ is given by the $\abs{-}$-twisted $\rmH^*_{\BoC^*}(\pt)$-linear extension of the Lie bracket on $\rmBPS_{\CC}^{\CN}$.
\end{corollary}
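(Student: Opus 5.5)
The plan is to deduce the statement from the sheaf-level result, \Cref{corollary:enveloping_affinizedBPS}, by applying the functor $\imath^!$ for the saturated submonoid $\imath\colon\CN\rightarrow\CM_{\CC}$ and then taking derived global sections.

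First, since the square defining saturation is Cartesian, base change gives a canonical isomorphism $\imath^!(\CF\boxdot\CG)\cong\imath^!\CF\boxdot\imath^!\CG$. So $\imath^!$ is strict monoidal, as recalled in \Cref{subsection:constructible_sheaves}, and it sends algebra and Lie algebra objects to algebra and Lie algebra objects. Applying it to the isomorphism of algebra objects $\bfU(\BPS_{\CC}\otimes\rmH^*_{\BoC^*}(\pt))\cong\pH(\SA_{\CC}^{\psi})$ of \Cref{corollary:enveloping_affinizedBPS}, we obtain an isomorphism of algebras
\[
\bfU(\imath^!\BPS_{\CC}\otimes\rmH^*_{\BoC^*}(\pt))\cong\imath^!\pH(\SA_{\CC}^{\psi})\,.
\]
Here we use that $\imath^!$ commutes with $\Sym_{\boxdot}$, tensor algebras and quotients by Lie ideals. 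This holds because $\imath^!$ is strict monoidal and additive, and all the constructions involved are split, being built from direct sums and images of idempotents in a $\BoQ$-linear category. Similarly, \Cref{theorem:less_perverse_2CY} gives that the Lie bracket on $\imath^!\BPS_{\CC}\otimes\rmH^*_{\BoC^*}(\pt)$ is the $\abs{-}$-twisted trivial extension of the bracket on $\imath^!\BPS_{\CC}$. The twisting coefficients $\abs{a}^m\abs{b}^n/\abs{a+b}^{m+n}$ depend only on connected components, so they are preserved by $\imath^!$.

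Second, we take $\rmH^*$, which is strict monoidal. On the right-hand side, the perverse truncation triangles for $\SA_{\CC}^{\psi}$ split because $\SA_{\CC}^{\psi}\cong\pH(\SA_{\CC}^{\psi})$ is semisimple. Hence $\imath^!\ptau^{\leq i}\SA_{\CC}^{\psi}\rightarrow\imath^!\SA_{\CC}^{\psi}$ is a split monomorphism, and the filtration $\FL$ on $\CA_{\CC}^{\CN,\psi}$ has associated graded $\rmH^*(\imath^!\pH(\SA_{\CC}^{\psi}))$. Its product is induced by $\imath^!\pH(\frakm^{\psi})$, which matches the degenerated product on $\Gr^{\FL}$.

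On the left-hand side, we need $\rmH^*\bigl(\bfU(\imath^!\BPS_{\CC}\otimes\rmH^*_{\BoC^*}(\pt))\bigr)\cong\bfU(\rmBPS_{\CC}^{\CN}\otimes\rmH^*_{\BoC^*}(\pt))$. By PBW, this reduces to a K\"unneth-type identification $\rmH^*(\imath^!\CF\boxdot\imath^!\CG)\cong\rmH^*(\imath^!\CF)\otimes\rmH^*(\imath^!\CG)$. This is immediate, since $\rmH^*(\oplus_*(-\boxtimes-))$ is the cohomology of an external product on $\CN\times\CN$, and the K\"unneth formula holds over $\BoQ$.

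I expect the main obstacle to be the bookkeeping that the product on $\Gr^{\FL}\CA_{\CC}^{\CN,\psi}$ really coincides with $\rmH^*(\imath^!\pH(\frakm^{\psi}))$. This requires compatibility of $\imath^!$ with $\ptau^{\leq i}$ only up to the splitting; note that $\imath^!$ is not perverse t-exact. I would handle this by defining everything through the chosen decomposition $\SA_{\CC}^{\psi}\cong\bigoplus_i\pH^i(\SA_{\CC}^{\psi})[-i]$. The filtration $\FL^i=\rmH^*(\imath^!\ptau^{\leq i}\SA_{\CC}^{\psi})$ is then the image of the summands of index $\leq i$. The CoHA product sends the $(i,j)$ summands into the summands of index $\leq i+j$, with leading component $\pH(\frakm^{\psi})$. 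Applying $\imath^!$ and $\rmH^*$ to this statement gives exactly the degenerated product.
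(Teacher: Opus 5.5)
Your proposal is correct and follows the paper's proof. The paper proves the corollary in one line, by applying the strict monoidal functor $\rmH^*(\imath^!-)$ (strict monoidal by base change along the Cartesian saturation square plus K\"unneth) to the sheaf-level isomorphism $\bfU(\BPS_{\CC}\otimes\rmH^*_{\BoC^*}(\pt))\cong\pH(\SA_{\CC}^{\psi})$ of \Cref{corollary:enveloping_affinizedBPS}. Your extra bookkeeping is what the paper leaves implicit: the truncations of the semisimple complex $\SA_{\CC}^{\psi}$ stay split after $\imath^!$ even though $\imath^!$ is not t-exact, the product on $\Gr^{\FL}$ is the leading $(i,j)\to i+j$ component, and additive strict monoidal functors commute with the split constructions $T$, $\Sym$, $\bfU$.
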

\begin{proof}
The corollary follows by applying the strict monoidal functor $\rmH^*(\imath^!-)$ to \Cref{corollary:enveloping_affinizedBPS}.
\end{proof}

One may obtain versions of this theorem for deformed CoHA, given an action of a torus $T'$ on the arrow of the quiver $\overline{Q}$ such that the preprojective relation is homogeneous and the submonoid $\CN\subset\CM_{\Pi_Q}$ is preserved by the $T'$-action. In particular, the associated graded of nilpotent CoHAs of quivers is fully understood. While a full description of the nilpotent CoHAs of quivers is missing, it is described in \cite{diaconescu2025cohomological} in terms of Yangians in the case of affine quivers.

\section{Comparison with the filtration on the Maulik--Okounkov Yangians}
In \cite{maulik2019quantum}, Maulik and Okounkov construct Yangians associated with quivers using geometric methods. In particular, they construct a Yangian $\mathbf{Y}_Q$ associated with the preprojective algebra of a quiver $Q$. It is proved in \cite{botta2023okounkov} that there is an isomorphism of algebras between the CoHA \[ \Phi: \mathcal{A}^{T',\psi}_{\Pi_Q} \rightarrow \mathbf{Y}_Q^{T'} \] and the Yangian $\mathbf{Y}_Q^{T'}$ for suitably choosen torus $T'$.  The algebra on the right has a filtration defined by the degree of $u$ operators \cite[Section 5.2.10]{maulik2019quantum}. We show that this morphism respects the filtrations on both sides and induces an isomorphism on the associated graded algebras.

We follow \cite{botta2023okounkov} for quickly summarizing the definition of the Maulik--Okounkov Yangian and its filtration. Let $\NN_{Q}(\ff,\dd)$ be the Nakajima quiver variety associated with the quiver $Q$ with framing $\ff$ and dimension $\dd$. Let $T'$ be a torus acting on the arrows $a, a^{*}\in \overline{Q}$ such that $\sum [a,a^{*}]$ is homogeneous of equivariant weight $\hbar$. Let $T_{\ff} = T' \times A_{\ff}$ be the torus acting on $\NN_{Q}(\ff,\dd)$ where $A_{\ff}$ is the framing torus which acts on framing arrows $\alpha_{i,m}$ with weight $1$ and $\alpha_{i,m}^{*}$ with weight $-1$. Let \[ \NN^{T_{\ff}}_{Q,\ff} = \bigoplus_{\bd \in \mathbb{N}^{Q_0}} \NN^{T_{\ff}}_{Q,\ff,\dd} \ \ \mathrm{where } \ \  \NN^{T_{\ff}}_{Q,\ff,\dd} := \rmH^*_{T_{\ff}}(\NN_Q(\ff,\dd),\mathbb{Q}^{\vir}) \] be the $T_{\ff}$-equivariant cohomology of the Nakajima quiver variety, where $\mathbb{Q}^{\vir}: = \mathbb{Q}[\mathrm{dim}(\NN_Q(\ff,\dd))]$ is the constant sheaf shifted by the virtual dimension of $\NN_Q(\ff,\dd)$. Similarly, let $\NN^{T'}_Q = \bigoplus_{\bd \in \mathbb{N}^{Q_0}} \rmH^*_{T'}(\NN_Q(\ff,\dd),\mathbb{Q}^{\vir})$ and $\NN^{T}_{Q,\ff,\dd}$ be the $T'$-equivariant cohomology.

Then given two framing vectors $\ff^{\prime}$ and $\ff^{\prime \prime}$ such that $\ff = \ff^{\prime}+\ff^{\prime \prime}$, certain elements \[R_{\ff^{\prime},\ff^{\prime \prime}} \in \mathrm{End}_{\rmH^{*}_{T_{\ff}}}(\NN^{T_{\ff^{\prime}}}_{Q,\ff^{\prime}} \otimes_{\rmH^{*}_{T^{\prime}}} \NN^{T_{\ff^{\prime \prime}}}_{Q,\ff^{\prime \prime}}) \otimes_{\rmH^{*}_{T_{\ff}}} \mathrm{Frac}(\rmH^{*}_{T_{\ff}})\] are constructed using stable envelopes. Owing to their geometric definition, these elements satisfy the Yang--Baxter equation. More precisely, for any three framing vectors $\ff_1, \ff_2, \ff_3$ such that $\ff = \ff_1+\ff_2+\ff_3$ we have
\[ R^{12}_{\ff_1,\ff_2} R^{13}_{\ff_1,\ff_3}  R^{23}_{\ff_2,\ff_3} = R^{23}_{\ff_2,\ff_3}  R^{13}_{\ff_1,\ff_3}  R^{12}_{\ff_1,\ff_2} \] in $\mathrm{End}_{\rmH^{*}_{T_{\ff}}}(\NN^{T_{\ff_1}}_{Q,\ff_1} \otimes_{\rmH^{*}_{T^{\prime}}} \NN^{T_{\ff_2}}_{Q,\ff_2} \otimes_{\rmH^{*}_{T^{\prime}}} \NN^{T_{\ff_3}}_{Q,\ff_3} ) \otimes_{\rmH^{*}_{T_{\ff}}} \mathrm{Frac}(\rmH^{*}_{T_{\ff}})$. 

Note that if the framing vector is $\ff=\delta_i$ for some vertex $i \in Q_0$, where $\delta_i$ is the vector with $1$ at the $i$-th vertex and $0$ elsewhere, then, since the action of $A_{\delta_i}$ is trivial on the quiver variety $N_{Q}(\delta_i,\dd)$, we have \[ \NN^{T_{\delta_i}}_{Q,\delta_i} = \NN^{T'}_{Q,\delta_i} [a] \] where we are identifing $\rmH^{*}_{A_{\delta_i}} = \mathbb{Q}[a]$. Let \[ \NN^{T',\vee}_{Q,\delta_i} =  \bigoplus_{-\bd \in \mathbb{N}^{Q_0}} \mathrm{Hom}_{\rmH^{*}_{T'}}(\NN^{T'}_{Q,\delta_i,-\bd},\rmH^{*}_{T'}) \] be the graded dual of $\NN^{T'}_{Q,\delta_i}$ as $\rmH^{*}_{T'}$-module. This defines a trace morphism \[ \mathrm{Tr} : \NN^{T'}_{Q,\delta_i} \otimes_{\rmH^{*}_{T'}} \NN^{T',\vee}_{Q,\delta_i} \rightarrow \rmH^{*}_{T'} \] by taking the composition on the graded pieces. 

For any $i \in Q_0$, let $m_i(a) \in \NN^{T'}_{Q,\delta_i} \otimes_{\rmH^{*}_{T'}} \NN^{T',\vee}_{Q,\delta_i}[a]$ be a polynomial in $a$ with coefficients in $\NN^{T'}_{Q,\delta_i} \otimes_{\rmH^{*}_{T'}} \NN^{T',\vee}_{Q,\delta_i}$. Let $(\NN^{T'}_{Q,\delta_i}[a] \otimes_{{\rmH^{*}_{T'}} } \NN^{T',\vee}_{Q,\delta_i} \otimes_{{\rmH^{*}_{T'}} } \NN^{T_{\ff}}_{Q,\ff})_{\mathrm{Loc}}=   (\NN^{T'}_{Q,\delta_i}[a] \otimes_{{\rmH^{*}_{T'}} } \NN^{T',\vee}_{Q,\delta_i} \otimes_{{\rmH^{*}_{T'}} } \NN^{T_{\ff}}_{Q,\ff}) \otimes_{\rmH^{*}_{T_{\ff+\delta_i}}} \mathrm{Frac}(\rmH^{*}_{T_{\ff+\delta}})$. We consider the composition \[\begin{tikzcd}
	{ (\NN^{T'}_{Q,\delta_i}[a] \otimes_{{\rmH^{*}_{T'}} } \NN^{T',\vee}_{Q,\delta_i} \otimes_{{\rmH^{*}_{T'}} } \NN^{T_{\ff}}_{Q,\ff})} & { (\NN^{T'}_{Q,\delta_i}[a] \otimes_{{\rmH^{*}_{T'}} } \NN^{T',\vee}_{Q,\delta_i} \otimes_{{\rmH^{*}_{T'}} } \NN^{T_{\ff}}_{Q,\ff})_{\mathrm{Loc}}} \\
	& { (\NN^{T'}_{Q,\delta_i}[a] \otimes_{{\rmH^{*}_{T'}} } \NN^{T',\vee}_{Q,\delta_i} \otimes_{{\rmH^{*}_{T'}} } \NN^{T_{\ff}}_{Q,\ff})_{\mathrm{Loc}}} \\
	{ \NN^{T_{\ff}}_{Q,\ff}} & {  (\NN^{T_{\ff}}_{Q,\ff})_{\mathrm{Loc}}}
	\arrow["{{R_{\delta_i,\ff}^{(13)}}}", from=1-1, to=1-2]
	\arrow["{m_i(a) \otimes 1}", from=1-2, to=2-2]
	\arrow["{\mathrm{Tr}}", from=2-2, to=3-2]
	\arrow["{1 \otimes 1 \otimes \mathrm{Id}}", from=3-1, to=1-1]
	\arrow[dotted, from=3-1, to=3-2]
\end{tikzcd}\]
where the second arrow is given by applying $R_{\delta_i,\ff}$ to the first and third factors, the third arrow is given by taking the tensor product with $m_i(a)$ and the forth arrow is given by applying the trace morphism $\mathrm{Tr}$ to the first two factors. Then taking the residue of the above compostion, we obtain 
\[ E(m_i(a)) := \frac{1}{\hbar} \sum  \mathrm{Res}_a( \mathrm{Tr}((m_i(a) \otimes 1) R_{\delta_i,\ff}))\]

It is proved in \cite{maulik2019quantum} that the operator $E(m_i(a))$ is well-defined and lives in $\mathrm{End}_{\rmH^{*}_{T_{\ff}}}(\NN^{T_{\ff}}_{Q,\ff})$.

\begin{definition}\cite[Section 5.2.6]{maulik2019quantum}
	The Maulik--Okounkov Yangian $\mathbf{Y}^{T'}_Q$ is the subalgebra of $\prod_{\ff} \mathrm{End}_{\rmH^{*}_{T_{\ff}}}(\NN^{T_{\ff}}_{Q,\ff})$ generated by the operators $E(m_i(a))$ for all $i \in Q_0$ and all polynomials $m_i(a)$.
\end{definition} 

\begin{theorem}\cite[Section 5.3, Theorem 5.5.1]{maulik2019quantum}\label{theorem:MO_Yangian_properties}
The algebra $\mathbf{Y}^{T'}_Q$ enjoys the following properties:

	\begin{itemize}
		\item Let $\mathfrak{g}^{\MO,T'}_Q$ be the span of the operators $E(m_i)$ for all constant polynomials $m_i$. Then $\mathfrak{g}^{\MO,T'}_Q$ is closed under Lie bracket and is called the Maulik-Okounkov Lie algebra. Furthermore, the algebra $\mathbf{Y}^{T'}_Q$ is generated by $\mathfrak{g}^{\MO,T'}_Q$ and the operators of multiplication by the Chern classes of the tautological bundles on the Nakajima quiver variety.
		\item The algebra $\mathbf{Y}^{T'}_Q$ admits a natural increasing filtration $F^{\bullet}$ defined by setting degree of the generator $E(m_i(a))$ to be the degree of $a$ in $m_i(a)$, i.e \[deg(E(m_i(a)))=\deg_a(m_i(a)). \] The associated graded algebra is isomorphic to the universal enveloping algebra of $\mathfrak{g}^{\MO,T'}_Q[a]$, i.e, \[\bfU(\mathfrak{g}^{\MO,T'}_Q[a]) \cong \mathrm{Gr}^{F}(\mathbf{Y}^{T'}_Q) \] where $\mathfrak{g}^{\MO,T'}_Q[a]$ is the trivial $\BoQ[a]$-linear extension of Lie algebra structure on $\mathfrak{g}^{\MO,T'}_Q$ by the variable $a$ and any element $a^n \alpha$ for $\alpha \in \mathfrak{g}^{\MO,T'}_Q$ is in degree $n$.
		\item The Lie algebra $\mathfrak{g}^{\MO,T'}_Q$ admits a triangular decomposition \[ \mathfrak{g}^{\MO,T'}_Q \simeq \mathfrak{n}^{\MO,T',+}_Q \oplus \mathfrak{h}^{\MO,T'}_Q \oplus \mathfrak{n}^{\MO,T',-}_Q \] such that $\mathfrak{n}^{\MO,T',+}_Q$ is $\mathbf{N}^{Q_0}$-graded \[ \mathfrak{n}^{\MO,T',+}_Q  = \bigoplus_{\dd \in \mathbf{N}^{Q_0} \backslash 0} \mathfrak{n}^{\MO,T'}_{Q,\dd} \] where any operator $\alpha \in \mathfrak{n}^{\MO,T',+}_{Q,\dd}$ acts by \[ \alpha \colon \NN^{T_{\ff}}_{Q,\ff,\dd^{\prime}} \rightarrow \NN^{T_{\ff}}_{Q,\ff, \dd^{\prime}+\dd} \] for any $\dd, \dd^{\prime}$.
	\end{itemize}
\end{theorem}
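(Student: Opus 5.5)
The plan follows the strategy of \cite{maulik2019quantum}, organised around the expansion of the geometric $R$-matrix in the spectral parameter $a$. Since $A_{\delta_i}$ acts trivially on $\NN_Q(\delta_i,\dd)$, the matrix $R_{\delta_i,\ff}$ depends on $a$ only through the difference of the framing equivariant parameters. By the asymptotics of stable envelopes (the polarisation and degree bounds), I would first prove an expansion
\[
 R_{\delta_i,\ff}=1+\frac{\hbar}{a}\,\mathbf r+O(a^{-2}),
\]
where $\mathbf r$ is the classical $r$-matrix, which is independent of $a$. Taking $\mathrm{Res}_a$ against a constant $m_i$ then isolates the coefficient of $a^{-1}$. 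So $E(m_i)$ is the image of $m_i\otimes 1$ under $\mathrm{Tr}\circ\mathbf r$, and $\mathfrak g^{\MO,T'}_Q$ is the span of the "matrix coefficients" of $\mathbf r$. Expanding the Yang--Baxter equation to order $a^{-1}b^{-1}$ gives the classical Yang--Baxter equation for $\mathbf r$. Contracting with traces, this shows that $[E(m),E(m')]$ is again a matrix coefficient of $\mathbf r$, so $\mathfrak g^{\MO,T'}_Q$ is closed under the bracket.

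Next I would set up the filtration. Expanding the Yang--Baxter equation in all orders (the RTT relations) shows that $[E(m\,a^p),E(m'\,a^q)]$ lies in $F^{p+q}$, and that modulo $F^{p+q-1}$ it equals $E([m,m']\,a^{p+q})$. This gives a surjection $\bfU(\mathfrak g^{\MO,T'}_Q[a])\to\mathrm{Gr}^F(\mathbf Y^{T'}_Q)$.

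For generation I would compute $[c_1(\mathcal V),E(m\,a^p)]$, where $\mathcal V$ is a tautological bundle. Using that the $R$-matrix commutes with the tensor-product action of $c_1$ of the full tautological bundle, this bracket should be a nonzero multiple of $E(m\,a^{p+1})$ plus terms of lower filtration degree. By induction on $p$, $\mathfrak g^{\MO,T'}_Q$ together with the Chern class operators then generates $\mathbf Y^{T'}_Q$.

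The triangular decomposition comes from the $\BoN^{Q_0}$-grading. The components of $\mathbf r$ shifting $\dd'$ by $\dd>0$ give $\mathfrak n^{+}$, those shifting by $-\dd$ give $\mathfrak n^{-}$, and the degree-preserving ones give $\mathfrak h$. The block-triangularity of stable envelopes with respect to the chosen chamber ensures that these three pieces are each closed under the bracket and intersect trivially.

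The main obstacle is injectivity in the PBW statement, i.e. that $\bfU(\mathfrak g^{\MO,T'}_Q[a])\to\mathrm{Gr}^F\mathbf Y^{T'}_Q$ has no kernel. I would first try faithfulness on tensor products of the form $\NN_{Q,\ff_1}\otimes\cdots\otimes\NN_{Q,\ff_k}$ with distinct evaluation parameters. Since $\mathbf Y^{T'}_Q$ acts through the coproduct given by $R$, the associated graded acts as $\mathfrak g[a]$ acting in evaluation modules. A Vandermonde argument in the evaluation parameters, combined with the faithfulness of $\bfU(\mathfrak g^{\MO,T'}_Q)$ on $\bigoplus_{\ff}\NN_{Q,\ff}$ (which holds because $\mathfrak g^{\MO,T'}_Q$ is defined by its action there), then separates PBW monomials.
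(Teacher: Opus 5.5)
The paper does not prove this statement; it quotes it from Maulik--Okounkov, so your sketch has to stand on its own. The overall template (classical $r$-matrix from the $1/a$-expansion, Yang--Baxter, RTT relations for the filtration, evaluation modules for PBW) is the right one, and the first step is fine. The $a^{-1}b^{-1}$-coefficient of the Yang--Baxter equation gives $[\mathbf r_{12}+\mathbf r_{13},\mathbf r_{23}]=0$, which is invariance rather than the classical Yang--Baxter equation but suffices. Tracing this out gives $[E(m),E(m')]=E([E(m),m'])$.

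The generation step, however, fails as written, for three reasons.
\begin{itemize}
\item \emph{Its premise is false.} $R$ does not commute with $c_1\otimes 1+1\otimes c_1$. Already for $T^*\mathbb P^1=\NN_Q(2,1)$, with $Q$ of type $A_1$ and $\ff=1+1$, the restriction of $c_1(\mathcal V)$ to the two fixed points is $\mathrm{diag}(a_1,a_2)$, while $R$ is Yang's $R$-matrix, with nonzero off-diagonal entries there. What holds is only $R\,\Delta_+(c_1)=\Delta_-(c_1)\,R$, where $\Delta_\pm(c_1)$ is $c_1\cup$ conjugated by $\mathrm{Stab}_\pm$. These operators are triangular, with the naive tensor-product action on the diagonal and $\hbar$-multiples of raising/lowering products off it. You can still extract $[c_1(\mathcal V_j)\cup,E(ma^p)]\equiv\alpha_j E(ma^{p+1})$ modulo $F^p$ for $m$ of weight $\alpha$, but only after showing the extra terms lie in $F^p$.
\item \emph{The Cartan part is not reached.} The coefficient $\alpha_j$ vanishes for $\alpha=0$, so your induction says nothing about the weight-zero generators, i.e.\ about $\frakh^{\MO,T'}_Q[a]$.
\item \emph{Membership of the Chern classes is never shown.} You do not prove that the Chern class operators lie in $\mathbf Y^{T'}_Q$. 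This is half of the generation claim, and it is needed even to form $[c_1\cup,-]$ inside $\mathbf Y^{T'}_Q$.
\end{itemize}
The missing input is the vacuum matrix coefficient. The component $\NN_Q(\delta_i,0)\times\NN_Q(\ff,\dd)$ is extremal for the chamber order. Hence $\langle\mathrm{vac}|R_{\delta_i,\ff}(a)|\mathrm{vac}\rangle$ is multiplication by the ratio of Euler classes of the two halves of its normal bundle, which consists of the new framing maps to and from $\mathcal V_i$. This ratio is a rational function of $a$, $\hbar$ and the Chern roots of $\mathcal V_i$, and its $a^{-1}$-expansion produces all $\mathrm{ch}_k(\mathcal V_i)$.

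The PBW and triangular parts have further holes.
\begin{itemize}
\item \emph{The bracket formula is wrong.} Your relation $E([m,m']a^{p+q})$ is not meaningful in general: $m,m'$ may act on different $\NN_{Q,\delta_i},\NN_{Q,\delta_j}$, and $m\mapsto E(m)$ is not a Lie (anti)homomorphism. The correct graded relation is $[E(m)a^p,E(m')a^q]=[E(m),E(m')]a^{p+q}$.
\item \emph{The symbol map is not shown to be well defined.} To have any map $\bfU(\frakg^{\MO,T'}_Q[a])\to\mathrm{Gr}^F\mathbf Y^{T'}_Q$, you must show that the symbol of $E(ma^p)$ depends only on $E(m)$, i.e.\ that $E(m)=0$ implies $E(ma^p)\in F^{p-1}$. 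Nothing in your RTT step gives this. For root components it follows by induction from a corrected commutator formula; for the Cartan part it follows from the vacuum formula.
\item \emph{The faithfulness argument is a non sequitur.} Faithfulness of $\bfU(\frakg^{\MO,T'}_Q)$ on $\bigoplus_\ff\NN_{Q,\ff}$ does not follow from $\frakg^{\MO,T'}_Q$ being defined by its action there: $\mathfrak{sl}_2$ acts faithfully on $\BoC^2$, but its enveloping algebra does not. You need that stable envelopes identify $\NN_{Q,\ff'+\ff''}$, after localization, with $\NN_{Q,\ff'}\otimes\NN_{Q,\ff''}$ as $\frakg$-modules, so that you are testing on all tensor products of the $\NN_{Q,\delta_i}$. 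You then also need a lemma that $\bfU$ acts faithfully on the tensor powers of a faithful module, valid for this infinite-dimensional graded Lie algebra.
\item \emph{The triangular decomposition argument targets the wrong claim.} Closure of the three pieces under the bracket and their trivial intersection are automatic from the $\BoZ^{Q_0}$-grading. The real content is that no weight with entries of both signs occurs, so that the pieces exhaust $\frakg^{\MO,T'}_Q$. Block-triangularity gives this only once you know the chamber order compares only components whose dimension vectors differ by an element of $\pm\BoN^{Q_0}$, and that requires stability. A point with limits in both directions carries subrepresentations $V_1\supseteq\mathrm{im}(W_{\delta_i})$ and $V_1'\supseteq\mathrm{im}(W_{\ff})$ producing the two limits. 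Generation by the framing forces $V=V_1+V_1'$, whence $\dim V_1-\dim(V/V_1')=\dim(V_1\cap V_1')\in\BoN^{Q_0}$.
\end{itemize}
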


Let $\mathbf{Y}^{T',+}_Q$ be the subalgebra of $\mathbf{Y}^{T'}_Q$ generated by $\mathfrak{n}^{\MO,T',+}_Q$ and multiplication by tautological classes. It is shown in \cite{davison2020cohomological} that for any framing $\ff$, there is an $\rmH^{*}_{T_{\ff}}$-linear action of the cohomological Hall algebra $\mathcal{A}^{T',\psi}_{\Pi_Q}$ on $\NN^{T_{\ff}}_{Q,\ff}$, thus giving a morphism of algebras \[ \mathcal{A}^{T',\psi}_{\Pi_Q} \rightarrow \prod_{\ff} \mathrm{End}_{\rmH^{*}_{T_{\ff}}}(\NN^{T_{\ff}}_{Q,\ff}). \] Then it is proved in \cite{schiffmann2023cohomological} and \cite{botta2023okounkov} that this morphism is an injection and the image of this morphism is exactly $\mathbf{Y}^{T',+}_Q$ and thus we have an isomorphism of algebras \[ \Phi: \mathcal{A}^{T',\psi}_{\Pi_Q} \rightarrow \mathbf{Y}^{T',+}_Q. \]  Note that since the Yangian is a $\mathbf{Z}^{Q_0}$-graded algebra, the filtration $F^{\bullet}$ restricts to the positive Half $\mathbf{Y}^{T',+}_Q$ of the Yangian. We then have

\begin{theorem}
	The morphism $\Phi$ induces an isomorphism between the less perverse filtration on the cohomological Hall algebra and the degree filtration on the Maulik--Okounkov Yangian, i.e, \[ \Phi(\mathfrak{L}^{\leq i}(\mathcal{A}^{T',\psi}_{\Pi_Q})) =  F^{\leq [i/2]}(\mathbf{Y}^{T',+}_Q) \] for all $i \geq 0$.
\end{theorem}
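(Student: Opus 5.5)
The plan is to compare explicit spanning sets of each filtered piece on both sides, and to close the argument with a Hilbert series count.

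\textbf{The less perverse side.} By \Cref{corollary:Tprimeenveloping} and the PBW splitting underlying it, $\mathfrak{L}^{\leq i}\CA_{\Pi_Q}^{T',\psi}$ is spanned by ordered products $(x_1u^{n_1})\cdots(x_ku^{n_k})$ with $x_j\in\rmBPS_{\Pi_Q}^{T'}$ and $2\sum_j n_j\leq i$. Indeed, $xu^n$ has less perverse degree $2n$ because $\BPS$ sits in perverse degree $0$ and $u$ raises the degree by $2$ (\Cref{lemma: heisqlessperversecompatibility}). The less perverse filtration is also multiplicative. In particular all odd steps coincide with the even steps below them, which explains the $[i/2]$ in the statement.

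\textbf{The Yangian side.} By \Cref{theorem:MO_Yangian_properties}, $F^{\leq k}\mathbf{Y}_Q^{T',+}$ is spanned by products of generators $E(\alpha a^{n_j})$ with $\sum n_j\leq k$, where $\alpha\in\mathfrak{n}^{\MO,T',+}_Q$. Here I would use the results of \cite{botta2023okounkov}:
\begin{itemize}
\item[(a)] $\Phi$ restricts to an isomorphism $\rmBPS^{T'}_{\Pi_Q}\cong\mathfrak{n}^{\MO,T',+}_Q$;
\item[(b)] $\Phi$ intertwines the action of the tautological class $u=c_1(\CL)$ (the derivation $u\bullet-$) with the adjoint action of the corresponding tautological operator on Nakajima varieties. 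Up to lower order terms and a rescaling by $\abs{\dd}$, this operator sends $E(\alpha a^n)$ to $E(\alpha a^{n+1})$.
\end{itemize}
Concretely, the key step is the following. For $x\in\rmBPS_{\Pi_Q,\dd}^{T'}$,
\[
\Phi(xu^n)=\abs{\dd}^{n}E(\Phi(x)a^n)+(\text{elements of }F^{\leq n-1}).
\]
I would prove this by induction on $n$. The case $n=0$ is (a). For the inductive step, I would apply $u\bullet-$, which on the Yangian side is the commutator with the first Chern class of the determinant of the tautological bundles. In the MO construction, commuting a generator $E(m(a))$ with a tautological class shifts $m(a)\mapsto a\,m(a)$ plus terms of lower $a$-degree; this is the standard computation in \cite[\S5]{maulik2019quantum} and \cite[Prop.~5.7]{botta2023okounkov}.

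\textbf{Inclusions and equality.} Granting the key step, multiplicativity of both filtrations gives $\Phi(\mathfrak{L}^{\leq i})\subset F^{\leq[i/2]}$. The same formula, read inductively, shows that each generator $E(\alpha a^n)$ lies in $\Phi(\mathfrak{L}^{\leq 2n})$, since $\abs{\dd}\neq0$. Hence $F^{\leq[i/2]}\subset\Phi(\mathfrak{L}^{\leq i})$. For a cleaner argument, I would alternatively use only one inclusion and compare graded pieces. The induced map $\Gr\Phi$ sends $\bfU(\rmBPS^{T'}[u])$, as given by \Cref{corollary:Tprimeenveloping}, to $\bfU(\mathfrak{g}^{\MO,T',+}[a])$, with $u'_\dd=u/\abs{\dd}\mapsto a$ as in \Cref{corollary:coha=envelopingalgebra}. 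It is a Lie algebra map on generators, and both brackets are the trivial linear extensions after this rescaling. So $\Gr\Phi$ is an isomorphism by PBW on each side, and injectivity of $\Phi$ together with dimension counting in each bigraded piece forces equality of the filtrations.

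\textbf{Main obstacle.} I expect the hardest step to be the key formula $\Phi(xu^n)\equiv\abs{\dd}^nE(\Phi(x)a^n)$ modulo lower filtration. It requires matching the determinant line bundle $\CL$ on $\FM_{\Pi_Q}$ with the tautological classes on quiver varieties under the CoHA action, and controlling the lower order terms coming from the $R$-matrix residue. My first attempt would be the Heisenberg algebra action (\Cref{proposition:heisenberg_algebra_action}): compute $\partial_u$ on both sides and use injectivity (\Cref{lemma:injectivity}) to pin down the leading term, exactly as in the proof of \Cref{theorem:main_degeneration_affinized_BPS}.
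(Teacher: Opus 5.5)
Your proposal is correct and has the same skeleton as the paper's proof. You first obtain $\Phi(\mathfrak{L}^{\leq i})\subset F^{\leq[i/2]}$ from generation by $\mathfrak{n}^{+,\mathrm{BPS},T'}_{\Pi_Q}[u]$ and multiplicativity. You then upgrade this to equality using the two PBW descriptions and $\mathfrak{n}^{+,\mathrm{BPS},T'}_{\Pi_Q}\cong\mathfrak{n}^{\mathrm{MO},T',+}_Q$. Where you differ is that you rest the argument on a sharper input that the paper never needs.

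For the inclusion, the paper only uses the upper bound $\Phi(xu^n)\in F^{\leq n}$. This holds because $u\bullet-$ becomes an operator built from the classes $c_1(\mathcal{V}_i)$, and cup product with $c_1(\mathcal{V}_i)$ raises the $F$-degree by at most one \cite[Proposition~5.5.3]{maulik2019quantum}. Your phrasing of this as an adjoint action is the accurate version of the paper's formula $\Phi(u^n\alpha)=(\sum_i c_1(\mathcal{V}_i))^n\Phi(\alpha)$, since on the Hecke correspondence $u$ is a difference of tautological first Chern classes at the two ends. It gives the same bound because $F$ is multiplicative.

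Equality then needs no leading-term information. In each $\mathbf{N}^{Q_0}\times\mathbf{Z}$-degree everything is finite-dimensional, and the PBW descriptions give $\dim\mathfrak{L}^{\leq i}=\dim F^{\leq[i/2]}$. Injectivity of $\Phi$ together with the inclusion then forces equality. This is exactly the last sentence of your ``cleaner'' paragraph. So the formula $\Phi(xu^n)\equiv\abs{\dd}^nE(\Phi(x)a^n)$ modulo $F^{\leq n-1}$, which you flag as the main obstacle, can simply be dropped.

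That formula would buy more than the theorem. It would identify $\mathrm{Gr}\,\Phi$ with the map induced by the Lie algebra isomorphism $x(u'_{\dd})^n\mapsto\Phi(x)a^n$, which explains the rescaling in \Cref{corollary:coha=envelopingalgebra}. It would also make the surjectivity of $\mathrm{Gr}\,\Phi$ visible. By contrast, the paper's justification (``since $\Phi$ is an isomorphism, $\mathrm{Gr}(\Phi)$ is surjective'') fails for general filtered bijections, and its proof really rests on the same dimension count.

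If you do want the formula, the Heisenberg route will not deliver it. Transporting $\partial_u$ through $\Phi$ gives a derivation of $\mathbf{Y}^{T',+}_Q$, but its interaction with $F$ and with the generators $E(m(a))$ is unknown. You would instead need to compute, on the Maulik--Okounkov side, the class of the weighted tautological divisor in $\mathrm{Gr}^F_1$ and its adjoint action on $\mathfrak{n}^{\mathrm{MO},T',+}_Q[a]$.
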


\begin{proof} We first check that the morphism $\Phi$ respects the filtrations in this way. The cohomological Hall algebra $\mathcal{A}^{T',\psi}_{\Pi_Q}$ is generated by $\mathfrak{n}^{+, \mathrm{BPS},T'}_{\Pi_Q} \otimes \rmH^*_{\BoC^*}(\pt)$, thus degree of elements in $\mathfrak{n}^{+, \mathrm{BPS},T'}_{\Pi_Q} \otimes \rmH^{*}_{\mathbb{C}^{*}}(\pt)$ defines the less perverse filtration on $\CA_{\Pi_Q}^{T'}$. By definition, any $\alpha \in \mathfrak{n}^{+, \mathrm{BPS},T'}_{\Pi_Q}$ is in $\mathfrak{L}^{0}(\mathcal{A}^{T',\psi}_{\Pi_Q})$ (the less perverse filtration is coarser than the perverse filtration). On the other hand, it is proved in \cite{botta2023okounkov} the the morphism $\Phi$ sends $\alpha \in \mathfrak{n}^{+, \mathrm{BPS},T'}_{\Pi_Q}$ to $\mathfrak{n}^{\MO,T',+}_{Q}$ and thus $\Phi(\alpha) \in F^0(\mathbf{Y}^{T',+}_Q)$ for any $\alpha\in\frakn_{\Pi_Q}^{+,\rmBPS,T'}$. The action of the cohomological Hall algebra on $\NN^{T_{\ff}}_{Q,\ff}$ is defined by convolution with tautological classes, thus the morphism $\Phi$ sends multiplication by tautological classes to multiplication by tautological classes. In particular, for any $\alpha\in\frakn_{\Pi_Q}^{+,\rmBPS,T'}$ and $n\geq 0$, $\Phi(u^n \alpha) = (\sum_{i\in Q_0} c_1(\mathcal{V}_i))^n \Phi(\alpha)$ where $\mathcal{V}_i$ is the tautological bundle on the $i$th vertex. But it is proved in \cite[Proposition 5.5.3]{maulik2019quantum} that the cup product with $c_1(\mathcal{V}_i)$ increases $F$ degree by 1, thus $\Phi(u^n \alpha) \in F^n(\mathbf{Y}^{T',+}_Q)$. This shows that $\Phi(\mathfrak{L}^{\leq 2n}(\mathcal{A}^{T',\psi}_{\Pi_Q})) \subset F^{\leq n}(\mathbf{Y}^{T',+}_Q)$ for all $n \geq 0$. Since $\mathfrak{L}^{\leq 2n}(\mathcal{A}^{T',\psi}_{\Pi_Q}) = \mathfrak{L}^{\leq 2n+1}(\mathcal{A}^{T',\psi}_{\Pi_Q})$, shows that $\Phi(\mathfrak{L}^{\leq i}(\mathcal{A}^{T',\psi}_{\Pi_Q})) \subset F^{\leq [i/2]}(\mathbf{Y}^{T',+}_Q)$ for all $i \geq 0$. Since the morphism $\Phi$ respect the filtrations, this induces a morphism on the associated graded algebras \[ \mathrm{Gr}(\Phi) \colon \mathrm{Gr}^{\mathfrak{L}}(\mathcal{A}^{T',\psi}_{\Pi_Q}) \rightarrow \mathrm{Gr}^F(\mathbf{Y}^{T',+}_Q).\]Since the morphism $\Phi$ is an isomorphism, the induced morphism on the associated graded algebras is surjective. But by Corollary \ref{corollary:PBW_affinized_BPS} and Theorem \ref{theorem:MO_Yangian_properties}, the associated graded algebras are isomorphic to $\bfU(\mathfrak{n}^{+, \mathrm{BPS},T'}_{\Pi_Q}[u])$ and $\bfU(\mathfrak{n}^{\MO,T',+}_Q[a])$ respectively. Thus the graded dimensions as a $\mathbf{N}^{Q_0} \times \mathbf{Z}$ graded vector space are the same under the morphism $\Phi$ where the $\mathbf{Z}$-grading is coming from the grading of the associated graded algebra. Thus $\mathrm{Gr}(\Phi)$ is surjective. Since each graded piece of the source and the target have the same dimension, $\mathrm{Gr}(\Phi)$ is an isomorphism. Since both filtration start in degree $0$, the isomorphism between the associated graded implies $\Phi(\mathfrak{L}^{\leq 0}(\mathcal{A}^{T',\psi}_{\Pi_Q})) =  F^{\leq 0}(\mathbf{Y}^{T',+}_Q)$. Now we may conclude by induction since we have pairs of exact sequences\[\begin{tikzcd}
	0 & {\mathfrak{L}^{\leq i-1}(\mathcal{A}^{T',\psi}_{\Pi_Q})} & {\mathfrak{L}^{\leq i}(\mathcal{A}^{T',\psi}_{\Pi_Q})} & {\mathrm{Gr}^{\mathfrak{L},i}(\mathcal{A}^{T',\psi}_{\Pi_Q})} & 0 \\
	0 & {F^{[(i-1)/2]}(\mathbf{Y}^{T',+}_Q)} & {F^{[i/2]}(\mathbf{Y}^{T',+}_Q)} & {\mathrm{Gr}^{F,[i/2]}(\mathbf{Y}^{T',+}_Q)} & 0
	\arrow[from=1-1, to=1-2]
	\arrow[from=1-2, to=1-3]
	\arrow[from=1-2, to=2-2]
	\arrow[from=1-3, to=1-4]
	\arrow[from=1-3, to=2-3]
	\arrow[from=1-4, to=1-5]
	\arrow[from=1-4, to=2-4]
	\arrow[from=2-1, to=2-2]
	\arrow[from=2-2, to=2-3]
	\arrow[from=2-3, to=2-4]
	\arrow[from=2-4, to=2-5]
\end{tikzcd}\]
for $i\geq 1$, where by induction, we may assume that first vertical arrow is an isomorphism. Since the third vertical arrow is an isomorphism, we conclude that the middle vertical arrow is an isomorphism as well by the snake lemma, which concludes the proof.
\end{proof}

\printbibliography

\end{document}